\newsavebox\CBox
\newcommand\hcancel[2][0.5pt]{%
  \ifmmode\sbox\CBox{$#2$}\else\sbox\CBox{#2}\fi%
  \makebox[0pt][l]{\usebox\CBox}%
  \rule[0.5\ht\CBox-#1/2]{\wd\CBox}{#1}}
\newmdenv[
  leftmargin = 3em,
  innerleftmargin = 1em,
  innertopmargin = 0pt,
  innerbottommargin = 0pt,
  innerrightmargin = 0pt,
  rightmargin = 0pt,
  linewidth = 1pt,
  topline = false,
  rightline = false,
  bottomline = false
  ]{leftbar}
\numberwithin{equation}{section}
\newtheorem{proposition}{Proposition}[section]
\newtheorem{definition}{Definition}
\newtheorem{remark}{Remark}[section]
\newtheorem{problem}{Problem}[section]
\newcommand{\bmA}{\ensuremath{\bm{A}} } 
\newcommand{\bmB}{\ensuremath{\bm{B}} }
\newcommand{\bmH}{\ensuremath{\bm{H}} }
\newcommand{\bmN}{\ensuremath{\bm{N}} }
\newcommand{\bmR}{\ensuremath{\bm{R}} } 
\newcommand{\bmU}{\ensuremath{\bm{U}} }
\newcommand{\bmf}{\ensuremath{\bm{f}} }
\newcommand{\bmn}{\ensuremath{\bm{n}} }
\newcommand{\bmt}{\ensuremath{\bm{t}} }
\newcommand{\bmu}{\ensuremath{\bm{u}} }
\newcommand{\bmv}{\ensuremath{\bm{v}} } 
\newcommand{\bmx}{\ensuremath{\bm{x}} } 
\newcommand{\bmw}{\ensuremath{\bm{w}} }
\newcommand{\bmz}{\ensuremath{\bm{z}} }
\newcommand{\varepsilonb}{\ensuremath{\bm{\varepsilon}} }
\newcommand{\sigmab}{\ensuremath{\bm{\sigma}} }
\newcommand{\R}{\ensuremath{\mathbb{R}} }
\DeclareMathOperator{\tr}{\mathsf{tr}}
\DeclareMathOperator{\ddiv}{\mathsf{div}}
\DeclareMathOperator{\ds}{\mathrm{d}\mathit{s}}
\DeclareMathOperator{\dx}{\mathrm{d}\bm{\mathit{x}}}
\DeclareMathOperator{\dt}{\mathrm{d}\mathit{t}}
\newcommand{\abs}[1]{\ensuremath{|#1|}}
\newcounter{box}
\newcommand{\boxx}[4]{
  \begin{figure}[H]
    \refstepcounter{box}\label{#2} 
\begin{minipage}{#1}
    \begin{center}
      {\footnotesize \bf Box $\bm{\thebox}$. #3}
    \end{center}
\end{minipage}

\fbox{ 
\begin{minipage}{#1}      
    \begin{center}      
      {#4}  
    \end{center}
\end{minipage}
}    
  \end{figure}
}
\title{A variational asymmetric phase-field model of quasi-brittle fracture:\\ 
Energetic solutions and their computation.}
\author{\normalsize
	Mariela Luege\thanks{CONICET, Instituto de Estructuras, FACET, 
		Universidad Nacional de Tucum\'an, Argentina}\, and
	Antonio Orlando\thanks{CONICET, Departamento de Bioingenier\`ia,
		Universidad Nacional de Tucum\'an, Argentina}
}
\date{ }
\begin{document}

\maketitle


\onehalfspacing


\pagestyle{fancy}
\fancyhead{}
\cfoot{\thepage}


\begin{abstract} 
	We derive the variational formulation of a gradient damage model by applying the energetic formulation 
	of rate-independent processes and obtain a regularized formulation of fracture. 
	The model exhibits different behaviour at traction and compression and has a state-dependent dissipation 
	potential which induces a path-independent work. 
	We will show how such formulation provides the natural framework for setting up a consistent numerical scheme
	with the underlying variational structure and for the derivation of additional necessary conditions
	of global optimality in the form of a two-sided energetic inequality. These conditions will form 
	our criteria for making a better choice of the starting guess in the application of 
	the alternating minimization scheme to describe crack propagation as quasistatic evolution of global minimizers of the underlying 
	incremental functional. We will apply the procedure 
	for two- and three-dimensional benchmark problems and we will compare the results with the solution of the weak form of the
	Euler-Lagrange equations. We will observe that by including the two-sided energetic inequality in our solution method, 
	we describe, for some of the benchmark problems, an equilibrium path when damage starts to manifest, 
	which is different from the one obtained by solving simply the stationariety conditions of the underlying functional.
\end{abstract}

\medskip
\footnotesize{
	{\bf Keywords}:\textit{Phase field variable. Generalized standard material. Energetic formulation. 
		Two sided energetic inequality. Alternating minimization. Backtracking algorithm.}

\medskip
\noindent ${}^{\ast}$ Corresponding author.\\
	{\bf Email}: \{mluege, aorlando\}@herrera.unt.edu.ar}

\normalsize
\section{Introduction}\label{Sec.INTRO}
In computational fracture mechanics, the variational phase-field models of fracture have received a considerable 
and increased attention as approximation models of fracture since the seminal works \cite{FM98,BFM08} where the
classical concept of Griffith's critical energy release rate \cite{Gri21} is replaced by a least energy principle, making it possible to 
capture otherwise characteristic features of the fracture process.
These models of fracture can appear as regularization formulations of 
free-discontinuity problems in the context of the variational approach to fracture \cite{FM98,CCF18,FR10,DT02,Gia05},  or they can result from  
the modelling of gradient damage as application of material constitutive theories in terms of two potentials, 
the free energy density and the dissipation potential \cite{Fre02,FN96,LA99,LA03,AMM09,Mie11,MHW10,MWH10}.
Excellent reviews on the application of these two approaches to approximate quasi-brittle fracture according to the above sense 
can be found in \cite{AGDl15,BA18,PAMM11,URSS19,DeG20} whereas \cite{EPAK19,DbV16,Wu17,WNNSSB20} provide an extensive overview
of also other phase-field models, not only the variational ones, that have been lately proposed for providing a more 
accurate description of the fracture process. 
In the variational formulations the cracks are represented by a continuum variable, namely a phase-field variable, that can be identified 
with the damage variable $\beta$ which describes the damaged and the undamaged phases, whereas
their propagation is described by the quasistatic evolution of the critical points of an energetic functional 
which accounts for the stored elastic energy and the dissipation associated with the variation of $\beta$. 
The main advantages of these formulations, compared for instance to the discrete approaches to fracture \cite{MDB99},
which relies on explicit modelling of the displacement field discontinuity jump produced by the crack, is that 
phase-field formulations can handle the evolution of complex crack patterns, can account for crack initiation and propagation without initial defects 
and prescribed crack path, and can be implemented without any particular consideration of what the crack pattern will be. This is because one
deals with the search for critical points of functionals defined over Sobolev spaces which can be easily discretized by standard finite elements spaces \cite{Qua17}
and crack initiation and propagation appear as a result of a competition between the different energetic terms \cite{TLBMM18}.

The existence of a variational structure for the models which we consider in this paper is basically a consequence of the 
rate-independence and associativity of the evolution process, thus the need to work with standard damage models 
\cite{MMP16,Ngu00,BCCFB10}. 
In this case, the variational formulation can be derived quite naturally by a general theoretical 
framework of clear mechanical interpretation given by the energetic formulation 
proposed by Mielke and coworkers \cite{MR15,MRZ10}. The use of the energetic formulation indirectly defines 
also the type of critical point which must be considered for the description of the evolution process. 
Since the existence of energetic solutions is proved by considering the 
evolution along global minimizers of discrete functionals, which are those that we use in the numerical simulation,
the concept of global minimizer is therefore the appropriate critical point we will use in this paper. 
This modelling assumption, which represents a milestone of the variational approach of fracture advanced
in \cite{FM98}, 
has been analysed
and justified
theoretically,
for instance, in \cite{DT02,BFM00,FL03,Gia05,Mie11,Mar10}, 
though the evolution via global minimzation is also subject of debate because it is not always able 
to produce physical solutions \cite{BDaMG99,Ste09,Ale16}. For instance, in \cite{Ale16} it is shown that by applying 
the energetic formulation, the evolution along global minimizers could produce early 
and unphysical discontinuity jumps, thus 
the relevance of an evolution along other type of critical points
is also beeing investigated as in  \cite{DT02b,AN20,Ale16,Bra14} 
to the fraction of cost of adding further physical based conditions about 
the choice of the particular critical point or of a redefinition of the concept of evolution
as in \cite{AN20,Ale16}.

From the numerical standpoint, the computation of global 
minimizers of the discrete energetic functional, which is separately convex in the displacement field 
$\bmu$ and in the phase-field damage variable $\beta$, poses the problem to ensure the global 
optimality of the critical points that one computes.  
The application of brute-force global optimization algorithms, such as clustering like or stocastic 
methods does not represent, at the moment, a viable option. In practice, one considers the Euler-Lagrange 
equations of this minimizing principle, and then apply the finite element method 
to their weak formulations \cite{MHW10,MWH10,GDl16,GDl19,Wic17,WNNSSB20}
or apply an alternating minimization method (referred to also as staggered scheme) to the finite element 
discrete energetic functional \cite{BWBNR20,FM17,URSS19}.
However, this methodology contrasts with the underlying modelling assumption of 
global minimization. Since the functional is non-convex, the Euler-Lagrange 
equations represent only stationariety conditions, thus their satisfaction cannot guarantee the
global optimality of the computed solution. Same conclusion holds by applying the staggered procedure 
given that the sequence of iterates, eventually up to a subsequence, converges to a critical point
of the discrete energetic functional \cite{AN20,KN17}.
Notably exceptions to this approach are those methods 
where the search of a global minimizer is realized still by local optimization algorithms which 
are however augmented by conditions met by the global minimizers \cite{MRZ10,Bou07,CLR16,MBK15}. 
On the basis of these additional necessary conditions
of global optimality, one basically tries to make a better choice of the starting guess so that it 
falls within the attraction basin of a global minimizer. 
This procedure has been applied successfully to the simulation of isotropic damage in \cite{Bou07} 
and \cite{MRZ10} by the variational and energetic formulation, respectively; to the simulation of the energetic formulation of
a delamination and adhesive contact model in \cite{RKZ13,ZG10}
and to the simulation of hysteresis in magnetic shape memory composites by the energetic formulation 
in \cite{CLR16}.
A similar idea, though applied in a different context, 
has been used in \cite{CCO08}
to obtain fast and efficient numerical relaxation algorithms
for the simulation of microstructures
in single crystals with one active slip system.
Here the authors exploit the structure of the modelling problem to 
obtain a better starting guess for the minimization of the nonconvex functional 
that models the problem at hand.

In this paper, starting from the mechanical model of \cite{AMM09,MWH10}, we illustrate the complete 
procedure for the derivation of the corresponding energetic formulation, of the 
additional optimality conditions of the discrete energetic solutions in the form of 
a two-sided energy inequality, 
and the ensuing energy-balance-based backtracking strategy for the numerical 
simulation of the phase-field  damage model, characterized by a different behaviour between 
traction and compression and with a state-dependent  
dissipation potential which induces a path-independent dissipated work. 
The discrete energetic functional we obtain 
is the same as the regularized functional considered in \cite{CCF18}
which is usually known as \textsc{AT2}-model in the literature on the regularized formulations of fracture
\cite{AMM09,MMP16,DeG20} and has been shown in \cite{CCF18} to $\Gamma-$converge 
(in the appropriate topology) to the free-discontinuity functional of \cite{FM98} with the additional 
non-interpenetration constraint of the crack faces under compression. 
The term \textsc{AT} stands for Ambrosio-Torterelli who were the first to propose an elliptic regularization of 
the Mumford-Shah free-discontinuity  functional \cite{AT90}.
To ensure that the discrete energetic solutions meet the additional conditions of globality, and to develop 
in this manner a computational strategy consistent with the modelling paradigma of evolution along global minimizers,   
we apply the strategy of backtracking in the context of an 
alternating minimization of the separately convex discrete energetic functional.
By such algorithm adapted to rate-independent processes \cite{Ben11},   
we go back over the time steps, whenever the two-sided-energy inequality is violated at the current time, 
to restart the simulation with a different initial guess which is built on the basis of the computed states 
that violate the check test given by the energetic bounds. 
By comparing our simulations to those based on the solution of the weak form of the Euler-Lagrange equations, 
we observe that for some of our benchmark problems the energetic solutions describe an equilibrium path that deviates 
from the standard one when damages starts to manifest, though eventually the two paths coincide. 
The two-sided energy inequality does not only represent a quick test 
of whether the candidate solution can be completed to a valid solution, but it has also a relevant physical meaning. 
This condition in fact represents a discrete equivalent of the conservation of energy \cite{MRZ10,MR15}.
Compared to \cite{MRZ10}, the present work enlarges the field of applications to state dependent dissipation potentials 
and to  phase-field models 
that account for the non-interpenetration condition when they are considered as fracture approximation models. 
It also proposes a numerical procedure which is consistent with 
the underlying globality assumption of the model by exploiting properties of the global minimizers. 

After this brief introduction, in the next Section we derive the 
phase-field model of fracture introduced in \cite{AMM09,MWH10} by applying 
the constitutive material theory based on the extended virtual power developed in \cite{Fre02}. We will 
introduce therefore the additive decomposition of the free 
elastic energy $\psi_0$ into a `compressive' and `tensile', with only the 
tensile contribution degraded by damage development. We use such decomposition to enforce in the limit the
non--interpenetration condition in view of the $\Gamma-$convergence result of \cite{CCF18}, though the assumption of the decomposition
of the free energy for the formulation of regularized variational formulations of fracture that accounts for the non--interpenetration constraint
has been debated, for instance, in \cite{LR09,FR10}.
At this stage, we do not go into the specific of such decomposition which is not relevant for the subsequent 
theoretical developments,
though, when we consider the actual implementation of the model,
we will then refer to the decomposition of $\psi_0$ that results from
the spectral splitting of the strain $\varepsilonb$ proposed in \cite{MWH10}.
The objective of Section \ref{Sec.StrngFormPFM} is to relate the mechanical model as is given in the literature to the 
energetic formulation which is the subject of Section \ref{Sec.EnergForm}. In this Section, we give first the continuous formulation 
which describes the evolution in time of the rate-independent system, and then we present the discrete energetic functional 
as approximation of the continuous energetic formulation. We will also mention therein the relation between this approach and the 
ones in the literature \cite{FN96,LA99,LCK11,Ngu16,LOAP18}, and derive the important energetic bounds of the discrete energetic solution
by exploiting the property of global optimality. Section \ref{Sec.AMMVF} describes then the alternating minimization algorithm. 
The corresponding finite element discrete equations
and how the energetic based backtracking algorithm is used in the whole numerical strategy is explained in Section \ref{Sec.FDS}
whereas Section \ref{Sec.NumEx} gives applications of the full procedure to the numerical solution of $2d-$ and $3d-$benchmark problems. 
The results are compared to the numerical solutions obtained without the activation of the backtracking algorithm, that is, the solution
of the weak form of the Euler-Langrange equations of the discrete energetic functional. 
Section \ref{Sec.Conc} concludes the paper with some final remarks about the energetic formulation 
and the proposed procedure.


\section{Mechanical derivation of the phase-field model of fracture}\label{Sec.StrngFormPFM}
In this section we derive the phase-field model of fracture introduced in \cite{AMM09,MWH10} by applying 
the constitutive material theory developed in \cite{Fre02}.

\subsection{Notations, main assumptions and field equations}
Let $\Omega\subset\R^n$, $n=1,2,3$,  be a bounded open domain which we take as reference configuration
of an homogeneous body made of brittle damaging material. We denote by $\partial\Omega$ the boundary of the 
domain $\Omega$, 
and assume that $\partial\Omega$ is split into two parts: a Dirichlet boundary $\partial \Omega_D$  
and the remaining Neumann boundary $\partial \Omega_N := \partial \Omega \setminus \partial\Omega_D$
where displacements $\bmw$ and surface tractions $\bmt$ are prescribed, respectively.
The boundary $\partial\Omega$ is such that the outward normal $\bmn$ can be defined 
almost everywhere (a.e.) on $\partial\Omega$. 
We assume the displacement field $\bmu$ to be small and the system to undergo an isothermal 
quasi-static evolution over the time interval of interest $[0,\,T]$, $T>0$ and with
uniform temperature in $\Omega$. The state of the system is then characterized by the 
linearized strain $\varepsilonb(\bmu)=\nabla_s\bmu:=(\nabla\bmu+\nabla\bmu^T)/2$, where 
$\nabla$ denotes the gradient operator, 
and additional variables which are introduced to capture the effects of microfractures 
at the material point on its macroscopic properties.  As such additional variables we consider the 
damage variable field $\beta$ and its gradient $\nabla\beta$. The field variable $\beta$ can take values 
in $[0,\,1]$ with $\beta=0$ when the material is undamaged and $\beta=1$
for completed damaged material, i.e. when the material is not able to sustain any stress. 
Its gradient $\nabla \beta$ is introduced to account for the influence of the damage at a point on damage 
of its neighborhood. 
Following the method of virtual power, we assume as in Fr\'emond \cite{Fre02,Fre17} that damage is 
produced by microscopic motions which break 
bonds among particles and such motion is described on the macroscopic level by the rate quantities
$\dot\beta:=d\beta/\dt$ and $\nabla \dot\beta:=\nabla (d\beta/\dt)$. The underlying assumption of the
theory is that the power of these motions must be taken into account in the power of the internal forces.
For background on continuum mechanics and corresponding notation, 
we refer to \cite{Gur70,Fre02}.

To define the functional setting where to formulate our model, we introduce
the standard Sobolev spaces $W^{1,\infty}(\Omega;\,\mathbb{R}^n)$
and $H^1(\Omega;\,\mathbb{R}^n)$ of functions defined a.e. in $\Omega$
and with values in $\mathbb{R}^n$, $n=1,2,3$. We denote then by $\mathcal{V}\subset H^1(\Omega;\,\mathbb{R}^n)$ 
the space of all displacement fields that generate 
compatible strain fields, by $\mathcal{V}_{D}\subset\mathcal{V}$ 
the affine space of kinematically admissible fields,
i.e. for any $t\in[0,\,T]$, $\bmu(\cdot,t)\in\mathcal{V}$ such that $\bmu=\bmw$ on $\partial\Omega_D$ in the sense of trace, 
and by $\mathcal{V}_{D,0}\subset\mathcal{V}$ the linear space of the kinematically 
admissible virtual displacement fields, that is, 
the space of the displacement fields $\bmu_{D,0}$  meeting the homogeneous kinematic boundary conditions 
on $\partial\Omega_D$, i.e. $\bmu_{D,0}=\bm{0}$ on $\partial\Omega_D$. 
We then introduce the linear space of the damage fields
$\mathcal{B}=\{\gamma\in W^{1,\infty}(\Omega;\,\mathbb{R}):\,\nabla\gamma\cdot\bmn=0\text{ on }\partial\Omega\}$
and consider $\beta\in W^{1,1}([0,\,T];\, \mathcal{B})$ as function of $t$, 
\cite[page 120]{MR15}.

For any $t\in[0,\,T]$, let $\bmu_{D,\bmw}(\cdot,t)\in\mathcal{V}_D$ 
be a lifting of the 
Dirichlet boundary data $\bmw(\cdot,t)$ \cite{Qua17}, that is, 
$\bmu_{D,\bmw}(\cdot,t)$ is a given (fixed) extension 
of $\bmw(\cdot,t)$ onto $\overline\Omega$, the closure of $\Omega$ with 
$\overline\Omega=\Omega\cup\partial\Omega_D\cup\partial\Omega_N$. 
Such extension can be obtained, for instance, 
by taking an interpolation of $\bmw$ onto $\overline\Omega$ by finite element shape functions
and must be considered as a known function once $\bmw(\bmx,t)$ is given.
We use also the notation  $\varepsilonb_{D,0}=\nabla_s\bmu_{D,0}$ to refer to the linearized strain of virtual 
admissible displacement fields.

The internal virtual power is then defined by the linear form
\[
	\mathcal{P}_i(\dot\bmu_{D,0},\,\dot\beta)= \int_{\Omega} \left(\sigmab\colon\dot\varepsilonb_{D,0} + 
	V\dot\beta+\bmH\cdot\nabla\dot\beta\right)\,\dx
\]
which defines the field variables $\sigmab$, $V$ and $\bmH$ dual of $\dot\varepsilonb$, $\dot\beta$ and 
$\nabla\dot \beta$, respectively. In this paper only volume forces $\bmf$, 
surface tractions $\bmt$ on $\partial \Omega_N$ and prescribed displacements $\bmw$ on $\partial \Omega_D$
are accounted for producing damage, thus the external virtual power is represented by the linear form
\[
	\mathcal{P}_e(\dot\bmu_{D,0})= \int_{\Omega} \bmf\cdot\dot\bmu_{D,0} \,\dx + \int_{\partial \Omega_N} \bmt\cdot\dot\bmu_{D,0} \,dx\,.
\]
The principle of the virtual power then states
\[
	\mathcal{P}_e(\dot\bmu_{D,0})=\mathcal{P}_i(\dot\bmu_{D,0},\,\dot\beta)
\]
which must hold for any admissible $\dot\bmu_{D,0}\in\mathcal{V}_{D,0}$ and  $\dot\beta\in\mathcal{B}$. 
By applying the Gauss-Green theorem and then the 
fundamental lemma of the calculus of variations, we obtain two field equations, 
one is the balance equations of linear momentum

\begin{equation}\label{Eq.2.EquilEq}
\begin{array}{ll}
	\displaystyle \ddiv\sigmab+\bmf= \bm{0} & \text{ in }\Omega\\
	\displaystyle \sigmab \bmn=\bmt    & \text{ on }\partial\Omega_N
\end{array}
\end{equation}
with the corresponding Neumann boundary condition given by the Cauchy Theorem \cite{Gur70}, 
whereas the other is the microforce balance equations

\begin{equation}\label{Eq.2.MicroEq}
\begin{array}{ll}
	\displaystyle \ddiv\bmH - V= 0 & \text{ in }\Omega\\
	\displaystyle \bmH\cdot \bmn=0    & \text{ on }\partial\Omega
\end{array}
\end{equation}
with the corresponding boundary value.

\subsection{The differential constitutive model}

We consider the coupled elasto--damage model defined by the following potentials

\begin{subequations}\label{FreModel}
\begin{align}
	&\displaystyle\psi(\varepsilonb,\,\beta,\,\nabla\beta)=
\displaystyle	R(\beta)\psi_0^{+}(\varepsilonb)+\psi_0^{-}(\varepsilonb)+
\displaystyle	\frac{g_c\ell}{2}|\nabla\beta|^2+I_{[0,\,1]}(\beta)\,,\label{FreModelFreeErg}\\
	&\displaystyle\phi(\dot\beta;\,\beta)=\frac{g_c}{\ell}\beta\dot\beta
		+I_{\mathbb{R}^+}(\dot\beta)\,, \label{FreModelDisPot}
\end{align}
\end{subequations}
where $I_{\mathbb{A}}$ is the indicator function of the set $\mathbb{A}$ and is defined by 
$I_{\mathbb{A}}(x)=0$ if $x\in\mathbb{A}$ and  $I_{\mathbb{A}}(x)=+\infty$ if
$x\not\in\mathbb{A}$. We use the notation  $\mathbb{R}^+=\{x\in\mathbb{R}:x\geq 0\}$ and denote by
$R(\beta)=g(\beta)+k$ the degradation function where
$g(\beta)$ is decreasing with $\beta$, convex and Lipschitz and such that $g(0)=1$
and $g(\beta)=1$ whereas $k$ is a small positive parameter that precludes complete damage by ensuring 
an artificial residual stiffness of a totally broken phase when $\beta=1$. 
The symbol $g_c$ is the fracture toughness whereas 
$\ell>0$ has the dimension of a length and controls the width of the transition zone of $\beta$. Such parameter 
identifies with the regularization parameter in the
variational model of fracture. Finally $\psi_0^{+}$ and $\psi_0^{-}$
are the `tensile' and `compressive' parts of the elastic strain energy density $\psi_0$
which can be derived, for instance, from the so-called volumetric-deviatoric splitting of the strain tensor 
$\varepsilonb$ \cite{AMM09,FR10,LR09} or the spectral decomposition of $\varepsilonb$ \cite{MWH10,MHW10},
though also other options for constructing $\psi_0^{+}$ and $\psi_0^{-}$ have been suggested in \cite{Li16,NYWH20}.
By \eqref{FreModelFreeErg}, it is assumed that only the positive part of the energy is degraded 
by the occurrence of damage, whereas the negative part remains unaffected by it.

We next show that this model is the same as the one proposed 
by \cite[Eq.~(61)]{MWH10} and \cite[Eq.~(38)]{MHW10} and corresponds to the \textsc{AT2} regularized formulation 
of fracture of \cite{BFM00} apart from the presence of the splitting of the free elastic
energy term \cite{AMM09,MMP16,DeG20}.

\begin{proposition}\label{Prop.MieheFre}
The differential constitutive model defined by the potentials \eqref{FreModel} is given by the state laws
\begin{equation}\label{FreStatLaw}
	\sigmab=\frac{\partial\psi}{\partial\varepsilonb}
\end{equation}
and the following evolution laws
\begin{equation}\label{MieheRateEq}
	\left|\begin{array}{l}
	\displaystyle \dot\beta\geq 0\\[1.5ex]
	\displaystyle -\frac{\partial\psi}{\partial\beta}-\frac{g_c}{\ell}(\beta-\ell^2\Delta\beta)\leq 0\\[1.5ex]
	\displaystyle \dot\beta\Big(-\frac{\partial\psi}{\partial\beta}-\frac{g_c}{\ell}(\beta-\ell^2\Delta\beta)\Big)= 0\,.
		\end{array}\right.
\end{equation}
\end{proposition}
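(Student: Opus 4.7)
The plan is to invoke the generalized standard material formalism underlying Frémond's theory: once the dual forces $\sigmab$, $V$ and $\bmH$ have been identified via the virtual-power principle, each is split into a non-dissipative contribution read off from $\psi$ and a dissipative contribution belonging to the subdifferential of $\phi$ with respect to the conjugate rate. Since $\psi$ does not depend on the rates and $\phi$ depends only on $\dot\beta$ (with $\beta$ as a state parameter), this recipe yields $\sigmab=\partial\psi/\partial\varepsilonb$, $\bmH=\partial\psi/\partial\nabla\beta$, and $V=\partial\psi/\partial\beta+V^d$ with $V^d\in\partial_{\dot\beta}\phi(\dot\beta;\beta)$. The state law \eqref{FreStatLaw} is therefore immediate.

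Next I would make each piece explicit from \eqref{FreModel}. From \eqref{FreModelFreeErg}, $\partial\psi/\partial\nabla\beta=g_c\ell\nabla\beta$, so that $\bmH=g_c\ell\nabla\beta$ and $\ddiv\bmH=g_c\ell\Delta\beta$; the homogeneous Neumann condition $\bmH\cdot\bmn=0$ in \eqref{Eq.2.MicroEq} is automatic from the definition of the admissible damage space $\mathcal{B}$. For the dissipative part, writing $\phi(\dot\beta;\beta)=(g_c/\ell)\beta\dot\beta+I_{\mathbb{R}^+}(\dot\beta)$, standard convex analysis reduces the inclusion $V^d\in\partial_{\dot\beta}\phi(\dot\beta;\beta)$ to the KKT-type complementarity system
\begin{equation*}
\dot\beta\geq 0,\qquad V^d-(g_c/\ell)\beta\leq 0,\qquad \bigl(V^d-(g_c/\ell)\beta\bigr)\dot\beta=0.
\end{equation*}

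Finally, I would combine the microforce balance $V=\ddiv\bmH=g_c\ell\Delta\beta$ from \eqref{Eq.2.MicroEq} with the decomposition $V=\partial\psi/\partial\beta+V^d$ to obtain $V^d=g_c\ell\Delta\beta-\partial\psi/\partial\beta$, and substitute this into the three conditions above. Regrouping the terms not involving $\psi$ as $-(g_c/\ell)(\beta-\ell^2\Delta\beta)$ reproduces \eqref{MieheRateEq} verbatim. The argument is thus essentially sign-and-symbol bookkeeping once the generalized standard material framework is in place; the only delicate point is the handling of the indicator $I_{[0,1]}(\beta)$ appearing in \eqref{FreModelFreeErg}, whose contribution to $\partial\psi/\partial\beta$ must be interpreted as a subdifferential. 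Since the proposition is stated with classical derivatives, \eqref{MieheRateEq} is to be read at interior states $\beta\in(0,1)$, with the boundary values $0$ and $1$ absorbed by the inequality through a natural enlargement of $\partial\psi/\partial\beta$ into the subdifferential of $I_{[0,1]}$.
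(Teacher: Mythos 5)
Your argument is correct and follows essentially the same route as the paper: both identify the non-dissipative parts of $\sigmab$, $\bmH$, $V$ through the state laws read off from $\psi$, place the dissipative residue $V^d$ in $\partial_{\dot\beta}\phi(\dot\beta;\beta)$, use the microforce balance $V=\ddiv\bmH=g_c\ell\Delta\beta$ to express $V^d=g_c\ell\Delta\beta-\partial\psi/\partial\beta$, and unpack the subdifferential of $I_{\mathbb{R}^+}$ into the complementarity conditions \eqref{MieheRateEq}. The only presentational differences are that the paper derives the non-dissipative/dissipative split (with $\sigmab^d=\bm{0}$, $\bmH^d=\bm{0}$) from the Clausius-Duhem inequality \eqref{FreCDineq} rather than invoking the generalized standard material framework as a given, and your closing remark on the subdifferential reading of $I_{[0,1]}(\beta)$ flags a genuine subtlety the paper defers until it explicitly drops that term in the incremental problem \eqref{Eq.IncConstModel}.
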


\begin{proof}
We start from the Clausius-Duhem inequality for isothermal processes
\begin{equation}\label{FreCDineq}
	\sigmab\colon\dot\varepsilonb+\bmH\cdot\nabla\dot\beta+V\dot\beta-\rho\dot\psi\geq 0\,,
\end{equation}
and make the following constitutive assumptions
\begin{equation}\label{ConstAsFre}
	\sigmab=\sigmab^{nd}+\sigmab^{d}\,,\quad V=V^{nd}+V^{d}\quad\text{and}\quad
	\bmH=\bmH^{nd}+\bmH^{d}
\end{equation}
which distinguish the components that are 
responsable of the dissipative and reversible mechanisms.
By replacing \eqref{ConstAsFre} into \eqref{FreCDineq} and by defining
\begin{equation}\label{FreStateLaws}
	\sigmab^{nd}=\frac{\partial\psi}{\partial\varepsilonb}\,,\quad
	V^{nd}=\frac{\partial\psi}{\partial\beta}\quad\text{and}\quad
	\bmH^{nd}=\frac{\partial\psi}{\partial(\nabla\beta)}
\end{equation}
the Clausius-Duhem inequality reduces to the following expression
\begin{equation}\label{RedCDFreIneq}
	\sigmab^d\colon\dot\varepsilonb+V^d\dot\beta+\bmH^d\cdot\nabla\dot\beta\geq 0\,.
\end{equation}
For our model we assume 
\begin{equation}\label{AsDispMech}
	\sigmab^d=\bm{0}\quad\text{and}\quad\bmH^{d}=\bm{0}\,,
\end{equation}
thus \eqref{RedCDFreIneq} becomes
\begin{equation}\label{RedCDFreIneqOurModel}
	V^d\dot\beta\geq 0\,.
\end{equation}
We can meet \eqref{RedCDFreIneqOurModel} by taking 
\begin{equation}\label{EvLawFreDis}
	V^d\in \partial_{\dot\beta}\phi(\dot\beta;\,\beta)
\end{equation}
given that the function \eqref{FreModelDisPot} is a dissipation potential. 
Now, by the constitutive assumptions \eqref{ConstAsFre} and \eqref{AsDispMech}, 
and given the expression \eqref{FreModelFreeErg} of $\psi$, we have that 
\[	
	\bmH=\bmH^{nd}=g_c\ell\nabla\beta\,, 
\]
which replaced in \eqref{Eq.2.MicroEq}, yields 
\[
	V=\ddiv\bmH=g_c\ell\Delta\beta\,.
\]
By accounting for the expression \eqref{FreStateLaws}
of $V^{nd}$, we have thus 
\begin{equation}\label{DisV}
	V^d=V-V^{nd}=g_c\ell\Delta\beta-\frac{\partial\psi}{\partial\beta}\,.
\end{equation}
By computing $\partial_{\dot\beta}\phi$ we obtain
\begin{equation}\label{SubFreDis}
	\partial_{\dot\beta}\phi(\dot\beta;\,\beta)=\frac{g_c}{\ell}\beta+\partial_{\dot\beta}I_{\mathbb{R}^+}(\dot\beta)\,,
\end{equation}
thus \eqref{EvLawFreDis} reads as
\begin{equation}\label{EvLawFre}
	g_c\ell\Delta\beta-\frac{\partial\psi}{\partial\beta}-\frac{g_c}{\ell}\beta
	\in\partial_{\dot\beta}I_{\mathbb{R}^+}(\dot\beta)\,,
\end{equation}
where we have accounted for \eqref{DisV} and \eqref{SubFreDis}. 
By the definition of subdifferential of the indicator function \cite[Appendix A.1.3]{Fre02}, 
we have that \eqref{EvLawFre} means 
\begin{equation}
	\begin{array}{ll}
\displaystyle -\frac{\partial\psi}{\partial\beta}-\frac{g_c}{\ell}(\beta-\ell^2\Delta\beta)=0&\text{ if }\dot\beta>0\,,	\\[1.5ex]
\displaystyle -\frac{\partial\psi}{\partial\beta}-\frac{g_c}{\ell}(\beta-\ell^2\Delta\beta)<0&\text{ if }\dot\beta=0\,,	\\[1.5ex]
\displaystyle \varnothing &\text{ if }\dot\beta<0\,,	
	\end{array}
\end{equation}
which can then be expressed in the form given by \eqref{MieheRateEq}.
\end{proof}

\begin{remark}
	The model defined by \eqref{FreStatLaw} and \eqref{MieheRateEq} is a generalized standard material in the meaning of \cite[page 37]{Ngu00} 
	and \cite[page 69]{BCCFB10}
	given that it can be defined by the two potentials, the free energy potential $\psi(\varepsilonb,\,\beta,\,\nabla\beta)$
	and the dissipation potential $\phi(\dot\beta,\,\beta)$ using \eqref{ConstAsFre}, \eqref{FreStateLaws}, \eqref{AsDispMech} and \eqref{EvLawFreDis}.
	Since for $\dot\beta\geq 0$, the dissipation potential $\phi$ is a gauge \cite{HR13}, the model is associative. 
	The free energy and the dissipation potential are the only ingredients we need to set up the energetic formulation, henceforth 
	to derive the incremental variational formulation. This will be shown in Section \ref{Sec.IncrMin}.
\end{remark}

If in place of the dissipation potential \eqref{FreModelDisPot} we take \cite{MR06,MRZ10,BMMS14,TLBMM18}
\begin{equation}\label{Eq:AT1DisPot}
	\phi(\dot\beta)=\frac{g_c}{\ell}\kappa\dot\beta+I_{\mathbb{R}^+}(\dot\beta)\,,
\end{equation}
with $\kappa>0$ a  phenomenological constant such that $\kappa g_c/\ell$ is an activation threshold 
that represents the 
specific energy dissipated by fully damaging the bulk material, then by
the arguments of the proof of Proposition \ref{Prop.MieheFre}  we obtain the following evolution laws
\begin{equation}\label{MielkeRateEq}
	\left|\begin{array}{l}
	\displaystyle \dot\beta\geq 0\\[1.5ex]
	\displaystyle -\frac{\partial\psi}{\partial\beta}-\frac{g_c}{\ell}(\kappa-\ell^2\Delta\beta)\leq 0\\[1.5ex]
	\displaystyle \dot\beta\Big(-\frac{\partial\psi}{\partial\beta}-\frac{g_c}{\ell}(\kappa-\ell^2\Delta\beta)\Big)= 0\,.
		\end{array}\right.
\end{equation}
We will see later in Remark \ref{Rem:IncMinProAT}$(ii)$ that the dissipation potential \eqref{Eq:AT1DisPot} defines 
the so-called  \textsc{AT1} regularized formulation of fracture \cite{TLBMM18}. Also, it is not difficult
to verify that \eqref{MielkeRateEq} leads to the existence of an elastic stage before onset of damage, 
that is, as long 
as $-\text{d}R(\beta)/\text{d}\beta\,\psi^{+}_0(\varepsilonb)\leq \kappa g_c/\ell$, $\beta=0$ is solution
of \eqref{MielkeRateEq} whereas with the \textsc{AT2} regularized formulation, as soon as 
$\psi_0^{+}(\varepsilonb)\not=0$, the damage variable $\beta$ starts to evolve.


\subsection{The incremental boundary value problem}

Let $\mathcal{P}=\{0=t_0<t_1,\,\ldots,\,t_{N}=T\},\,N\in\mathbb{N}$ 
be a discrete set of time instants that realize a partition of 
the time interval of interest $[0,\,T]$ and $\Delta t=\max_{n=0,1,\ldots, N-1}\{t_{n+1}-t_n\}$.
Denote by $\bmz$ a kinematically admissible displacement field, that is, a 
displacement field that meets the Dirichlet boundary conditions at the current time $t$.
Given $(\bmz_{n},\,\beta_n)$ an approximation of the fields $\bmz$ and $\beta$ at the time instant $t_n$, 
we consider the incremental boundary value problem associated with the time step $[t_n,\,t_{n+1}]$ 
obtained by an Euler implicit time discretization of the constitutive equations \eqref{FreStatLaw} 
and \eqref{MieheRateEq}, and of the momentum balance equations \eqref{Eq.2.EquilEq} and \eqref{Eq.2.MicroEq}.
This problem consists of finding $(\bmz_{n+1},\,\beta_{n+1})$ 
such that the following relations are met

\begin{subequations}\label{Eq.IncConstModel}
	\begin{align}
		&-\ddiv\sigmab_{n+1}-\bmf_{n+1}=\bm{0}\quad\text{in }\Omega\,,	\label{Eq.IncConstModel.01}\\[1.5ex]
		&\varepsilonb_{n+1}=\nabla_s\bmz_{n+1},\quad\sigmab_{n+1}=\frac{\partial\psi}{\partial\varepsilonb}(\varepsilonb_{n+1},\beta_{n+1})\quad\text{in }\Omega\,, \label{Eq.IncConstModel.02}	\\[1.5ex]
		&\bmz_{n+1}=\bmu_{D,n+1}\text{ on }\partial\Omega_D\,,\quad\sigmab_{n+1}\bmn=\bmt_{n+1}\text{ on }\partial\Omega_N\,,	\label{Eq.IncConstModel.03}\\[1.5ex]
		&0\leq \beta_{n+1}\leq 1\quad\text{in }\Omega\,,	\label{Eq.IncConstModel5}\\[1.5ex]
		&\beta_{n+1}\geq\beta_n\quad\text{in }\Omega\,,	\label{Eq.IncConstModel4}\\[1.5ex]
		&-\frac{\partial\psi}{\partial\beta}(\varepsilonb_{n+1},\beta_{n+1})-\frac{g_c}{\ell}(\beta_{n+1}-\ell^2\Delta\beta_{n+1})\leq 0\quad\text{in }\Omega\,,  \label{Eq.IncConstModel6}\\[1.5ex]
		&(\beta_{n+1}-\beta_n)\Big(-\frac{\partial\psi}{\partial\beta}(\varepsilonb_{n+1},\beta_{n+1})-\frac{g_c}{\ell}(\beta_{n+1}-\ell^2\Delta\beta_{n+1})\Big)= 0\quad\text{in }\Omega\,,  \label{Eq.IncConstModel7}\\[1.5ex]
		&\nabla\beta_{n+1}\cdot\bmn=0\text{ on }\partial\Omega\quad\text{in }\Omega\,,\label{Eq.IncConstModel8}
	\end{align}
\end{subequations}
where $\psi$ is given by \eqref{FreModelFreeErg} 
without the indicator function $I_{[0,\,1]}(\beta)$ given that the constraint enforced by this function 
has been accounted explicitly by \eqref{Eq.IncConstModel5}. The condition \eqref{Eq.IncConstModel4}
is referred to as irreversibility condition and prevents material healing. 

\begin{remark}
For any given $\varepsilonb_{n+1}$, the model defined by \eqref{Eq.IncConstModel4}, \eqref{Eq.IncConstModel6} and \eqref{Eq.IncConstModel7}
is similar to the one that describes the deformation of a membrane over a linear elastic obstacle represented by 
$\beta_n$ and loaded by $\partial\psi/\partial\beta$. For instance, using the classical assumption for 
$g(\beta)$ in $R(\beta)$ due to Kachanov \cite{Kac58}, with $g(\beta)=1-\beta$,
we have $\partial\psi/\partial\beta=-\psi_0^+(\varepsilonb)$. For more 
general expressions of $R(\beta)$, such as the ones in \cite{LCK11,LOAP18}, $\partial\psi/\partial\beta$ has always a term
that depends only on $\varepsilonb$ and another one that depends also on $\beta$. The latter would then modify the 
bilinear form associated with $\beta g_c/\ell-g_c\ell\Delta\beta$. 
\end{remark}

If we denote by $\mathbb{C}$ the convex set of admissible solutions for $\beta$

\begin{equation}\label{Eq.DefAdmSet}
	\mathbb{C}=\bigg\{\beta\in \mathcal{B}:\beta(x)\in[0,\,1]\text{ a.e. in }\Omega
	\text{ and }\beta\geq\beta_n
	\text{ a.e. in }\Omega\bigg\}\,,
\end{equation}
and take $\bmz$ and $\beta$ as primary variables, we can consider the following weak formulation of \eqref{Eq.IncConstModel}:

\begin{subequations}\label{Eq.WeakForm}
	\begin{align}
		&\text{Find }(\bmz_{n+1},\,\beta_{n+1})\in\mathcal{V}_D\times\mathbb{C}:\nonumber\\[1.5ex]
		&\phantom{xxxxx}\int_{\Omega}\frac{\partial\psi}{\partial\varepsilonb}(\varepsilonb_{n+1},\,\beta_{n+1})\colon\varepsilonb(\bmv)\,\dx=
			\int_{\Omega}\bmf\cdot\bmv\,\dx+\int_{\partial\Omega_N}\bmt\cdot\bmv\,\ds
			\quad\text{for all }\bmv\in\mathcal{V}_{D,0}\,,	\label{Eq.WeakForm.01}\\[1.5ex]
		&\phantom{xxxxx}\int_{\Omega}\frac{\partial\psi}{\partial\beta}(\varepsilonb_{n+1},\beta_{n+1})(\gamma-\beta_{n+1})\,\dx
		+\int_{\Omega}\frac{g_c}{\ell}\beta_{n+1}(\gamma-\beta_{n+1})\,\dx \nonumber\\[1.5ex]
		&\phantom{xxxxxxxxxxxxxxxxxxxxxxxxxxx}+\int_{\Omega}g_c\ell\nabla\beta_{n+1}\nabla(\gamma-\beta_{n+1})\,\dx
			\geq 0\quad\text{for all }\gamma\in\mathbb{C}\,, \label{Eq.WeakForm.02}
	\end{align}
\end{subequations}
where $\varepsilonb_{n+1}=\nabla_s\bmz_{n+1}$.

\begin{remark}
Since $\psi$ is not convex, problem \eqref{Eq.WeakForm} is not ensured to have a unique solution. 
We will discuss this below with reference to the minimization formulation associated with \eqref{Eq.WeakForm}. 
\end{remark}

This observation justifies therefore the following notion.
\begin{definition}\label{Def.LocSol}
	We refer to any solution of \eqref{Eq.WeakForm} as a local solution of the model \eqref{FreModel}.
\end{definition}

\begin{proposition}
	If $(\bmz_{n+1},\,\beta_{n+1})\in\mathcal{V}_D\times\mathbb{C}$ solves \eqref{Eq.WeakForm}, then the field equations
	of \eqref{Eq.IncConstModel} are met a.e. in $\Omega$ and the boundary conditions are 
	met a.e. on the corresponding part of $\partial\Omega$.
\end{proposition}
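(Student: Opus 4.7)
The plan is to recover the strong-form system \eqref{Eq.IncConstModel} from the weak system \eqref{Eq.WeakForm} field-by-field, using integration by parts and the fundamental lemma of the calculus of variations for the equality statements, and a KKT-style argument for the variational inequality in $\beta$.

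First I would extract the linear momentum balance. Testing \eqref{Eq.WeakForm.01} with $\bmv\in C^\infty_c(\Omega;\mathbb{R}^n)\subset\mathcal{V}_{D,0}$ and applying Gauss--Green gives $\int_\Omega(\ddiv\sigmab_{n+1}+\bmf_{n+1})\cdot\bmv\,\dx=0$, and the fundamental lemma yields \eqref{Eq.IncConstModel.01} a.e.~in $\Omega$. Substituting this back into \eqref{Eq.WeakForm.01} for arbitrary $\bmv\in\mathcal{V}_{D,0}$ leaves only the surface integral $\int_{\partial\Omega_N}(\sigmab_{n+1}\bmn-\bmt_{n+1})\cdot\bmv\,\ds=0$, which by density of admissible traces produces the Neumann condition in \eqref{Eq.IncConstModel.03} a.e.~on $\partial\Omega_N$. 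The constitutive law \eqref{Eq.IncConstModel.02} and the Dirichlet condition in \eqref{Eq.IncConstModel.03} are built in through the definition of $\sigmab_{n+1}$ in \eqref{Eq.WeakForm.01} and through $\bmz_{n+1}\in\mathcal{V}_D$.

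For the damage subproblem, the membership $\beta_{n+1}\in\mathbb{C}\subset\mathcal{B}$ immediately gives the box constraint \eqref{Eq.IncConstModel5}, the irreversibility \eqref{Eq.IncConstModel4}, and the natural boundary condition \eqref{Eq.IncConstModel8}. To extract \eqref{Eq.IncConstModel6}, I would test \eqref{Eq.WeakForm.02} with $\gamma=\beta_{n+1}+\epsilon\phi$, where $\phi\geq 0$ is smooth and compactly supported in the open set $\{\beta_{n+1}<1\}$ and $\epsilon>0$ is small enough to keep $\gamma\in\mathbb{C}$. The gradient term integrates by parts with vanishing boundary contribution (thanks to $\nabla\beta_{n+1}\cdot\bmn=0$ on $\partial\Omega$); dividing by $\epsilon$ and sending $\epsilon\to 0^+$ yields
\begin{equation*}
\int_\Omega\left(\frac{\partial\psi}{\partial\beta}+\frac{g_c}{\ell}(\beta_{n+1}-\ell^2\Delta\beta_{n+1})\right)\phi\,\dx\geq 0,
\end{equation*}
so the fundamental lemma gives \eqref{Eq.IncConstModel6} a.e.~on $\{\beta_{n+1}<1\}$; the active set $\{\beta_{n+1}=1\}$ is handled by a standard cutoff-limit argument.

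Finally, to obtain the complementarity condition \eqref{Eq.IncConstModel7} I would use the admissible choice $\gamma=\beta_n\in\mathbb{C}$, which after integration by parts delivers
\begin{equation*}
\int_\Omega\left(-\frac{\partial\psi}{\partial\beta}-\frac{g_c}{\ell}(\beta_{n+1}-\ell^2\Delta\beta_{n+1})\right)(\beta_{n+1}-\beta_n)\,\dx\geq 0,
\end{equation*}
while multiplying the pointwise inequality \eqref{Eq.IncConstModel6} by the non-negative factor $(\beta_{n+1}-\beta_n)$ and integrating produces the reverse $\leq 0$. The two bounds force the sign-definite integrand to vanish a.e., which is precisely \eqref{Eq.IncConstModel7}. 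The main technical obstacle I expect is the careful handling of the active sets $\{\beta_{n+1}=\beta_n\}$ and $\{\beta_{n+1}=1\}$ when constructing admissible one-sided perturbations, since cutoffs are needed to keep the test field inside $\mathbb{C}$; once these are in place the rest reduces to a routine application of Gauss--Green and the fundamental lemma.
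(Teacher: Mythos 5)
Your proof follows the paper's outline for the momentum balance, the box and irreversibility constraints, the natural boundary condition, and the derivation of \eqref{Eq.IncConstModel6} by one-sided perturbations, so those parts are essentially the same. Where you genuinely diverge is the derivation of the complementarity condition \eqref{Eq.IncConstModel7}. The paper further assumes $\beta_{n+1},\beta_n\in C^0(\Omega)$, introduces the open set $\Omega'=\{\beta_{n+1}>\beta_n\}$, and tests \eqref{Eq.WeakForm.02} with two-sided perturbations $\gamma=\beta_{n+1}\pm\epsilon\varphi$, $\varphi\in\mathcal{D}(\Omega')$, to get an equality on $\Omega'$; the continuity hypothesis is needed so that $\Omega'$ is open and $\epsilon$ can be chosen uniformly on the support of $\varphi$. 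You instead test once with $\gamma=\beta_n\in\mathbb{C}$, integrate by parts, and combine the resulting integral bound
\[
\int_\Omega\Big(-\frac{\partial\psi}{\partial\beta}-\frac{g_c}{\ell}(\beta_{n+1}-\ell^2\Delta\beta_{n+1})\Big)(\beta_{n+1}-\beta_n)\,\dx\geq 0
\]
with the pointwise sign information from \eqref{Eq.IncConstModel6} and $\beta_{n+1}\geq\beta_n$, which forces the sign-definite integrand to vanish a.e. This is a cleaner KKT-style complementarity argument: it is more self-contained, avoids the extra $C^0$ regularity hypothesis, and does not need to identify a coincidence set pointwise. What the paper's route buys in exchange is a direct, local statement of the Euler--Lagrange equation on $\Omega'$ without first needing the full-strength \eqref{Eq.IncConstModel6} in hand. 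One caveat applies to both your argument and the paper's, and is worth being explicit about: the admissible one-sided perturbations used to derive \eqref{Eq.IncConstModel6} must also respect $\beta_{n+1}+\epsilon\varphi\leq 1$, so that inequality is in fact only obtained on $\{\beta_{n+1}<1\}$; on the fully-damaged set $\{\beta_{n+1}=1\}$ a multiplier for the upper-bound constraint would in principle enter. You flag this (``the active set $\{\beta_{n+1}=1\}$ is handled by a standard cutoff-limit argument''), but since your proof of \eqref{Eq.IncConstModel7} multiplies \eqref{Eq.IncConstModel6} against $(\beta_{n+1}-\beta_n)$, the cutoff step is not merely cosmetic and should be spelled out (or the statement restricted to $\{\beta_{n+1}<1\}$, which is the physically relevant set anyway). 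The paper's proof has the same lacuna since it never verifies $\beta_{n+1}+\epsilon\varphi\leq 1$ on $\Omega'$.
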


\begin{proof}
	Let $(\bmz_{n+1},\,\beta_{n+1})\in\mathcal{V}_D\times\mathbb{C}$ be a solution of \eqref{Eq.WeakForm} and denote by 
	$\mathcal{D}(\Omega)$ the space of the infinitely differentiable functions compactly supported in $\Omega$ i.e., for 
	$\varphi\in\mathcal{D}(\Omega)$, let $S=\{x\in\Omega:\varphi(x)\not =0\}$, then the closure of $S$
	is bounded and contained in $\Omega$.
	By standard arguments based on the Gauss Green theorem and the properties of the space $\mathcal{D}(\Omega)$,
	from \eqref{Eq.WeakForm.01} we derive that $(\bmz_{n+1},\,\beta_{n+1})$ meets \eqref{Eq.IncConstModel.01}, 
	\eqref{Eq.IncConstModel.02} and \eqref{Eq.IncConstModel.03}. Conditions \eqref{Eq.IncConstModel5},
	\eqref{Eq.IncConstModel4} and \eqref{Eq.IncConstModel8}
	are also met given that they are enforced by the definition of $\mathbb{C}$. Now for any $\varphi\in\mathcal{D}(\Omega)$
	such that $\varphi\geq 0$ and $0\leq \beta_{n+1}+\varphi\leq 1$, $\gamma=\varphi+\beta_{n+1}\in\mathbb{C}$. 
	Thus, from \eqref{Eq.WeakForm.02}
	we obtain
	\begin{equation}\label{Eq.PosDistr}
		\int_{\Omega}\left(\frac{\partial\psi}{\partial\beta}+\frac{g_c}{\ell}\beta-g_c\ell\Delta\beta\right)\varphi\,dx\geq 0\,.
	\end{equation}
	Since \eqref{Eq.PosDistr} holds for any $\varphi\geq 0$ meeting the above conditions, then there must hold
	\[
		\frac{\partial\psi}{\partial\beta}+\frac{g_c}{\ell}\beta-g_c\ell\Delta\beta\geq 0\quad\text{a.e. in }\Omega\,,
	\]
	which is \eqref{Eq.IncConstModel6}. To prove \eqref{Eq.IncConstModel7}, for simplicity, 
	we make the further assumption that $\beta_{n+1},\,\beta_n\in C^0(\Omega)$. In this case, then, if we let 
	$\Omega'=\{x\in\Omega:\beta_{n+1}(x)>\beta_n(x)\}$, $\Omega'$ is Lebesgue measurable and has positive measure 
	and, therefore, we can consider the space $\mathcal{D}(\Omega')$. By the introduction of the set $\Omega'$,
	condition \eqref{Eq.IncConstModel7} can also be stated as 
	\[
		-\frac{\partial\psi}{\partial\beta}-\frac{g_c}{\ell}\beta_{n+1}+g_c\ell\Delta\beta_{n+1}= 0\quad\text{in }\Omega'
	\]
	whose weak form is given by
	\begin{equation}\label{Eq.WeakForm.CoincSet}
		\int_{\Omega}\left(
			\frac{\partial\psi}{\partial\beta}+\frac{g_c}{\ell}\beta_{n+1}\varphi+g_c\ell\nabla\beta_{n+1}\nabla\varphi
		\right)\,dx=0
		\quad\forall\varphi\in \mathcal{D}(\Omega')\,, 
	\end{equation}
	where we have used the fact that for $\varphi\in \mathcal{D}(\Omega')$, $\varphi(x)=0$ for $x\in\Omega\setminus\Omega'$.
	Therefore, next we need to show that we can derive \eqref{Eq.WeakForm.CoincSet} starting from \eqref{Eq.WeakForm.02}.
	For any $\varphi\in\mathcal{D}(\Omega')$, we can choose $\epsilon>0$ such that 
	$\gamma=\beta_{n+1}+\epsilon\varphi\in\mathbb{C}$. For instance, take $\epsilon>0$ such that $\epsilon<m/M$ 
	where $m=\min_{S}(\beta_{n+1}(x)-\beta_{n}(x))$
	with $S$ the support of $\varphi$, and $M=\max|\varphi|$. In this case, then 
	\[
		-\epsilon\varphi(x)\leq \epsilon|\varphi(x)|\leq\epsilon M<m\leq (\beta_{n+1}(x)-\beta_{n}(x))\,,
	\]
	thus
	\[
		\gamma(x)=\beta_{n+1}(x)+\epsilon\varphi(x)> \beta_{n}(x)\,.
	\]
	With such test function in \eqref{Eq.WeakForm.02}, we obtain
	\begin{equation}\label{Eq.Ineq1}
		\int_{\Omega}\left(
		\frac{\partial\psi}{\partial\beta}+\frac{g_c}{\ell}\beta_{n+1}\varphi+g_c\ell\nabla\beta_{n+1}\nabla\varphi
		\right)\,dx\geq 0\,.
	\end{equation}
	Since \eqref{Eq.Ineq1} holds for any $\varphi\in\mathcal{D}(\Omega')$, then it must hold even if we take $-\varphi$, 
	which gives the opposite inequality
	\begin{equation}\label{Eq.Ineq2}
		\int_{\Omega}\left(
		\frac{\partial\psi}{\partial\beta}+\frac{g_c}{\ell}\beta_{n+1}\varphi+g_c\ell\nabla\beta_{n+1}\nabla\varphi
		\right)\,dx\leq 0\,.
	\end{equation}
	By comparing \eqref{Eq.Ineq1} and \eqref{Eq.Ineq2}, we finally conclude \eqref{Eq.WeakForm.CoincSet}.
\end{proof}

\setcounter{equation}{0}
\section{Energetic formulation}\label{Sec.EnergForm}

In this section we present the continuos and incremental energetic formulation associated with 
the differential model \eqref{Eq.IncConstModel}.


\subsection{Continuous formulation}

The energetic theory developed by \cite{MR15} applies to standard generalized models that are rate-independent.  
The state and evolution laws of such material models are defined  
in terms of  
only two potentials $\psi$ and $\phi$ \cite{BCCFB10,LC98,Ngu00} with $\phi$ non-negative, convex and positively homogeneous
with respect to the rate variables. According to this theory, the governing equations 
can be concordingly described in terms of the stored energy functional $\mathcal{E}$ and the 
dissipation distance $\mathcal{D}$.

The stored energy functional $\mathcal{E}:[0,\,T]\times\mathcal{V}_D\times\mathcal{B}\rightarrow \mathbb{R}\cup\{\infty\}$  
is defined by 

\begin{equation}\label{Eq.2.EF1}
	\mathcal{E}(t,\bmz,\beta)=\int_{\Omega}\psi(\varepsilonb,\,\beta)\,\dx-
\langle\ell(t),\,\bmz\rangle
\end{equation}
where $\varepsilonb=\nabla_s\bmz$ and the pairing
$\langle\cdot,\cdot\rangle$ is the linear form modelling the work of the external time-dependent 
loading  given by

\begin{equation}\label{Eq.2.EF2}
	\langle\ell(t),\,\bmz\rangle=\int_{\Omega}\bmf(\bmx,t)\cdot\bmz(\bmx,t)\,\dx+
		\int_{\partial\Omega_N}\bmt(s,t)\cdot\bmz(s,t)\,\ds\,.
\end{equation}
The dissipation distance $\mathcal{D}:\mathcal{B}\times\mathcal{B}\rightarrow \mathbb{R}^+\cup\{\infty\}$ is given by

\begin{equation}\label{Eq.DissDist}
	\mathcal{D}(\beta_0,\,\beta_1)=\inf_{\beta\in\mathcal{B}}\left\{
	\int_0^1\mathcal{R}(\beta(s),\,\dot\beta(s))\,\ds:\,\beta(0)=\beta_0,\,\beta(1)=\beta_1
	\right\}\,,
\end{equation}
where $\mathcal{R}:\mathcal{B}\times\mathcal{B}\rightarrow \mathbb{R}\cup\{\infty\}$ is referred to as
the dissipation functional and is related to the dissipation potential via

\begin{equation}\label{Eq.2.EF3}
	\mathcal{R}(\beta,\,\dot\beta)=
	\int_{\Omega}\phi(\beta,\,\dot\beta)\,\dx\,.
\end{equation}
\begin{remark}
	The functional $\mathcal{R}$ is non negative because of the definition \eqref{FreModelDisPot} of $\phi(\dot\beta,\,\beta)$.
\end{remark}
We refer to the triple $(\mathcal{V}_D\times\mathcal{B},\,\mathcal{E},\,\mathcal{D})$ 
as Energetic Rate-Independent System \cite{MR15} given that its specification defines
completely the evolution of the model in terms of two global energetic conditions: 
an energetic balance condition $(E)$ and a stability condition $(S)$.

\begin{definition}
	We say that for any $t\in[0,\,T]$, $\big(\bmz(\cdot,t),\beta(\cdot,t)\big)\in\mathcal{V}_D\times\mathcal{B}$ 
	is an energetic 
	solution of the system $(\mathcal{V}_D\times\mathcal{B},\,\mathcal{E},\,\mathcal{D})$ if for all $t\in[0,\,T]$, 
	the following two conditions are met

\begin{align}
	& \mathcal{E}(t,\bmz(\cdot,t),\beta(\cdot,t))+\mathcal{D}(\beta(\cdot,0),\,\beta(\cdot,t)) =
	  \mathcal{E}(0,\bmz(\cdot,0),\beta(\cdot,0))
		+\int_0^t \frac{\partial \mathcal{E}}{\partial \tau}(\tau,\,\bmz(\cdot,\,\tau),\,\beta(\cdot,\,\tau))\,d\tau\,,	
		\label{Eq.EnrgBal}\tag{E} \\
	& \forall (\tilde\bmz,\tilde\beta)\in\mathcal{V}_{D}\times\mathcal{B},\quad	
		\mathcal{E}(t,\bmz(\cdot,t),\beta(\cdot,t))\leq\mathcal{E}(t,\tilde\bmz,\tilde\beta)
		+\mathcal{D}(\tilde\beta,\,\beta(\cdot,t))\,.	
		\label{Eq.Stab}\tag{S} 
\end{align}
\end{definition}

For $\beta\in\mathcal{B}$ such that $\dot\beta\geq 0$ a.e. in $\Omega$ and for 
almost all $t\in[0,\,T]$, and $0\leq \beta\leq 1$ a.e. in $\Omega$, the energetic formulation defined by the constitutive potentials  
\eqref{FreModel} is therefore obtained by taking the following functionals

\begin{subequations}\label{EnrgFrmFreMie}
	\begin{align}
		&\mathcal{E}(t,\,\bmz,\,\beta)=\int_{\Omega}\left[R(\beta)
		\psi_0^{+}(\varepsilonb)+\psi_0^{-}(\varepsilonb)+
		\frac{g_c\ell}{2}|\nabla\beta|^2\right]\,\dx-\langle\ell(t),\,\bmz\rangle\,,  \label{EnrgFrmFreMieFree}\\[1.5ex]
		&\mathcal{R}(\beta,\,\dot\beta)=\int_{\Omega}\frac{g_c}{\ell}\beta\dot\beta\,\dx\,. \label{EnrgFrmFreMieR}
	\end{align}
\end{subequations}

Given the expression \eqref{EnrgFrmFreMieR} of $\mathcal{R}(\beta,\,\dot\beta)$, we have  

\[
	\int_0^1\frac{g_c}{\ell}\beta(s)\dot\beta(s)\,\text{d}s=\frac{g_c}{2\ell}\left(\beta^2_1-\beta^2_0\right)
\]
for any $\beta:s\in[0,1]\to\beta(s,\,\cdot)\in\mathcal{B}$ such that $\beta(0,\cdot)=\beta_0$
and $\beta(1,\cdot)=\beta_1$, thus the dissipation distance \eqref{Eq.DissDist}
becomes in our case a path-independent function that depends only on the initial and final 
state of the system, and for $\beta_1\geq\beta_0$ is given by

\begin{equation}\label{Eq:DissAT2}
	\mathcal{D}(\beta_0,\,\beta_1)=\int_{\Omega}\frac{g_c}{2\ell}\left(\beta_1^2-\beta_0^2\right)\,\dx\,.
\end{equation}

\begin{remark}
	For the relation of the energetic theory with the continuum theories developed by \cite{LA99,Fre02} for gradient damage models, 
	we refer to \cite{PMM11,Ngu16,LOAP18}.
\end{remark}
In the following, for any given $t\in[0,\,T]$, we will express any admissible displacement field $\bmz(\bmx,t)$ of $\mathcal{V}_D$ 
as the sum of a fixed element of $\mathcal{V}_{D}$, for instance the lifting $\bmu_D$ of 
$\bmw$, and elements $\bmu$ of $\mathcal{V}_{D,0}$, that is, we write

\[
	\bmz(\bmx,t)=\bmu_D(\bmx,t)+\bmu(\bmx,t)\,.
\]
As a result, when we describe the stored energy functional $\mathcal{E}$ we will also use the notation
$\mathcal{E}(t,\bmu,\beta)$ with $\bmu\in\mathcal{V}_{D,0}$ to mean that we are considering 
$\mathcal{E}(t,\bmu+\bmu_D,\beta)$ where $\bmu_D$ is a fixed lifting of the Dirichlet boundary condition.

\subsection{Incremental minimization problem}\label{Sec.IncrMin}
The time incremental minimization problems associated with the energetic rate independent system 
\eqref{Eq.2.EF1} and \eqref{Eq.DissDist} are given by
\begin{leftbar}
\begin{problem}\label{GlobOptMatModel}\mbox{}\\[1.5ex]
Let $\mathcal{P}=\{0=t_0,\,\ldots,\,t_{N}=T\},\,N\in\mathbb{N}$\\[1.5ex]
For $n=0,\ldots, N-1$\\[1.5ex]
Given $\begin{array}[t]{ll}
	\displaystyle \text{External loading: }&\begin{array}[t]{ll}
							\ell(t_{n+1})&\text{Neumann b.c.}\\[1.5ex]
							\bmu_{D,n+1}(\bmx)=\bmw(x,t_{n+1})\text{ on }\partial\Omega_D&\text{Dirichlet b.c.}
						\end{array}\\[3ex]
	\displaystyle \text{State of the system at $t_n$:}&\begin{array}[t]{l} 
								\beta_n\in\mathcal{B} 
							  \end{array} 
	\end{array}$\\[1.5ex]
Find $(\bmu_{n+1},\beta_{n+1})\in\mathcal{V}_{D,0}\times\mathcal{B}$ such that minimize

\begin{equation}\label{Eq.3.IM01}
	\mathcal{F}(t_{n+1},\bmu,\,\beta;\,\beta_n):=\mathcal{E}(t_{n+1},\bmu,\,\beta)+\mathcal{D}(\beta_n,\,\beta) 
\end{equation}
subject to

\begin{subequations}\label{Eq.3.IM01a}
	\begin{align}
		&0\leq \beta_{n+1} \leq 1\,, \label{Eq.3.IM01SB}\\[1.5ex]
		&\beta_{n+1}\geq \beta_n\,. \label{Eq.3.IM01IRR}
	\end{align}
\end{subequations}
\end{problem}
\end{leftbar}
with the functionals $\mathcal{E}$ and $\mathcal{D}$
given by \eqref{EnrgFrmFreMieFree} and \eqref{Eq:DissAT2}, respectively.

\begin{remark}\label{Rem:IncMinProAT}
\begin{itemize}
	\item[$(i)$]
	If the functional $\mathcal{F}(t_{n+1},\bmu,\,\beta)$ of Problem \ref{GlobOptMatModel} is augmented by the term
	$-\mathcal{E}(t_n,\,\bmu_n,\,\beta_n)$, the functionals $\mathcal{F}(t_{n+1},\bmu,\,\beta)$ and 
	$\mathcal{F}(t_{n+1},\bmu,\,\beta)-\mathcal{E}(t_n,\,\bmu_n,\,\beta_n)$ have clearly the same minimizers. 
	These minimizers have therefore the property to minimize the sum of the variation of the free elastic energy 
	and of the dissipation. We obtain in this manner the
	\textsc{AT2} regularized formulation of fracture considered by 
	\cite{AMM09,Mie11,MHW10,MWH10}.
	In those works, one starts from \eqref{Eq.IncConstModel} and looks for the existence of a functional such that its Euler-Lagrange 
	equations coincide with \eqref{Eq.IncConstModel}.
	\item[$(ii)$]
		By taking the dissipation potential $\phi$ as \eqref{Eq:AT1DisPot}, the dissipation distance $\mathcal{D}$ 
		\eqref{Eq.DissDist} also in this case is path-independent and is given by
		
		\[
			\mathcal{D}(\beta_0,\,\beta_1)=\int_{\Omega}\frac{\kappa g_c}{\ell}\left(\beta_1-\beta_0
			\right)\,\dx+I_{\mathbb{R}^{+}}(\beta_1-\beta_0)\,,
		\]
		thus, the functional $\mathcal{F}(t_{n+1},\bmu,\,\beta)$ that defines Problem \ref{GlobOptMatModel}
		with $\beta$ meeting the constraints \eqref{Eq.3.IM01SB} and \eqref{Eq.3.IM01IRR},
		is given by
		
		\begin{equation}\label{Eq:FuncAT1}
		\begin{split}
			\mathcal{F}(t_{n+1},\bmu,\,\beta)&=
				\int_{\Omega} [R(\beta)\psi^+_0(\varepsilonb(\bmu+\bmu_{D,t}))+
				\psi^-_0(\varepsilonb(\bmu+\bmu_{D,t}))]\,\dx	\\[1.5ex]
				&+\int_{\Omega}\frac{g_c\ell}{2}\nabla\beta\cdot\nabla\beta\,\dx
				-\langle\ell(t), \bmu+\bmu_{D,t}\rangle\\[1.5ex]
				&+\int_{\Omega}\frac{\kappa g_c}{\ell}\left(\beta-\beta_n
				\right)\,\dx\,.
		\end{split}
		\end{equation}
		The functional \eqref{Eq:FuncAT1} defines the \textsc{AT1} regularized formulation considered by \cite{MR06,MRZ10,BMMS14,TLBMM18}
		apart from the constant $\int_{\Omega}\frac{\kappa g_c}{\ell}\beta_n\,\dx$.
	\item[$(iii)$]
	The derivation of Problem \ref{GlobOptMatModel} from the energetic formulation relies basically on two theoretical considerations:
	One regards the solutions of Problem \ref{GlobOptMatModel} as approximation of the
	energetic solutions as $\Delta t\to 0$ for given $\ell>0$, and the other refers to the 
	energetic formulation as approximation of the variational formulation of fracture 
	as limit problem for $\ell\to 0$. The asymptotic behaviour of the functional \eqref{Eq.3.IM01}
	in the special case of the degradation function $g(\beta)=(1-\beta)^2$ has been analyzed in \cite{CCF18} where it 
	has been shown that as $\ell\to 0$ the family of functionals $\mathcal{F}_{\ell}(t_{n+1},\bmu,\,\beta;\,\beta_n)$ $\Gamma-$converges to the 
	functional given by the sum of the stored elastic energy in the bulk material and the Griffith surface energy. In this sense, 
	therefore, we can state that this result justifies Problem \ref{GlobOptMatModel} as a variational approximation of quasi-brittle fracture
	\cite{FM98}.
\end{itemize}
\end{remark}

\begin{remark}
\begin{itemize}
	\item[$(i)$] 
		The explicit dependence of $\mathcal{E}$ on $t$ is through the loading term $\ell(t)$ and the free 
		energy term that depends on $\bmu_{D,t}$.
	\item[$(ii)$] If we account for the irreversibility condition \eqref{Eq.3.IM01IRR} by redefining 
		$\mathcal{D}$ over $\mathcal{B}\times\mathcal{B}$ as
		\begin{equation}
			\mathcal{D}(\beta_1,\beta_2)=
			\int_{\Omega}\frac{g_c}{2\ell}(\beta_2^2-\beta_1^2)\,\dx+
			\int_{\Omega}I_{\mathbb{R}^+}(\beta_2-\beta_1)\,\dx\,,
		\end{equation}
		then it is not difficult to verify 
		that $\mathcal{D}$ is an extended quasidistance \cite{MR15}, that is, 
		it meets the following conditions

		\begin{equation*}
			\begin{array}{ll}
			\displaystyle \forall \beta_1,\,\beta_2,\,\beta_3\in\mathcal{B}:&\displaystyle\mathcal{D}(\beta_1,\,\beta_2)\geq 0\\[1.5ex]
			& \displaystyle\mathcal{D}(\beta_1,\,\beta_2)=0\Longleftrightarrow 
			\displaystyle \beta_1=\beta_2\quad\text{a.e. in }\Omega\,;			\\[1.5ex]
			&\displaystyle \mathcal{D}(\beta_1,\,\beta_2)\leq \mathcal{D}(\beta_1,\,\beta_3)+\mathcal{D}(\beta_3,\,\beta_2)\,.
			\end{array}
		\end{equation*}
	\end{itemize}
\end{remark}

The existence of minimizers of Problem \ref{GlobOptMatModel} can be established by standard compacteness 
arguments \cite{Dac08,BB92,JL98}.

\begin{proposition}\label{Prop.Exist}
	Problem \ref{GlobOptMatModel} admits at least a solution $(\bmu_{n+1},\,\beta_{n+1})\in\mathcal{V}_{D,0}\times\mathbb{C}$.
\end{proposition}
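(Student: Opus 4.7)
The plan is to apply the direct method of the calculus of variations. First I would verify that the functional $\mathcal{F}(t_{n+1},\bmu,\beta;\beta_n)$ is bounded below on the admissible set. Since $\psi_0^{\pm}\geq 0$, the gradient term is non-negative, and on the admissible set $\mathcal{D}(\beta_n,\beta)=\frac{g_c}{2\ell}\int_\Omega(\beta^2-\beta_n^2)\,\dx\geq 0$, the only potentially negative contribution is $-\langle\ell(t_{n+1}),\bmu+\bmu_{D,n+1}\rangle$. This is a linear functional in $\bmu$ and can be controlled by Cauchy--Schwarz and Young's inequality together with the coercive contribution of the elastic energy (see below), so a finite infimum $m$ exists.

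Next, I would pick a minimizing sequence $(\bmu_k,\beta_k)\in\mathcal{V}_{D,0}\times\mathbb{C}$ with $\mathcal{F}(t_{n+1},\bmu_k,\beta_k;\beta_n)\to m$ and establish uniform bounds. The residual stiffness $R(\beta)\geq k>0$ forces $R(\beta_k)\psi_0^{+}(\varepsilonb(\bmu_k+\bmu_{D,n+1}))+\psi_0^{-}(\varepsilonb(\bmu_k+\bmu_{D,n+1}))\geq k\psi_0(\varepsilonb(\bmu_k+\bmu_{D,n+1}))$, so by the coercivity of the elastic density in $\varepsilonb$ and Korn's inequality applied on $\mathcal{V}_{D,0}$ (where the homogeneous Dirichlet trace is enforced), I obtain a uniform $H^1(\Omega;\mathbb{R}^n)$ bound on $\bmu_k$ after absorbing the linear loading term. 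The admissibility constraint $0\leq\beta_k\leq 1$ provides an $L^\infty$ (hence $L^2$) bound on $\beta_k$, while the gradient term $\frac{g_c\ell}{2}\int_\Omega|\nabla\beta_k|^2\,\dx$ is controlled by the functional; combined, this yields a uniform $H^1(\Omega)$ bound on $\beta_k$. By reflexivity of $H^1$ and Rellich's compact embedding, I can then extract a subsequence (not relabeled) with $\bmu_k\rightharpoonup\bmu^{*}$ in $H^1$, $\beta_k\rightharpoonup\beta^{*}$ in $H^1$, $\beta_k\to\beta^{*}$ strongly in $L^p$ for every $p<\infty$, and $\beta_k\to\beta^{*}$ a.e.\ in $\Omega$.

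To close the argument I would verify that the limit $(\bmu^{*},\beta^{*})$ is admissible and realizes the minimum. Admissibility is preserved: the a.e.\ convergence together with $0\leq\beta_k\leq 1$ and $\beta_k\geq\beta_n$ forces the same inequalities on $\beta^{*}$, so $\beta^{*}\in\mathbb{C}$; the weak $H^1$ limit $\bmu^{*}$ lies in the closed subspace $\mathcal{V}_{D,0}$. For lower semicontinuity, the loading term $\langle\ell(t_{n+1}),\bmu_k+\bmu_{D,n+1}\rangle$ passes to its limit by weak continuity of continuous linear forms; the gradient term $\int_\Omega|\nabla\beta_k|^2\,\dx$ and the compressive part $\int_\Omega\psi_0^{-}(\varepsilonb(\bmu_k+\bmu_{D,n+1}))\,\dx$ are weakly lower semicontinuous on $H^1$ because $|\cdot|^2$ and $\psi_0^{-}$ are convex in $\nabla\beta$ and $\varepsilonb$ respectively; and the dissipation $\int_\Omega\frac{g_c}{2\ell}(\beta_k^2-\beta_n^2)\,\dx$ passes to the limit by the strong $L^2$ convergence of $\beta_k$.

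The main obstacle is the cross term $\int_\Omega R(\beta_k)\psi_0^{+}(\varepsilonb(\bmu_k+\bmu_{D,n+1}))\,\dx$, which is \emph{not} jointly convex in $(\bmu,\beta)$, so convexity-based lsc theorems do not apply directly. I would handle it by exploiting the product structure: since $R$ is continuous, $\beta_k\to\beta^{*}$ a.e., and $0\leq R(\beta_k)\leq R(0)+k$ uniformly, dominated convergence yields $R(\beta_k)\to R(\beta^{*})$ strongly in every $L^p(\Omega)$, $p<\infty$. Writing
\begin{equation*}
\int_\Omega R(\beta_k)\psi_0^{+}(\varepsilonb_k)\,\dx
= \int_\Omega R(\beta^{*})\psi_0^{+}(\varepsilonb_k)\,\dx
+ \int_\Omega\bigl(R(\beta_k)-R(\beta^{*})\bigr)\psi_0^{+}(\varepsilonb_k)\,\dx,
\end{equation*}
the second integrand tends to zero in $L^1$ because $\psi_0^{+}(\varepsilonb_k)$ is bounded in $L^1$ and $R(\beta_k)-R(\beta^{*})\to 0$ in $L^\infty$ (after passing to a further subsequence via Egorov, or by noting $R$ is Lipschitz and using strong $L^p$ convergence of $\beta_k$), while for the first term the weight $R(\beta^{*})\geq k>0$ is fixed and non-negative, so convexity of $\psi_0^{+}$ in $\varepsilonb$ gives weak $H^1$ lsc via Ioffe's theorem applied to the Carathéodory integrand $R(\beta^{*}(\bmx))\psi_0^{+}(\varepsilonb)$. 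Combining all lsc statements gives $\mathcal{F}(t_{n+1},\bmu^{*},\beta^{*};\beta_n)\leq m$, hence $(\bmu^{*},\beta^{*})$ is a minimizer.
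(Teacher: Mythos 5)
Your route is genuinely different from the paper's. The paper restricts attention to the sublevel set $\Sigma=\{(\bmu,\beta)\in\mathcal{V}_{D,0}\times\mathbb{C}:\mathcal{E}(t_{n+1},\bmu,\beta)\leq\mathcal{E}(t_{n+1},\bmu_n,\beta_n)\}$, cites \cite{TM10} for both the sequential weak compactness of $\Sigma$ and the weak sequential lower semicontinuity of $\mathcal{E}$, cites \cite{JL98} for that of $\mathcal{D}$, and concludes via the Weierstrass theorem. You instead unpack the compactness and lower semicontinuity by carrying out the direct method explicitly, which is a legitimate and more self-contained route — but there is a gap in your treatment of the cross term.

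Your decomposition of $\int_\Omega R(\beta_k)\psi_0^+(\varepsilonb_k)\,\dx$ does not close. You claim the remainder $\int_\Omega(R(\beta_k)-R(\beta^*))\psi_0^+(\varepsilonb_k)\,\dx$ tends to zero because $R(\beta_k)-R(\beta^*)\to 0$ in $L^\infty$; that convergence is false in general for $n\geq 2$, since neither a.e.\ convergence with a uniform bound nor strong $L^p$ convergence for $p<\infty$ yields $L^\infty$ convergence. The Egorov variant fails as well: you obtain uniform convergence on a set $A_\delta$ of almost-full measure, but to control the tail $\int_{\Omega\setminus A_\delta}|R(\beta_k)-R(\beta^*)|\,\psi_0^+(\varepsilonb_k)\,\dx$ you would need $\{\psi_0^+(\varepsilonb_k)\}$ to be uniformly integrable, and a sequence merely bounded in $L^1$ (which is all the quadratic growth and the $L^2$ bound on $\varepsilonb_k$ provide) need not be uniformly integrable — nothing rules out concentration of $|\varepsilonb_k|^2$. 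The Lipschitz-plus-strong-$L^p$ alternative meets the same obstacle, since Hölder pairing would require $\psi_0^+(\varepsilonb_k)$ to be bounded in $L^{p'}$ for some $p'>1$. The clean fix is to drop the decomposition entirely and apply the Ioffe-type lower-semicontinuity theorem you already invoke, but directly to the joint integrand $f(s,\xi)=R(s)\psi_0^+(\xi)$: it is non-negative, continuous in $s$, and convex in $\xi$ (the symmetric eigenvalue function underlying the spectral split is convex, so $\psi_0^\pm$ are convex); with $\beta_k\to\beta^*$ in measure and $\varepsilonb_k\rightharpoonup\varepsilonb^*$ weakly in $L^1$, one gets $\liminf_k\int_\Omega R(\beta_k)\psi_0^+(\varepsilonb_k)\,\dx\geq\int_\Omega R(\beta^*)\psi_0^+(\varepsilonb^*)\,\dx$ without any splitting.
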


\begin{proof}
	The set of solutions of Problem \ref{GlobOptMatModel} coincides with the set of minimizers of 
	$\mathcal{F}(t_{n+1},\bmu,\,\beta)$
	over the sublevel set of $\mathcal{E}(t_{n+1},\bmu,\beta)$ 
	with threshold $\mathcal{E}(t_{n+1},\bmu_n,\beta_n)$, that is,
	\[
		\Sigma=\{(\bmu,\beta)\in\mathcal{V}_{D,0}\times\mathbb{C}:\quad
		\mathcal{E}(t_{n+1},\bmu,\beta)\leq \mathcal{E}(t_{n+1},\bmu_n,\beta_n) \}
	\]
	which is not empty and sequentially compact \cite[Proposition 3.4]{TM10}, that is, 
	for any sequence $\{\left(\bmu_{\nu},\,\beta_{\nu}\right)\}_{\nu\in\mathbb{N}}$ of points of $\Sigma$,
	there exists a subsequence, which we keep on denoting by the same notation, which converges to 
	$(\bar\bmu,\,\bar\beta)$ with respect to the weak topology of $\mathcal{V}_{D,0}\times\mathcal{B}$	
	and $(\bar\bmu,\,\bar\beta)\in\Sigma$. Furthermore, by 
	\cite[Proposition 3.4]{TM10}, we have that $\mathcal{E}(t_{n+1},\bmu,\beta)$ is weakly sequentially lowersemicontinuous, 
	and by \cite[Lemma 4.3.1]{JL98} so is also the functional 
	$\beta\in\mathcal{B}\rightarrow\mathcal{D}(\beta_n,\,\beta)$,	
	that is, for any sequence $\{\left(\bmu_{\nu},\,\beta_{\nu}\right)\}$ of points of $\mathcal{V}_{D,0}\times\mathbb{C}$
	such that, up to a subsequence, weakly converges to a certain $(\bar\bmu,\,\bar\beta)$ 
	in $\mathcal{V}_{D,0}\times\mathbb{C}$, there holds
	\[
		\underset{\nu\rightarrow\infty}{\lim\inf}\,\mathcal{E}(t_{n+1},\,\bmu_{\nu},\,\beta_{\nu})
		\geq \mathcal{E}(t_{n+1},\,\bar\bmu,\,\bar\beta)\quad\text{and}\quad
		\underset{\nu\rightarrow\infty}{\lim\inf}\,\mathcal{D}(\beta_n,\,\beta_{\nu})
		\geq \mathcal{D}(\beta_n,\,\bar\beta)
	\]
	where $\lim\inf$ denotes the lower limit. We have therefore that also
	$\mathcal{F}(t_{n+1},\bmu,\beta)$ is weakly sequentially lowersemicontinuous. Thus,
	the application of the Weierstrass Theorem \cite[Theorem 1.1.2]{BB92} with the set $\Sigma$ and the 
	functional $\mathcal{F}(t_{n+1},\bmu,\beta)$
	 concludes the proof.
\end{proof}

\begin{remark}
\begin{itemize}
	\item[$(i)$] Uniqueness of minimizers is not guaranteed given that the functional \eqref{Eq.3.IM01} is not convex.
		Neither we can rule out the absence of local solutions in the sense of 
		Definition \ref{Def.LocSol}. The precise relation between Problem \ref{GlobOptMatModel}
		and \eqref{Eq.WeakForm} is given by the content of Proposition \ref{Prop.MinWeak} below.
	\item[$(ii)$] A similar argument to the proof of Proposition \ref{Prop.Exist} can also be applied to 
		establish the existence of minimizers for the minimization problems

		\begin{equation}
			\forall \bar\beta\in\mathbb{C}\,,\quad\min_{u\in\mathcal{V}_{D,0}}\,\mathcal{E}(t_{n+1},\,u,\,\bar\beta)
			\quad\text{and}\quad\forall \bar u\in\mathcal{V}_{D,0}\,,\quad
			\min_{\beta\in\mathbb{C}}\,\mathcal{E}(t_{n+1},\,\bar u,\,\beta)+\mathcal{D}(\beta_n,\,\beta)\,,
		\end{equation}
		which will be examined in Section \ref{Sec.AMMVF}.
\end{itemize}
\end{remark}

If we consider the Euler-Lagrange equations of the functional \eqref{Eq.3.IM01}, we obtain 
the weak form \eqref{Eq.WeakForm} of the incremental boundary value problem \eqref{Eq.IncConstModel}. This result justifies 
therefore Problem \ref{GlobOptMatModel} as a minimization formulation of the equations \eqref{Eq.IncConstModel}.
More precisely, we have the following result.

\begin{proposition}\label{Prop.MinWeak}
	If $(\bmu_{n+1},\,\beta_{n+1})$ solves Problem \ref{GlobOptMatModel}, then $(\bmu_{n+1},\,\beta_{n+1})$
	is a local solution of the model \eqref{FreModel}, that is, $(\bmu_{n+1},\,\beta_{n+1})$ solves \eqref{Eq.WeakForm}.
\end{proposition}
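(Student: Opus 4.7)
The plan is to derive the weak inequalities \eqref{Eq.WeakForm.01}--\eqref{Eq.WeakForm.02} from the first-order optimality conditions for the minimizer $(\bmu_{n+1},\beta_{n+1})$ of Problem \ref{GlobOptMatModel}. The two components of the statement correspond to variations in the two distinct types of admissible sets: $\bmu$ ranges over a linear space, which yields a standard Euler--Lagrange equality, whereas $\beta$ ranges over the convex admissible set $\mathbb{C}$, which yields a variational inequality because only one-sided perturbations are admissible.

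\textbf{Step 1 (variation in $\bmu$).} Since $\mathcal{V}_{D,0}$ is a linear space, for every $\bmv\in\mathcal{V}_{D,0}$ and every $s\in\mathbb{R}$ the pair $(\bmu_{n+1}+s\bmv,\beta_{n+1})$ is admissible. The functional $\mathcal{D}(\beta_n,\beta_{n+1})$ is independent of $\bmu$, so minimality forces $\frac{d}{ds}\mathcal{E}(t_{n+1},\bmu_{n+1}+s\bmv,\beta_{n+1})\big|_{s=0}=0$. Using the explicit form \eqref{EnrgFrmFreMieFree} together with $\varepsilonb(\bmu_{n+1}+\bmu_{D,n+1})=\varepsilonb_{n+1}$ and the chain rule yields \eqref{Eq.WeakForm.01}.

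\textbf{Step 2 (variation in $\beta$).} The set $\mathbb{C}$ defined by \eqref{Eq.DefAdmSet} is convex, as the intersection of the convex sets $\{0\le\beta\le 1\}$ and $\{\beta\ge\beta_n\}$. Hence for any $\gamma\in\mathbb{C}$ and $s\in[0,1]$ the path $\beta_s:=\beta_{n+1}+s(\gamma-\beta_{n+1})$ lies entirely in $\mathbb{C}$; in particular $\beta_s\ge\beta_n$ along the whole path, so the path-independent formula \eqref{Eq:DissAT2} gives $\mathcal{D}(\beta_n,\beta_s)=\int_\Omega\frac{g_c}{2\ell}(\beta_s^2-\beta_n^2)\dx$. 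Minimality then implies
\begin{equation*}
\left.\frac{d}{ds}\bigl[\mathcal{E}(t_{n+1},\bmu_{n+1},\beta_s)+\mathcal{D}(\beta_n,\beta_s)\bigr]\right|_{s=0^+}\ge 0,
\end{equation*}
since otherwise a small $s>0$ would strictly decrease $\mathcal{F}$. Differentiating each term, using $\partial_\beta\mathcal{D}(\beta_n,\beta)=\frac{g_c}{\ell}\beta$ and $\partial_{\nabla\beta}\mathcal{E}=g_c\ell\nabla\beta$, produces precisely the inequality \eqref{Eq.WeakForm.02}.

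\textbf{Step 3 (regularity of the derivatives).} What I need to check is that the formal differentiations in Steps 1 and 2 are actually justified. The bulk terms $R(\beta)\psi_0^+(\varepsilonb)+\psi_0^-(\varepsilonb)$ are $C^1$ in their arguments with growth controlled so that the integrand $\frac{d}{ds}[\,\cdot\,]$ is dominated by an $L^1$ function uniformly for small $s$; the gradient term is quadratic in $\nabla\beta$ and the dissipation term is quadratic in $\beta$, both smooth; the loading $\langle\ell(t),\cdot\rangle$ is linear. Dominated convergence then lets me interchange differentiation and integration.

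\textbf{Main obstacle.} The routine calculation is not difficult; the one subtle point is the asymmetry between Steps 1 and 2, i.e.\ remembering that one obtains an equality for $\bmu$ but only a one-sided inequality for $\beta$ because perturbations $\gamma-\beta_{n+1}$ need not have a negative counterpart in $\mathbb{C}$. Related to this, one must verify that convex combinations with test functions $\gamma\in\mathbb{C}$ remain admissible for $s\in[0,1]$, so that the path-independent form \eqref{Eq:DissAT2} of $\mathcal{D}$ is available along the entire path and the differentiation of $\mathcal{D}(\beta_n,\beta_s)$ produces the clean term $\int_\Omega\frac{g_c}{\ell}\beta_{n+1}(\gamma-\beta_{n+1})\dx$ rather than something involving the subdifferential of an indicator.
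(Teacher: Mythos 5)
Your proposal is correct and follows essentially the same route as the paper: both derive the Euler--Lagrange equality \eqref{Eq.WeakForm.01} from variations over the linear space $\mathcal{V}_{D,0}$, and the variational inequality \eqref{Eq.WeakForm.02} from one-sided variations over the convex set $\mathbb{C}$, then compute the Gâteaux derivatives of $\mathcal{E}$ and $\mathcal{D}$ explicitly (cf.\ \eqref{Eq.StatCnd} and \eqref{Eq.Gateaux}). Your Step~3 on dominated convergence is a justification the paper leaves implicit, but it does not change the argument.
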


\begin{proof}
Let $(\bmu_{n+1},\,\beta_{n+1})$ be a solution of Problem \ref{GlobOptMatModel}. Then 
$(\bmu_{n+1},\,\beta_{n+1})$ is a solution of the following stationariety conditions 

\begin{subequations}\label{Eq.StatCnd}
\begin{align}
	&D\mathcal{E}(t_{n+1},\bmu,\beta)[\bmv]+D\mathcal{D}(\beta_n,\,\beta)[\bmv]=0
		\quad\text{for all }\bmv\in\mathcal{V}_{D,0}	\label{Eq.GateauxDerE}	\\[1.5ex]
	&D\mathcal{E}(t_{n+1},\bmu,\beta)[\gamma-\beta]+D\mathcal{D}(\beta_n,\,\beta)[\gamma-\beta]\geq 0
		\quad\text{for all }\gamma\in\mathbb{C}\,, \label{Eq.GateauxDerDiss}
\end{align}
\end{subequations}
where $D\mathcal{F}(t_{n+1},\bmu,\beta)[\bmv]$ and $D\mathcal{F}(t_{n+1},\bmu,\beta)[\gamma]$
are the Gateaux derivatives of the functional $\mathcal{F}(t_{n+1},\bmu,\beta)$ with respect to $\bmu$ and $\beta$. 
Condition \eqref{Eq.GateauxDerDiss} is the variational inequality corresponding to the stationariety condition of the functional
$\mathcal{F}(t_{n+1},\bmu,\beta)$ in the variable $\beta$ defined over the convex set $\mathbb{C}$. 
The expressions of the Gauteaux derivatives of the functionals \eqref{EnrgFrmFreMieFree} and \eqref{Eq:DissAT2}
with respect to $\bmu$ and $\beta$ are given as follows

\begin{equation}\label{Eq.Gateaux}
\begin{split}
	D\mathcal{E}(t_{n+1},\bmu,\beta)[\bmv]&=\left.\frac{d}{dh}\right|_{h=0}\mathcal{E}(t_{n+1},\bmu+h\bmv,\beta)
		\\
		&=\int_{\Omega}\big[R(\beta)\sigmab_0^+(\varepsilonb(\bmu))+
		\sigmab_0^-(\varepsilonb(\bmu))\big]\colon\varepsilonb(\bmv)\,\dx\\
		&+\int_{\Omega}\big[R(\beta)\sigmab_0^+(\varepsilonb(\bmu_{D,t}))+
		\sigmab_0^-(\varepsilonb(\bmu_{D,t}))\big]\colon\varepsilonb(\bmv)\,\dx-\langle\ell(t),\,\bmv\rangle  \\[1.5ex]
	D\mathcal{E}(t_{n+1},\bmu,\beta)[\gamma]&=\left.\frac{d}{dh}\right|_{h=0}\mathcal{E}(t_{n+1},\bmu,\beta+h\gamma)\\[1.5ex]
		&=\int_{\Omega}\frac{dR}{d\beta}\psi_0^+(\varepsilonb(\bmu+\bmu_{D,t}))\,\gamma\,\dx+
		\int_{\Omega}g_c\ell\nabla\beta\cdot\nabla\gamma\,\dx\\[1.5ex]
	D\mathcal{D}(\beta_n,\,\beta)[\bmv]&=0\\[1.5ex]
	D\mathcal{D}(\beta_n,\,\beta)[\gamma]&=\left.\frac{d}{dh}\right|_{t=0}\mathcal{D}(\beta_n,\,\beta+h\gamma)
		=\int_{\Omega}\frac{g_c}{\ell}\beta\gamma\,\dx\,,
\end{split}
\end{equation}
which, replaced into \eqref{Eq.StatCnd}, give \eqref{Eq.WeakForm}, 
that is the weak form of \eqref{Eq.IncConstModel}.
\end{proof}

In our succesive developments, we will consider only the 
case of applied Dirichlet boundary conditions, zero body force and zero traction forces.


\subsection{Energetic bounds}
The solutions of the incremental minimization problem enjoy additional properties which 
will be used to build the backtracking algorithm. In the proof of these additional properties, 
it is determinant to note the role played by $(\bmu_{n+1},\,\beta_{n+1})$ as global 
optimizers of Problem \ref{GlobOptMatModel}. 

\begin{proposition}
Let $(\bmu_{n+1},\,\beta_{n+1})\in\mathcal{V}_{D,0}\times \mathbb{C}$ be solution
of Problem \ref{GlobOptMatModel} for $n=0,\ldots,N-1$. Then the following estimates hold
\begin{itemize}
\item[$(i)$] A stability condition met by $(\bmu_{n+1},\,\beta_{n+1})$ in the sense that 

		\begin{equation}\label{Eq.3.Stab}
			\boxed{\mathcal{E}(t_{n+1},\bmu_{n+1},\,\beta_{n+1})\leq \mathcal{E}(t_{n+1},\tilde\bmu,\,\tilde\beta)
					+\mathcal{D}(\beta_{n},\tilde\beta)
					\quad\text{for all }(\tilde\bmu,\,\tilde\beta)\in\mathcal{V}_{D,0}\times\mathcal{C}
					}\,.
		\end{equation}  
\item[$(ii)$] The upper bound to  
	$\mathcal{E}(t_{n+1},\bmu_{n+1},\beta_{n+1})-\mathcal{E}(t_{n},,\bmu_{n},\beta_{n})+\mathcal{D}(\beta_{n},\,\beta_{n+1})$
	given by

		\begin{equation}\label{Eq.UpBnd}
		\boxed{
			\begin{aligned}
				&\mathcal{E}(t_{n+1},\bmu_{n+1},\beta_{n+1})-\mathcal{E}(t_{n},\bmu_{n},\beta_{n})
				+\mathcal{D}(\beta_{n},\beta_{n+1})\\[1.5ex]
				&\phantom{xxxxxxx}\leq \mathcal{E}(t_{n+1},\bmu_{n},\beta_{n})-\mathcal{E}(t_n,\bmu_{n},\beta_{n}):=UB_{n,n+1}\,.
			\end{aligned}
		}
	\end{equation}
\item[$(iii)$] The lower bound to $\mathcal{E}(t_{n+1},\bmu_{n+1},\beta_{n+1})-\mathcal{E}(t_{n},,\bmu_{n},\beta_{n})
		+\mathcal{D}(\beta_{n},\,\beta_{n+1})$ given by

	\begin{equation}\label{Eq.LwBnd}
		\boxed{
		\begin{aligned}
			&\mathcal{E}(t_{n+1},\bmu_{n+1},\beta_{n+1})-\mathcal{E}(t_{n},\bmu_{n},\beta_{n})
			+\mathcal{D}(\beta_{n},\beta_{n+1})\\[1.5ex]
			&\phantom{xxxxxxx}\geq \mathcal{E}(t_{n+1},\bmu_{n+1},\beta_{n+1})-\mathcal{E}(t_n,\bmu_{n+1},\beta_{n+1}):=LB_{n,n+1}\,.
		\end{aligned}
		}
	\end{equation}
\end{itemize}
\end{proposition}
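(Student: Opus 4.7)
The plan is to exploit the global minimization property of $(\bmu_{n+1},\beta_{n+1})$ for the functional $\mathcal{F}(t_{n+1},\bmu,\beta;\beta_n) = \mathcal{E}(t_{n+1},\bmu,\beta) + \mathcal{D}(\beta_n,\beta)$, combined with the path-independence of the dissipation distance $\mathcal{D}$ implied by the explicit quadratic expression \eqref{Eq:DissAT2}. Each of the three bounds reduces to testing the minimizer inequality against a judiciously chosen competitor.

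For (i), I would argue directly from global minimality. Since $(\bmu_{n+1},\beta_{n+1})$ minimizes $\mathcal{F}$ over $\mathcal{V}_{D,0}\times\mathbb{C}$, for any admissible $(\tilde\bmu,\tilde\beta)$ the inequality $\mathcal{E}(t_{n+1},\bmu_{n+1},\beta_{n+1}) + \mathcal{D}(\beta_n,\beta_{n+1}) \leq \mathcal{E}(t_{n+1},\tilde\bmu,\tilde\beta) + \mathcal{D}(\beta_n,\tilde\beta)$ holds, and dropping the non-negative term $\mathcal{D}(\beta_n,\beta_{n+1})$ on the left yields the stability claim.

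For (ii), the natural competitor is $(\tilde\bmu,\tilde\beta) = (\bmu_n,\beta_n)$ itself, which is admissible since $\beta_n \geq \beta_n$ trivially and $\mathcal{D}(\beta_n,\beta_n)=0$. Substituting into the minimizer inequality and subtracting $\mathcal{E}(t_n,\bmu_n,\beta_n)$ from both sides immediately gives $UB_{n,n+1}$. For (iii) I would instead exploit the minimization property at the \emph{previous} step: for $n\geq 1$, $(\bmu_n,\beta_n)$ minimizes $\mathcal{E}(t_n,\bmu,\beta) + \mathcal{D}(\beta_{n-1},\beta)$ over the admissible set at time $t_n$ (with the initial state $(\bmu_0,\beta_0)$ assumed stable in the analogous sense, which is standard in the energetic framework). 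Testing this inequality with $(\bmu_{n+1},\beta_{n+1})$, admissible because $\beta_{n+1}\geq\beta_n\geq\beta_{n-1}$, and using the additive splitting $\mathcal{D}(\beta_{n-1},\beta_{n+1}) = \mathcal{D}(\beta_{n-1},\beta_n) + \mathcal{D}(\beta_n,\beta_{n+1})$ (a direct algebraic consequence of \eqref{Eq:DissAT2} along the monotone chain), I get $\mathcal{E}(t_n,\bmu_n,\beta_n) \leq \mathcal{E}(t_n,\bmu_{n+1},\beta_{n+1}) + \mathcal{D}(\beta_n,\beta_{n+1})$; adding $\mathcal{E}(t_{n+1},\bmu_{n+1},\beta_{n+1})$ to both sides and rearranging produces $LB_{n,n+1}$.

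All three bounds are essentially one-line rearrangements of the defining minimization inequalities, so I do not expect a serious obstacle. The only delicate point is conceptual: (iii) genuinely requires the minimization at the \emph{preceding} step (or, for $n=0$, an initial stability assumption on $(\bmu_0,\beta_0)$), and it requires the exact path-additivity of $\mathcal{D}$, which in turn relies on the monotonicity $\beta_{n-1}\leq\beta_n\leq\beta_{n+1}$ enforced by the irreversibility constraint \eqref{Eq.IncConstModel4}. The symmetry between (ii) and (iii), obtained by swapping the roles of the two consecutive minimizers when testing, is precisely what produces the two-sided energetic inequality that will drive the backtracking algorithm.
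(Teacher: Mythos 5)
Your argument is correct and is essentially the paper's own proof: part $(i)$ drops the non-negative dissipation term from the global-minimality inequality, part $(ii)$ tests with $(\bmu_n,\beta_n)$, and part $(iii)$ tests the minimality of $(\bmu_n,\beta_n)$ at the previous step with competitor $(\bmu_{n+1},\beta_{n+1})$ and uses the path-additivity $\mathcal{D}(\beta_{n-1},\beta_{n+1})=\mathcal{D}(\beta_{n-1},\beta_n)+\mathcal{D}(\beta_n,\beta_{n+1})$ coming from \eqref{Eq:DissAT2} along the monotone chain. Your remark that $(iii)$ for $n=0$ requires a stability hypothesis on the initial state $(\bmu_0,\beta_0)$ is a genuine point the paper leaves implicit.
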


\begin{proof} 
	\textit{Part (i)}: Since $\mathcal{D}(\beta_{n},\,\beta_{n+1})\geq 0$ and from the definition of 
		$(\bmu_{n+1},\beta_{n+1})\in\mathcal{V}_{D,0}\times\mathbb{C}$ we have that

		\begin{equation}\label{Eq.GlobMin}
		\begin{split}
			\mathcal{E}(t_{n+1},\,\bmu_{n+1},\,\beta_{n+1})&\leq
			\mathcal{E}(t_{n+1},\,\bmu_{n+1},\,\beta_{n+1})+\mathcal{D}(\beta_{n},\,\beta_{n+1})\\[1.5ex]
			&\leq
			\mathcal{E}(t_{n+1},\,\tilde\bmu,\,\tilde\beta)+\mathcal{D}(\beta_{n},\,\tilde\beta)\quad\text{for any }
			\tilde\bmu\in\mathcal{V}_{D,0}\text{ and }\tilde\beta\in\mathbb{C}\,,
		\end{split}
		\end{equation}
		which is \eqref{Eq.3.Stab}.\\[1.5ex]
	\textit{Part (ii)}: 
		Using  the definition of $(\bmu_{n+1},\beta_{n+1})\in\mathcal{V}_{D,0}\times\mathbb{C}$ 
		with $\tilde\bmu=\bmu_n$ and $\tilde\beta=\beta_n$, then it is 

		\begin{equation*}
			\mathcal{E}(t_{n+1},\,\bmu_{n+1},\,\beta_{n+1})+\mathcal{D}(\beta_{n},\,\beta_{n+1})\leq
			\mathcal{E}(t_{n+1},\,\bmu_{n},\,\beta_{n})\,.
		\end{equation*}
		given that $\mathcal{D}(\beta_{n},\,\beta_{n})=0$, thus by adding $-\mathcal{E}(t_{n},\,\bmu_{n},\,\beta_{n})$ 
		to both sides, we get \eqref{Eq.UpBnd}.\\[1.5ex]
	\textit{Part (iii)}: We start from the quantity that we want to bound to which we add and subtract $\mathcal{E}(t_{n},\,\bmu_{n+1},\,\beta_{n+1})$.
	This gives

	\begin{equation}\label{Eq.LwBnd.01}
		\begin{split}
		    &\mathcal{E}(t_{n+1},\,\bmu_{n+1},\,\beta_{n+1})-\mathcal{E}(t_{n},\,\bmu_{n},\,\beta_{n})
			+\mathcal{D}(\beta_{n},\,\beta_{n+1})=\mathcal{E}(t_{n+1},\,\bmu_{n+1},\,\beta_{n+1})
			-\mathcal{E}(t_{n},\,\bmu_{n+1},\,\beta_{n+1})\\[1.5ex]
		    &\phantom{xxxxxxxxxxxx}+\mathcal{E}(t_{n},\,\bmu_{n+1},\,\beta_{n+1})-\mathcal{E}(t_{n},\,\bmu_{n},\,\beta_{n})
			+\mathcal{D}(\beta_{n},\,\beta_{n+1})\,.
		\end{split}
	\end{equation}
	By the definition of $(\bmu_n,\beta_n)\in\mathcal{V}_{D,0}\times\mathbb{C}$, 

	\begin{equation}\label{Eq.GlobMinUN}
		\mathcal{E}(t_{n},\,\bmu_{n},\,\beta_{n})+\mathcal{D}(\beta_{n-1},\,\beta_{n})\leq
		\mathcal{E}(t_{n},\,\tilde\bmu,\,\tilde\beta)+\mathcal{D}(\beta_{n-1},\,\tilde\beta)\quad\text{for any }
		\tilde\bmu\in\mathcal{V}_{D,0}\text{ and }\tilde\beta\in\mathbb{C}\,,
	\end{equation}
	specialized for $\tilde\bmu=\bmu_{n+1}$ and $\tilde\beta=\beta_{n+1}$, gives

	\begin{equation}\label{Eq.GlobMinUN}
		\mathcal{E}(t_{n},\,\bmu_{n},\,\beta_{n})+\mathcal{D}(\beta_{n-1},\,\beta_{n})\leq
		\mathcal{E}(t_{n},\,\bmu_{n+1},\,\beta_{n+1})+\mathcal{D}(\beta_{n-1},\,\beta_{n+1})\,.
	\end{equation}
	From the expression \eqref{Eq:DissAT2} of $\mathcal{D}(\beta_1,\beta_2)$, we find

	\begin{equation}
		\mathcal{D}(\beta_{n-1},\,\beta_{n+1})-\mathcal{D}(\beta_{n-1},\,\beta_{n})
		=\frac{g_c}{2\ell}\int_{\Omega}(\beta_{n+1}^2-\beta_n^2)\,\dx
		=\mathcal{D}(\beta_{n},\,\beta_{n+1})\,,
	\end{equation}
	which used in \eqref{Eq.GlobMinUN} gives 

	\begin{equation}\label{Eq.GlobMinUN1}
		\mathcal{E}(t_{n},\,\bmu_{n+1},\,\beta_{n+1})-\mathcal{E}(t_{n},\,\bmu_{n},\,\beta_{n})
		+\mathcal{D}(\beta_{n},\,\beta_{n+1})\geq 0\,.
	\end{equation}
	By comparing \eqref{Eq.GlobMinUN1}  with \eqref{Eq.LwBnd.01} we get \eqref{Eq.LwBnd}.
\end{proof}

By taking into account for the expression \eqref{EnrgFrmFreMieFree},
the terms that appear in \eqref{Eq.UpBnd} and \eqref{Eq.LwBnd} have the following explicit expressions

\begin{subequations}
\begin{align}
	\mathcal{E}(t_{n+1},\bmu_{n+1},\beta_{n+1})&=
		\int_{\Omega}\bigg[R(\beta_{n+1})\psi_0^+(\varepsilonb(\bmu_{n+1}+\bmu_{D,n+1}))
			+\psi_0^-(\varepsilonb(\bmu_{n+1}+\bmu_{D,n+1}))\bigg] \,\dx	\nonumber \\[1.5ex]
		&+\frac{g_c\ell}{2}\int_{\Omega}|\nabla\beta_{n+1}|^2\,\dx\,,\\[1.5ex]
	\mathcal{E}(t_{n},\bmu_{n},\beta_{n})&=
		\int_{\Omega}\bigg[R(\beta_{n})\psi_0^+(\varepsilonb(\bmu_{n}+\bmu_{D,n}))
			+\psi_0^-(\varepsilonb(\bmu_{n}+\bmu_{D,n}))\bigg] \,\dx	\nonumber \\[1.5ex]
		&+\frac{g_c\ell}{2}\int_{\Omega}|\nabla\beta_{n}|^2\,\dx\,,\\[1.5ex]
	\mathcal{D}(\beta_{n},\beta_{n+1})&=\frac{g_c}{2\ell}\int_{\Omega}\left(\beta^2_{n+1}-\beta^2_{n}\right)\,\dx\,,
\end{align}
\end{subequations}
whereas

\begin{equation}\label{Eq.Bounds}
\begin{split}
	\mathcal{E}(t_{n+1},\bmu,\beta)-\mathcal{E}(t_n,\bmu,\beta)&=
			\int_{\Omega}R(\beta)\big[
				\psi_0^+(\varepsilonb(\bmu+\bmu_{D,n+1}))-\psi_0^+(\varepsilonb(\bmu+\bmu_{D,n}))
			\big]\,\dx\\[1.5ex]
	&+
			\int_{\Omega}\big[
				\psi_0^-(\varepsilonb(\bmu+\bmu_{D,n+1}))-\psi_0^-(\varepsilonb(\bmu+\bmu_{D,n}))
			\big]\,\dx\,, 
\end{split}
\end{equation}
which does not contain the term with $\nabla\beta$ that cancels out. 

\begin{remark}\label{Rem.3.SC}
	In the expression of $\mathcal{R}$ and $\mathcal{E}$, we have not taken into account 
	for the indicator functions $I_{\mathbb{R}^+}(\dot\beta)$ and $I_{[0,1]}(\beta)$, respectively, 
	given that the corresponding conditions
	on $\beta$ have been explicitly enforced as side conditions on the variable $\beta$.
\end{remark}


\section{Alternate minimization}\label{Sec.AMMVF}


The alternating minimization method consists in solving separately and sequentially the minimization of 
the functional
\[
	\mathcal{F}(t_{n+1},\bmu,\beta;\,\beta_n)=\mathcal{E}(t_{n+1},\bmu,\beta)+\mathcal{D}(\beta_n,\beta)
\]
with respect to the variables $\bmu$ and $\beta$ over the set
$\mathcal{V}_{D,0}$ and $\mathbb{C}$, respectively, where $\mathbb{C}$ is the convex set defined by \eqref{Eq.DefAdmSet}.
For each time step
$[t_{n},\,t_{n+1}]$, we produce, therefore, a sequence $(\bmu_{n+1}^i\,,\beta_{n+1}^i)_{i\in\mathbb{N}}$
where each term of the sequence is obtained by solving the
following minimization problems.  

\begin{subequations}\label{Sec.AMM}
	\begin{align}
		&\text{Set }\beta^0_{n+1}\in\mathbb{C},\,i=0		\nonumber\\[1.5ex]
		&\text{Find }\bmu_{n+1}^{i+1}\in\mathcal{V}_{D,0}\text{ such that }\text{minimize }
			\mathcal{F}(t_{n+1},\bmu,\,\beta^{i}_{n+1};\,\beta_n) \label{Sec.AMM.Disp}	\\[1.5ex]
		&\text{Find }\beta_{n+1}^{i+1}\in\mathbb{C}\text{ such that minimize }
			\mathcal{F}(t_{n+1},\bmu^{i+1}_{n+1},\,\beta;\,\beta_n)	\label{Sec.AMM.Damage}\\[1.5ex]
		&i\leftarrow i+1		\nonumber
	\end{align}
\end{subequations}

The alternating minimization method has, for instance, been used also in \cite{BFM00,BFM08,MR15,MRZ10,RKZ13,ZG10}.

\begin{remark}\label{Rem.AltMin}
	In \eqref{Sec.AMM}, $\beta_{n+1}^0$ represents the initial guess for $\beta$ to start the
	alternating minimization of $\mathcal{F}(t_{n+1},\bmu,\,\beta)$ over $\mathcal{V}_{D,0}\times\mathbb{C}$,
	whereas $\beta_n$ enters in the definition \eqref{Eq.DefAdmSet} of the convex set $\mathbb{C}$ of the admissible solutions. 
	In the standard application of \eqref{Sec.AMM}, we can take $\beta_{n+1}^0=\beta_n$ whereas in applying 
	the backtracking method described below, 
	we can also have $\beta_{n+1}^0\not=\beta_n$.
\end{remark}

The realization of the scheme \eqref{Sec.AMM} for finding solutions of Problem \ref{GlobOptMatModel} gives rise to the 
questions about the convergence of the scheme and the meaning of the corresponding limit in the case of convergence. 
In finite dimensional optimization, the scheme \ref{Sec.AMM} is known as block--coordinate descent method \cite{Ber16},
whose convergence cannot be given, in general, for granted, especially when dealing with nonsmooth optimization. 
A convergence analysis of \eqref{Sec.AMM} is reported in \cite{AN20,KN17} where it is shown that, up to a subsequence, 
as $i\to\infty$, we obtain a critical point of $\mathcal{F}(t_{n+1},\bmu,\beta;\,\beta_n)$.  
In this paper, and consistently with the numerical scheme which we will use to enforce the non-interpenetration condition,
we will analyse the convergence of a regularized formulation of the scheme \eqref{Sec.AMM} where
the convex constrained optimization \eqref{Sec.AMM.Damage} is solved by a penalization method 
with the introduction of a penalty function $\varphi$ defined over $\mathcal{B}$ which is convex and smooth and such that 
$\varphi(\gamma)\geq 0$ for any $\gamma\in\mathcal{B}$ and with the property that 
$\varphi(\gamma)= 0$ if and only if $\gamma\in\mathbb{C}$ \cite[page 321]{Cia89}. More specifically,
we will enforce through penalty only the irreversibility
constraint whereas the simple bounds \eqref{Eq.3.IM01SB} on the variable $\beta$ are taking into account
in the scheme itself. We therefore replace \eqref{Sec.AMM.Damage} with the following unconstrained minimization problem.
\begin{equation}\label{Sec.AMM.Penalty}
	\text{Assume }\epsilon>0.\text{ Find }\beta_{n+1}^{i+1}\in\mathcal{B}\text{ such that minimize }
			\mathcal{F}(t_{n+1},\bmu^{i+1}_{n+1},\,\beta;\,\beta_n)+\frac{1}{\epsilon}\varphi(\beta)\,.
\end{equation}
As penalty function $\varphi$ we take 
\[
	\varphi(\beta-\beta_n)=\int_{\Omega}[\beta-\beta_n]^2_{-}\,\dx\,, 
\]
where for $x\in\mathbb{R}$, $[x]_-=(x-|x|)/2$. Indeed, we have
\[
	\varphi(\beta-\beta_n)=\int_{\Omega}[\beta-\beta_n]^2_{-}\,\dx=0\Leftrightarrow
	[\beta-\beta_n]^2_{-}=0\Leftrightarrow[\beta-\beta_n]_{-}=0\Leftrightarrow \beta\geq\beta_n\,.
\]
The alternate minimization \eqref{Sec.AMM} is thus replaced by the following scheme

\begin{subequations}\label{Eq.AMMPen}
	\begin{align}
		&\text{Set }\beta^0_{n+1}\in\mathbb{C},\,i=0		\nonumber\\[1.5ex]
		&\text{Find }\bmu_{n+1}^{i+1}\in\mathcal{V}_{D,0}\text{ such that }\text{minimize }
			\mathcal{F}(t_{n+1},\bmu,\,\beta^{i}_{n+1};\,\beta_n) \label{Sec.AMMPen.Disp}	\\[1.5ex]
		&\text{Find }\beta_{n+1}^{i+1}\in\mathcal{B}\text{ such that minimize }
			\mathcal{F}(t_{n+1},\bmu^{i+1}_{n+1},\,\beta;\,\beta_n)+\frac{1}{\epsilon}\varphi(\beta)	\label{Sec.AMMPen.Damage}\\[1.5ex]
		&i\leftarrow i+1		\nonumber
	\end{align}
\end{subequations}

Since each of the minimizations \eqref{Eq.AMMPen} is an unconstrained convex smooth optimization problem, the corresponding 
optimality conditions, given by the Euler-Lagrange equations, are also minimality conditions and are given by the following 
variational formulation.

\begin{subequations}\label{AltMinVarForm}
	\begin{align}
		&\text{Let }\varepsilon>0.\text{ Set }\beta^0_{n+1}\in\mathbb{C},\,i=0	\nonumber\\[1.5ex]
		&\text{Find }\bmu_{n+1}^{i+1}\in\mathcal{V}_{D,0}\text{ such that}		\nonumber\\[1.5ex]
		&\phantom{xxxx} \int_{\Omega}\sigmab(\varepsilonb(\bmu_{n+1}^{i+1}),\beta_{n+1}^{i})\colon\varepsilonb(\bmv)\,\dx
			= -\int_{\Omega}\sigmab(\varepsilonb(\bmu_{D,n+1},\beta_{n+1}^{i})\colon\varepsilonb(\bmv)\,\dx\quad \text{ for all }\bmv\in\mathcal{V}_{D,0}\,.	\label{eq:3.IM22.disp}\\[1.5ex]
		&\text{Find }\beta_{n+1}^{i+1}\in\mathcal{B}\text{ such that}		\nonumber\\[1.5ex]
		&\phantom{xxxx} \int_{\Omega}\left.\frac{dR}{d\beta}\right|_{\beta_{n+1}^{i+1}}
			\psi_0^+(\varepsilonb(\bmu_{n+1}^{i+1}+\bmu_{D,n+1}))\,\gamma\,\dx
			+ \int_{\Omega}	\frac{g_c}{\ell}\beta_{n+1}^{i+1}\,\gamma\,\dx
			+ \int_{\Omega}	g_c\ell\nabla\beta_{n+1}^{i+1}\cdot\nabla\gamma\,\dx	\nonumber\\[1.5ex]
		&\phantom{xxxxxxxxxxxxxxxxxxx} + \frac{1}{\epsilon}\int_{\Omega}[ \beta_{n+1}^{i+1}-\beta_n]_{-}\,\gamma	\,\dx	= 0
				\quad\text{for all }\gamma\in\mathcal{B}\,.	\label{eq:3.IM22.damage}\\[1.5ex]
		&i\leftarrow i+1\,,		\nonumber
	\end{align}
\end{subequations}
where $\sigmab(\varepsilonb,\beta)=R(\beta)\sigmab_0^+(\varepsilonb)+\sigmab_0^-(\varepsilonb)$ with 

\begin{equation}\label{Eq.ApndxA.SplitStres}
	\sigmab_0^{+}=\frac{\partial\psi_0^{+}}{\partial \varepsilonb}\quad
	\text{ and }
	\sigmab_0^{-}=\frac{\partial\psi_0^{-}}{\partial \varepsilonb}\,.
\end{equation}

In order to discuss the convergence of \eqref{AltMinVarForm}, we require an additional notion.

\begin{definition}
	We say that $(\bmu,\,\beta)\in\mathcal{V}_{D,0}\times\mathcal{B}$ is a critical point of the functional
	\begin{equation}\label{Def.Funct.Pen}
		\mathcal{F}(t_{n+1},\,\bmu,\,\beta;\,\beta_n)+\frac{1}{\varepsilon}\varphi(\beta)
	\end{equation}
	if $(\bmu,\,\beta)$ meets the following equations

\begin{subequations}\label{Eq.VariatCond}
\begin{align}
	&\int_{\Omega}\sigmab(\varepsilonb(\bmu),\beta)\colon\varepsilonb(\bmv)\,\dx
	= -\int_{\Omega}\sigmab(\varepsilonb(\bmu_{D},\beta)\colon\varepsilonb(\bmv)\,\dx
		\quad \text{ for all }\bmv\in\mathcal{V}_{D,0}\,.	\label{Eq.VariatCond.01}\\[1.5ex]
	&\int_{\Omega}\left.\frac{dR}{d\beta}\right|_{\beta}
		\psi_0^+(\varepsilonb(\bmu+\bmu_{D}))\,\gamma\,\dx
		+ \int_{\Omega}	\frac{g_c}{\ell}\beta\,\gamma\,\dx
		+ \int_{\Omega}	g_c\ell\nabla\beta\cdot\nabla\gamma\,\dx	\nonumber\\[1.5ex]
	&\phantom{xxxxxxxxxxxx} + \frac{1}{\epsilon}\int_{\Omega}[ \beta-\beta_n]_{-}\,\gamma	\,\dx	= 0
				\quad\text{for all }\gamma\in\mathcal{B}\,.	\label{Eq.VariatCond.02}
\end{align}
\end{subequations}
\end{definition}

We can then state the following result.

\begin{proposition}\label{Prop.CnvrgAMA}
	Let $(\bmu_{n+1}^i\,,\beta_{n+1}^i)_{i\in\mathbb{N}}$ be a sequence generated by the scheme 
	\eqref{AltMinVarForm}. Then, up to a subsequence, $(\bmu_{n+1}^i\,,\beta_{n+1}^i)_{i\in\mathbb{N}}$ 
	is convergent in $\mathcal{V}_{D,0}\times\mathcal{B}$ 
	and its limit $(\bmu,\,\beta)$ is a critical point of the functional \eqref{Def.Funct.Pen}.
\end{proposition}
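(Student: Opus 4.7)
The approach is the standard energy-descent argument for block-coordinate minimization of a separately strongly convex, coercive, smooth functional. Setting $\mathcal{F}_\epsilon(\bmu,\beta) := \mathcal{F}(t_{n+1},\bmu,\beta;\beta_n) + \frac{1}{\epsilon}\varphi(\beta-\beta_n)$, the two substeps of \eqref{AltMinVarForm} yield the monotonicity chain
\[
\mathcal{F}_\epsilon(\bmu_{n+1}^{i+1},\beta_{n+1}^{i+1})
\leq \mathcal{F}_\epsilon(\bmu_{n+1}^{i+1},\beta_{n+1}^{i})
\leq \mathcal{F}_\epsilon(\bmu_{n+1}^{i},\beta_{n+1}^{i}).
\]
Since $R(\beta)\geq k>0$ and $\psi_0^{\pm}\geq 0$, the stored elastic term is coercive on $\mathcal{V}_{D,0}$ by Korn's inequality, while the gradient contribution $\frac{g_c\ell}{2}\|\nabla\beta\|_{L^2}^2$ and the $L^2$ contribution $\frac{g_c}{2\ell}\|\beta\|_{L^2}^2$ coming from the dissipation distance provide $H^1(\Omega)$-coercivity in $\beta$. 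Hence $\mathcal{F}_\epsilon$ is bounded below, the decreasing sequence $\{\mathcal{F}_\epsilon(\bmu_{n+1}^i,\beta_{n+1}^i)\}_i$ converges, and the iterates are bounded in $\mathcal{V}_{D,0}\times H^1(\Omega)$.

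Next I would exploit the uniform strong convexity of each substep: in $\bmu$ with modulus proportional to $k$ (from $R\geq k$, Korn and quadraticity of $\psi_0$) and in $\beta$ with modulus controlled by $g_c/\ell$, $g_c\ell$, convexity of $R$, and the convex penalty $\varphi$. Standard strong-convexity estimates then give, for some $\mu>0$,
\[
\mathcal{F}_\epsilon(\bmu_{n+1}^{i},\beta_{n+1}^{i}) - \mathcal{F}_\epsilon(\bmu_{n+1}^{i+1},\beta_{n+1}^{i+1})
\geq \tfrac{\mu}{2}\bigl(\|\bmu_{n+1}^{i+1}-\bmu_{n+1}^{i}\|^2_{H^1} + \|\beta_{n+1}^{i+1}-\beta_{n+1}^{i}\|^2_{H^1}\bigr).
\]
Summing over $i$ and using convergence of the left-hand side, the right-hand sides are summable, so $\bmu_{n+1}^{i+1}-\bmu_{n+1}^{i}\to 0$ in $\mathcal{V}_{D,0}$ and $\beta_{n+1}^{i+1}-\beta_{n+1}^{i}\to 0$ in $H^1(\Omega)$. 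By boundedness, on a (not relabelled) subsequence $\bmu_{n+1}^i\rightharpoonup\bar\bmu$ in $\mathcal{V}_{D,0}$ and $\beta_{n+1}^i\rightharpoonup\bar\beta$ in $H^1(\Omega)$, with $\beta_{n+1}^i\to\bar\beta$ strongly in every $L^p(\Omega)$, $p<\infty$, by Rellich-Kondrachov; the previous step ensures that the shifted sequence $(\bmu_{n+1}^{i+1},\beta_{n+1}^{i+1})$ has the same limit in these topologies.

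The final step is to pass to the limit in \eqref{eq:3.IM22.disp}-\eqref{eq:3.IM22.damage} to identify $(\bar\bmu,\bar\beta)$ as a solution of \eqref{Eq.VariatCond}. Equation \eqref{eq:3.IM22.disp} is linear in $\bmu_{n+1}^{i+1}$ with coefficients $R(\beta_{n+1}^i)\in[k,1+k]$; combining weak $H^1$ convergence of $\bmu_{n+1}^{i+1}$ with strong $L^p$ convergence of $R(\beta_{n+1}^i)$ (Lipschitz continuity of $R$) one obtains \eqref{Eq.VariatCond.01} in the limit. Testing this equation against $\bmv=\bmu_{n+1}^{i+1}-\bar\bmu$ and using the uniform ellipticity $R\geq k$ then upgrades weak to strong convergence of $\bmu_{n+1}^{i+1}$ in $\mathcal{V}_{D,0}$. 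With this strong convergence in hand, \eqref{eq:3.IM22.damage} passes to the limit as well: strong $L^p$ convergence of $\beta_{n+1}^{i+1}$ handles the Lipschitz nonlinearities $\frac{dR}{d\beta}|_{\beta_{n+1}^{i+1}}$ and the cutoff $[\beta_{n+1}^{i+1}-\beta_n]_-$, strong $H^1$ convergence of $\bmu_{n+1}^{i+1}$ absorbs the quadratic coupling $\psi_0^+(\varepsilonb(\bmu_{n+1}^{i+1}+\bmu_{D,n+1}))$, and weak $H^1$ convergence of $\beta_{n+1}^{i+1}$ handles the $\nabla\beta$ term. The main obstacle is precisely this coupled passage to the limit: one must bootstrap compactness in two stages, first using $H^1$-boundedness of $\beta$ to obtain strong $L^p$ convergence of the coefficient $R(\beta^i)$ in the displacement subproblem, then using the resulting uniformly elliptic linear structure to upgrade weak to strong $H^1$ convergence of $\bmu^{i+1}$, and only then absorbing the nonlinear coupling $\psi_0^+(\varepsilonb(\bmu^{i+1}))$ in the damage subproblem.
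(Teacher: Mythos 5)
Your proof follows the same route the paper takes and delegates to \cite{Bou07}, \cite{AN20} and \cite{Ber16}: an a-priori bound via the energy-descent property of the alternating scheme, a sufficient-decrease estimate from block strong convexity that makes consecutive increments summable, and then weak compactness together with a two-stage bootstrap (Rellich on $\beta$ $\Rightarrow$ strong $L^p$ convergence of the coefficient $R(\beta^i)$ $\Rightarrow$ strong $H^1$ convergence of $\bmu^{i+1}$ $\Rightarrow$ limit passage in the damage equation) to identify the limit as a critical point of \eqref{Def.Funct.Pen}. You simply spell out the details the paper leaves to the cited references.
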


\begin{proof}
	The proof can be obtained by an adaptation of the arguments given in \cite[Theorem 1]{Bou07}
	or \cite[Section 5.1]{AN20}
	which we refer to for the full details, and consists of obtaining first an a-priori estimate 
	of the solution of \eqref{AltMinVarForm} and then in applying compacteness arguments 
	(see also \cite[page 268]{Ber16} for an application of these arguments to the finite 
	dimensional setting of \eqref{AltMinVarForm}). 
\end{proof}

\begin{remark}
	A stronger result than the one stated in Proposition \ref{Prop.CnvrgAMA} is reported in \cite{BWBNR20}
	where, by modifying the scheme \eqref{AltMinVarForm} with the introduction of 
	coercive terms of Uzawa's like \cite{GLT81}, it is proved the convergence of the sequence of the iterates itself
	to a critical point of the discrete energetic functional.
\end{remark}

\begin{remark}
	Since the uniqueness of the solution of Problem \ref{GlobOptMatModel} is not guaranteed,
	by changing the initial value $\beta_{n+1}^0$ in \eqref{AltMinVarForm}, we will build in general 
	a different sequence which, up to a subsequence, will converge to a different critical point, which is an 
	approximation of a local solution of the model \eqref{FreModel}. 
\end{remark}


\section{Fully discrete scheme}\label{Sec.FDS}


In this section we present first the fully discrete equations obtained by a FE interpolation of the 
displacement and the damage phase-field. We then describe the numerical algorithm which we use to find
an approximate solution to these equations.


\subsection{Finite Element Discretization}


The fully discrete equations are 
obtained by replacing the infinite dimensional affine spaces $\mathcal{V}_{D,0}\times \mathcal{B}$ of the trial functions 
$(\bmu,\beta)$ and of the test functions $(\bmv,\gamma)$ with finite dimensional affine subspaces
which are taken here as finite element spaces.
Let us denote by $\bmN_{\bmu}$, and $\bmN_{\beta}$ the shape interpolation functions of
$\bmu$ and $\beta$, respectively, by $\bmU,\,\bmU_{D}\in\mathbb{R}^{n_{\bmu}}$
the displacement degree of freedom of the test functions 
$\bmu\in \mathcal{V}_{D,0}$ and of the lifting function $\bmu_{D}\in \mathcal{V}_D$, respectively,
and by $\bmA\in\mathbb{R}^{n_{\beta}}$ 
the degree of freedom of the field $\beta$. We have the following interpolations

\begin{equation}\label{Eq.3.IM23.SF}
	\bmu^h(\bmx) +\bmu^h_{D}(\bmx)= \bmN_{\bmu}(\bmx)(\bmU+\bmU_D)\quad\text{and}\quad
	\beta^h(\bmx) = \bmN_{\beta}(\bmx)\bmA.
\end{equation}
Consequently,

\begin{equation}\label{Eq.3.IM24.Grad}
	\varepsilonb^h(\bmx) =\nabla_s\bmu^h+\nabla_s\bmu_D^h=\bmB_{\bmu}(\bmx)(\bmU+\bmU_D)
	\quad\text{and}\quad
	\nabla\beta^h(\bmx)=  \bmB_{\beta}(\bmx)\bmA\,,
\end{equation}
where we  have introduced the matrices $\bmB_{\bmu}$ and $\bmB_{\beta}$ which are obtained by appropriately differentiating 
and combining rows of the matrices $\bmN_{\bmu}$ and $\bmN_{\beta}$, respectively \cite{Bat96}.
By using \eqref{Eq.3.IM23.SF} and \eqref{Eq.3.IM24.Grad}
into \eqref{eq:3.IM22.disp} and \eqref{eq:3.IM22.damage}, we obtain the following discrete variational formulation 

\begin{subequations}\label{Eq.3.IM25}
\begin{align}
	&\delta\bmU^T  \int_{\Omega}  \bmB_{\bmu}^{T}(\bmx)\; {}^h\sigmab\big(\bmB_{\bmu}(\bmx)\bmU,\,\bmN_{\beta}(\bmx)\bmA\big)\,\dx 
		+\delta\bmU^T  \int_{\Omega}  \bmB_{\bmu}^{T}(\bmx)\; {}^h\sigmab\big(\bmB_{\bmu}(\bmx)\bmU_D,\,\bmN_{\beta}(\bmx)\bmA\big)\,\dx=0
		\label{Eq.3.IM25U}\\[2ex] 
	&\delta\bmA^T \int_{\Omega} \bmN_{\beta}^{T}(\bmx)\frac{\partial\psi}{\partial\beta}(\varepsilonb^h(\bmx),\,\beta^h(\bmx))\,\dx  
		+\delta\bmA^T  \int_{\Omega} \frac{g_c}{\ell}\bmN_{\beta}^{T}(\bmx) \bmN_{\beta}(\bmx)\bmA \,\dx \notag \\[2ex] 
	&\phantom{xxx}+\delta\bmA^T \int_{\Omega} g_c\, \ell \bmB_{\beta}^{T}(\bmx)\bmB_{\beta}(\bmx)\bmA\,\dx   
		+\frac{1}{\epsilon}\delta\bmA^T  \int_{\Omega}  \bmN_{\beta}^{T}(\bmx) \big[\bmN_{\beta}(\bmx)(\bmA-\bmA_n)\big]_{-}\,\dx=0 
		\label{Eq.3.IM25A}
\end{align}
\end{subequations}
where  
\[
	\frac{\partial\psi}{\partial\beta}(\varepsilonb^h(\bmx),\,\beta^h(\bmx))=
	\left.\frac{dg}{d\beta}\right|_{\bmN_{\beta}\bmA}\,\psi^{+}_0(\bmB_{\bmu}(\bmU+\bmU_D))\,,
\]
whereas

\begin{equation*}
\begin{split}
	&{}^h\sigmab(\bmB_{\bmu}\bmU,\bmN_{\beta}\bmA)=R(\bmN_{\beta}\bmA)\sigmab_0^+(\bmB_{\bmu}\bmU)+\sigmab_0^-(\bmB_{\bmu}\bmU)\\[1.5ex]
	&{}^h\sigmab(\bmB_{\bmu}\bmU_D,\bmN_{\beta}\bmA)=R(\bmN_{\beta}\bmA)\sigmab_0^+(\bmB_{\bmu}\bmU_D)
	+\sigmab_0^-(\bmB_{\bmu}\bmU_D)\,,
\end{split}
\end{equation*}
with $\sigmab_0^{\pm}$ given by \eqref{Eq.ApndxA.SplitStres}.


\begin{remark}
The symbol ${}^h(\cdot)$ is here used to mean that, in the present formulation, the 
field $(\cdot)$ is not interpolated but it is computed by solving an equation.
\end{remark}

If we denote by $\Omega_e^h$ a generic element of the triangulation $\mathcal{T}^h$ and by $\bmx_{e,i}$ the $i^{th}$ Gauss point 
of the element $\Omega_e^h$ and $n_{gp}$ their number, the discrete variational formulations \eqref{Eq.3.IM25} are 
thus transformed into the following system of nonlinear algebraic equations

\begin{subequations}\label{Eq.3.Residual}
\begin{align}
	& \bmR_{\bmu}(t_{n+1},\bmU,\bmA):=\sum_{\Omega_e^h\in\mathcal{T}^h}\;\sum_{i=1}^{n_{gp}} w_{e,i}j_{e,i}
			\bmB_{\bmu}^{T}(\bmx_{e,i})\bigg[R(\bmN_{\beta}(\bmx_{e,i})\bmA)\sigmab_0^+(\bmB_{\bmu}(\bmx_{e,i})\bmU)  \nonumber\\[1.5ex]
	&\phantom{xxxxxxx}+\sigmab_0^-(\bmB_{\bmu}(\bmx_{e,i})\bmU)+R(\bmN_{\beta}(\bmx_{e,i})\bmA)\sigmab_0^+(\bmB_{\bmu}(\bmx_{e,i})\bmU_D)+
					\sigmab_0^-(\bmB_{\bmu}(\bmx_{e,i})\bmU_D)
					\bigg]=\bm{0}\,, \label{Eq.3.ResidualU}\\[2.ex] 
	& \bmR_{\beta}(\bmU,\bmA;\,\bmA_n) :=\sum_{\Omega_e^h\in\mathcal{T}^h}\;\sum_{i=1}^{n_{gp}}w_{e,i}j_{e,i}\bigg\{
		\bmN_{\beta}^{T}(\bmx_{e,i})\bigg[\left.\frac{dR}{d\beta}\right|_{\bmN_{\beta}(\bmx_{e,i})\bmA}\psi_0^+(\bmB_{\bmu}(\bmx_{e,i})(\bmU+\bmU_D)) \notag \\[2ex]
	&\phantom{xxxxxxx} 		+\frac{g_c}{\ell} \bmN_{\beta}(\bmx_{e,i})\bmA  
	+\frac{1}{\epsilon}\big[\bmN_{\beta}(\bmx_{e,i})(\bmA-\bmA_n)\big]_{-}\bigg]
	+g_c\,\ell\bmB_{\beta}^{T}(\bmx_{e,i}) \bmB_{\beta}(\bmx_{e,i})\bmA
                          \bigg\}=\bm{0}\,,\label{Eq.3.ResidualA}
\end{align}
\end{subequations}
with $w_{e,i}$ and $j_{e,i}$ the weight and the value of the Jacobian determinant at the Gauss point $\bmx_{e,i}$,  
respectively \cite{Bat96}.

Consistently with \eqref{AltMinVarForm}, for each time step $[t_n,\,t_{n+1}]$, we consider the solution of 
\eqref{Eq.3.Residual} separately with respect to $\bmU$ and $\bmA$ as follows

\begin{subequations}\label{Eq.3.AltMinDiscEq}
\begin{align}
	& \text{Let }\epsilon>0.\text{ Set }\bmA^0\in\mathbb{R}^{n_{\beta}},\,i=0		\nonumber\\[1.5ex]
	& \text{Find }\bmU^{i+1}\in\mathbb{R}^{n_{\bmU}}:\,\bmR_{\bmu}(t_{n+1},\bmU^{i+1},\,\bmA^{i};\,\bmA_n)=\bm{0}\,,\label{Eq.3.AltMinDiscEq.U}\\[1.5ex] 
	& \text{Find }\bmA^{i+1}\in\mathbb{R}^{n_{\bmA}}:\,\bmR_{\beta}(\bmU^{i+1},\,\bmA^{i+1};\,\bmA_n)=\bm{0}\,,	\label{Eq.3.AltMinDiscEq.A}	\\[1.5ex]
	& i\leftarrow i+1\,,	\nonumber
\end{align}
\end{subequations}
which represent the finite element equations of the stationariety conditions of the 
alternating minimization problems \eqref{Eq.AMMPen}. 
We solve  each of the equations \eqref{Eq.3.AltMinDiscEq} by applying a fully consistent Newton's method. 
The resulting scheme is given by the Algorithm \ref{Alg.NewAMM}.


\normalem 

\begin{algorithm}[H]
\DontPrintSemicolon
\KwData{$(\bmU_n,\,\bmA_n)$, $\varepsilon$, $tol_{\bmU}$, $tol_{\bmA}$}
\KwResult{$(\bmU_{n+1},\,\bmA_{n+1})$}
		\Set{
\lnl{Alg.NewAMM.Rem1}     $i=0$\; 
\lnl{Alg.NewAMM.Rem2}     $\bmA^{0}=\bmA_{n}$, $\bmU^{0}=\bmU_{n}$}
\lnl{Alg.NewAMM.Rem3} \Repeat{$\|\bmU^{i+1}-\bmU^{i}\|_\infty\leq\text{ tol}_{\bmU}$ and $\|\bmA^{i+1}-\bmA^{i}\|_\infty\leq\text{ tol}_{\bmA}$}{
	   \BlankLine
\lnl{Alg.NewAMM.Rem4} $\bmU^{i,0}=\bmU^{i}, k=1$ \;
\lnl{Alg.NewAMM.Rem5} \Repeat{$\displaystyle \|\Delta\bmU\|_{\ell^{\infty}}\leq tol_{\bmU}$}{
\lnl{Alg.NewAMM.Rem6}  $\displaystyle \Delta\bmU=-\left[\frac{d\bmR_{\bmu}}{d\bmU}(t_{n+1},\bmU^{i,k-1},\,\bmA^{i};\,\bmA_n)\right]^{-1}
		\bmR_{\bmu}(t_{n+1},\bmU^{i,k-1},\,\bmA^{i};\,\bmA_n)$\;  
\lnl{Alg.NewAMM.Rem7}  $\displaystyle \bmU^{i,k}=\bmU^{i,k-1}+\Delta\bmU$\;  
\lnl{Alg.NewAMM.Rem8}  $\displaystyle k\leftarrow k+1$\;
		\BlankLine}
	    \BlankLine
\lnl{Alg.NewAMM.Rem9} $\displaystyle \bmU^{i+1}=\bmU^{i,k}$ \; 
\lnl{Alg.NewAMM.Rem10} $\displaystyle \bmA^{i,0}=\bmA^{i}, k=1$ \;
	    \BlankLine
\lnl{Alg.NewAMM.Rem11} \Repeat{$\displaystyle \|\Delta\bmA\|_{\ell^{\infty}}\leq tol_{\bmA}$}{
\lnl{Alg.NewAMM.Rem12} $\displaystyle \Delta\bmA=-\left[\frac{d\bmR_{\beta}}{d\bmA}(\bmU^{i+1},\,\bmA^{i,k-1};\,\bmA_n)\right]^{-1}
		\bmR_{\beta}(\bmU^{i+1},\,\bmA^{i,k-1};\,\bmA_n)$\;  
\lnl{Alg.NewAMM.Rem13} $\displaystyle \bmA^{i,k}=\bmA^{i,k-1}+\Delta\bmA$\;  
\lnl{Alg.NewAMM.Rem14} $\displaystyle k\leftarrow k+1$\;
	    \BlankLine}
\lnl{Alg.NewAMM.Rem15} $\displaystyle\bmA^{i+1}=\bmA^{i,k}$ \;
\lnl{Alg.NewAMM.Rem16} $i\leftarrow i+1$\;
	    \BlankLine}
\caption{\label{Alg.NewAMM} Alternate Minimization Algorithm with Newton's Method}
\end{algorithm}

\ULforem 


\begin{remark}
Correspondingly to what already noted in Remark \ref{Rem.AltMin} about the continuous formulation, 
we can make a similar observation for the discrete scheme \eqref{Eq.3.AltMinDiscEq}. 
A more general initialization of Algorithm \ref{Alg.NewAMM} defined on line \ref{Alg.NewAMM.Rem2}
is given by taking $\bmA^{0}=\bmA^{\ast}$, $\bmU^{0}=\bmU_{n}$, with $\bmA^{\ast}\in\mathbb{R}^{n_{\bmA}}$.
In this manner, we distinguish the role of $\bmA^{0}$, which is used to start the alternating minimization of the 
functional $\mathcal{F}(t_{n+1},\bmU,\bmA)$, from the role of $\bmA_n$ that enters in to the definition
of the admissible set of $\mathcal{F}$. In the standard application (without backtracking)
of the Algorithm \ref{Alg.NewAMM} we take $\bmA^{\ast}=\bmA_n$, but we will see in the next section 
that when this scheme is combined with the backtracking, $\bmA^{\ast}$ might be different from $\bmA_n$.
\end{remark}


\subsection{A two-sided energy estimate based backtracking algorithm}

In Section \ref{Sec.AMMVF} we have observed that the solution of the 
alternate minimization \eqref{Eq.3.AltMinDiscEq}
represents, in general, an approximation of a critical point of the functional 
$\mathcal{F}(t,\varepsilonb,\beta)=\mathcal{E}(t,\varepsilonb,\beta)+\mathcal{D}(\beta_n,\,\beta)$, 
which might not be a global minimizer of $\mathcal{F}$. 
The global minimization model given by Problem \ref{GlobOptMatModel} is, indeed,
a crucial assumption of the theory of material behaviour we are applying. 

Given the particular structure of the problem 
at hand, the optimization landscape can change from one step 
increment to the other depending on whether damage occurs and, if so, on its
extension. If damage does not occur or does not change much, the function landscape maintains 
its shape without creation of other minima.
To avoid to resort to global optimization methods applied to Problem \eqref{GlobOptMatModel},
we propose here a numerical strategy 
where we still apply Newton's method but we change starting point which falls in the attraction basin of 
a stationariety point with lower energy. To ensure that this happens, we will use the
two-sided energy estimates \eqref{Eq.LwBnd} and \eqref{Eq.UpBnd} met by the solutions 
of Problem \ref{GlobOptMatModel}.

This will be realized by a backtracking strategy which is similar to 
the one used in \cite{Bou07,BFM08,CLR16,MRZ10,RKZ13} for related problems. The difference is that we now 
exploit our two-sided energy estimates \eqref{Eq.LwBnd} and \eqref{Eq.UpBnd} as necessary conditions 
of global optimality.
Consider the finite element approximation of the estimates \eqref{Eq.LwBnd} and \eqref{Eq.UpBnd}
which we write as follows

\begin{equation}\label{Disc.TwoSidIneq}
\begin{split}
	-\eta+LB(t_{n},\bmU_{n+1},\bmA_{n+1})\leq 
		\mathcal{E}(t_{n+1},\,\bmU_{n+1},\,\bmA_{n+1})&-\mathcal{E}(t_{n},\,\bmU_{n},\,\bmA_{n})
			+\mathcal{D}(\bmA_{n},\,\bmA_{n+1})	\\[1.5ex]
	&\leq UB(t_{n+1},\bmU_{n},\bmA_{n})+\eta\,.
\end{split}
\end{equation}
In \eqref{Disc.TwoSidIneq}, $\eta>0$ is an energy tolerance introduced to account for the approximated globality of the discrete solution,
which thus depends on the space and time discretization error,
whereas the discrete expressions of the energetic
terms and of the lower and upper bounds, $LB(t_{n},\bmU_{n+1},\bmA_{n+1})$ and $UB(t_{n+1},\bmU_{n},\bmA_{n})$ respectively, 
which appear in \eqref{Disc.TwoSidIneq}, are obtained from 
\eqref{Eq.LwBnd} and \eqref{Eq.UpBnd} by taking into account for 
\eqref{Eq.3.IM23.SF} and  \eqref{Eq.3.IM24.Grad}. 
Box \ref{BoundsFE} contains the steps needed for the implementation of \eqref{Disc.TwoSidIneq} as 
postprocessing step.

When the estimates \eqref{Disc.TwoSidIneq} are violated by the computed solution,
we go back over the time steps and restart the alternate minimization \eqref{Eq.3.AltMinDiscEq}
with a different inital value for $\bmA$ by taking one with a lower energy state.

To illustrate how actually such strategy works,
assume that $(\bmU_n,\,\bmA_n)$ is the computed solution corresponding to the time step $[t_{n-1},\,t_{n}]$
and it is such that the pairs $(\bmU_{n-1},\,\bmA_{n-1})$ and $(\bmU_n,\,\bmA_n)$ meet 
the two-sided inequality \eqref{Disc.TwoSidIneq}. By taking then the succesive time step $[t_{n},\,t_{n+1}]$,
the solution $(\bmU_{n+1},\,\bmA_{n+1})$ of \eqref{Eq.3.AltMinDiscEq} obtained with the initial value
$\bmA^0=\bmA_n$ is such that the pairs
$(\bmU_n,\,\bmA_n)$ and $(\bmU_{n+1},\,\bmA_{n+1})$ do not meet \eqref{Disc.TwoSidIneq}, even though, by construction, it is 
$\mathcal{F}(t_{n+1},\bmU_{n+1},\,\bmA_{n+1})\leq \mathcal{F}(t_{n+1},\bmU_n,\,\bmA_n)$. 
In this case then $(\bmU_{n+1},\bmA_{n+1})$ must be discharged. This might occur because when we did solve
\eqref{Eq.3.AltMinDiscEq}, we have used a starting value which falls in the attraction basin of a stationary point with 
a higher energy level. The idea is therefore to provide a better estimate of a starting value which could likely fall in 
the attraction basin of a stationary point with lower energy.
We therefore propose to go back one time step, that is, we solve again the time step $[t_{n-1},\,t_n]$,
even though the pairs $(\bmU_{n-1},\,\bmA_{n-1})$ and $(\bmU_n,\,\bmA_n)$
were meeting 
\eqref{Disc.TwoSidIneq}, but this time we use the starting value $\bmA^0=\bmA_{n+1}$.
The iteration over each previous step 
of the equilibrium path is repeated
until the estimates \eqref{Disc.TwoSidIneq} are met. The number of the backtracking steps will then clearly depend on the 
quality of the starting guess.
Algorithm \ref{Alg.BT} presents a conceptual implementation of the proposed strategy, whereas Figure \ref{Fig.BackTracking}
visualizes such algorithm, with possible situations for backtracking. The input data to start the algorithm are the total number 
$N$ of the time steps and the energy tolerance $\eta>0$ 
that enters the bound limit \eqref{Disc.TwoSidIneq}.
We also introduce the total number $K\geq 0$ of back steps by which we are willing to go back 
(by setting $K=0$ we do not activate the backtracking algorithm),
and the initial state $(\bmU_0,\,\bmA_0)$ at $t=0$. 
The parameter $K>0$ is not essential
for running the algorithm, but it is introduced only to allow the user to control the number of back steps. 
By setting a value of $K$ that is attained, we would accept discrete solutions that 
violate the estimates for some time steps.


\begin{figure}[H]
	\centering{
		\includegraphics[width=0.6\textwidth]{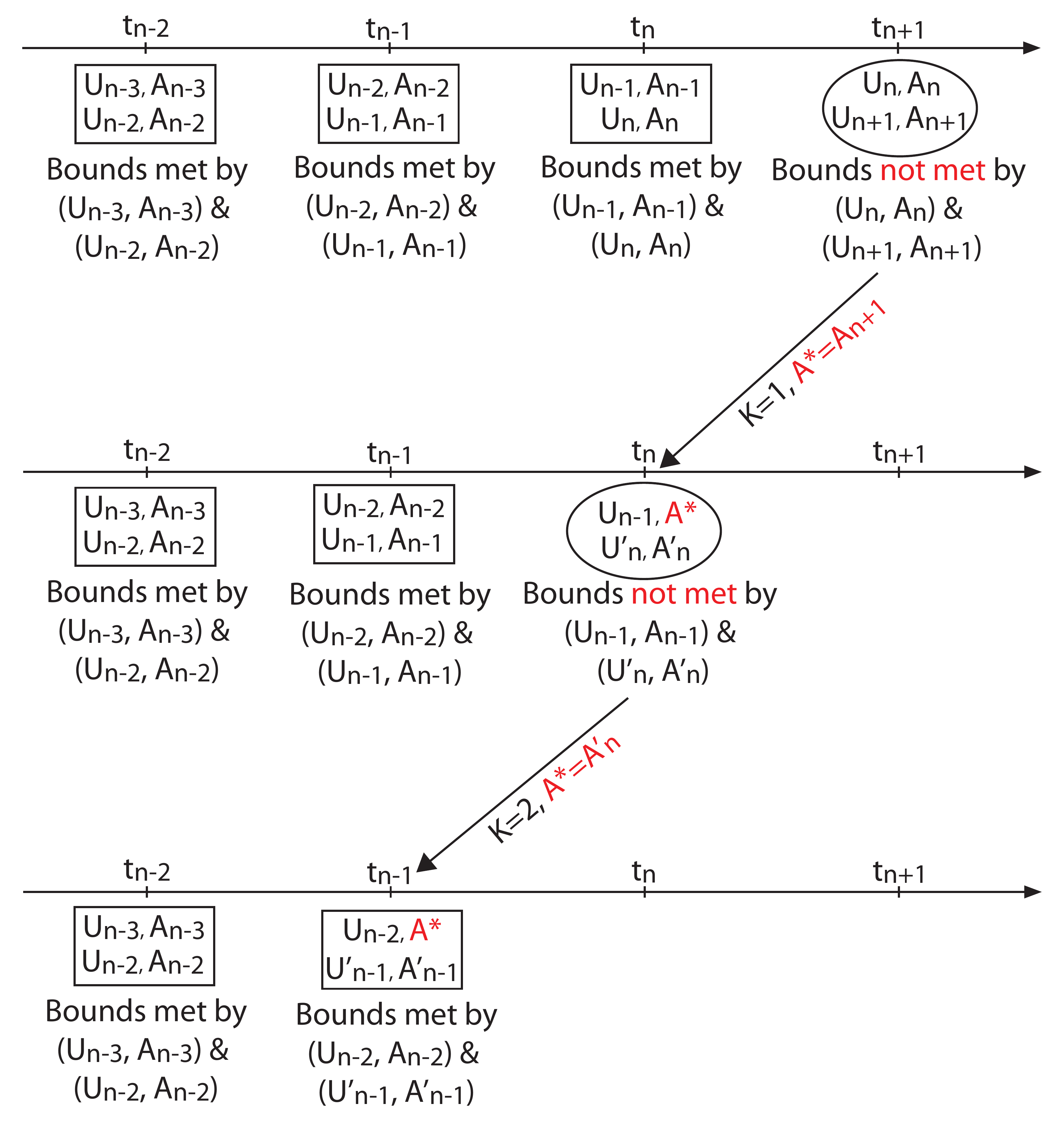}
		}
	\caption{\label{Fig.BackTracking} Visualization of the Backtracking Algorithm \ref{Alg.BT} 
	using the alternating minimization  
	\eqref{Eq.3.AltMinDiscEq} with $K=2$. In each square/circle we show the starting guess and the solution 
	of the incremental problem.
	}
\end{figure}



\boxx{0.95\textwidth}{BoundsFE}
{Implementation of \eqref{Disc.TwoSidIneq}}
{
	\begin{minipage}{\textwidth}
		\footnotesize{
	\noindent \textsc{function ERG}\\
		INPUT: $\bmU_1,\,\bmU_2,\,\bmA$\\
		COMPUTE:
			\[
				E=\sum_{\Omega_e^h\in\mathcal{T}^h}\;\sum_{i=1}^{n_{gp}} w_{e,i}j_{e,i}
				\bigg[ R\big(\bmN_{\beta}(\bmx_{e,i})\bmA\big)
				\psi_0^+\big(\bmB_{\bmu}(\bmx_{e,i})(\bmU_1+\bmU_2)\big)+
				\psi_0^-\big(\bmB_{\bmu}(\bmx_{e,i})(\bmU_1+\bmU_2)\big)
				\bigg]
			\]
	\noindent \textsc{function GRAD}\\
		INPUT: $\bmA$\\
		COMPUTE:
			\[
				G=\frac{g_c\ell}{2}\sum_{\Omega_e^h\in\mathcal{T}^h}\;\sum_{i=1}^{n_{gp}} w_{e,i}j_{e,i}
				\bigg[\bmB_{\beta}^T(\bmx_{e,i})\bmB_{\beta}(\bmx_{e,i})\bmA\cdot\bmA\bigg]
			\]
\noindent \textsc{function DIS}\\
		INPUT: $\bmA$\\
		COMPUTE:
			\[
				D=\frac{g_c}{2\ell}\sum_{\Omega_e^h\in\mathcal{T}^h}\;\sum_{i=1}^{n_{gp}} w_{e,i}j_{e,i}
					\bigg[\bmN_{\beta}^T(\bmx_{e,i})\bmN_{\beta}(\bmx_{e,i})\bmA\cdot\bmA\bigg]
			\]
\noindent To compute $\mathcal{E}(t_{n+1},\bmu_{n+1},\beta_{n+1})-\mathcal{E}(t_{n},\bmu_{n},\beta_{n})+\mathcal{D}(\beta_n,\beta_{n+1})$, 
use:
\begin{enumerate}
	\item \textsc{function ERG} with \textsc{INPUT:} $\bmU_1=\bmU_{n+1}$, $\bmU_2=\bmU_{D,n+1}$, $\bmA=\bmA_{n+1}$\\
		\phantom{xxxxxxx}\textsc{OUTPUT:} $E1$
	\item \textsc{function GRAD} with \textsc{INPUT:} $\bmA=\bmA_{n+1}$\\
		\phantom{xxxxxxx}\textsc{OUTPUT:} $G1$
	\item \textsc{function DIS} with \textsc{INPUT:} $\bmA=\bmA_{n+1}$\\
		\phantom{xxxxxxx}\textsc{OUTPUT:} $D1$
	\item \textsc{function ERG} with \textsc{INPUT:} $\bmU_1=\bmU_{n}$, $\bmU_2=\bmU_{D,n}$, $\bmA=\bmA_{n}$\\
		\phantom{xxxxxxx}\textsc{OUTPUT:} $E2$
	\item \textsc{function GRAD} with \textsc{INPUT:} $\bmA=\bmA_{n}$\\
		\phantom{xxxxxxx}\textsc{OUTPUT:} $G2$
	\item \textsc{function DIS} with \textsc{INPUT:} $\bmA=\bmA_{n}$\\
		\phantom{xxxxxxx}\textsc{OUTPUT:} $D2$
	\item \textsc{COMPUTE:} $(E1+G1+D1)-(E2+G2+D2)$
\end{enumerate}

\noindent To compute Upper Bound $UB$ (refer to Eq. \eqref{Eq.UpBnd} and Eq. \eqref{Eq.Bounds}), use:
\begin{enumerate}
	\item \textsc{function ERG} with \textsc{INPUT:} $\bmU_1=\bmU_{n}$, $\bmU_2=\bmU_{D,n+1}$, $\bmA=\bmA_{n}$\\
		\phantom{xxxxxxx}\textsc{OUTPUT:} $E1$
	\item \textsc{function ERG} with \textsc{INPUT:} $\bmU_1=\bmU_{n}$, $\bmU_2=\bmU_{D,n}$, $\bmA=\bmA_{n}$\\
		\phantom{xxxxxxx}\textsc{OUTPUT:} $E2$
	\item \textsc{COMPUTE:} $UB=E1-E2$
\end{enumerate}
\noindent To compute Lower Bound $LB$ (refer to Eq. \eqref{Eq.LwBnd} and Eq. \eqref{Eq.Bounds}), use:
\begin{enumerate}
	\item \textsc{function ERG} with \textsc{INPUT:} $\bmU_1=\bmU_{n+1}$, $\bmU_2=\bmU_{D,n+1}$, $\bmA=\bmA_{n+1}$\\
		\phantom{xxxxxxx}\textsc{OUTPUT:} $E1$
	\item \textsc{function ERG} with \textsc{INPUT:} $\bmU_1=\bmU_{D,n+1}$, $\bmU_2=\bmU_{D,n}$, $\bmA=\bmA_{n+1}$\\
		\phantom{xxxxxxx}\textsc{OUTPUT:} $E2$
	\item \textsc{COMPUTE:} $LB=E1-E2$
\end{enumerate}
		}
	\end{minipage}
}


\normalem 

\begin{algorithm}[H]
\SetKwInOut{Input}{input}\SetKwInOut{Output}{output}
\DontPrintSemicolon
\KwData{$N,\,K,\,\eta,\,(\bmU_0,\,\bmA_0)$}
\KwResult{$(\bmU_{n},\,\bmA_{n})$ $n=1,\ldots,N$}
		\Set{	
\lnl{Alg.BT.Rem1} 	$n=0$\;
\lnl{Alg.BT.Rem2} 	$\bmA^{0}=\bmA_0$, $\bmU^{0}=\bmU_0$}
\lnl{Alg.BT.Rem3} \Repeat{$n = N$}{
\lnl{Alg.BT.Rem4}		\Solve{
\lnl{Alg.BT.Rem5}		\Input{$\bmA^{0}$, $\bmU^{0}$, $\bmA_{n}$}
\lnl{Alg.BT.Rem6}		\textbf{Algorithm \ref{Alg.NewAMM}}: $(\bmU_{n+1},\,\bmA_{n+1})=\textsc{argmin }\mathcal{F}(t_{n+1},\bmU,\bmA;\,\bmA_n)$ \;
\lnl{Alg.BT.Rem7}		\Output{$\bmU_{n+1}$, $\bmA_{n+1}$}
			}%
		\Set{
\lnl{Alg.BT.Rem8}	     $\bmA^{0}=\bmA_{n+1}$, $\bmU^{0}=\bmU_{n+1}$}
\lnl{Alg.BT.Rem9} \eIf{inequality \eqref{Disc.TwoSidIneq} is met}{
\lnl{Alg.BT.Rem10} $n\leftarrow n+1$ (proceed to the next step)
						 }
			{
\lnl{Alg.BT.Rem11}        $b=0$ (back steps counter)\;
\lnl{Alg.BT.Rem12}	\Repeat{inequality \eqref{Disc.TwoSidIneq} is met or $b=K$}{
\lnl{Alg.BT.Rem13}		$n\leftarrow n-1$ (go back by one step)\; 
\lnl{Alg.BT.Rem14}		$b=b+1$ \;
\lnl{Alg.BT.Rem15}		\Solve{
\lnl{Alg.BT.Rem16}		\Input{$\bmA^{0}$, $\bmU^{0}$, $\bmA_{n}$}
\lnl{Alg.BT.Rem17}		\textbf{Algorithm \ref{Alg.NewAMM}}: $(\bmU_{n+1},\,\bmA_{n+1})=\textsc{argmin }\mathcal{F}(t_{n+1},\bmU,\bmA;\,\bmA_n)$ \;
\lnl{Alg.BT.Rem18}		\Output{$\bmU_{n+1}$, $\bmA_{n+1}$}
					}%
				\Set{
\lnl{Alg.BT.Rem19}		$\bmA^{0}=\bmA_{n+1}$, $\bmU^{0}=\bmU_{n+1}$}
				}
\lnl{Alg.BT.Rem20}	$n\leftarrow n+1$  (proceed to the next step)
			}
}
\caption{\label{Alg.BT} Backtracking Algorithm. }
\end{algorithm}


\section{Numerical examples}\label{Sec.NumEx}


In this section, we present representative numerical experiments to illustrate the perfomance of the energetic 
formulation and of the numerical procedure to obtain energetic solutions. We compare these solutions, which 
we will refer to as approximated energetic solutions, with those obtained by the standard procedure of 
simply solving the weak form of the Euler--Lagrange equations  \cite{MWH10,KWBNR20,URSS19,MHW10,AGDl15}. 
The problems that we consider are:
\begin{itemize}
	\item[$(i)$] Single edge notched tension test;
	\item[$(ii)$] Single edge notched shear test;
	\item[$(iii)$] $3d$ $L-$shaped panel test;
	\item[$(iv)$] $3d$ Symmetric bending test.
\end{itemize}
The first two are classical $2d$ benchmark problems where the specimens are assumed in plane strain conditions, whereas the 
last two are $3d$ bending tests of a concrete panel and a cement paste beam which we compare with experimental results.
All the numerical simulations are carried out with $\psi_0^{+}$ and $\psi_0^{-}$ given by

\[
	\psi_0^{\pm}(\varepsilonb)=\frac{\lambda}{2}(\tr\varepsilonb^{\pm})^2 + \mu\varepsilonb^{\pm}\colon\varepsilonb^{\pm}
\]
with $\lambda$ and $\mu$ the Lam\`{e} constants, whereas $\varepsilonb^{+}$ and 
$\varepsilonb^{-}$ are obtained by the spectral decomposition of $\varepsilonb$
introduced in \cite{MHW10,MWH10} as

\[
	\varepsilonb^+ =\sum_{a=1}^3\langle \varepsilon_a\rangle_+\bmn_a\otimes\bmn_a\quad\quad\quad
	\varepsilonb^- =\sum_{a=1}^3\langle \varepsilon_a\rangle_-\bmn_a\otimes\bmn_a\,,
\]
where $\varepsilon_a$, $\bmn_a$, $a=1,2,3 $, are  the principal strains and the principal strain directions of 
$\varepsilonb$, respectively, and for $x\in\mathbb{R}$, $\langle x\rangle_+=(x+\abs{x})/2$ and 
$\langle x\rangle_-=(x-\abs{x})/2$.
We also use $g(\beta)=(1-\beta)^2$ as degradation function according to what noted in Remark \ref{Rem:IncMinProAT}$(iii)$
and, for the residual stiffness, we set $k=10^{-4}$ (see \cite[Sec. 5]{AMM09} and discussion therein).
We apply monotonic displacement control by comparing the response for different displacement increments $\Delta w$. 
As for the value of the penalization factor $\epsilon$ to enforce crack irreversibility, 
this depends on the problem at hand. For its selection, we tested different values of $\epsilon$ and chose the one that 
was ensuring that the dissipation $\mathcal{D}(\beta_{n},\,\beta_{n+1})$, $n=0,\,\ldots,\,N-1$,
was non-negative and the problem conditioning was not jeopardized. The values of the tolerances 
$tol_{\bmU}$ and $tol_{\bmA}$ that control the convergence of Algorithm \ref{Alg.NewAMM} 
and the two-sided energy inequality tolerance $\eta$ have been all
set equal, in the respective units, to $10^{-5}$. 
We run our examples with two different values of $K$: $K=0$ to obtain discrete solutions
by the standard procedure such as in \cite{MHW10,DeG20} where one does not control the 
energy estimates \eqref{Disc.TwoSidIneq} and $K>0$ to obtain discrete solutions with 
the backtracking algorithm. In this latter case, the value of $K$ is chosen 
so that we can always go back as many time steps as necessary to obtain discrete solutions meeting \eqref{Disc.TwoSidIneq}
along the whole evolution. For the present simulations, for instance, by taking $K=50$ we could observe 
that we were never going back more than $10 \div 30$ steps. 
This will be easily verified for each of the numerical simulations by inspecting 
the plot of the terms that enter \eqref{Disc.TwoSidIneq} and noting that the estimates are met by the 
discrete solutions when the backtracking is active.


\subsection{Single edge notched tension test}\label{Sec.NumEx:Ex1}

The single edge notched tension (SENT) test is a classical benchmark problem which is used for a wide 
range of applications \cite{ASTM20} and is well studied also in the numerical literature 
\cite{MWH10,KWBNR20,URSS19,MHW10,AGDl15}. It consists of a square specimen with 
a single horizontal notch located at mid-height of the left edge with length equal to half the edge length, and is subject to constant tension
on the top edge.  In this paper, we consider the same mechanical model analysed in \cite{MWH10}. 
The geometric properties and boundary conditions of the specimen are shown in 
Figure \ref{NE1}$(a)$, with $\bmu=\bm{0}$ 
at the point of coordinates $(0,\,0)$ and $u_z=0$
on the bottom edge; $u_y=0$ and non--homogeneous Dirichlet condition  
$u_z=w$ on the top edge whereas all the other parts of the boundary including the slit are traction free. 
The elastic constants are chosen as $\lambda=121.1538\,\mathrm{kN/mm^2}$ and $\mu=80.7692\,\mathrm{kN/mm^2}$, 
the critical energy release rate as $g_c =2.7\, \,\mathrm{N/mm}$ and the internal length as $\ell =0.0175\,\mathrm{mm}$.
Figure \ref{NE1}$(b)$ displays the unstructured finite element mesh used for the simulations. 
We use linear finite elements for the approximation of the displacement field
and of the phase-field.
The mesh is thus formed by $6062$ triangular elements with $3088$ nodes. 
In order to capture properly the crack pattern, since under constant tension 
the crack propagates straight, we refine the mesh in this zone with an effective element size 
$h\approx 0.005\,\mathrm{mm}<\ell/2$ and for a bandwidth of about $2\,\mathrm{mm}$.

\begin{figure}[H]
	\centering{$\begin{array}{cc}
		\includegraphics[width=0.3\textwidth]{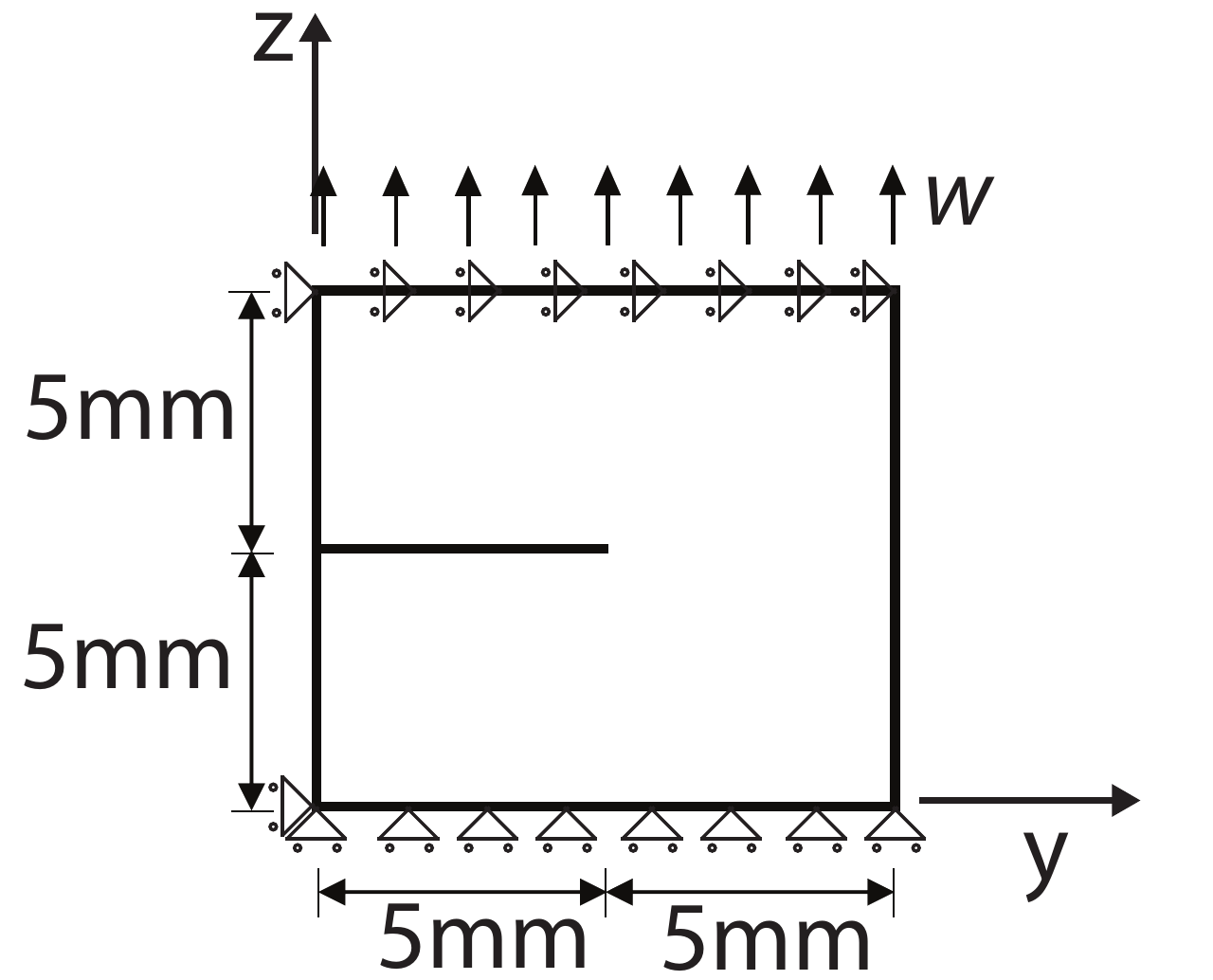}&
		\includegraphics[width=0.25\textwidth]{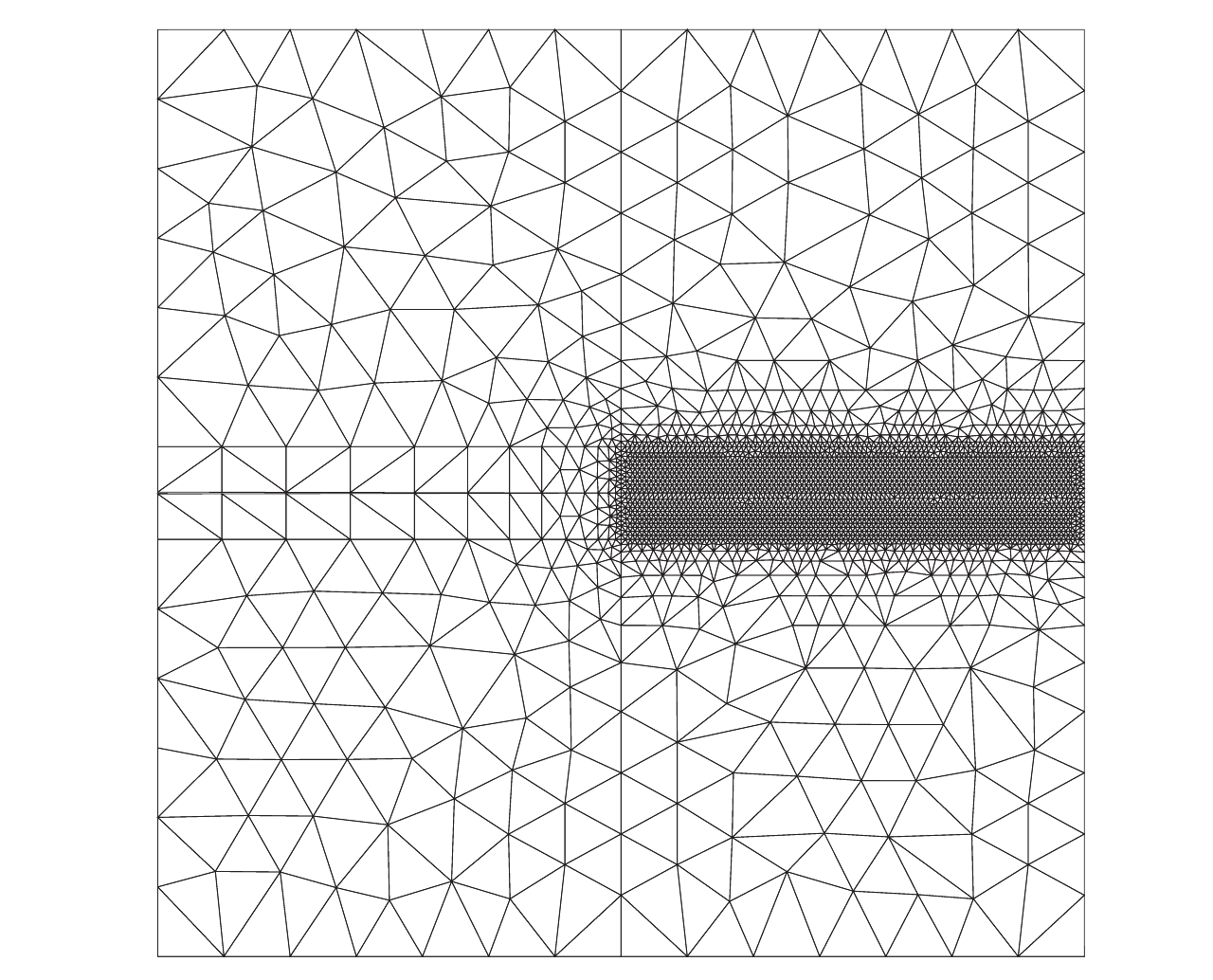}\\
		(a)&(b)
		\end{array}$
		}
	\caption{\label{NE1}
		Example \ref{Sec.NumEx:Ex1}. Single edge notched tension test. 
		$(a)$ Specimen geometry and boundary conditions. 
		$(b)$ Unstructured finite element mesh.
		}
\end{figure} 

We analyse the behaviour of the model for a monotone applied 
displacement $w$ resulting from the application of the following displacement increments: $\Delta~w~=~10^{-4}~\,~\mathrm{mm}$, 
$\Delta~w~=~10^{-5}~\,~\mathrm{mm}$ and $\Delta~w~=~10^{-6}~\,~\mathrm{mm}$. 
We take $\epsilon=10^{-6}$ and we
evaluate then the reaction force $F_z$ on the top edge $\Gamma_{top}\subseteq \partial\Omega$ given by

\[
	F_z=\int_{\Gamma_{top}}\,\sigmab\bmn\cdot\bmn\,\ds
\]
where $\bmn$ is the outward normal to this part of the boundary, and the energetic terms 
$\mathcal{E}(t_{n+1},\bmU_{n+1},\bmA_{n+1})$ and $\mathcal{D}(\bmA_{n},\bmA_{n+1})$, 
$n=0,1,\ldots, N-1$ with $N$ the total number of increments $\Delta w$. 

\begin{figure}[H]
	\centerline{\includegraphics[width=0.8\textwidth]{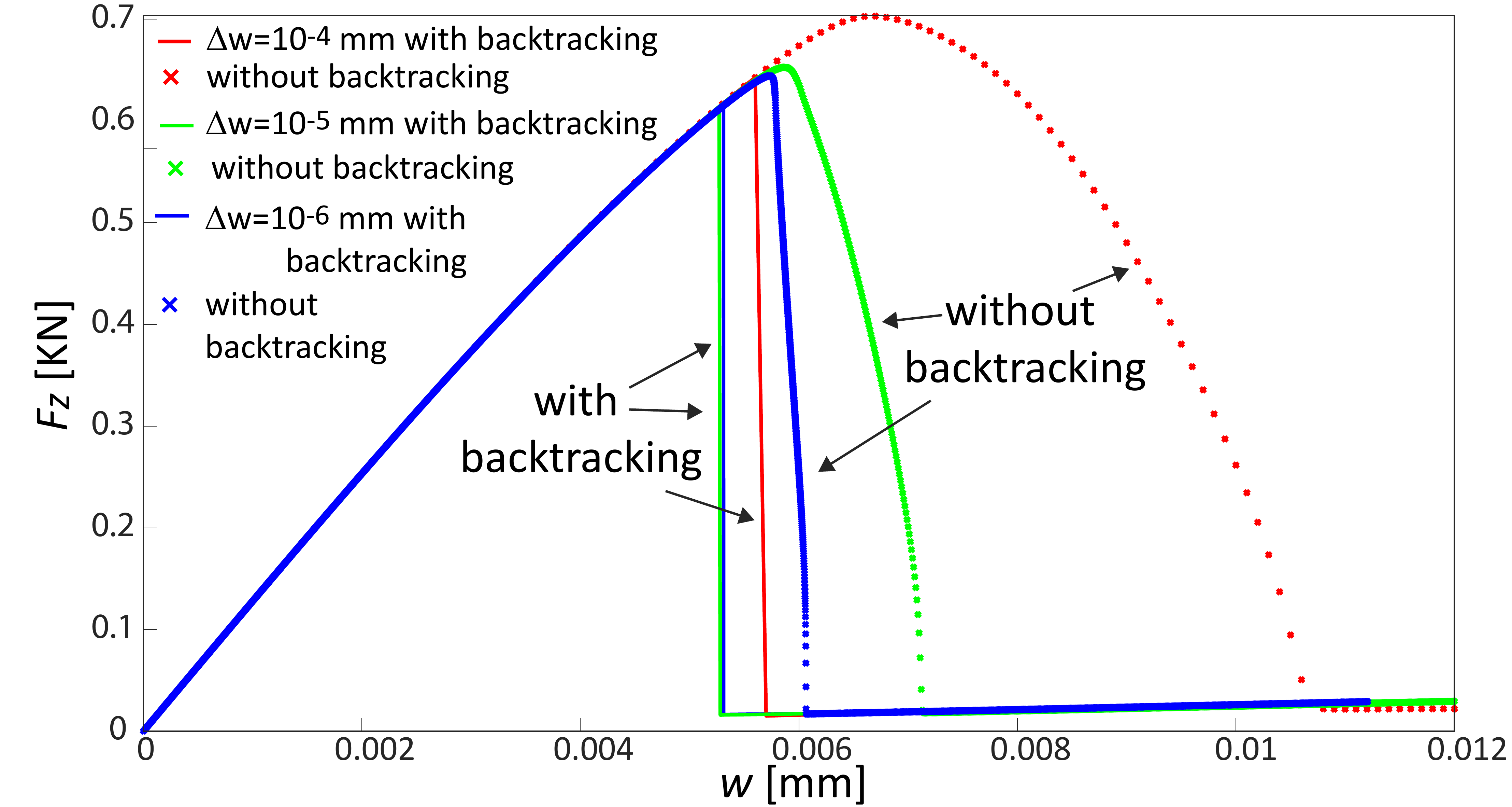}
		}
	\caption{\label{NE2} Example \ref{Sec.NumEx:Ex1}. Single edge notched tension test.
	Load--displacement curves for different displacement increments $\Delta w$ 
	and different schemes, using the backtracking algorithms describing the evolution 
	of the approximate energetic solutions and without applying the backtracking algorithm.
	}
\end{figure}

The variation of $F_z$ with $w$ for the different displacement increments and the different 
algorithms are displayed in Figure \ref{NE2}. By applying the backtracking algorithm (Algorithm \ref{Alg.BT} with $K>0$) 
the load-displacement curves display a similar response independent of the displacement increment $\Delta w$. This is in contrast 
with the behaviour associated with the solutions of Algorithm \ref{Alg.BT} with $K=0$, where the backtracking option is not active. In this case,
for the range of values used for $\Delta w$, the behaviour is sensitive with respect to $\Delta w$, though for small values of $\Delta w$
the response converges towards a definite configuration. Both numerical strategies identify 
a strong decreasing structural response, but the one 
described by the backtracking strategy occurs prior to that corresponding to the standard solution. Furthermore, for both type of solutions,
the load-displacement curve displays a residual force $F_z$ of the fully damaged specimen which is related to the value of $\delta$,
that defines the `residual' energy after complete damage. 

\begin{figure}[H]
	\centering{$\begin{array}{c}
		\includegraphics[width=0.60\textwidth]{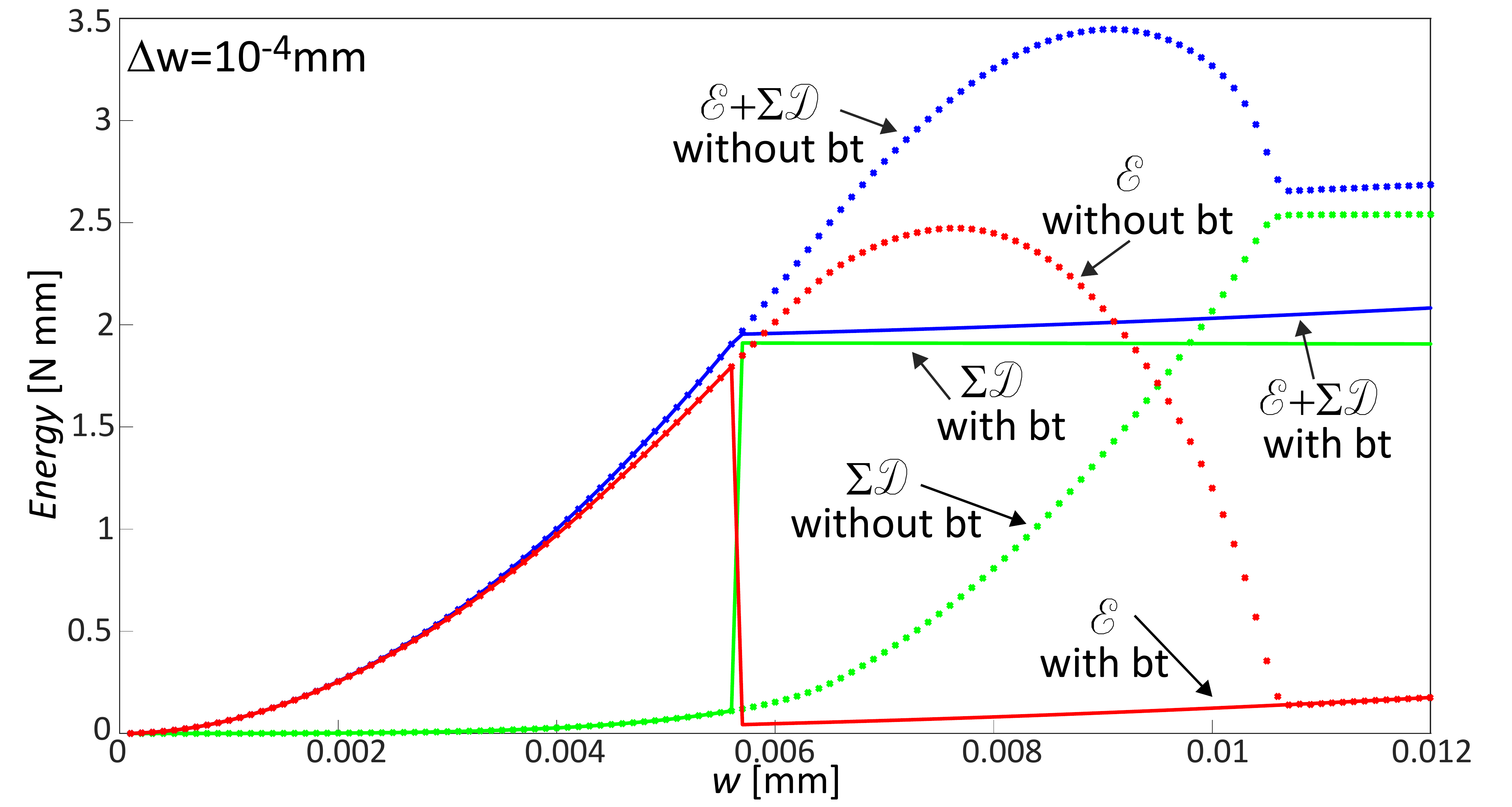}\\
		(a)\\
		\begin{array}{cc}
			\includegraphics[width=0.49\textwidth]{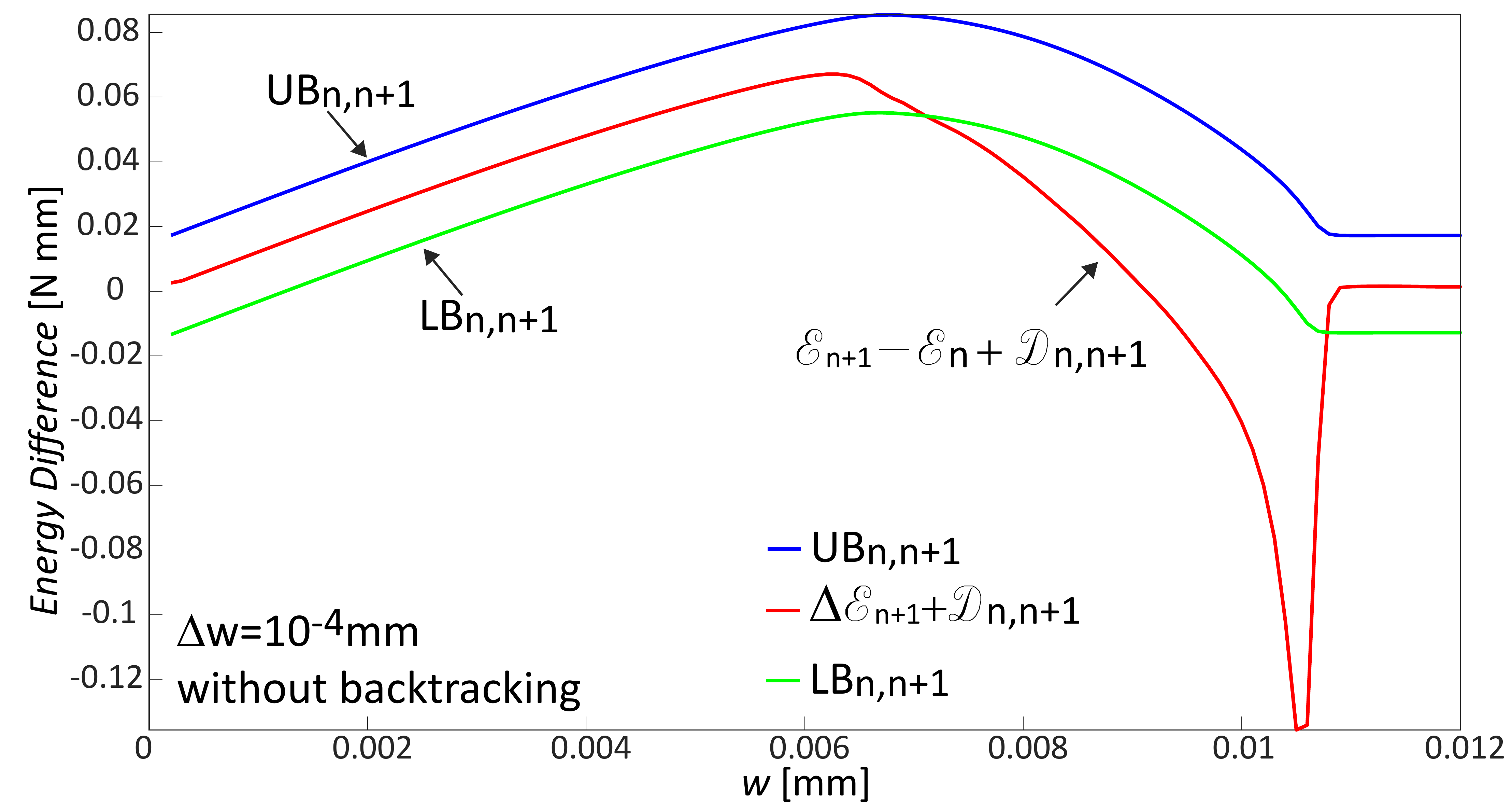}& 
			\includegraphics[width=0.49\textwidth]{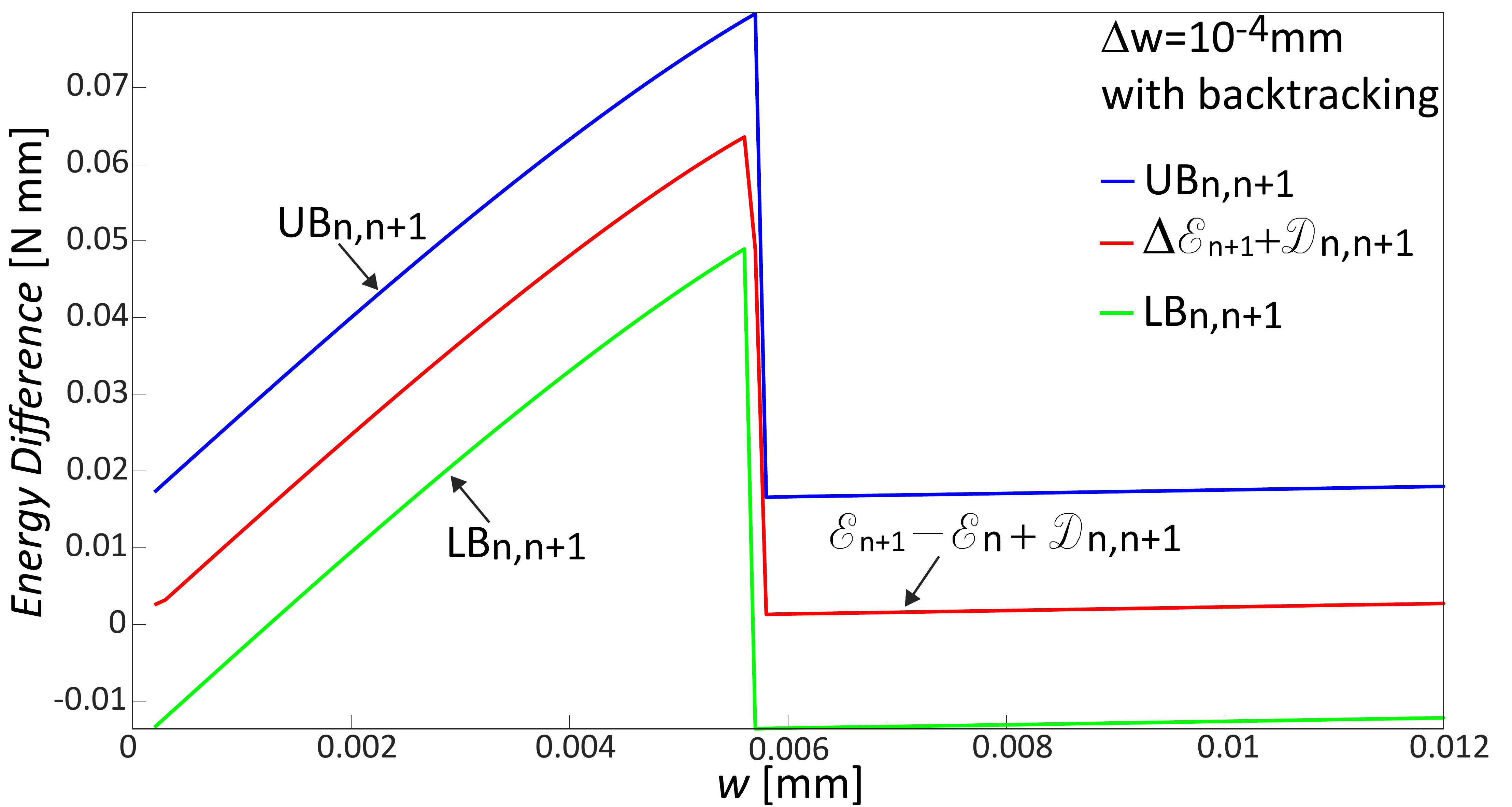}\\
			(b)&(c)
		\end{array}
		\end{array}$}
	\caption{\label{Ex:TensionTest:Enrg:4}
	Example \ref{Sec.NumEx:Ex1}. Single edge notched tension test.
	Results for $\Delta w=10^{-4}\,\mathrm{mm}$.
	$(a)$ Evolution of the total energy $\mathcal{E}(t_{n+1},\bmU_{n+1},\bmA_{n+1})+\sum_{i=0}^n\mathcal{D}(\bmA_i,\bmA_{i+1})$ 
	for $n=0,1,\ldots, N-1$, , without  backtracking $(K=0)$ and with backtracking.
	Evolution of the total incremental energy $\mathcal{E}_{n+1}-\mathcal{E}_{n}+\mathcal{D}_{n,n+1}$, 
	the lower bound $LB_{n,n+1}$ and the upper bound $UB_{n,n+1}$ which enter the two-sided energy estimate \eqref{Disc.TwoSidIneq},
	$n=0,1,\ldots, N-1$, for the scheme $(b)$ without backtracking and $(c)$ with backtracking.}
\end{figure}

\begin{figure}[H]
	\centering{$\begin{array}{c}
		\includegraphics[width=0.60\textwidth]{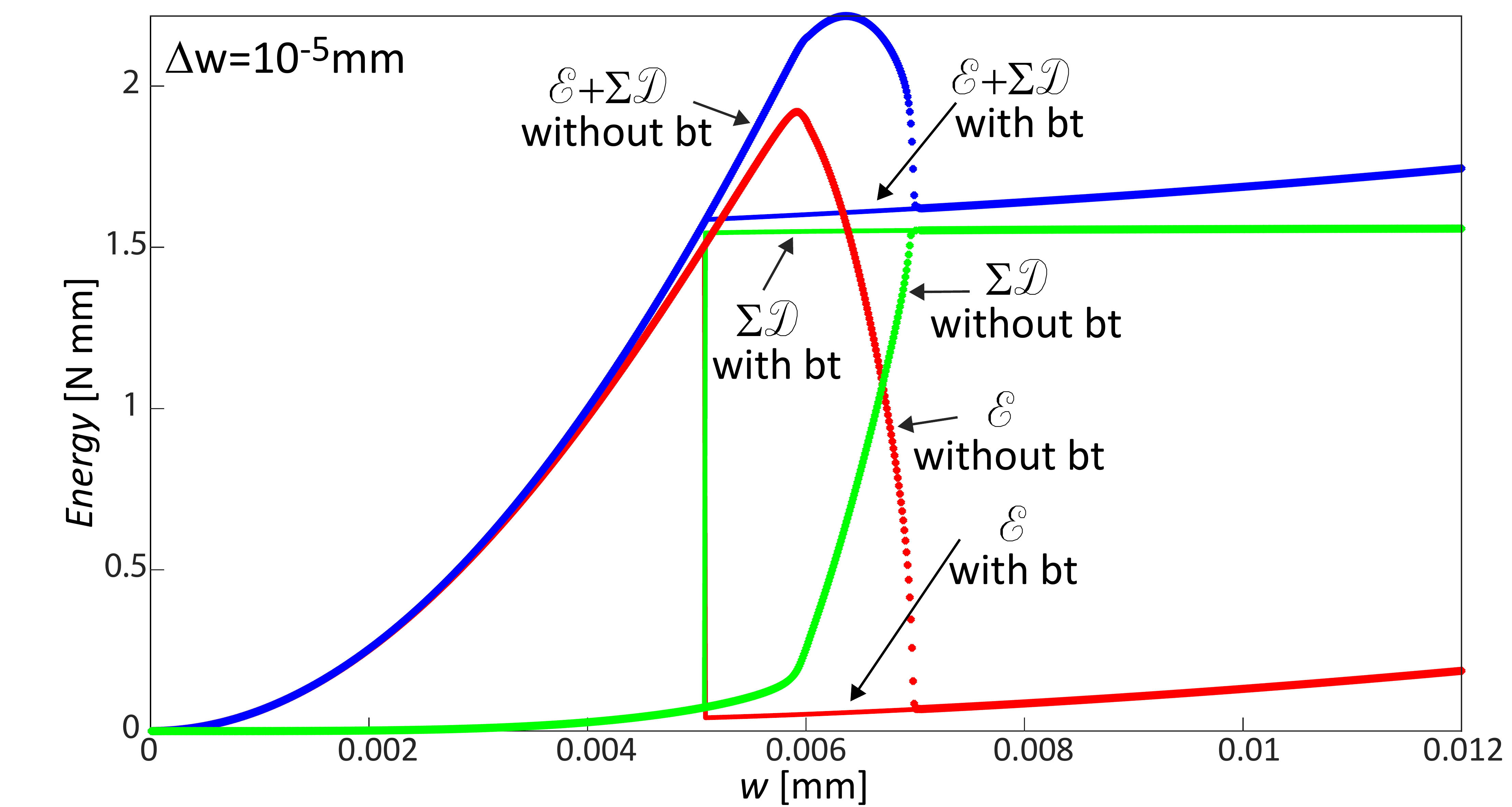}\\
		(a)\\
		\begin{array}{cc}
			\includegraphics[width=0.49\textwidth]{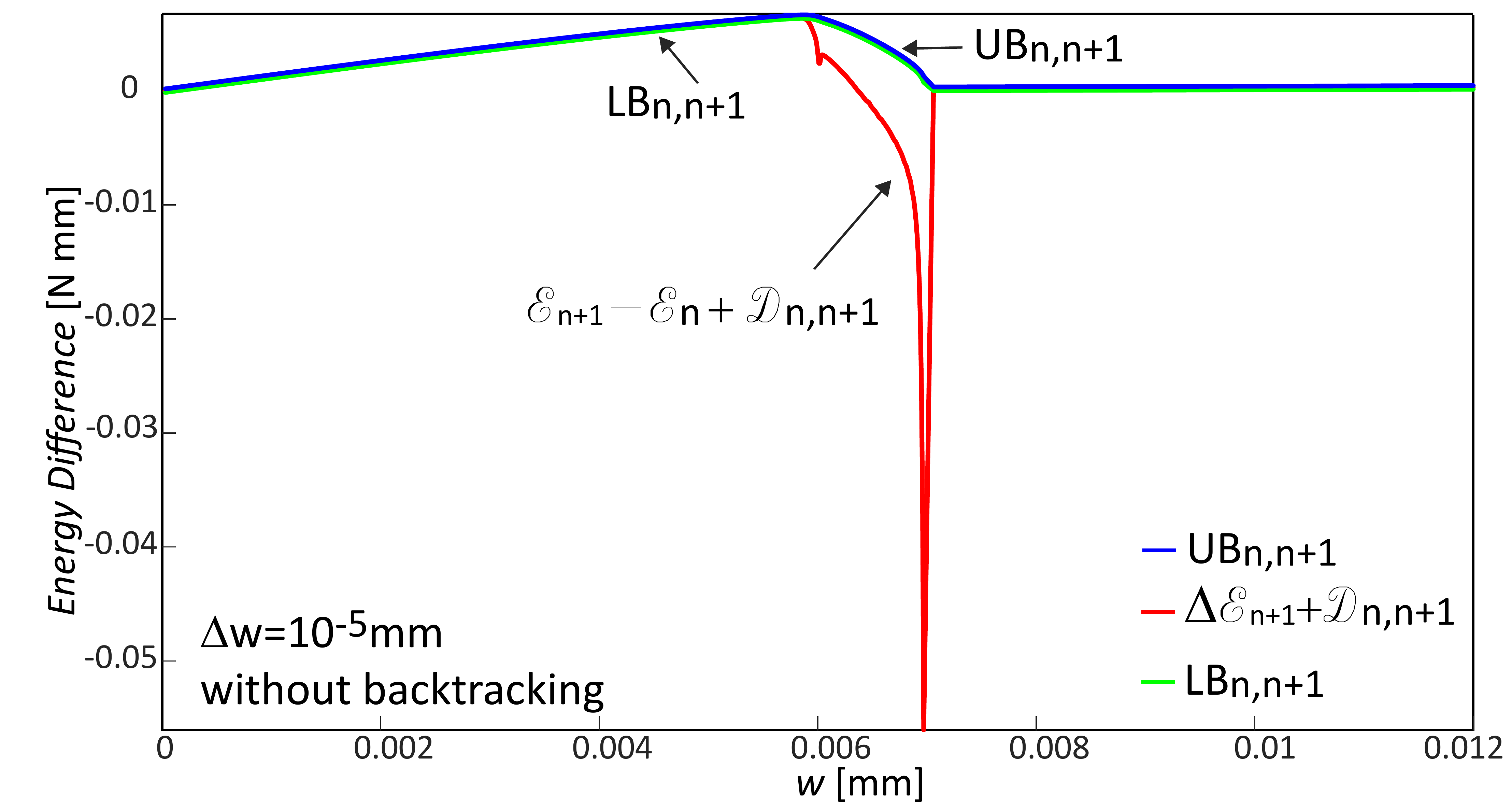}& 
			\includegraphics[width=0.49\textwidth]{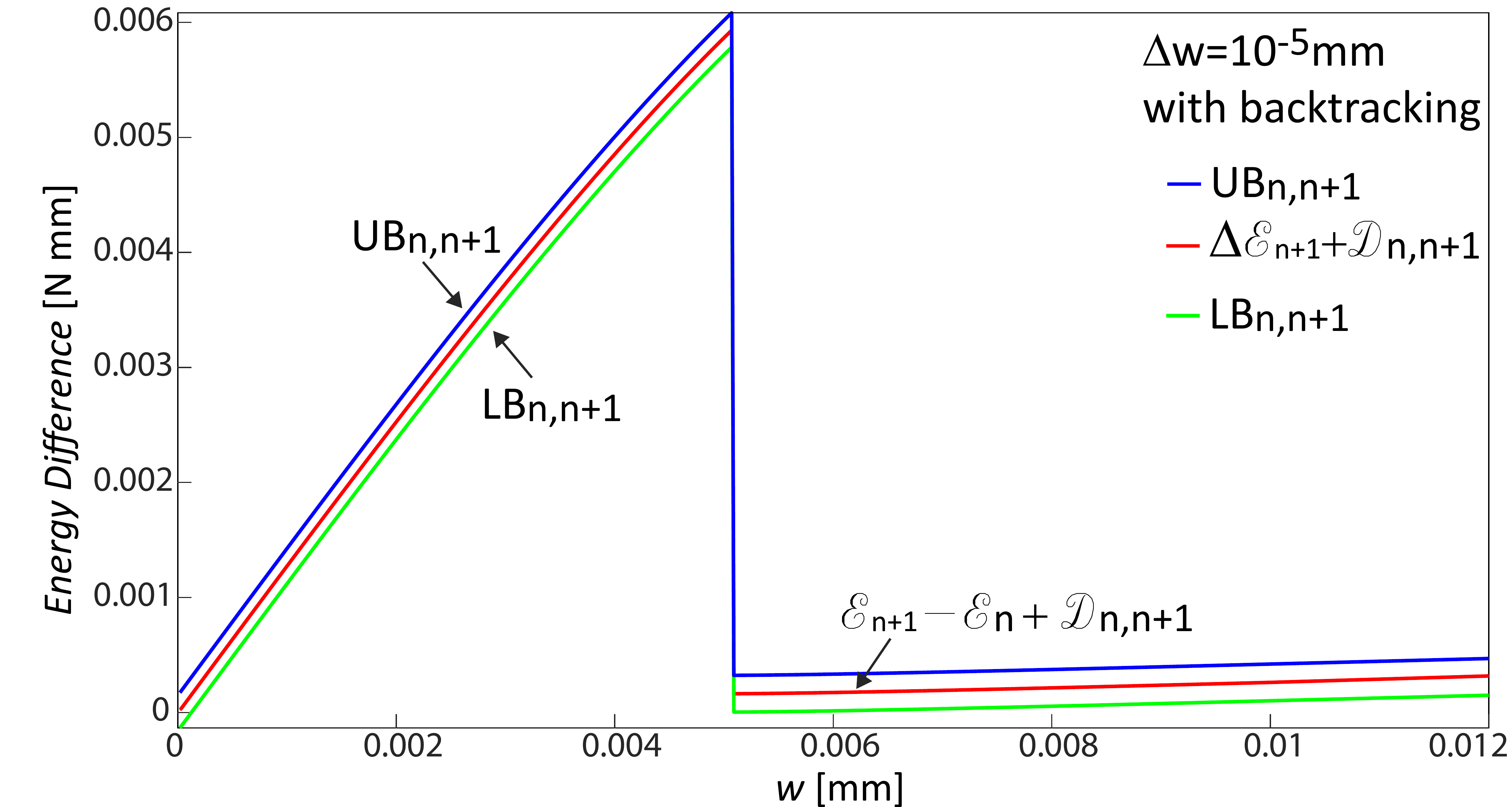}\\
			(b)&(c)
		\end{array}
		\end{array}$}
	\caption{\label{Ex:TensionTest:Enrg:5}
	Example \ref{Sec.NumEx:Ex1}. Single edge notched tension test.
	Results for $\Delta w=10^{-5}\,\mathrm{mm}$.
	$(a)$ Evolution of the total energy $\mathcal{E}(t_{n+1},\bmU_{n+1},\bmA_{n+1})+\sum_{i=0}^n\mathcal{D}(\bmA_i,\bmA_{i+1})$ 
	for $n=0,1,\ldots, N-1$, without  backtracking $(K=0)$ and with  backtracking.
	Evolution of the total incremental energy $\mathcal{E}_{n+1}-\mathcal{E}_{n}+\mathcal{D}_{n,n+1}$, 
	the lower bound $LB_{n,n+1}$ and the upper bound $UB_{n,n+1}$ which enter the two-sided energy estimate \eqref{Disc.TwoSidIneq},
	$n=0,1,\ldots, N-1$, for the scheme $(b)$ without backtracking and $(c)$ with backtracking.}
\end{figure}

\begin{figure}[H]
	\centering{$\begin{array}{c}
		\includegraphics[width=0.60\textwidth]{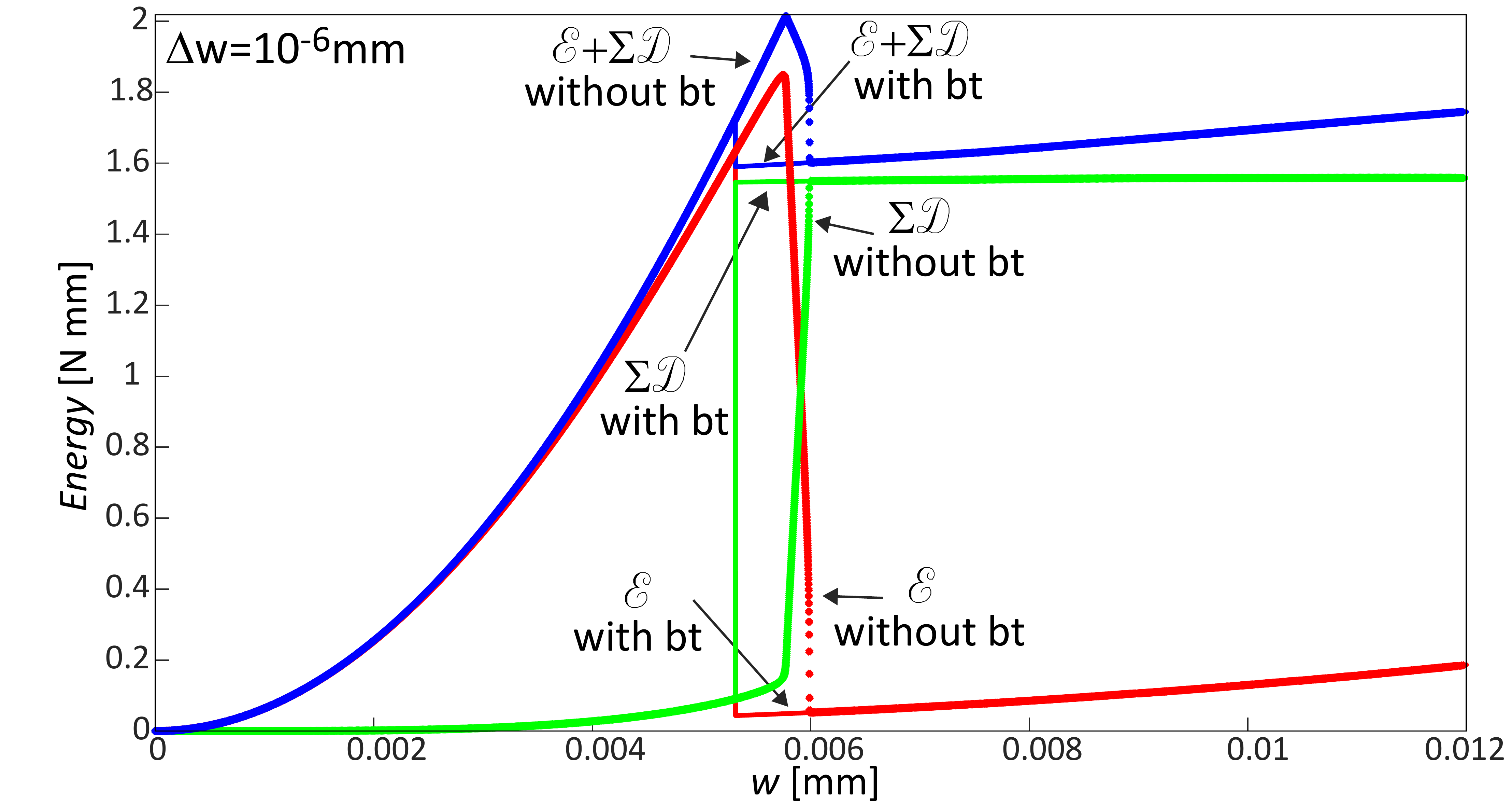}\\
		(a)\\
		\begin{array}{cc}
			\includegraphics[width=0.49\textwidth]{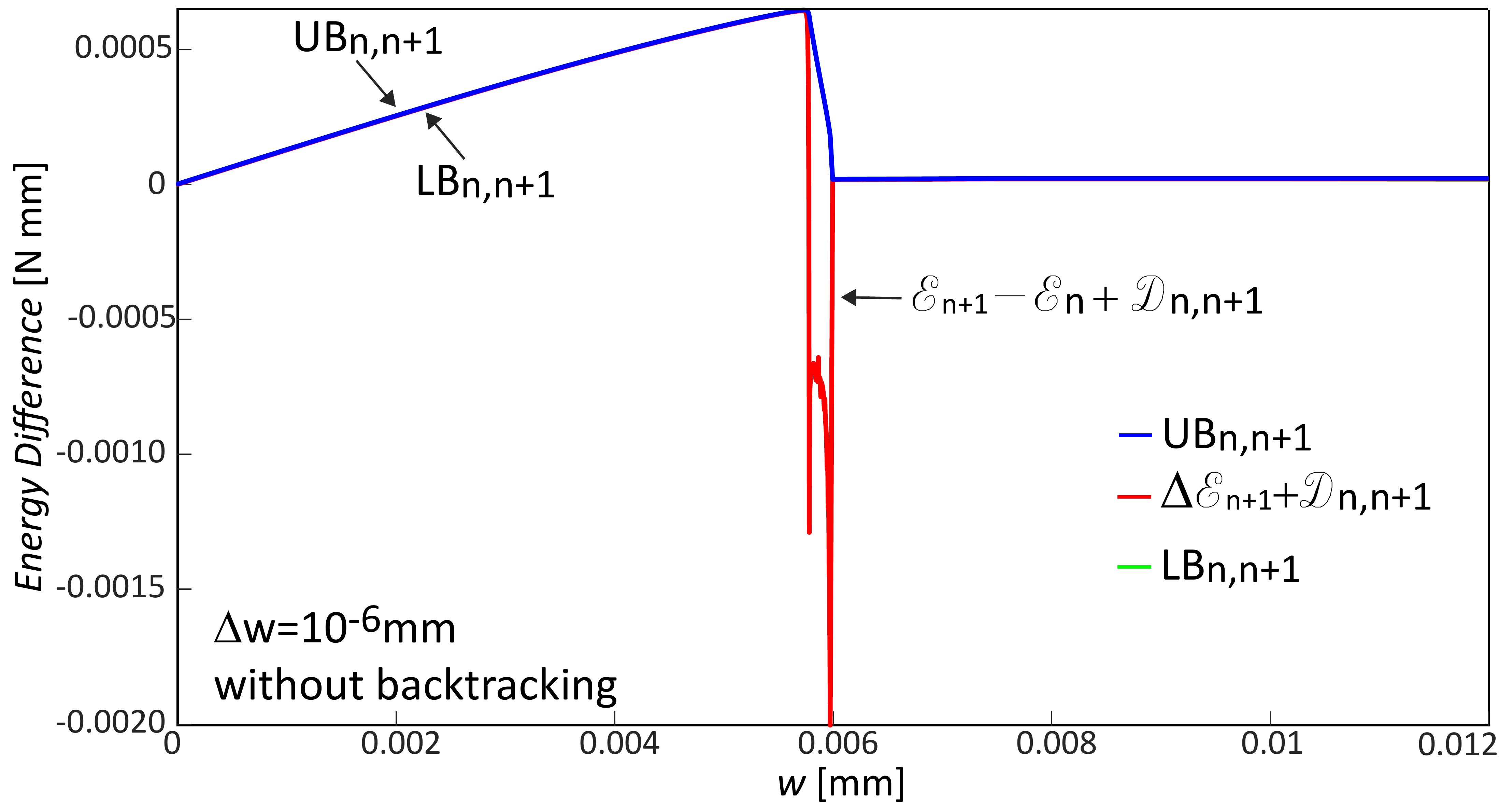}& 
			\includegraphics[width=0.49\textwidth]{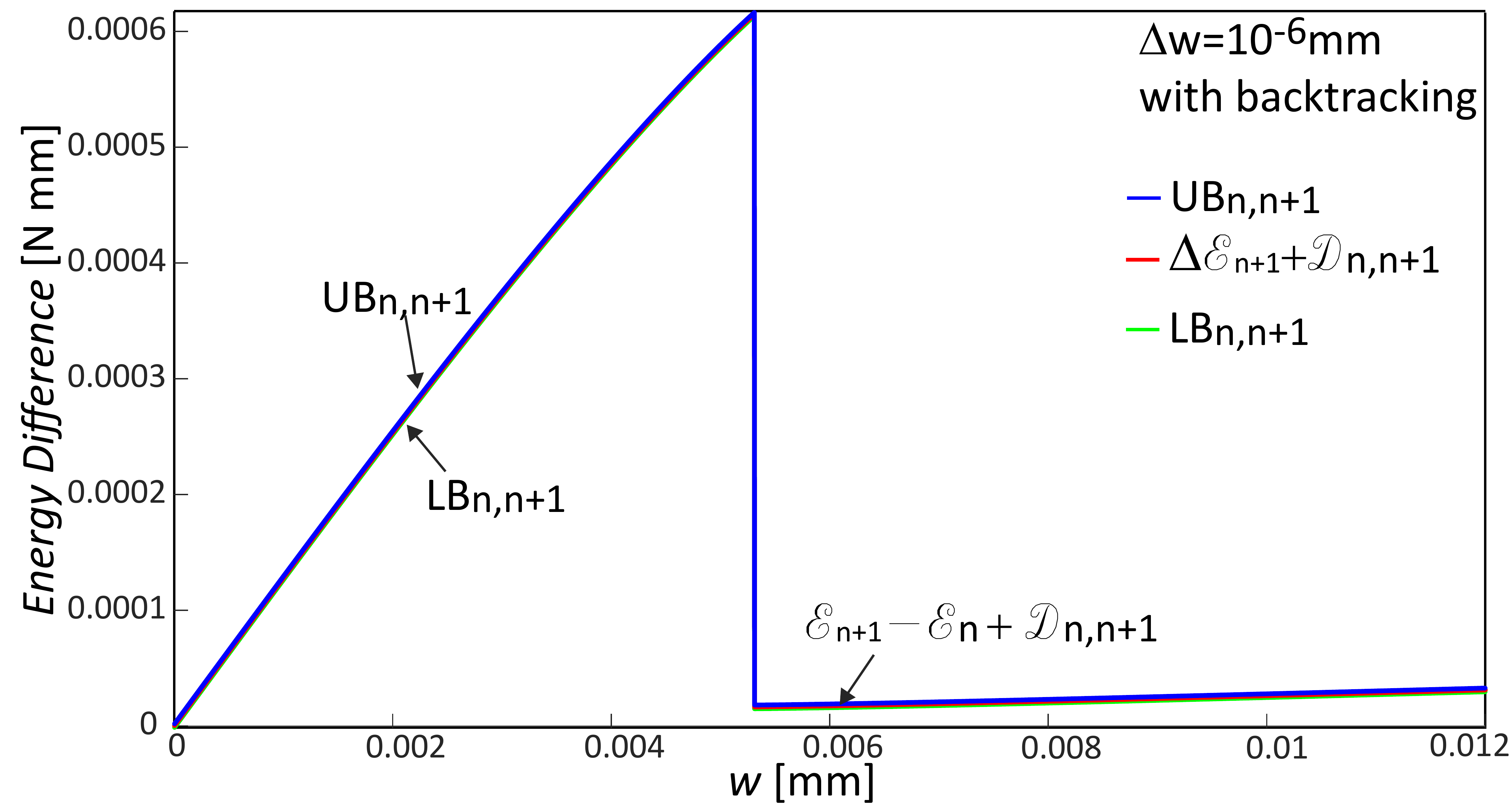}\\
			(b)&(c)
		\end{array}
		\end{array}$}
	\caption{\label{Ex:TensionTest:Enrg:6}
	Example \ref{Sec.NumEx:Ex1}. Single edge notched tension test.
	Results for $\Delta w=10^{-6}\,\mathrm{mm}$.
	$(a)$ Evolution of the total energy $\mathcal{E}(t_{n+1},\bmU_{n+1},\bmA_{n+1})+\sum_{i=0}^n\mathcal{D}(\bmA_i,\bmA_{i+1})$ 
	for $n=0,1,\ldots, N-1$, without  backtracking $(K=0)$ and with  backtracking.
	Evolution of the total incremental energy $\mathcal{E}_{n+1}-\mathcal{E}_{n}+\mathcal{D}_{n,n+1}$, 
	the lower bound $LB_{n,n+1}$ and the upper bound $UB_{n,n+1}$ which enter the two-sided energy estimate \eqref{Disc.TwoSidIneq},
	$n=0,1,\ldots, N-1$, for the scheme $(b)$ without backtracking and $(c)$ with backtracking.}
\end{figure}

The energy variation associated with the solutions computed
without the backtracking $(K=0)$ and with the backtracking option active $(K>0)$
are depicted in Figure \ref{Ex:TensionTest:Enrg:4} to Figure \ref{Ex:TensionTest:Enrg:6}
for the three displacement driven conditions, 
$\Delta w=10^{-4}\,\mathrm{mm}$, $\Delta w=10^{-5}\,\mathrm{mm}$ and $\Delta w=10^{-6}\,\mathrm{mm}$, respectively.
Figure \ref{Ex:TensionTest:Enrg:4}$(a)$,  
Figure \ref{Ex:TensionTest:Enrg:5}$(a)$ and Figure \ref{Ex:TensionTest:Enrg:6}$(a)$  
show the evolution of the total energy $\mathcal{E}(t_{n+1},\bmU_{n+1},\bmA_{n+1})+\sum_{i=0}^n\mathcal{D}(\bmA_i,\bmA_{i+1})$,
the current free energy $\mathcal{E}_{n+1}$ and the total dissipation 
$\sum_{i=0}^n\mathcal{D}_{i,i+1}$, for $n=0,1,\ldots, N-1$.
The energy paths obtained without backtracking display a bubble shape 
when damage starts to propagate due to the gradual
substantial reduction of the free energy because of the reduction of the elastic energy and of the increase of the 
dissipation energy. Such bubble is not present using the backtracking given that in this case 
the damage evolution is faster. For both the schemes, 
when the crack completes its propagation along half specimen, 
the total energy increases very little, due to the regularization parameter $\delta$, and is almost
equal to the accumulated dissipated energy.

Figure \ref{Ex:TensionTest:Enrg:4}$(b)$,  
Figure \ref{Ex:TensionTest:Enrg:5}$(b)$ and Figure \ref{Ex:TensionTest:Enrg:6}$(b)$ display
the total incremental energy  
$\mathcal{E}_{n+1}-\mathcal{E}_{n}+\mathcal{D}_{n,n+1}$, the upper bound $UB_{n,n+1}$ and 
the lower bound $LB_{n,n+1}$, for $n=0,\,\ldots, N-1$ associated with the solutions computed without activating 
the backtracking scheme $(K=0)$, whereas 
Figure \ref{Ex:TensionTest:Enrg:4}$(c)$,  
Figure \ref{Ex:TensionTest:Enrg:5}$(c)$ and Figure \ref{Ex:TensionTest:Enrg:6}$(c)$
contain the same type of plots relative to the approximate energetic solutions.
When damage starts to develop, the alternate minimization Algorithm \ref{Alg.NewAMM} 
fails to provide an appropriate 
energetic solution to the problem. As a result, the sequence of discrete solutions
evolves along a path of local minima, whose energy deviates substantially 
from the one associated with global minimization. 
The energetic bounds \eqref{Disc.TwoSidIneq}, and 
more specifically the lower bound, are then violated by the computed solutions. 
The two-sided energy inequality \eqref{Disc.TwoSidIneq}
is met only during the initial stage when the
specimen remains elastic and in the last stage of the cracking process showing that the algorithm
jumps back into a state of significantly lower energy. By contrast, 
with the backtracking option active, we avoid the wrong forward path obtained by the standard scheme,
for we restart with `better' local minima and we are able to obtain a
final path of the energy difference which lies between the two bounds.

\begin{figure}[H]
	\centering{$\begin{array}{cccc}
	\text{$(a)$ Without backtracking}&&&\\
		 \includegraphics[width=0.25\textwidth]{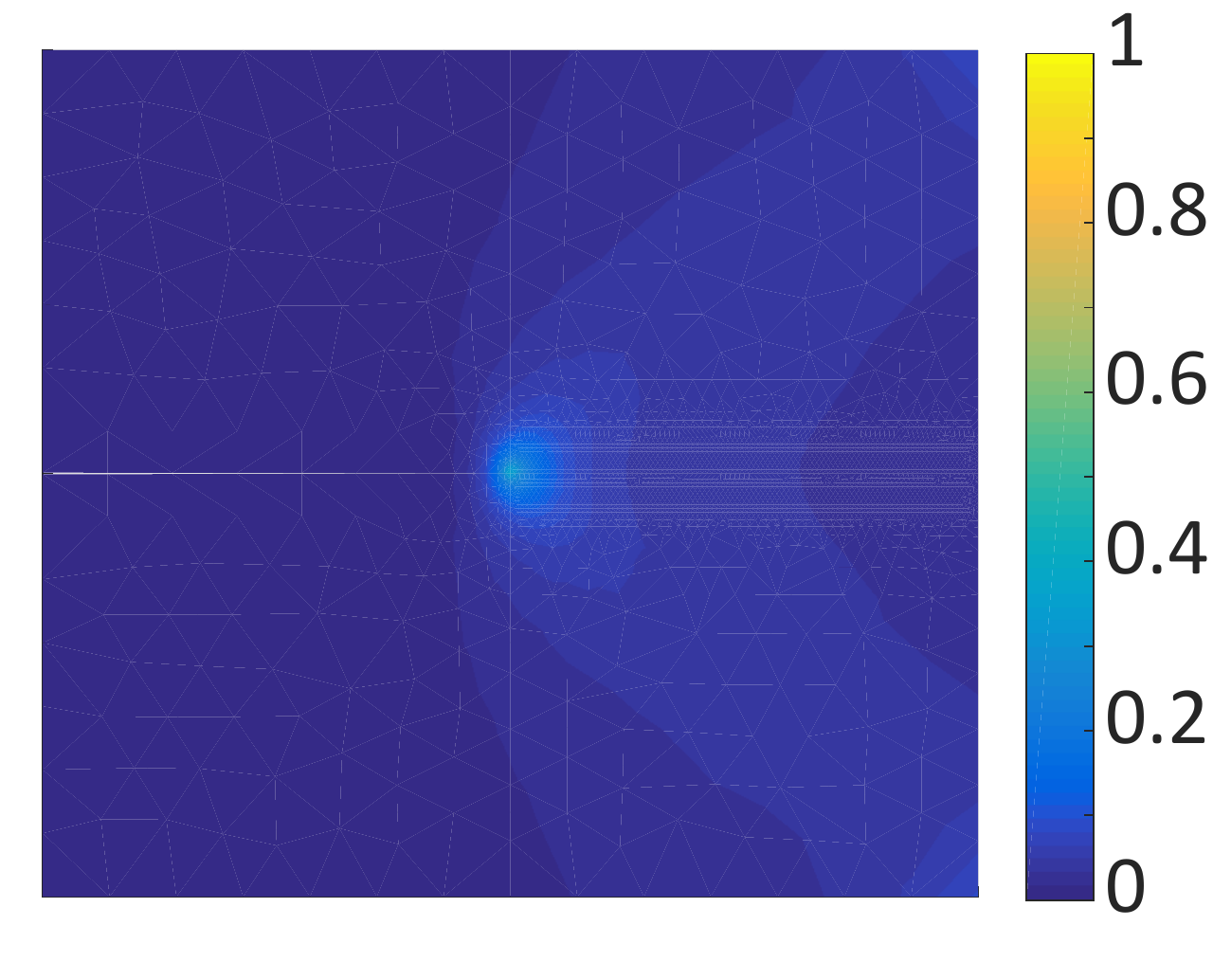}&\includegraphics[width=0.25\textwidth]{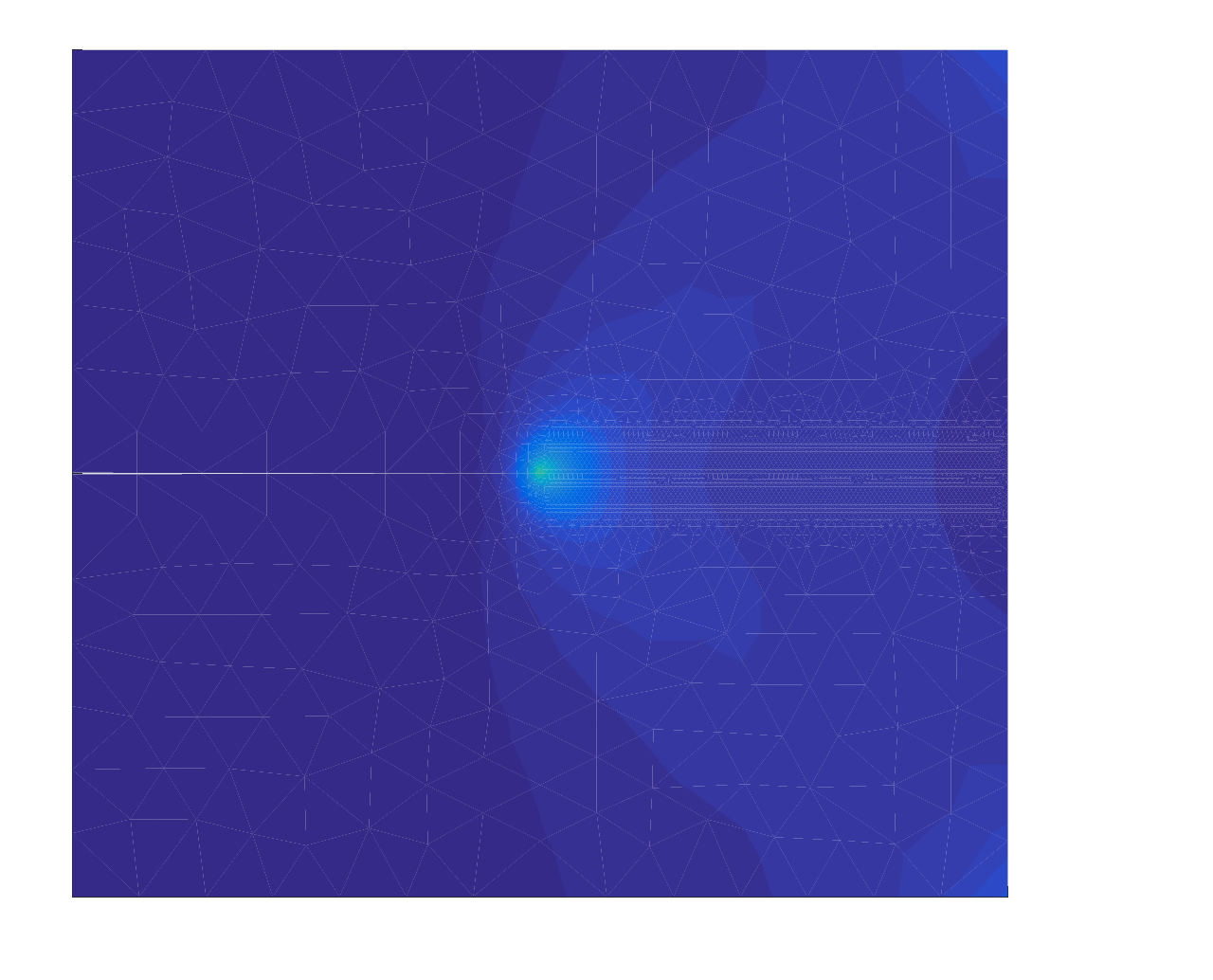}&		  
		 \includegraphics[width=0.25\textwidth]{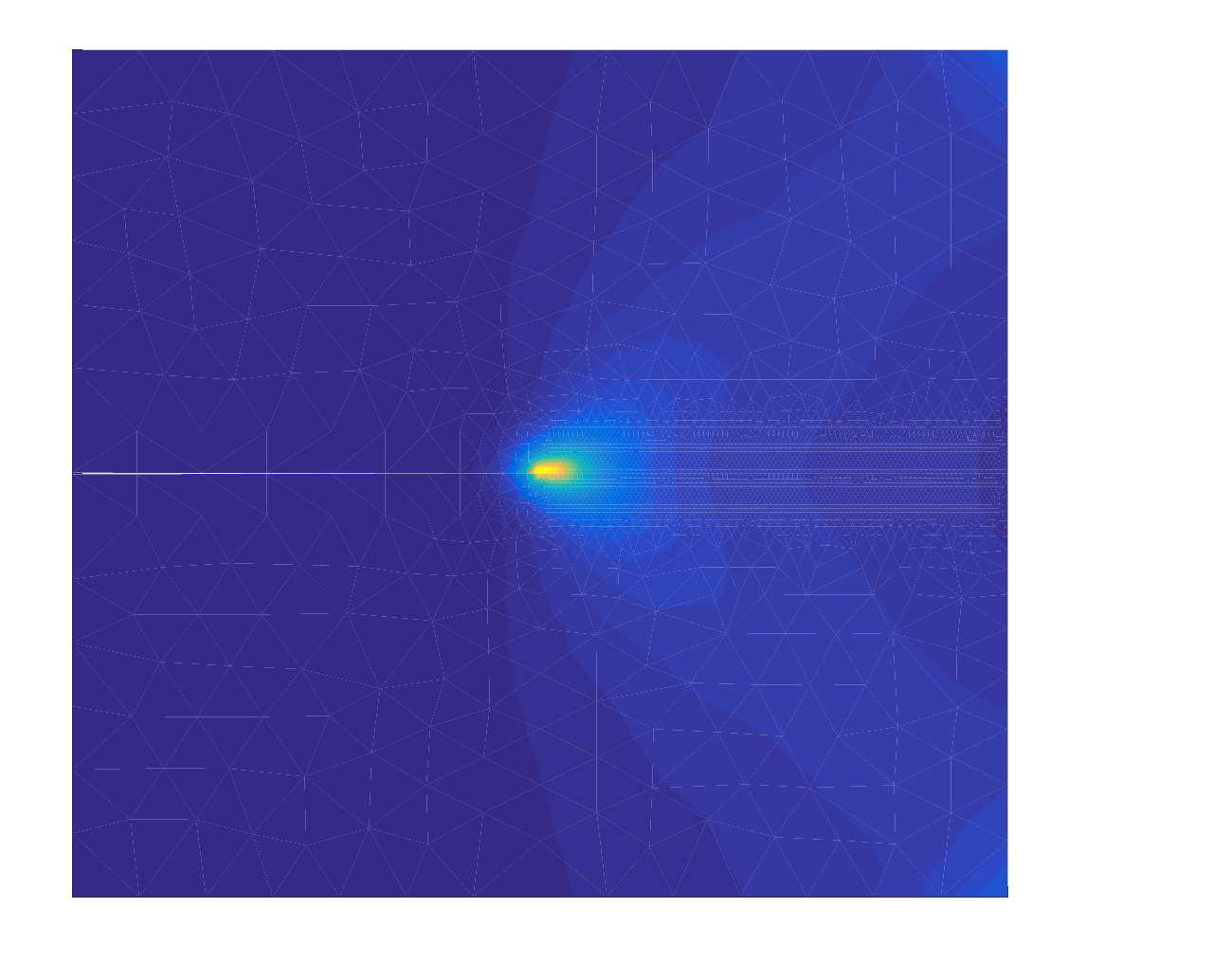}&\includegraphics[width=0.25\textwidth]{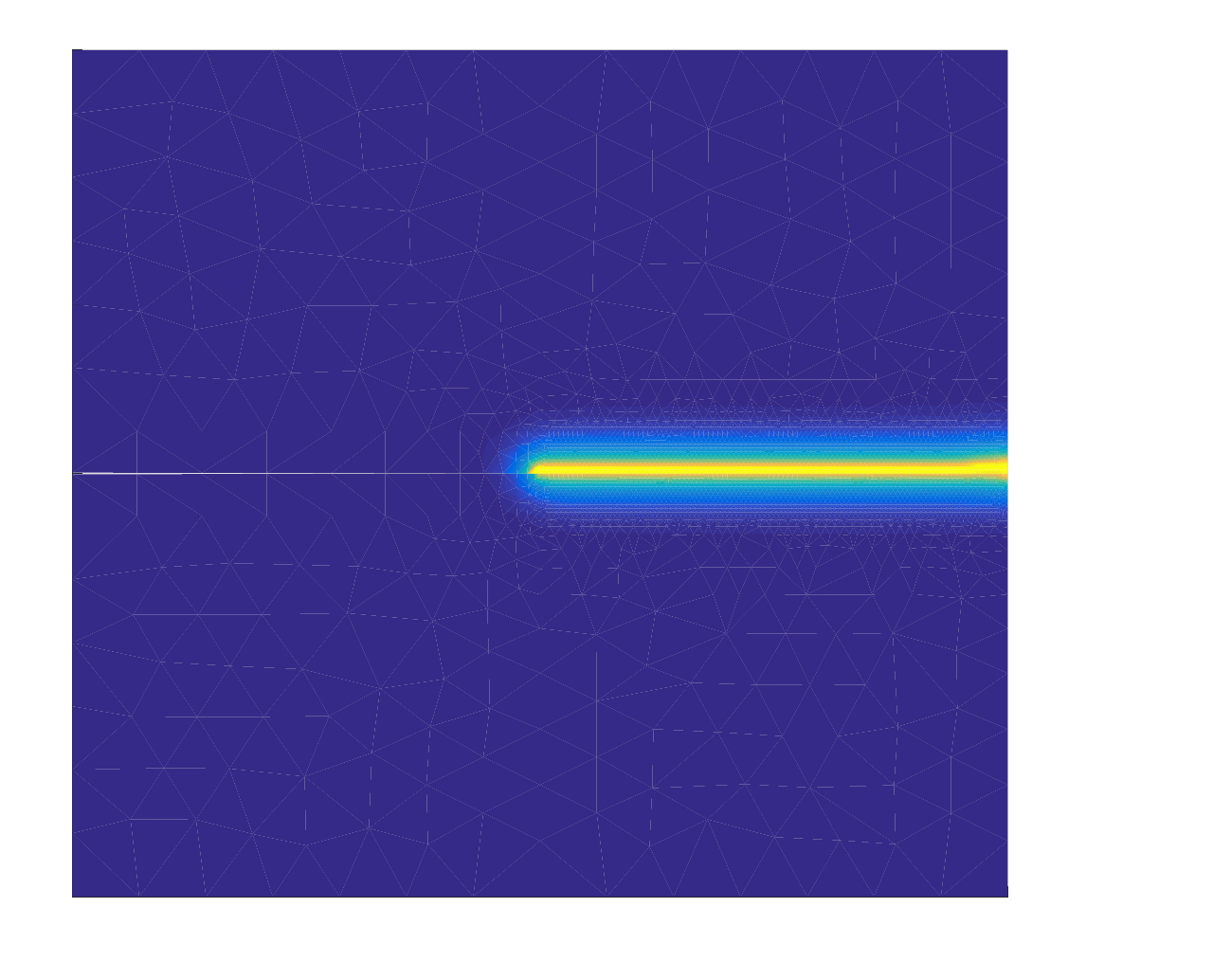}\\
		 w=0.005\,\mathrm{mm} & w=0.0055\,\mathrm{mm}  & w=0.006\,\mathrm{mm}  & w=0.007\,\mathrm{mm}\\[1.5ex]
	\text{$(b)$ With backtracking}&&&\\
		 \includegraphics[width=0.25\textwidth]{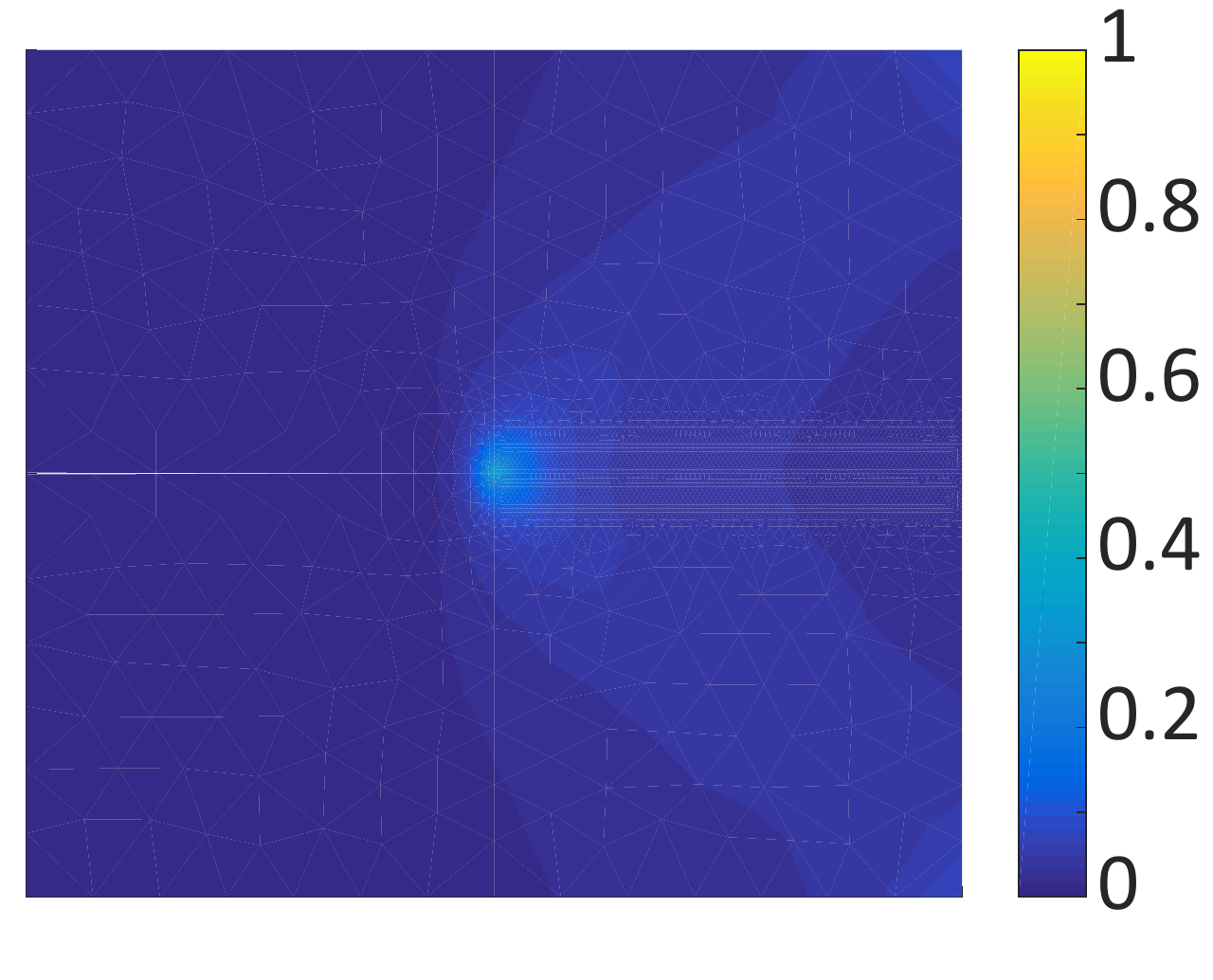}&\includegraphics[width=0.25\textwidth]{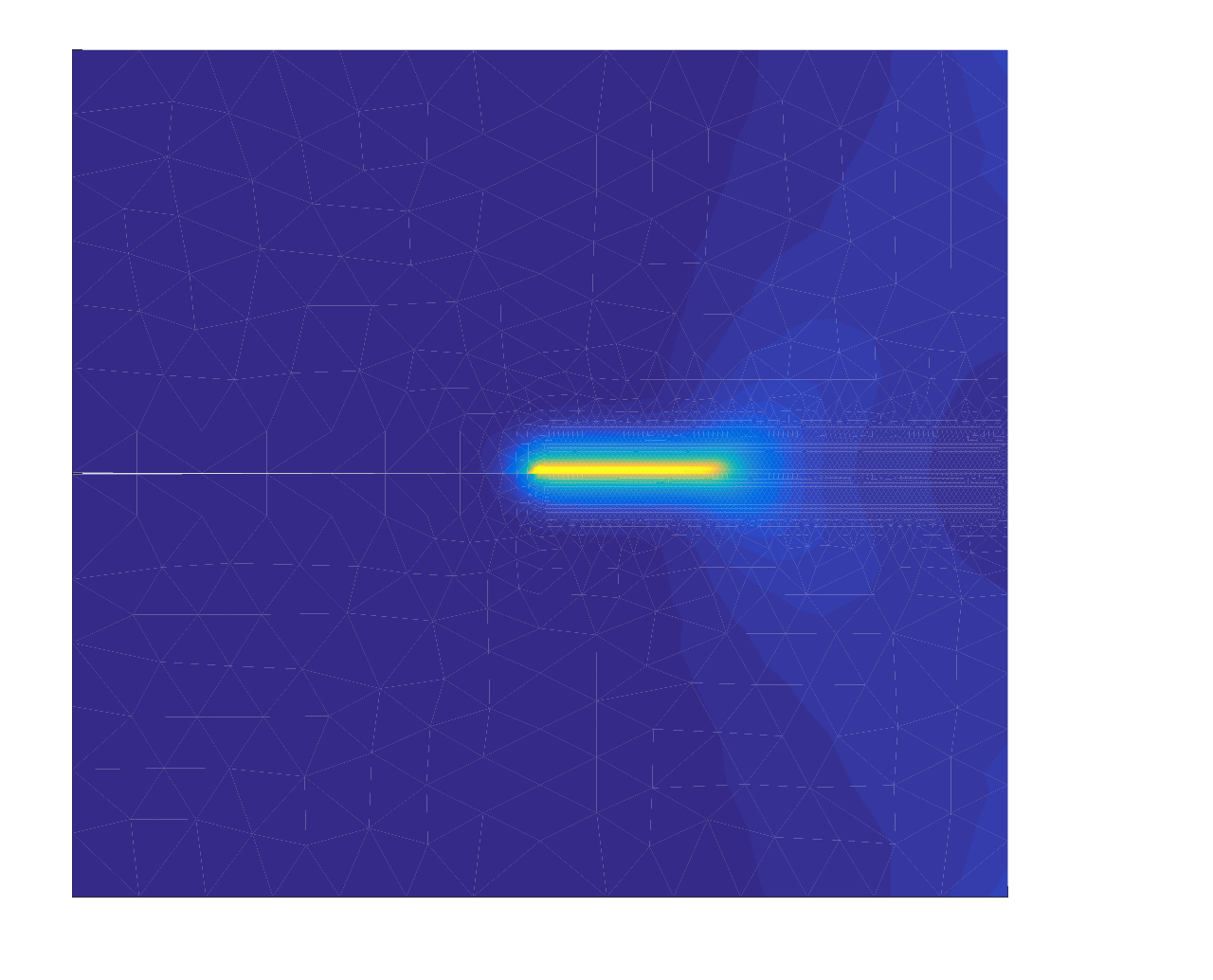}&		  
		 \includegraphics[width=0.25\textwidth]{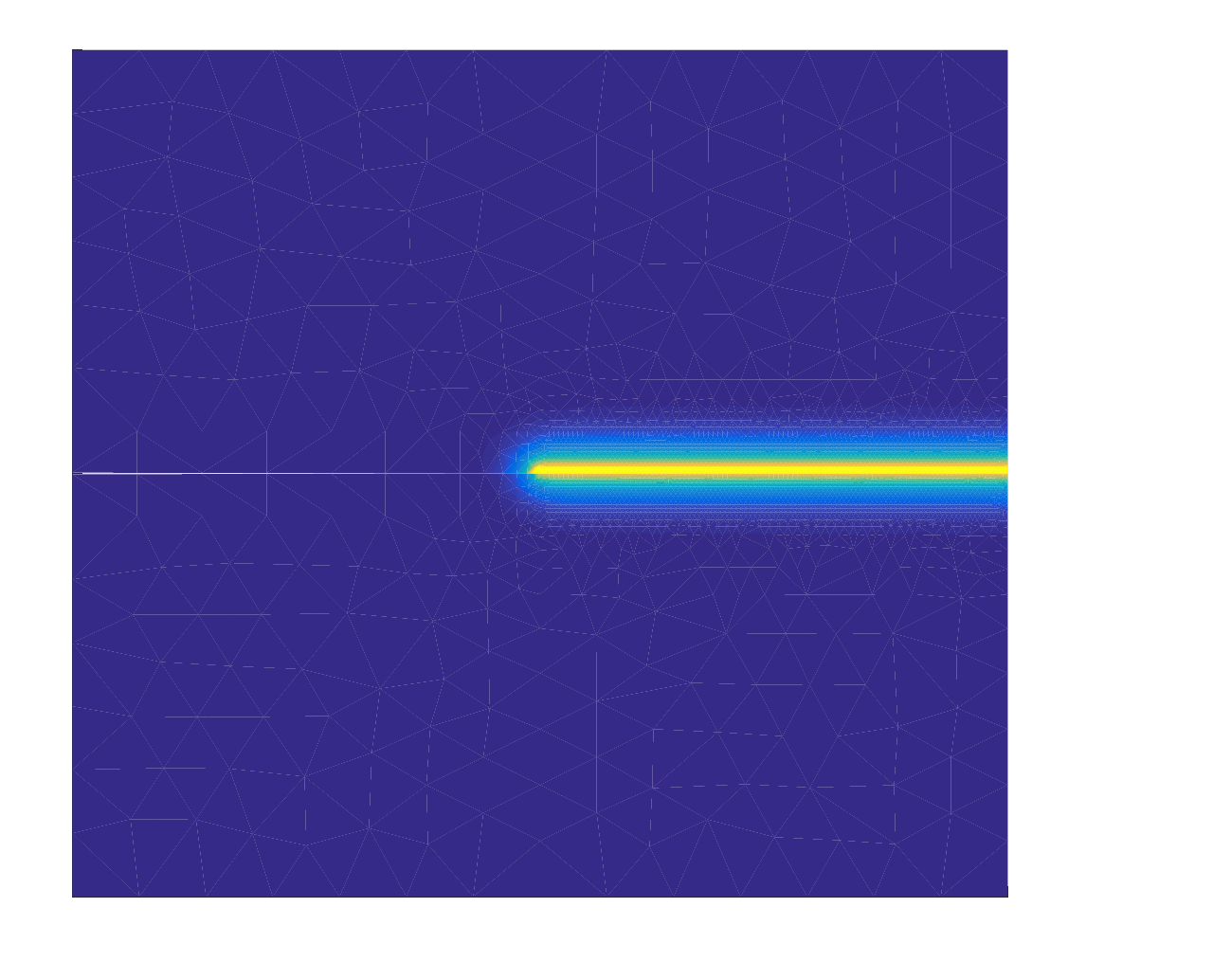}&\includegraphics[width=0.25\textwidth]{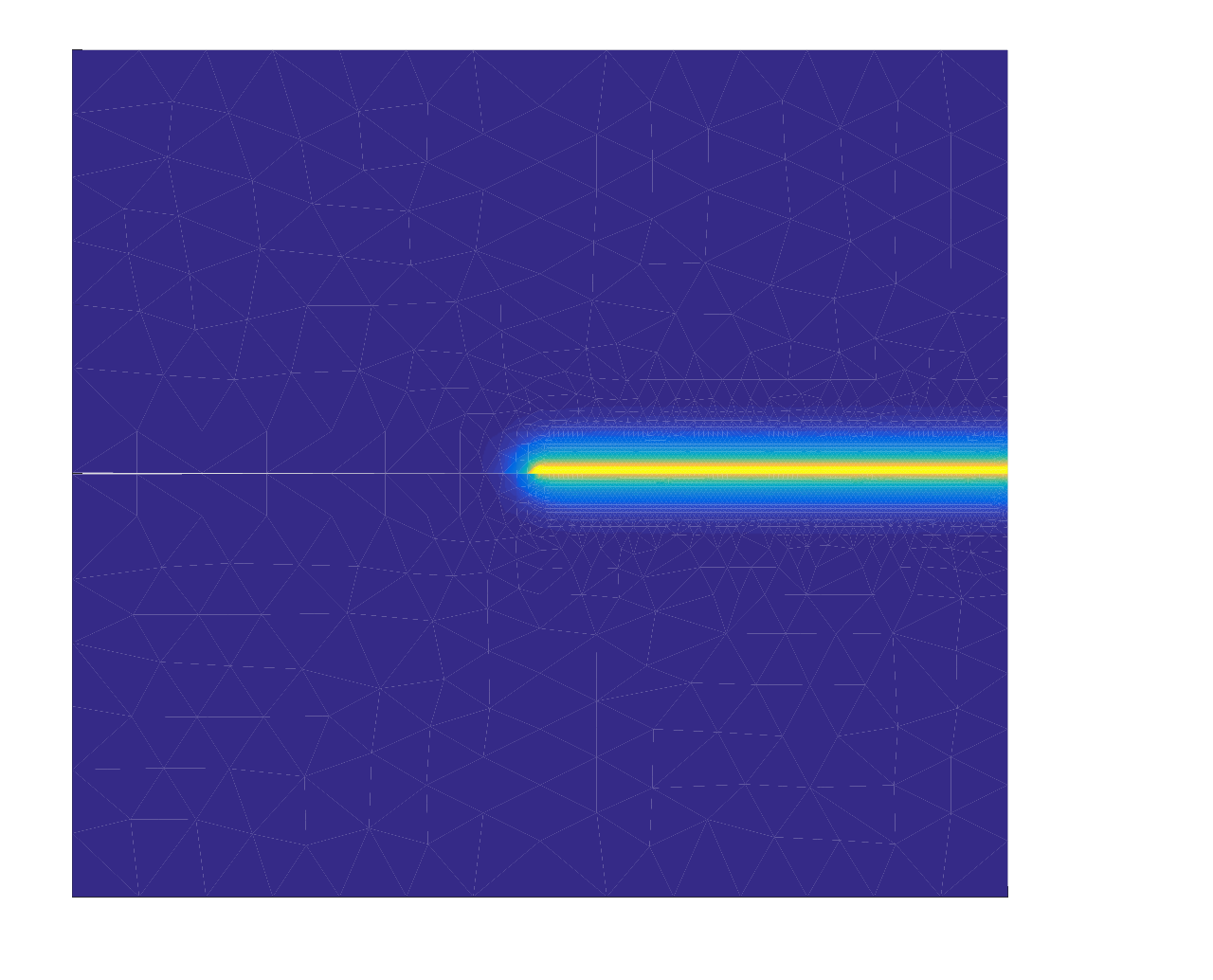}\\
		 w=0.005\,\mathrm{mm} & w=0.00537\,\mathrm{mm}  & w=0.0055\,\mathrm{mm}  & w=0.007\,\mathrm{mm}
               \end{array}$
		}
	\caption{\label{NE3} Example \ref{Sec.NumEx:Ex1}. Single edge notched tension test.
	Phase field distribution at different stages of the total displacement $w$ applied on the specimen top edge.
	Results for $\Delta w=10^{-5}\,\mathrm{mm}$. 
	In the damage maps, the yellow corresponds to $\beta=1-\delta$ 
	with $\delta=10^{-4}$ given that we are considering a partially damage profile, 
	whereas the blue corresponds to solid material for which $\beta=0$. 
	}
\end{figure}

\begin{figure}[H]
	\centering{$\begin{array}{ccc}
		\includegraphics[width=0.33\textwidth]{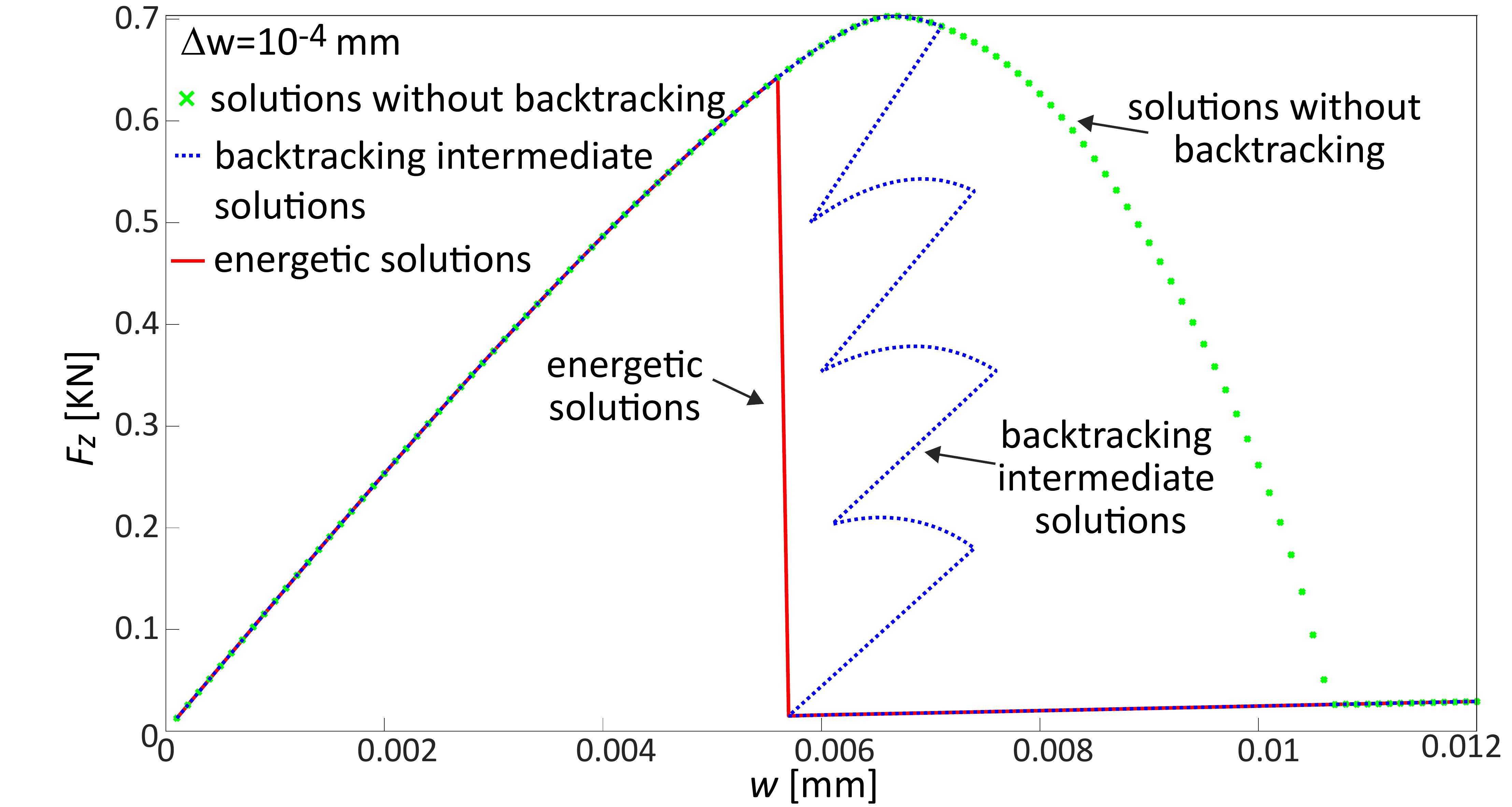}&	
		\includegraphics[width=0.33\textwidth]{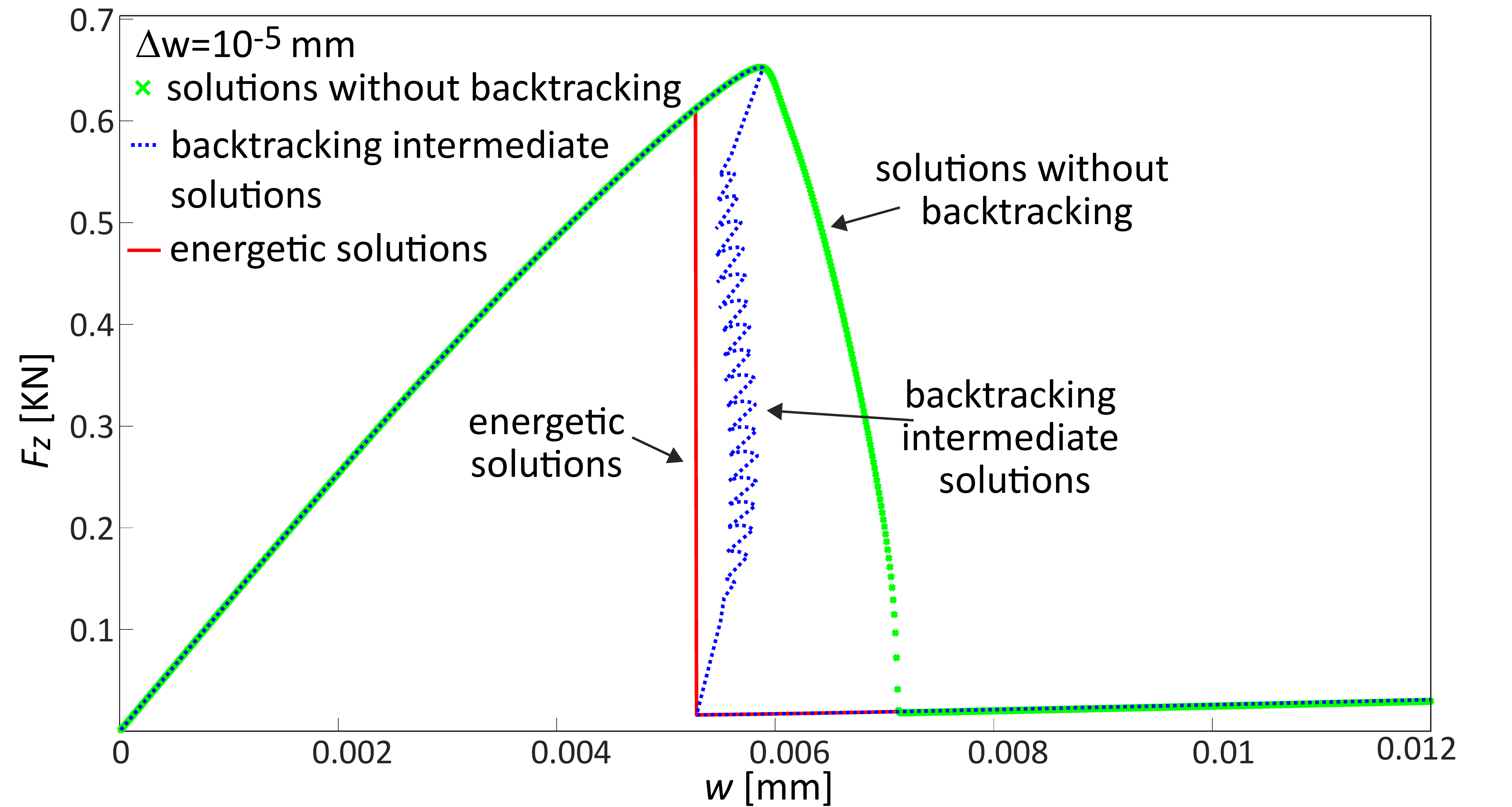}&	
		\includegraphics[width=0.33\textwidth]{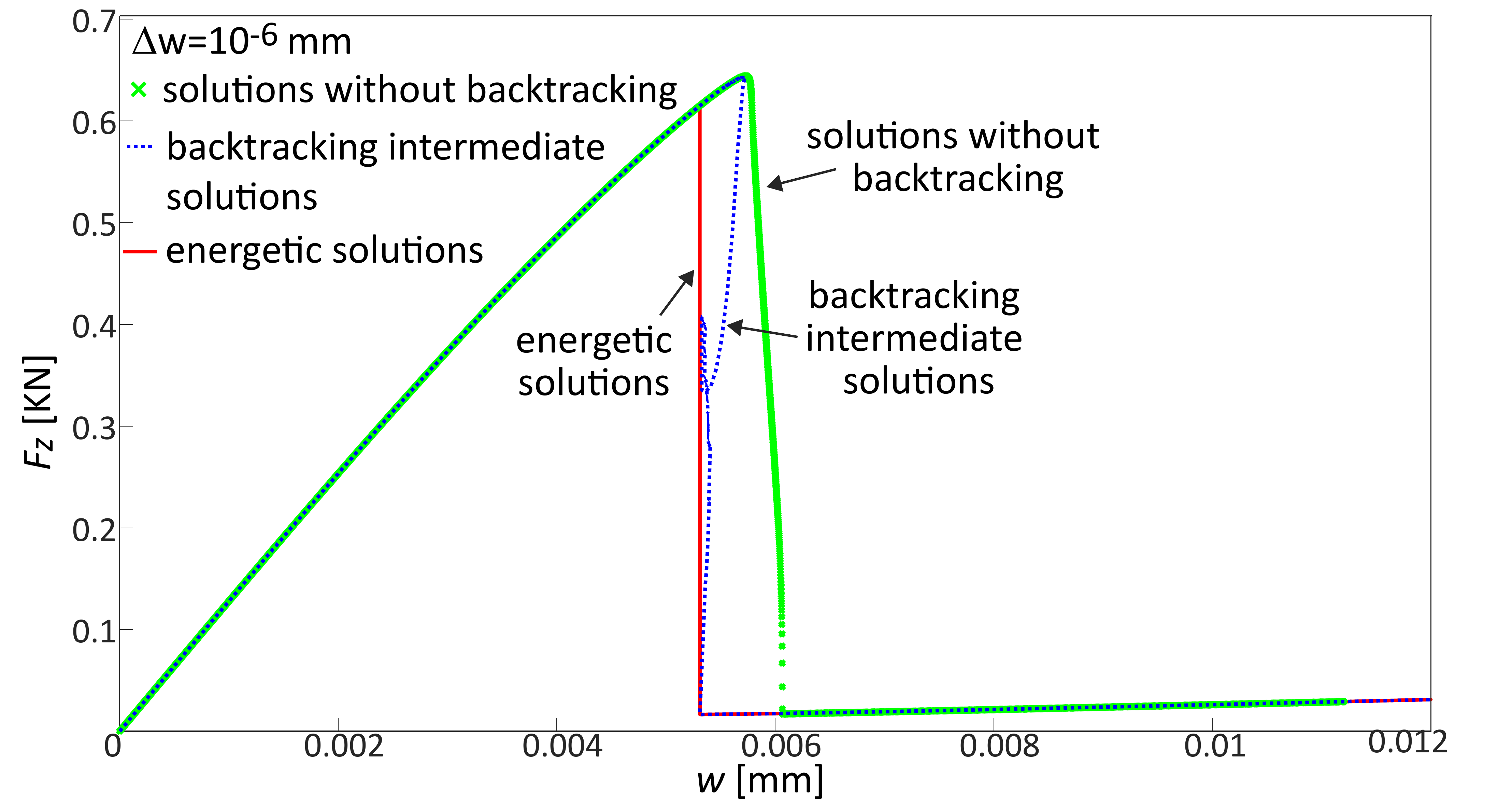}\\	
		 (a) & (b)& (c)
                \end{array}$
		}
	\caption{\label{NE4}
	Example \ref{Sec.NumEx:Ex1}. Single edge notched tension test.
	Load--displacement curves associated with the intermediate energetic solutions obtained with the backtracking algorithm
	and for the different displacement increments. 
	$(a)$ $\Delta w=10^{-4} \,\mathrm{mm}$; $(b)$ $\Delta w=10^{-5} \,\mathrm{mm}$ and $(c)$ $\Delta w=10^{-6} \,\mathrm{mm}$. 
	}
\end{figure}

To get some further insight on the structure of the energetic solutions, we recall that each loading step is solved by
the alternating minimization method with the last converged approximate energetic solution as starting guess. The solution
that we thus compute is in fact a local minimizer, unless it verifies the energetic bounds \eqref{Disc.TwoSidIneq}.
By the backtracking algorithm we are looking for a solution close to the starting guess which meets the energetic bounds, 
thus it is more likely to be a global minimizer.  As a result, if the energetic bounds are not met, the algorithm move one step backward 
(as we have skteched in Figure \ref{Fig.BackTracking}) and solves again the previous step 
but with different starting, guess given by the last computed damage value, thus defining a lower energy state. 
If also such solution does not meet the energetic bounds, the algorithm moves a further step back and the process is repeated 
until the bounds are met or 
we reach the maximum number $K$ that we have set to go backward. Only then the algorithm proceeds one step forward.
In our simulations we never reached such limit value for $K$ which was set equal to $50$.
Figure \ref{NE3}  displays the distribution of the 
phase-field $\beta$ at the different stages of the evolutive process 
as computed by the two numerical schemes. Consistently with the procedure described above,
the damage profile displays with the backtracking option active a faster evolution and higher dissipation
when compared with the basic variant. This behaviour is 
also confirmed by the numerical experiments of \cite{MRZ10}
where the \textsc{AT1} regularized formulation of fracture is considered without the
splitting of the free elastic energy $\psi_0$. 

Given that with the backtracking option active, we go backward and forward, obtaining energetic 
solutions for the same displacement but with increasing damage and therefore lower resultant load,
we save these solutions, for which the total energy remains between the two energetic bounds. 
Figure \ref{NE4} displays the load-displacement curve associated with such intermediate configurations. 
For instance, for the step increment $\Delta w=10^{-4}\,\mathrm{mm}$,
the path zigzags down to the curve because with such size of the increment, 
the two sided energetic bounds are more distant from each other, leaving more room to move 
within the bounds. Such range between the bounds is reduced  by reducing the displacement increment $\Delta w$
and so is the zigzaged path.


\subsection{Single edge notched shear test}\label{Sec.NumEx:Ex2}

We now consider the same square plate with horizontal notch as in the previous 
example but this time subject to pure shear deformation. This problem
has received a lot of attention in the literature on phase-field modelling of brittle fracture \cite{BFM00,MHW10,MWH10}
for its simple setup and for displaying an asymmetric failure pattern. Due to a non--trivial combination of
local tension-compression and loading--unloading processes, the crack propagates towards the lower right corner 
of the square plate. The geometric setup and boundary conditions are displayed in Figure \ref{NE6}$(a)$. 

\begin{figure}[H]\label{NE6}
	\centering{$\begin{array}{cc}
		\includegraphics[width=0.35\textwidth]{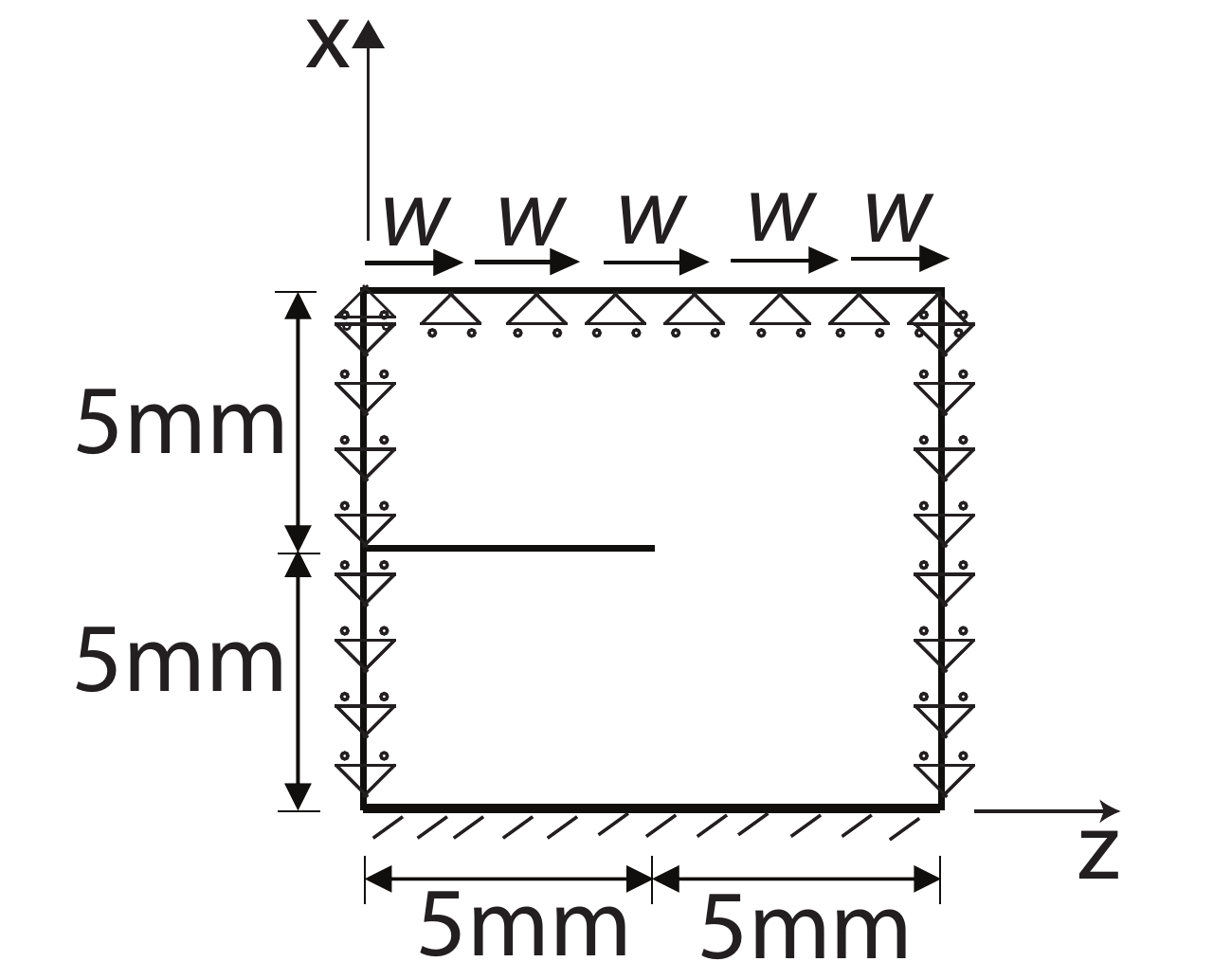}&    
		\includegraphics[width=0.20\textwidth]{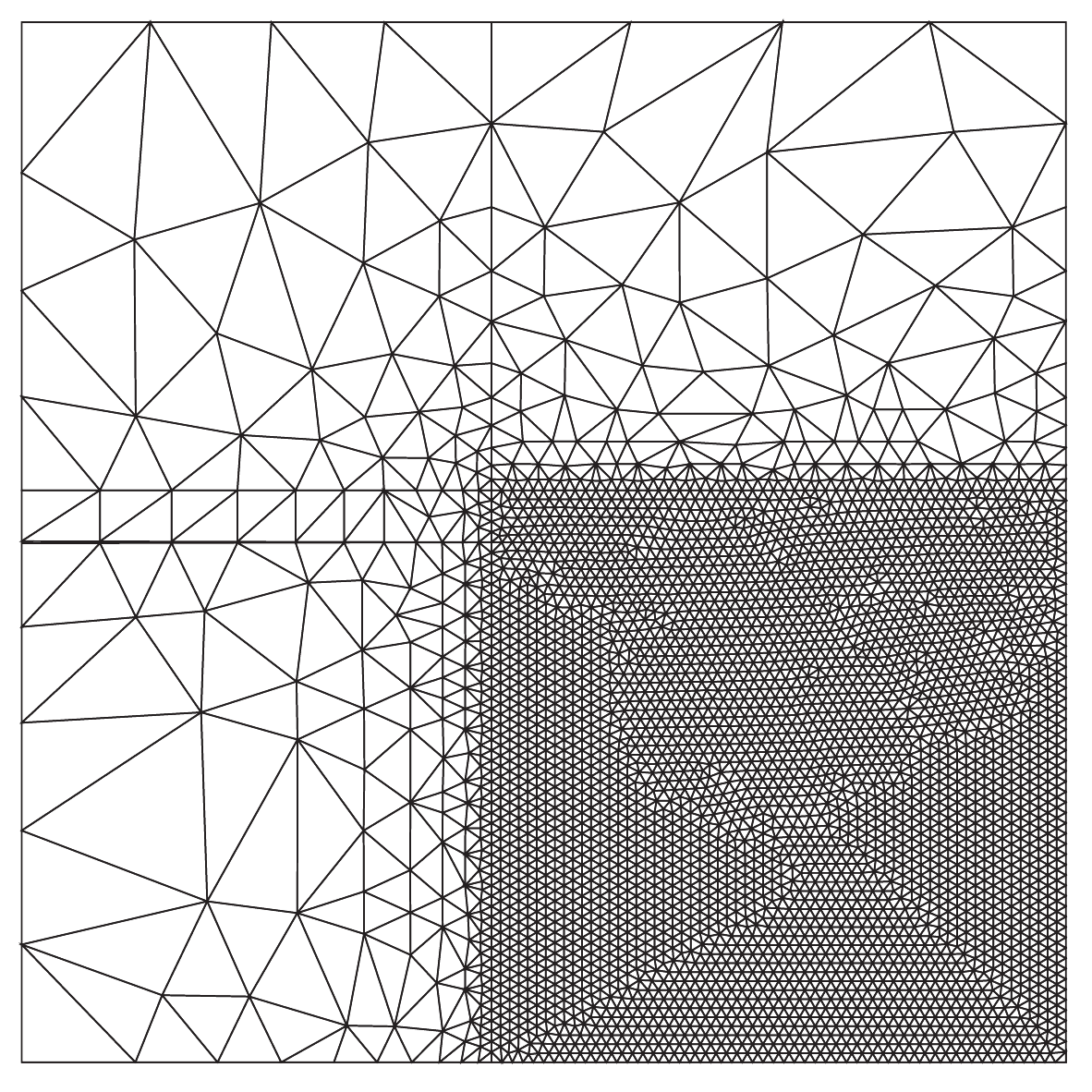}\\
		(a)& (b)
		\end{array}$
		}
	\caption{\label{NE6}
		Example \ref{Sec.NumEx:Ex2}. Single edge notched shear test.
		$(a)$ Geometry and boundary conditions. 
		$(b)$ Unstructured finite element mesh.
	}
\end{figure} 

The vertical displacement component is constrained on all four sides of the domain. The botton edge is also constrained
along the horizontal direction whereas the top edge presents a prescribed nonhomogeneous Dirichlet boundary condition $u_z=w$.
The same material properties are used as for the previous example. The characteristic length is now set equal to $\ell=0.001\,\mathrm{mm}$.  
Figure \ref{NE6}$(b)$ displays the unstructured finite element mesh with $7573$ triangular elements 
and $3878$ nodes which has been refined in the lower right part of the domain $\Omega$ where the crack 
is expected to propagate \cite{BFM00,MWH10}. The characteristic element size in this region is $h\approx 0.005\,\mathrm{mm}<\ell/2$.
The finite element approximations for the displacement and phase-field is the same as in the previous example. 
We consider displacement-driven loading by the application of two constant displacement increments $\Delta w=10^{-4}\,\mathrm{mm}$ 
and $\Delta w=10^{-5}\,\mathrm{mm}$, 
and take, for this example, the penalization factor $\epsilon$ equal to $10^{-5}$. We have then evaluated,
 for each case, the energetic terms that enter \eqref{Disc.TwoSidIneq}  
and the reaction force $F_z$ on the top edge $\Gamma_{top}\subseteq\partial\Omega$ given by

\[
	F_z=\int_{\Gamma_{top}}\,\sigmab\bmn\cdot\bmt\,\ds
\]
where $\bmt$ is the tangent to the top edge and $\bmn$ is the outward normal to this part of the boundary.

\begin{figure}[H]
	\centering{$\begin{array}{c}
		\includegraphics[width=0.5\textwidth]{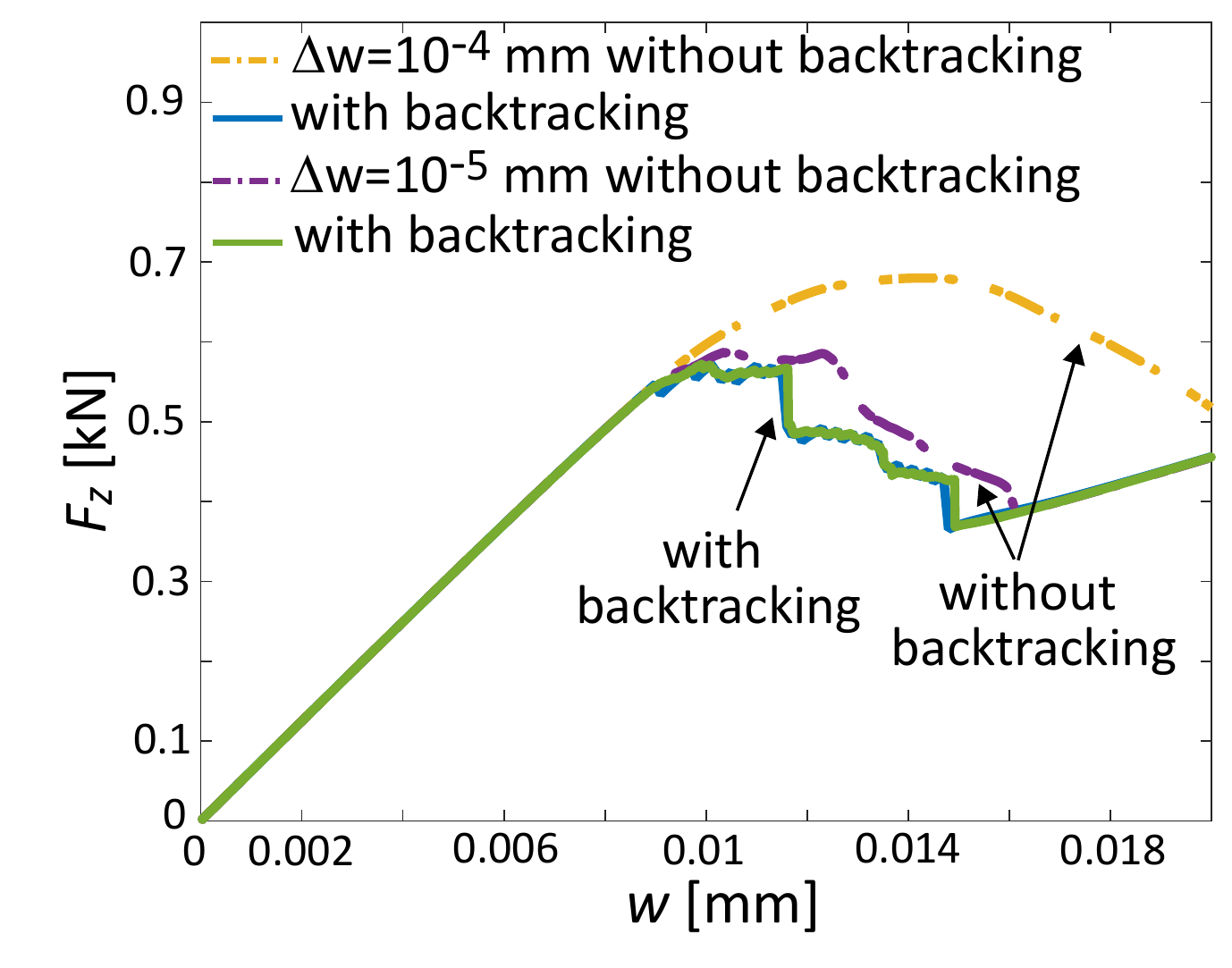} 
		\end{array}$}
	\caption{\label{NE7}
	Example \ref{Sec.NumEx:Ex2}. Single edge notched shear test. 
	Load--displacement curves for different displacement increments $\Delta w$ 
	and different schemes, using the backtracking algorithms describing the evolution 
	of the approximate energetic solutions and without applying the backtracking algorithm.}
\end{figure}

Figure \ref{NE7} displays the load-displacement curves corresponding to the approximate energetic solutions 
and to the solutions obtained without using the backtracking algorithm,
for the two different applications of $\Delta w$. Likewise the previous example, 
the structural response obtained by the approximate
energetic solutions is almost the same for $\Delta w=10^{-4}\,\mathrm{mm}$ and $\Delta w=10^{-5} \,\mathrm{mm}$. 

\begin{figure}[H]
	\centering{$\begin{array}{cc}
		\includegraphics[width=0.49\textwidth]{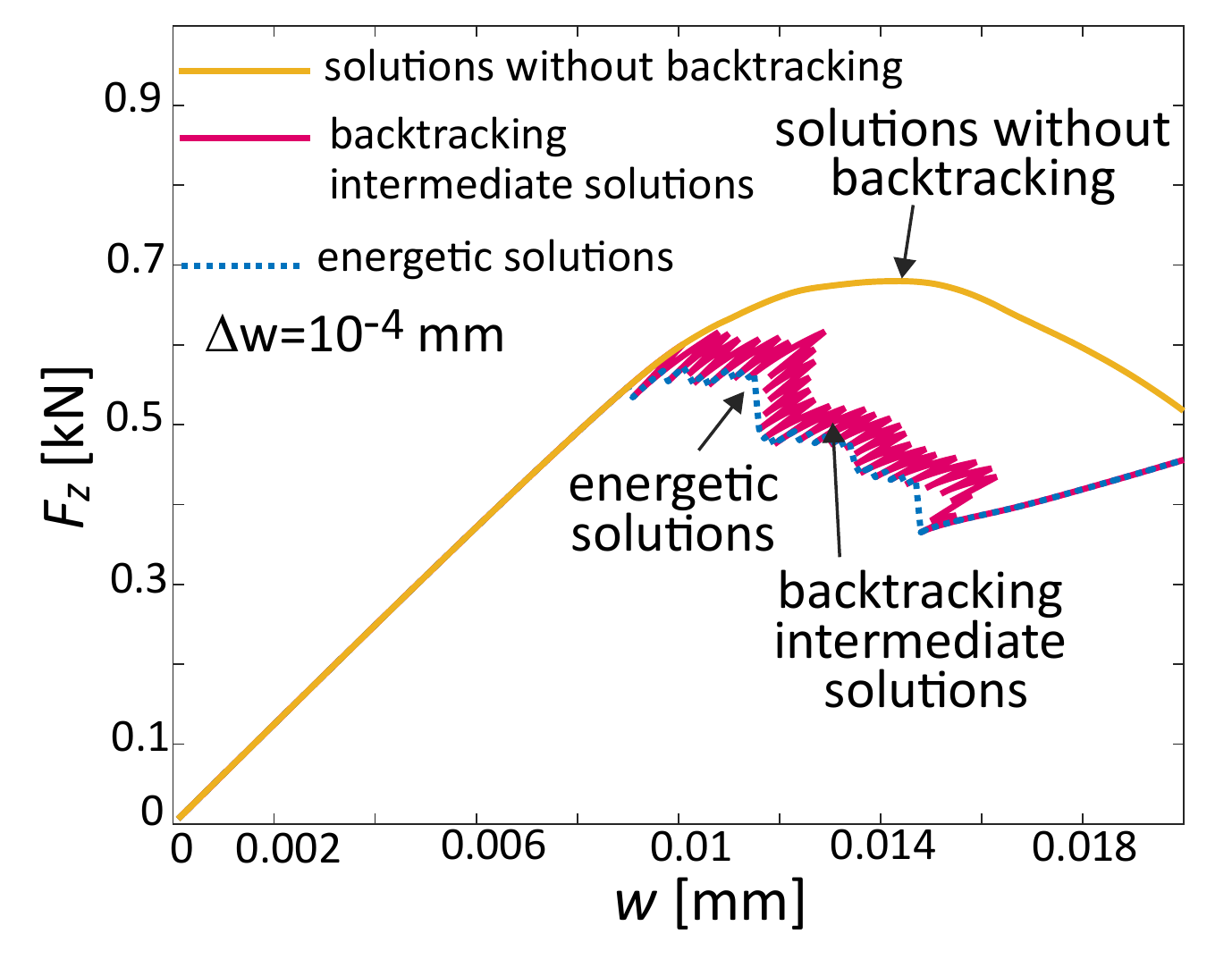} &\includegraphics[width=0.49\textwidth]{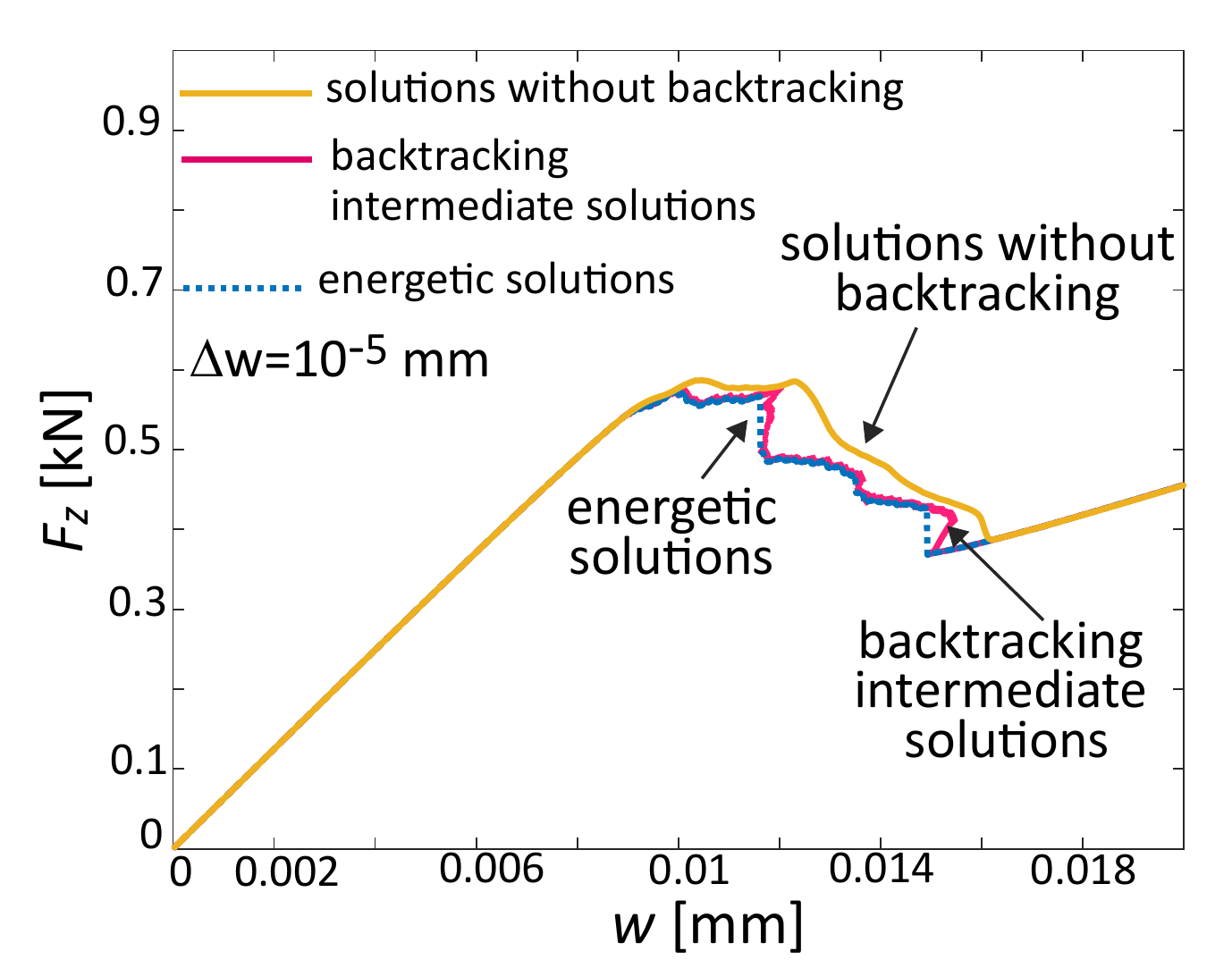} \\
		(a)&(b)
  \end{array}$}
	\caption{\label{NE8}
	Example \ref{Sec.NumEx:Ex2}. Single edge notched shear test.
	Load--displacement curves associated with the intermediate energetic solutions obtained with the backtracking algorithm
	for the displacement increment $(a)$ $\Delta w=10^{-4} \,\mathrm{mm}$ and $(b)$ $\Delta w=10^{-5} \,\mathrm{mm}$.}
\end{figure}

The load-displacement curves corresponding also to the intermediate solutions are, by contrast, displayed in Figure \ref{NE8}.
For the step increment $\Delta w=10^{-4}\, mm$ the curve zigzags towards the softenning part of the curve,
whereas for the smaller increment $\Delta~w=10^{-5}~\,~\mathrm{mm}$, the two bounds get closer and the curve 
results smoother with only two small jumps.
The variations of the total energies of the solutions computed with the two numerical schemes
and for $\Delta w=10^{-4}\,\mathrm{mm}$ and  $\Delta w=10^{-5}\,\mathrm{mm}$ are plotted in 
Figure \ref{Ex:ShearTest:Enrg:4} and Figure \ref{Ex:ShearTest:Enrg:5}, respectively. 
Figure \ref{Ex:ShearTest:Enrg:4}$(a)$
and  Figure \ref{Ex:ShearTest:Enrg:5}$(a)$ show, for the respective $\Delta w$, the evolution of the 
total energetic of the system, the total free energy and the accumulated dissipation.
Figure \ref{Ex:ShearTest:Enrg:4}$(b)$
and  Figure \ref{Ex:ShearTest:Enrg:5}$(b)$, and
Figure \ref{Ex:ShearTest:Enrg:4}$(c)$
and  Figure \ref{Ex:ShearTest:Enrg:5}$(c)$ display 
the total incremental energy  
$\mathcal{E}_{n+1}-\mathcal{E}_{n}+\mathcal{D}_{n,n+1}$, the upper bound $UB_{n,n+1}$ and 
the lower bound $LB_{n,n+1}$, for $n=0,\,\ldots, N-1$. 
We thus verify that also for this problem, 
the activation of the backtracking algorithm 
is needed to select the `right' forward path of the lowest energy content given by approximate energetic solutions. 
Figure \ref{NE10} finally displays the phase-field distribution 
at different stages of the displacement $w$ for the two numerical scheme showing that with the backtracking algorithm
we obtain a faster evolution when compared with the basic scheme.

\begin{figure}[H]
	\centering{$\begin{array}{c}
		\includegraphics[width=0.60\textwidth]{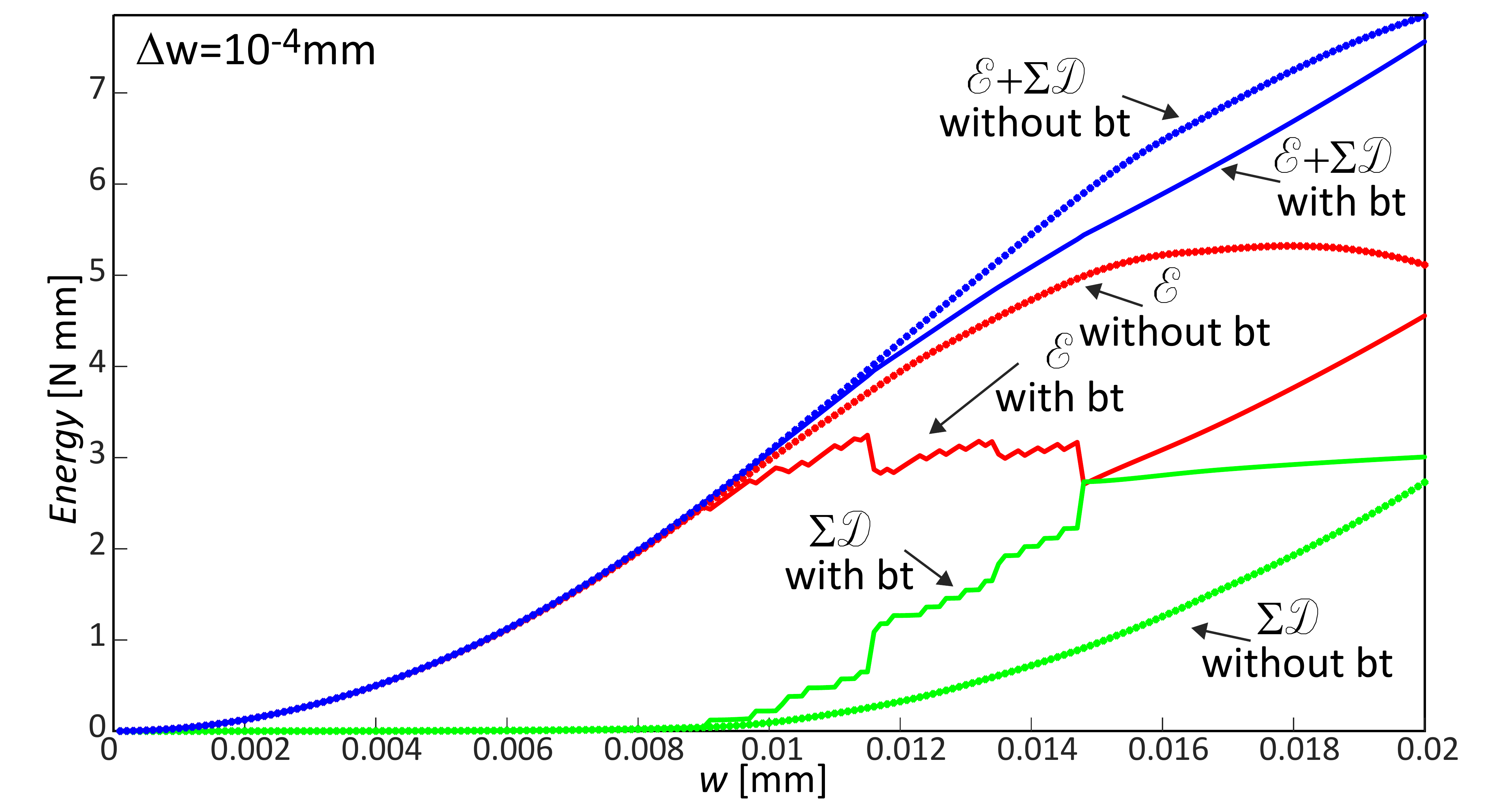}\\
		(a)\\
		\begin{array}{cc}
			\includegraphics[width=0.49\textwidth]{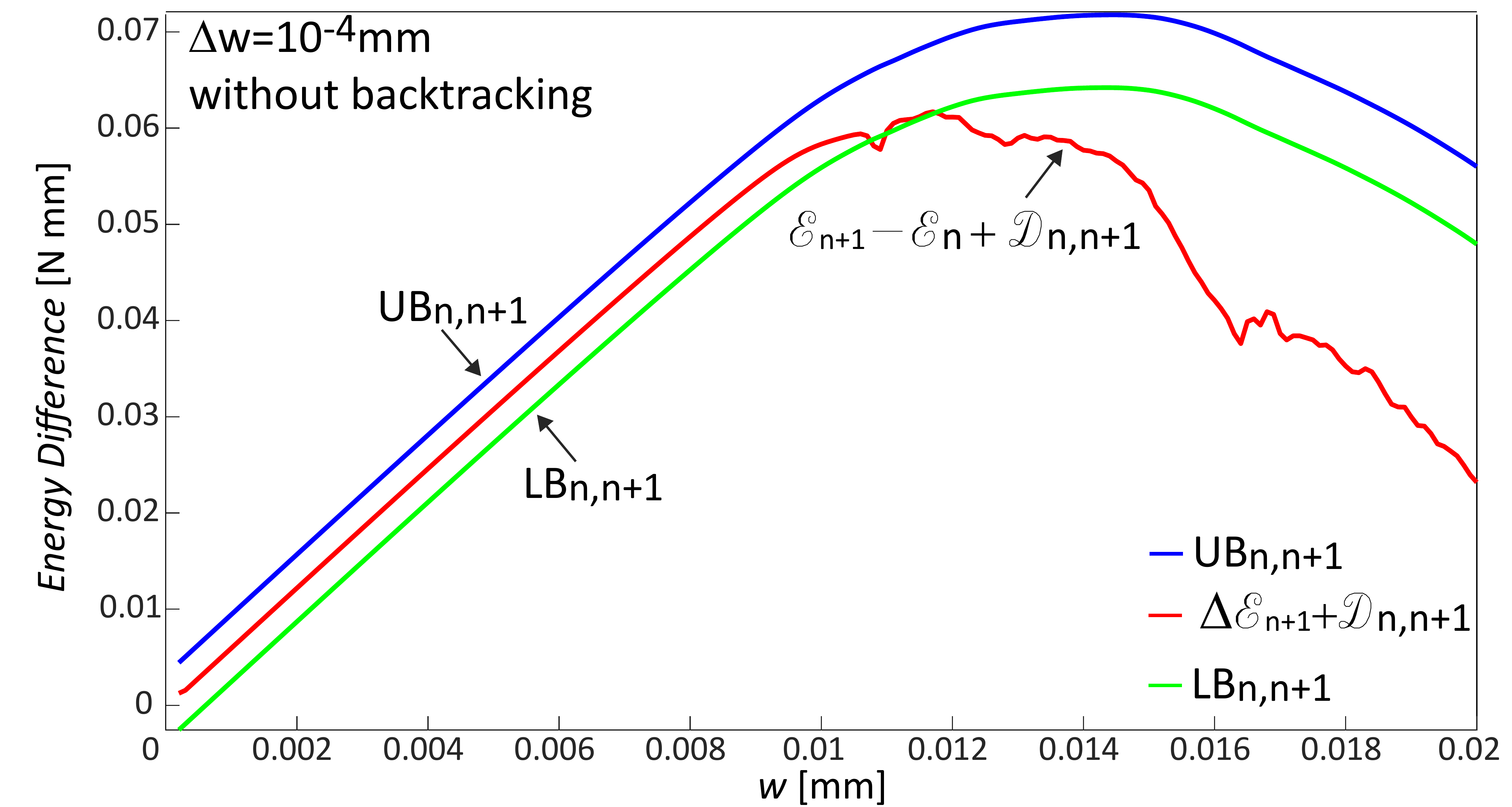}& 
			\includegraphics[width=0.49\textwidth]{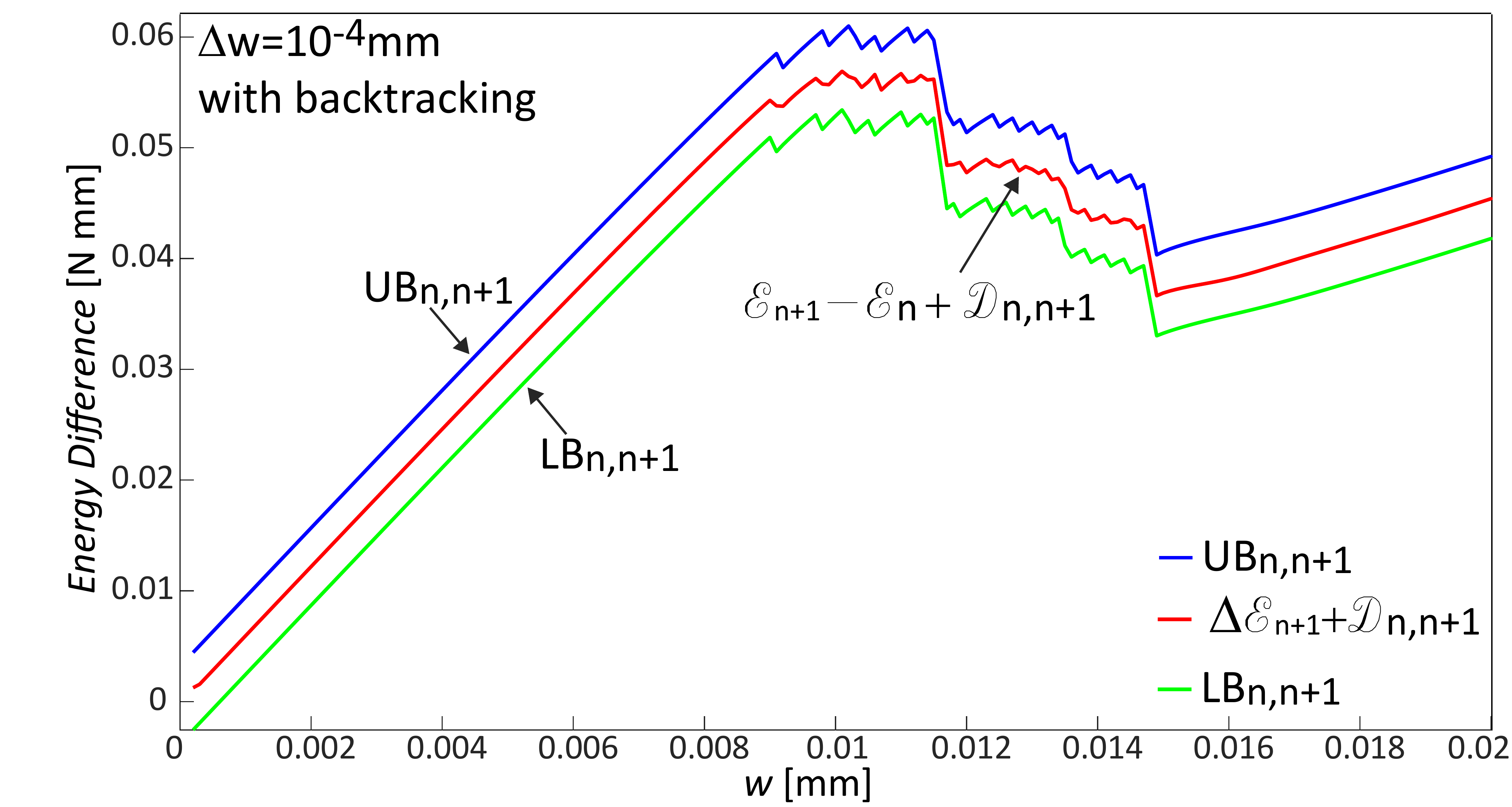}\\
			(b)&(c)
		\end{array}
		\end{array}$}
	\caption{\label{Ex:ShearTest:Enrg:4}
	Example \ref{Sec.NumEx:Ex2}. Single edge notched shear test.
	Results for $\Delta w=10^{-4}\,\mathrm{mm}$.
	$(a)$ Evolution of the total energy $\mathcal{E}(t_{n+1},\bmU_{n+1},\bmA_{n+1})+\sum_{i=0}^n\mathcal{D}(\bmA_i,\bmA_{i+1})$ 
	for $n=0,1,\ldots, N-1$, without backtracking $(K=0)$  and with  backtracking.
	Evolution of the total incremental energy $\mathcal{E}_{n+1}-\mathcal{E}_{n}+\mathcal{D}_{n,n+1}$, 
	the lower bound $LB_{n,n+1}$ and the upper bound $UB_{n,n+1}$ which enter the two-sided energy estimate \eqref{Disc.TwoSidIneq},
	$n=0,1,\ldots, N-1$, for the scheme $(b)$ without backtracking and $(c)$ with backtracking.}
\end{figure}

\begin{figure}[H]
	\centering{$\begin{array}{c}
		\includegraphics[width=0.60\textwidth]{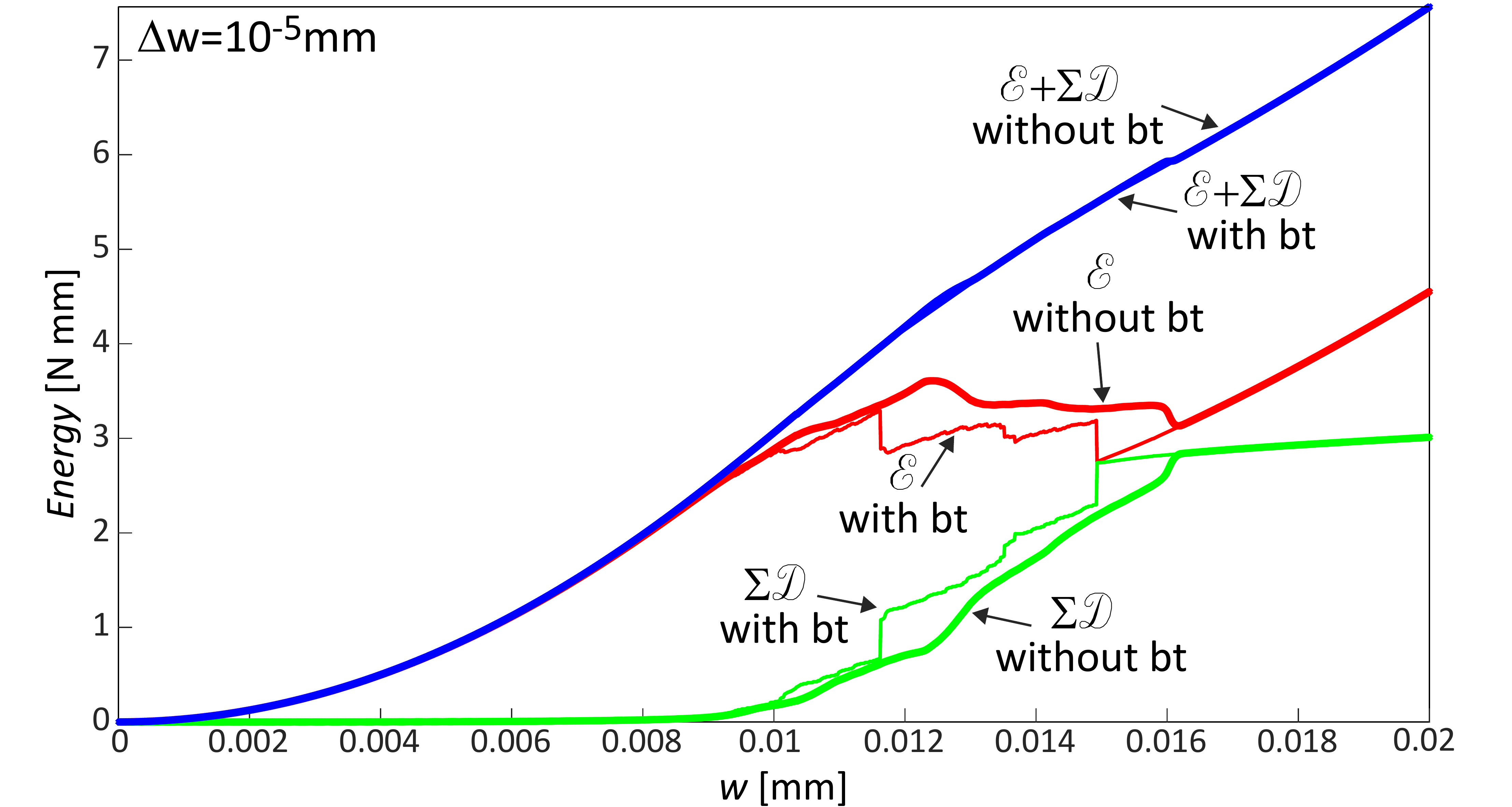}\\
		(a)\\
		\begin{array}{cc}
			\includegraphics[width=0.49\textwidth]{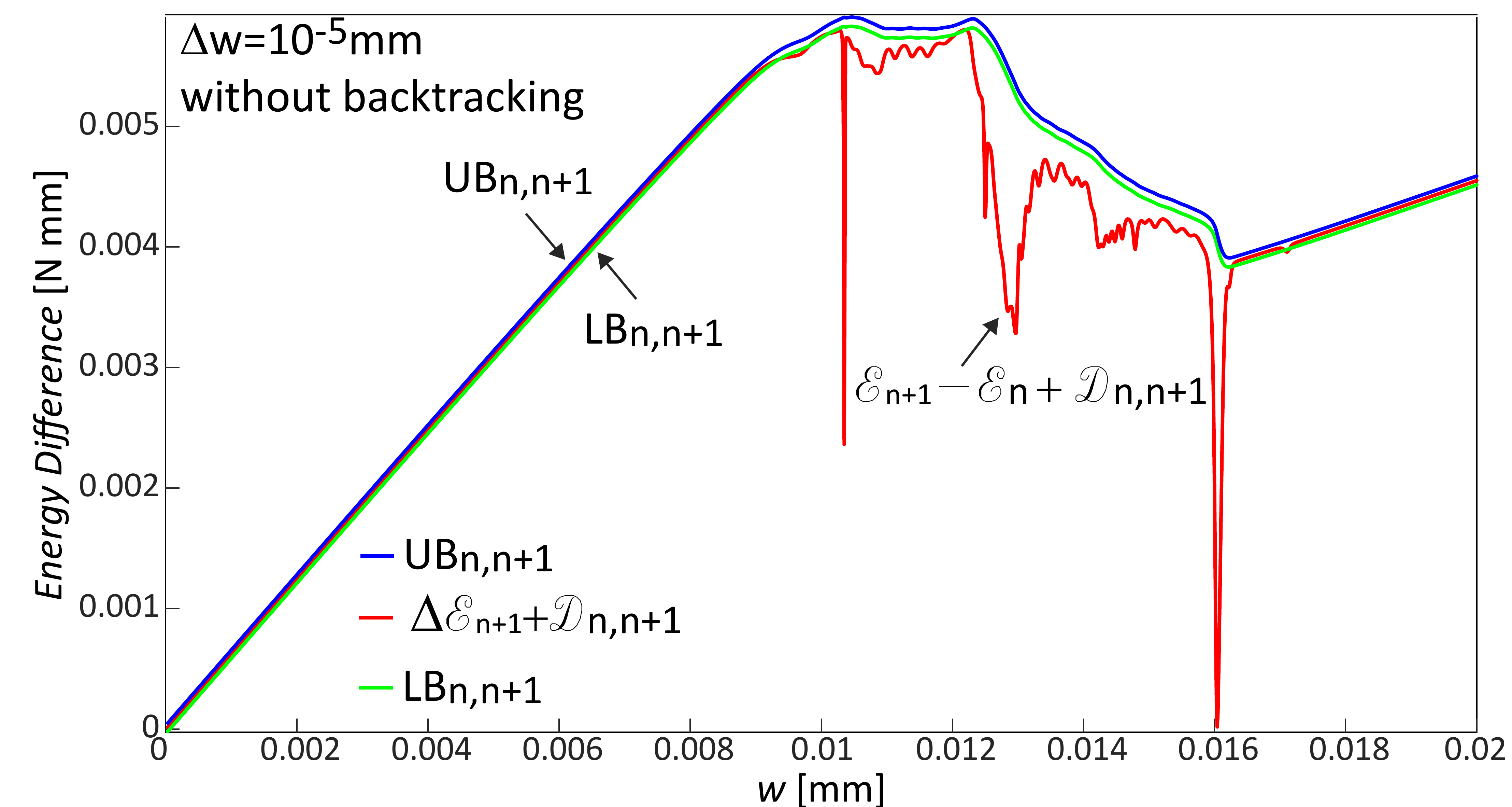}& 
			\includegraphics[width=0.49\textwidth]{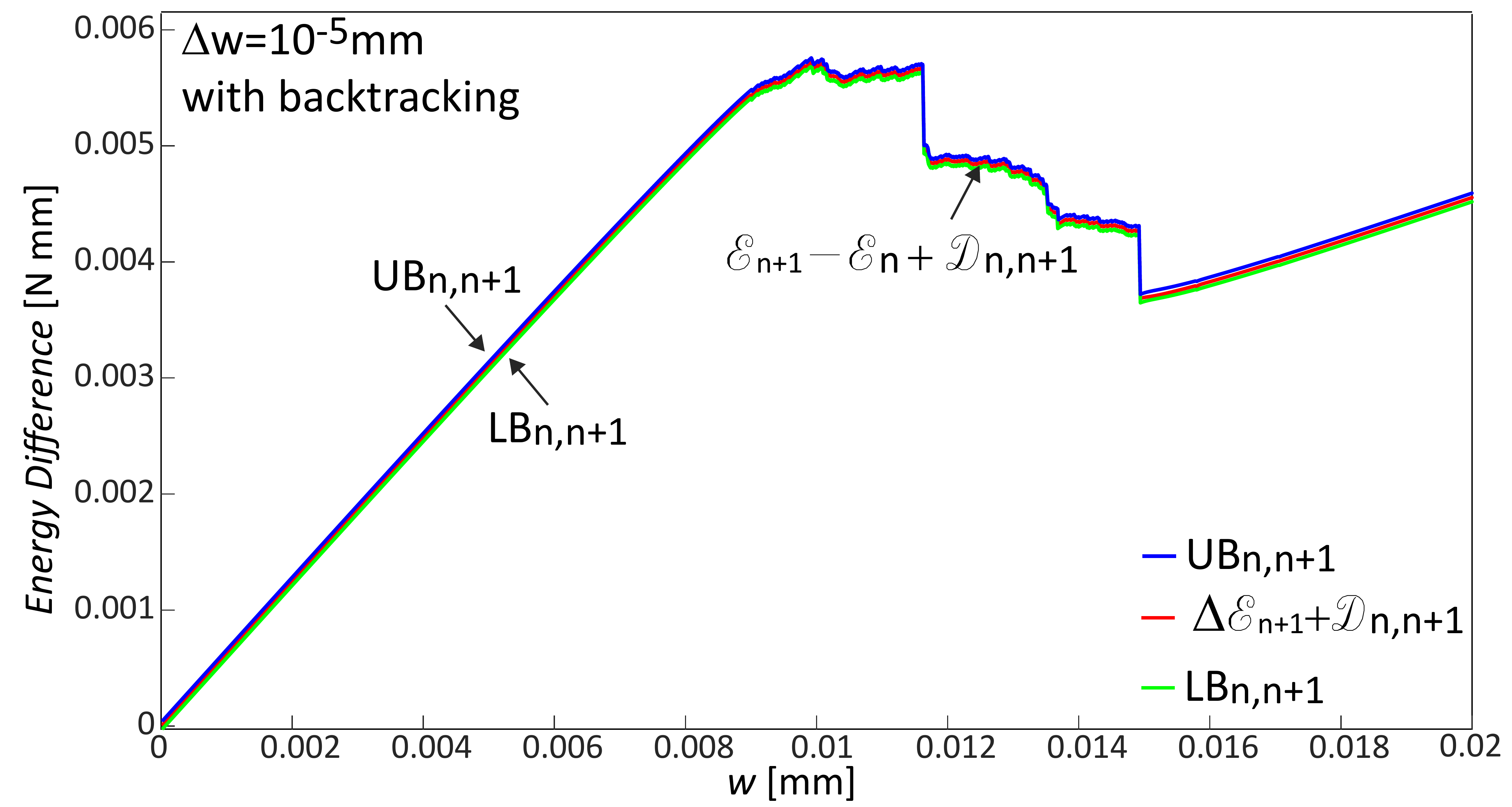}\\
			(b)&(c)
		\end{array}
		\end{array}$}
	\caption{\label{Ex:ShearTest:Enrg:5}
	Example \ref{Sec.NumEx:Ex2}. Single edge notched shear test.
	Results for $\Delta w=10^{-5}\,\mathrm{mm}$.
	$(a)$ Evolution of the total energy $\mathcal{E}(t_{n+1},\bmU_{n+1},\bmA_{n+1})+\sum_{i=0}^n\mathcal{D}(\bmA_i,\bmA_{i+1})$ 
	for $n=0,1,\ldots, N-1$,  without backtracking $(K=0)$  and with  backtracking.
	Evolution of the total incremental energy  $\mathcal{E}_{n+1}-\mathcal{E}_{n}+\mathcal{D}_{n,n+1}$, 
	the lower bound $LB_{n,n+1}$ and the upper bound $UB_{n,n+1}$ which enter the two-sided energy estimate \eqref{Disc.TwoSidIneq},
	$n=0,1,\ldots, N-1$, for the scheme $(b)$ without backtracking and $(c)$ with backtracking.}
\end{figure}

\begin{figure}[H]
	\centering{$\begin{array}{cccc}
	\text{$(a)$ Without backtracking}&&&\\
		\includegraphics[width=0.25\textwidth]{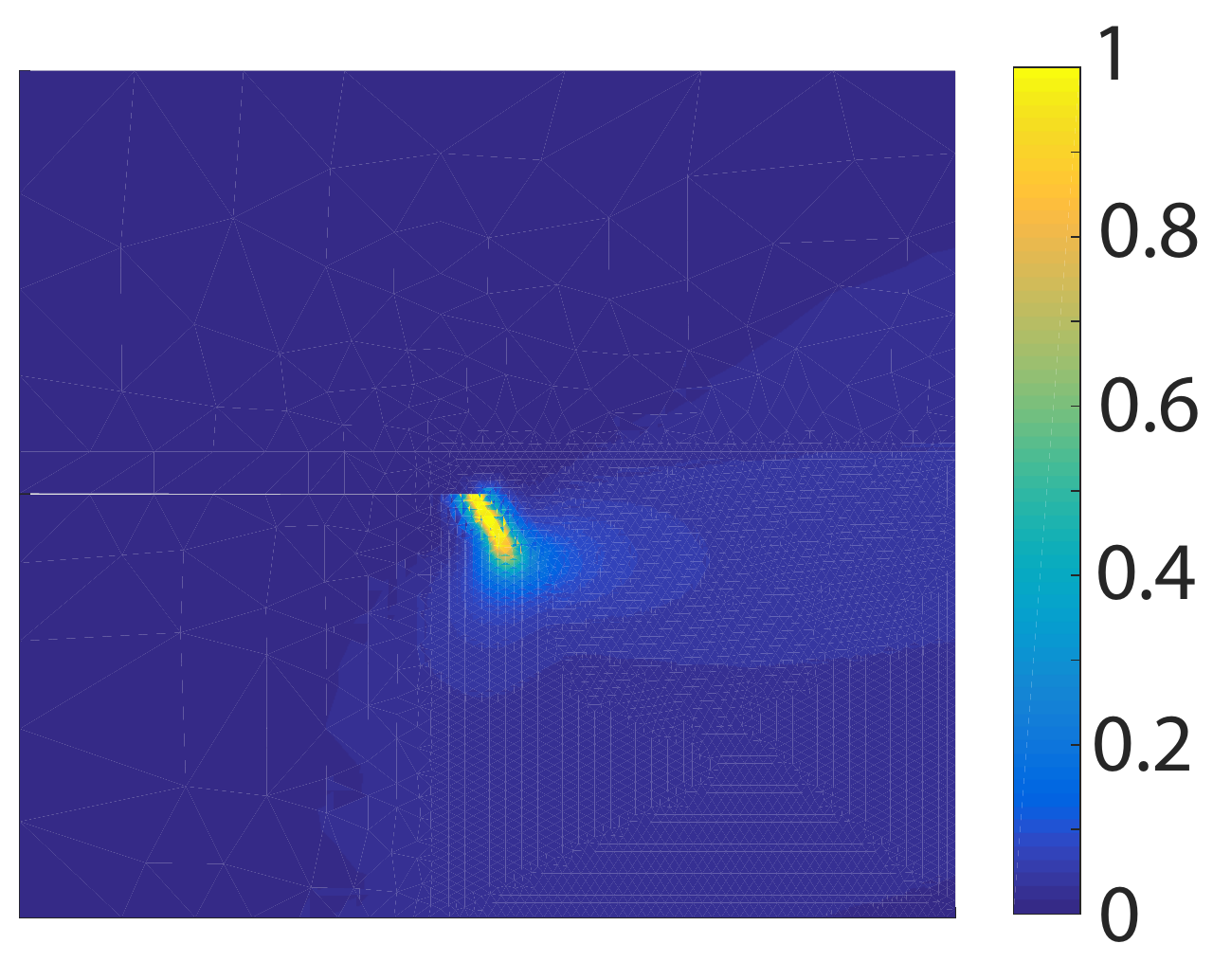}&\includegraphics[width=0.25\textwidth]{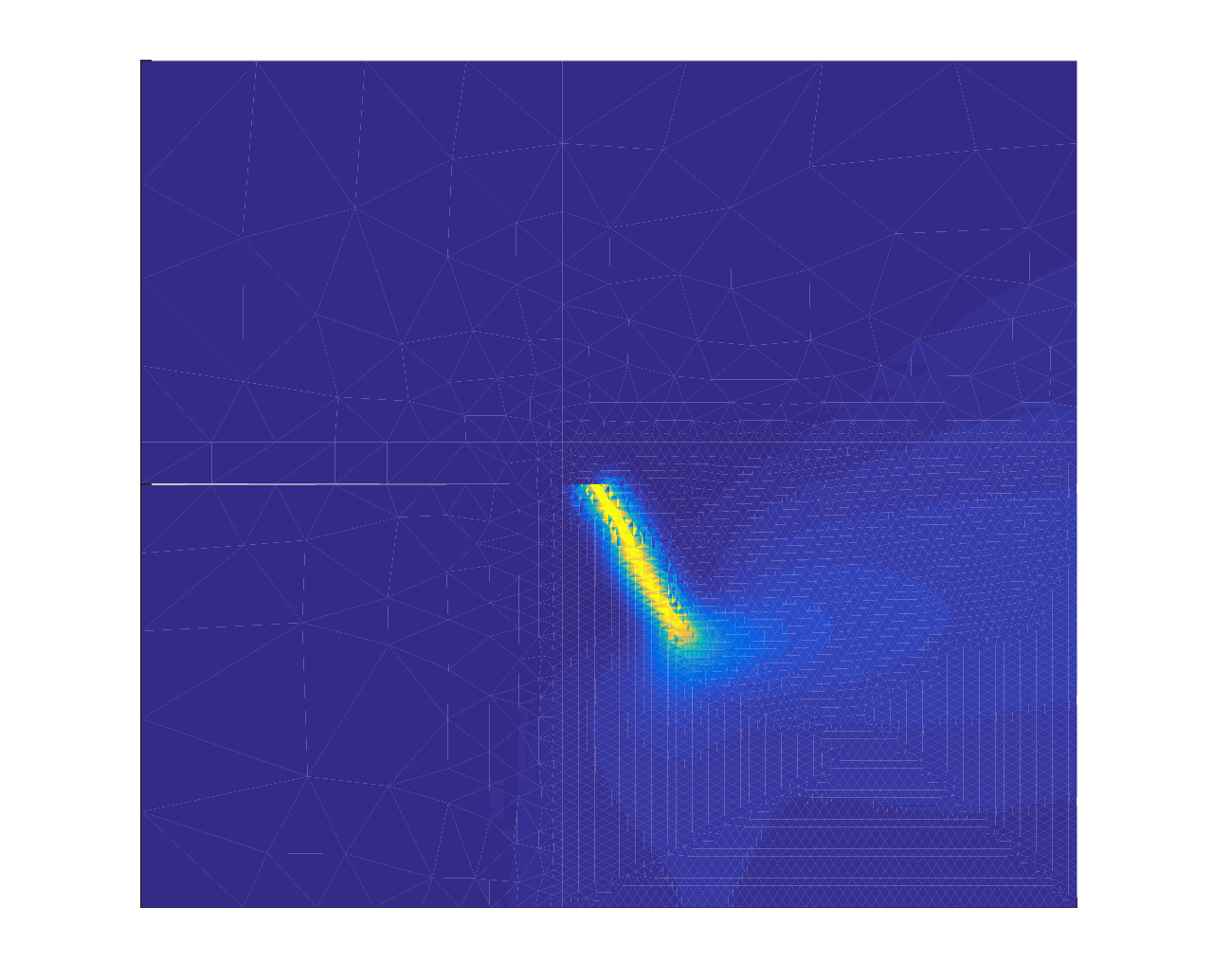}&		  
		\includegraphics[width=0.25\textwidth]{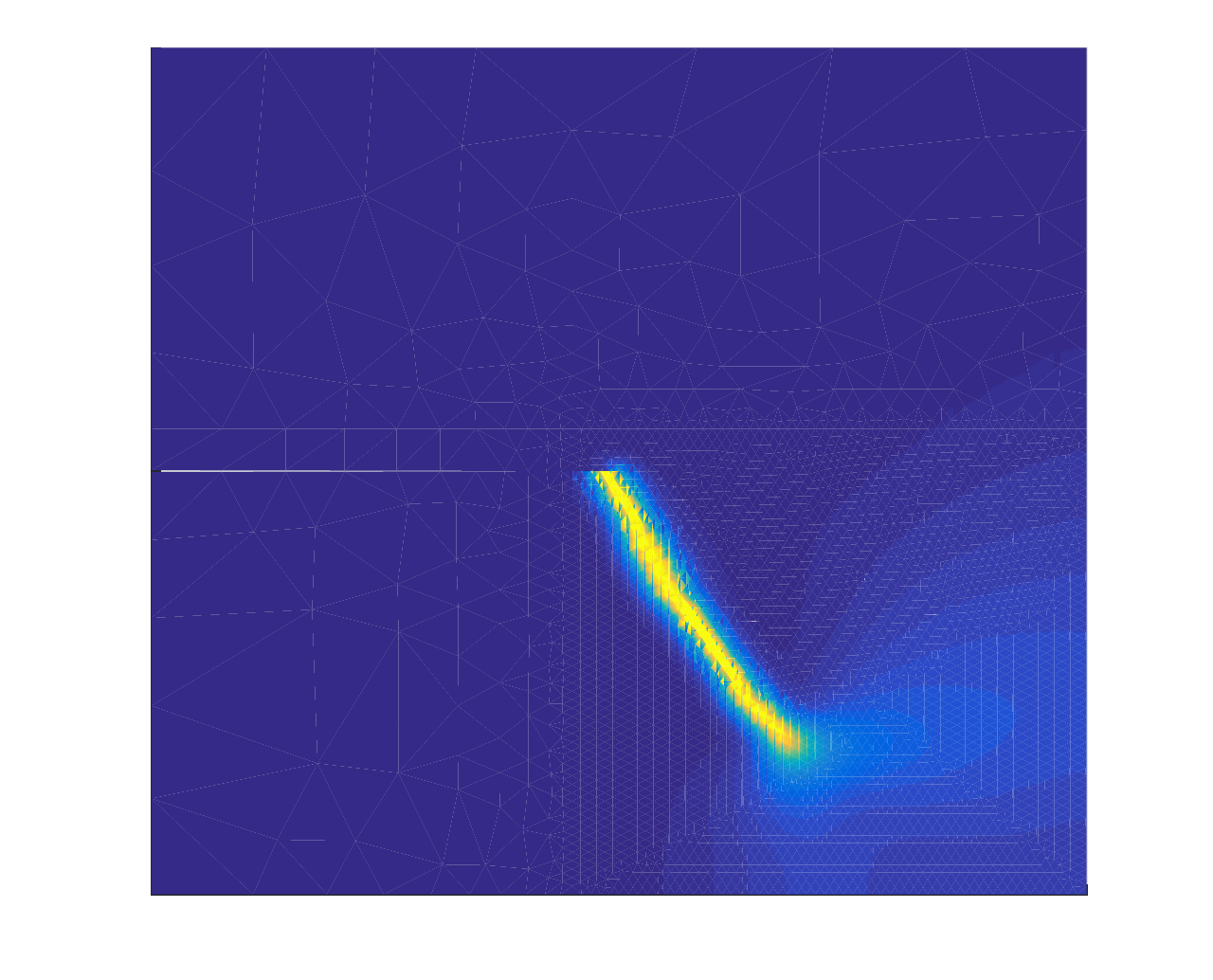}&\includegraphics[width=0.25\textwidth]{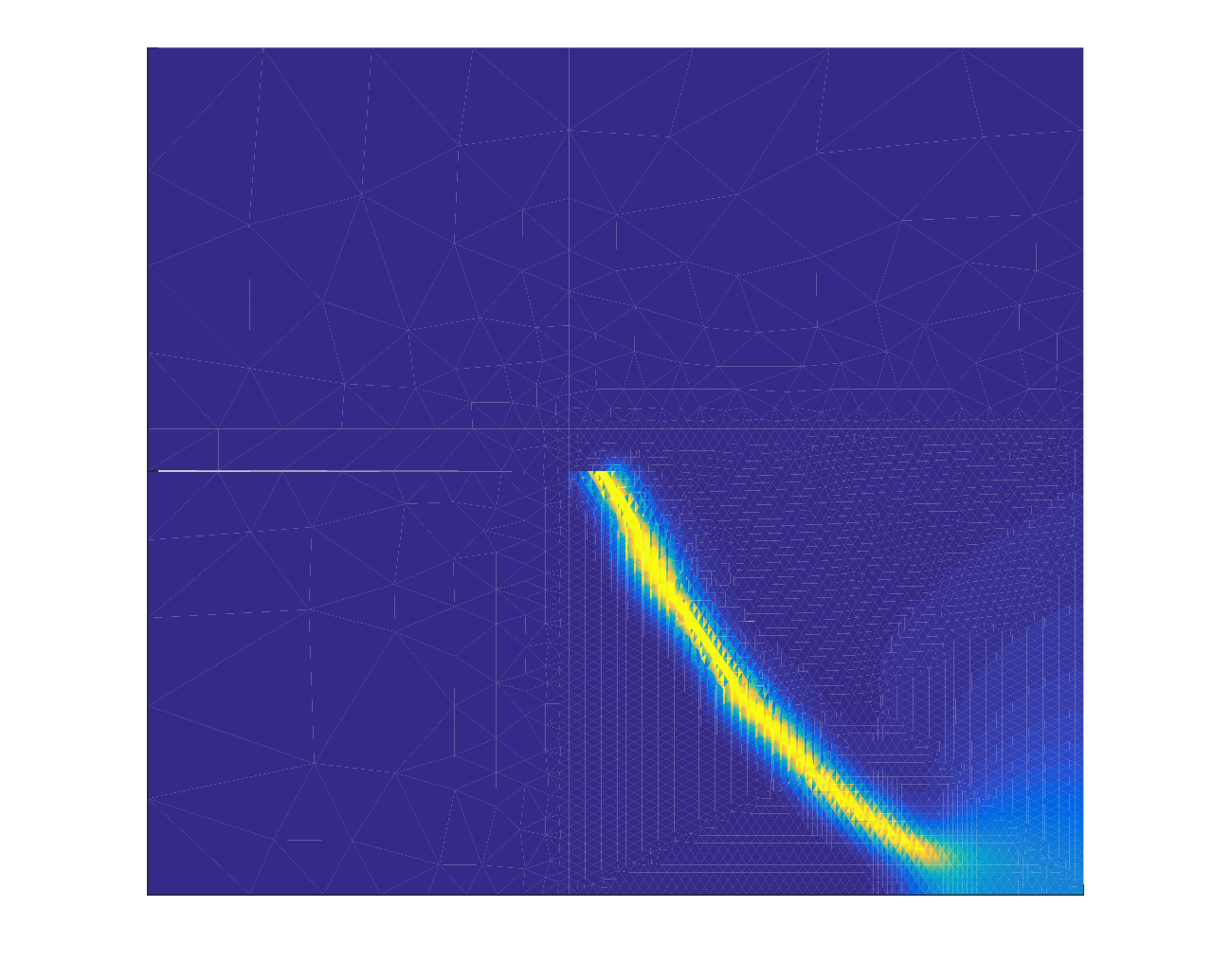}\\
		w=0.0125\,\mathrm{mm} & w=0.0150\,\mathrm{mm}  & w=0.0175\,\mathrm{mm}  & w=0.0200\,\mathrm{mm}\\[1.5ex]
	\text{$(b)$ With backtracking}&&&\\
		\includegraphics[width=0.25\textwidth]{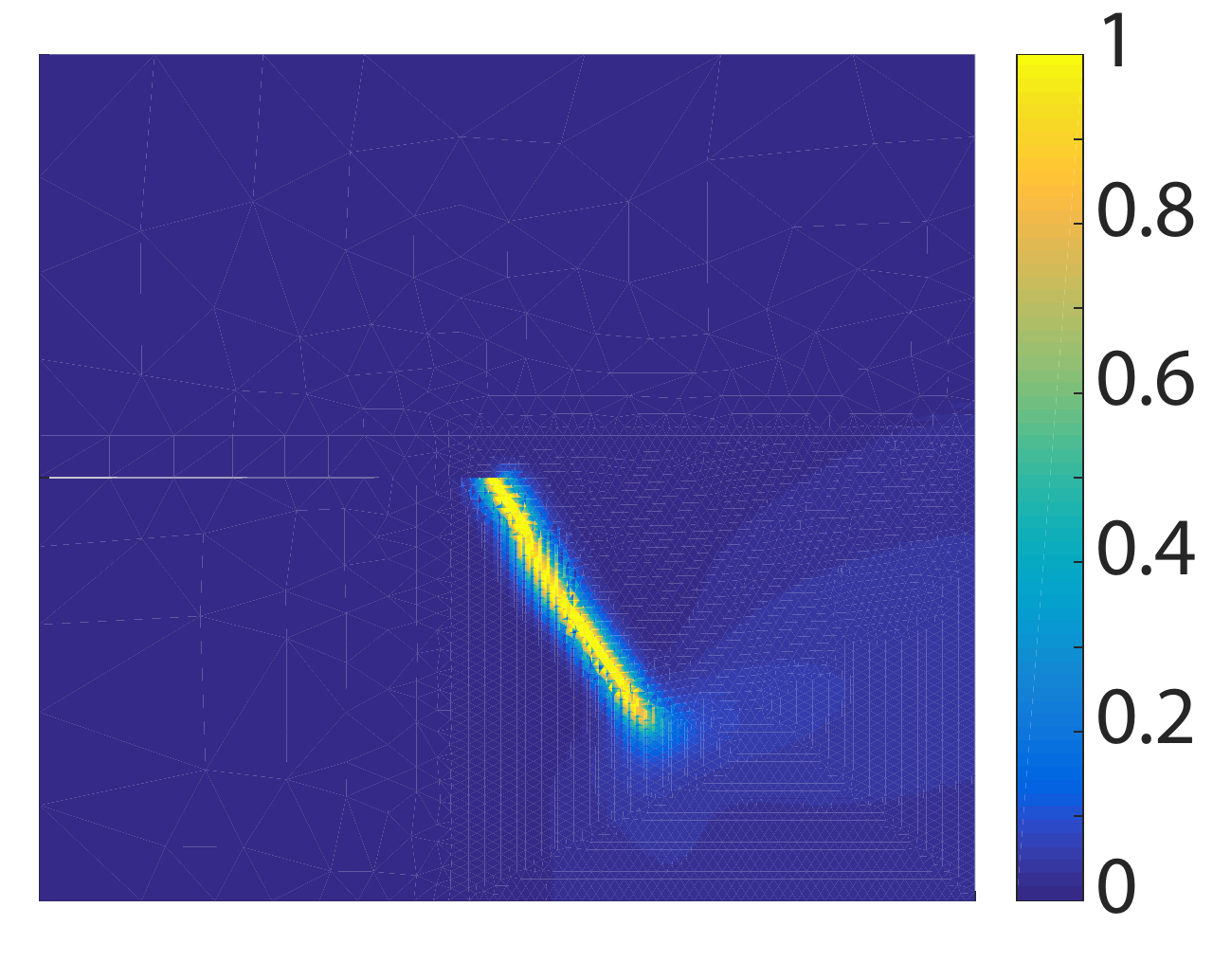}&\includegraphics[width=0.25\textwidth]{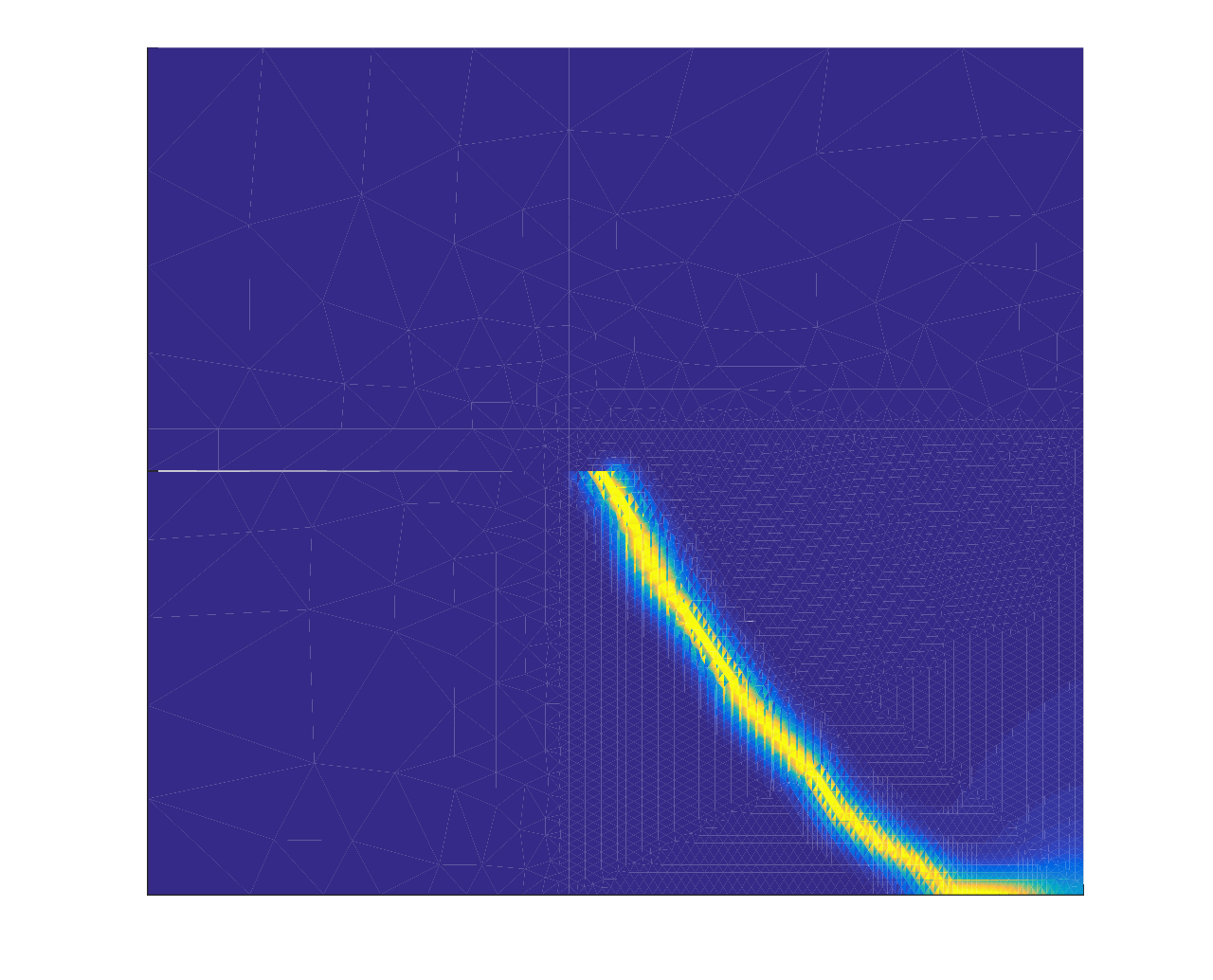}&		  
		\includegraphics[width=0.25\textwidth]{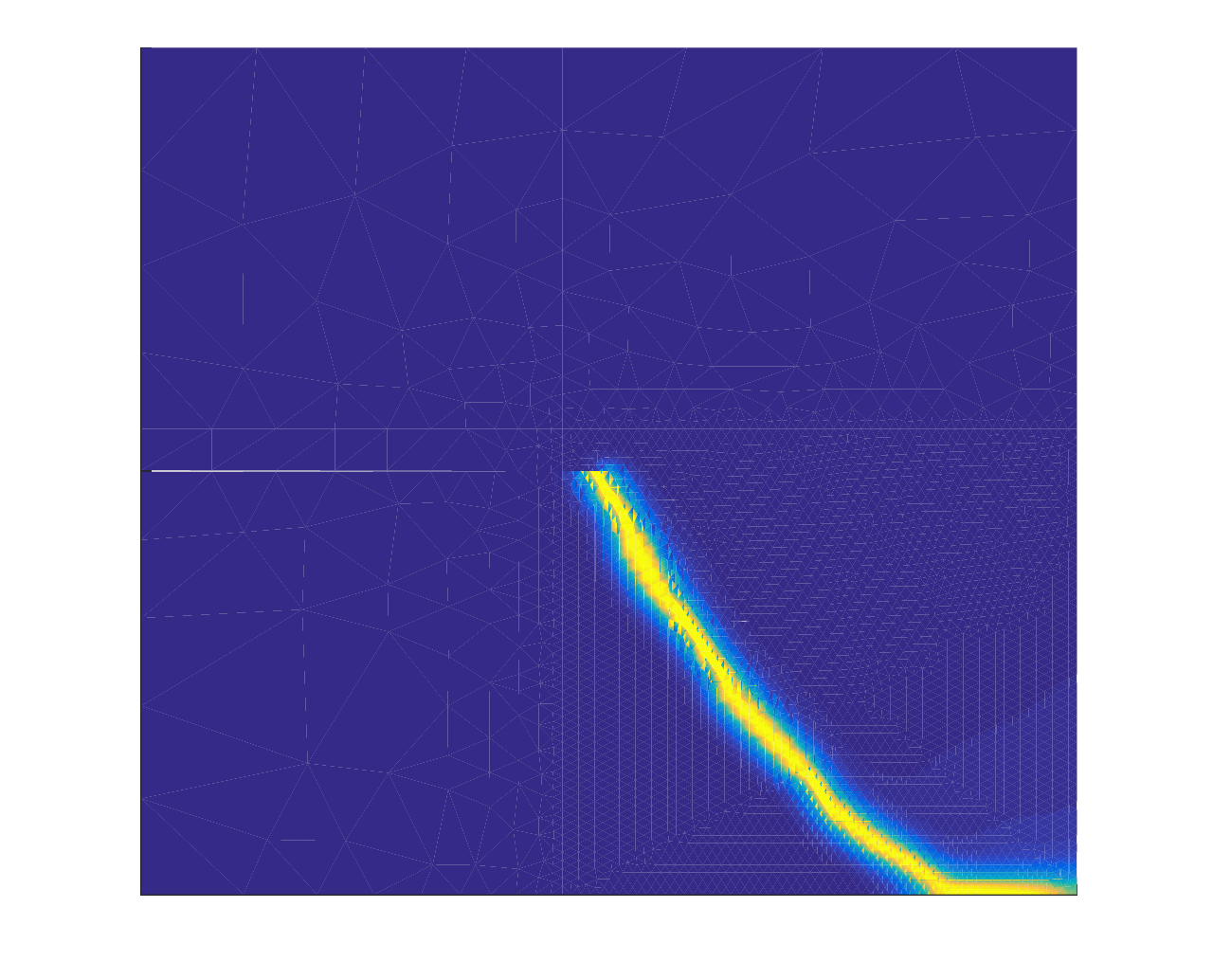}&\includegraphics[width=0.25\textwidth]{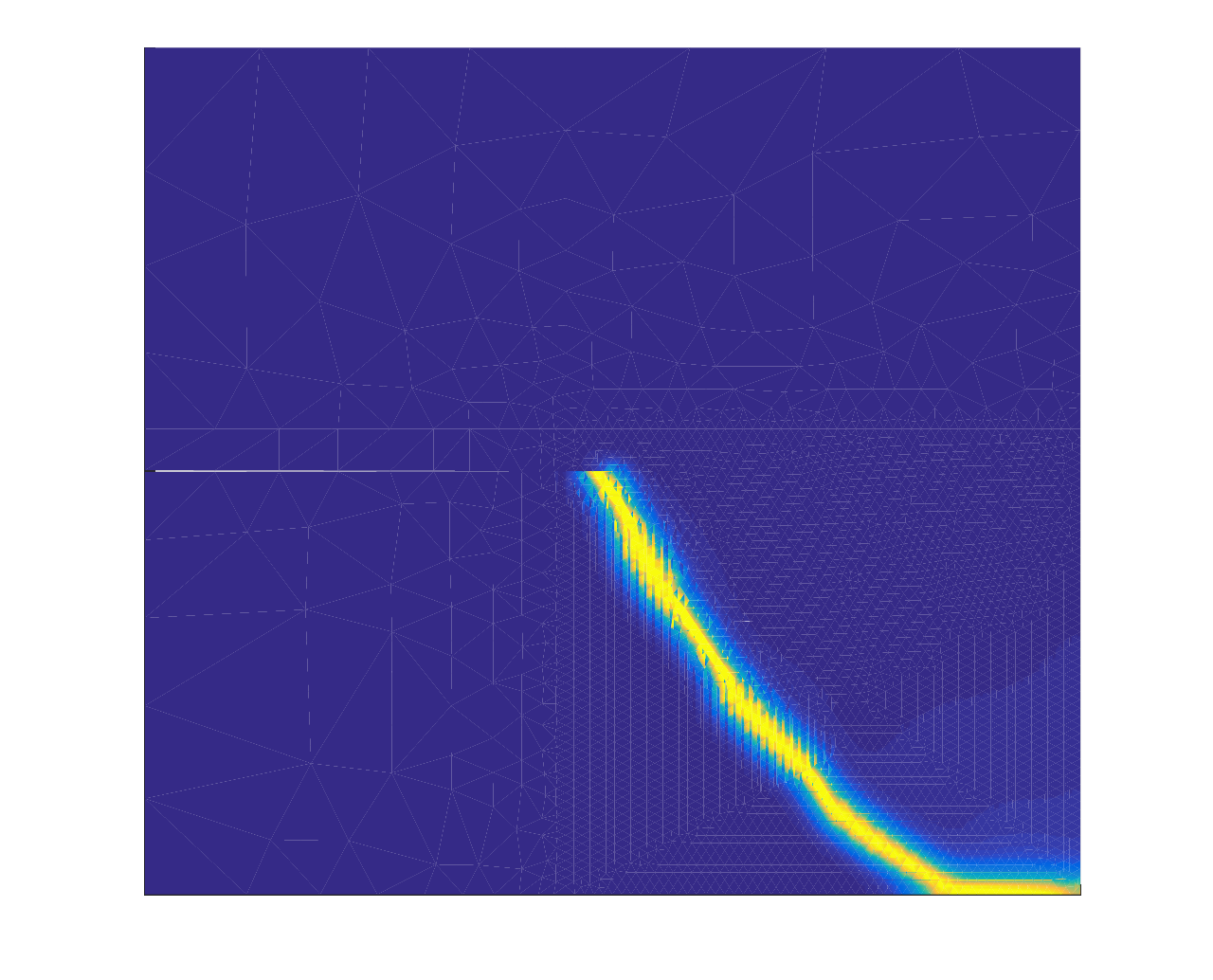}\\
		w=0.0125\,\mathrm{mm} & w=0.0150\,\mathrm{mm}  & w=0.0175\,\mathrm{mm}  & w=0.0200\,\mathrm{mm}\\
       \end{array}$
	}
	\caption{\label{NE10}
	Example \ref{Sec.NumEx:Ex2}. Single edge notched shear test.
	Phase field distribution at different stages of the total displacement $w$ applied on the specimen top edge.
	Results for $\Delta w=10^{-5}\,\mathrm{mm}$. 
	In the damage maps, the yellow corresponds to $\beta=1-\delta$ 
	with $\delta=10^{-4}$ given that we are considering a partially damage profile, 
	whereas the blue corresponds to solid material for which $\beta=0$. 
	}
\end{figure}


\subsection{Three dimensional $L-$shaped panel test}\label{Sec.NumEx:Lshape}

We analyze now the $3d$ crack propagation in an $L$--shaped concrete panel 
as benchmark for crack initiation  \cite{AGDl15,BWBNR20,GDl19,MBK15} and, likewise Example \ref{Sec.NumEx:Ex2}, 
to demonstrate the ability 
of the phase-field variational formulation to describe curved crack patterns.
The geometry and boundary conditions are displayed in 
Figure \ref{Ex:Lshape:GeoMesh}$(a)$, and correspond to the experimental setup given in \cite{Win01}.
All the points of the face of equation $y=0$ are fully restrained whereas those belonging to the line
of equation $x=470\,\mathrm{mm}$, $y=250\,\mathrm{mm}$ and $0\,\mathrm{mm}\leq z\leq 100\,\mathrm{mm}$ present prescribed values for $v$
and free the other degrees of freedom.

\begin{figure}[H]
	\centerline{$\begin{array}{cc}
		\includegraphics[width=0.45\textwidth]{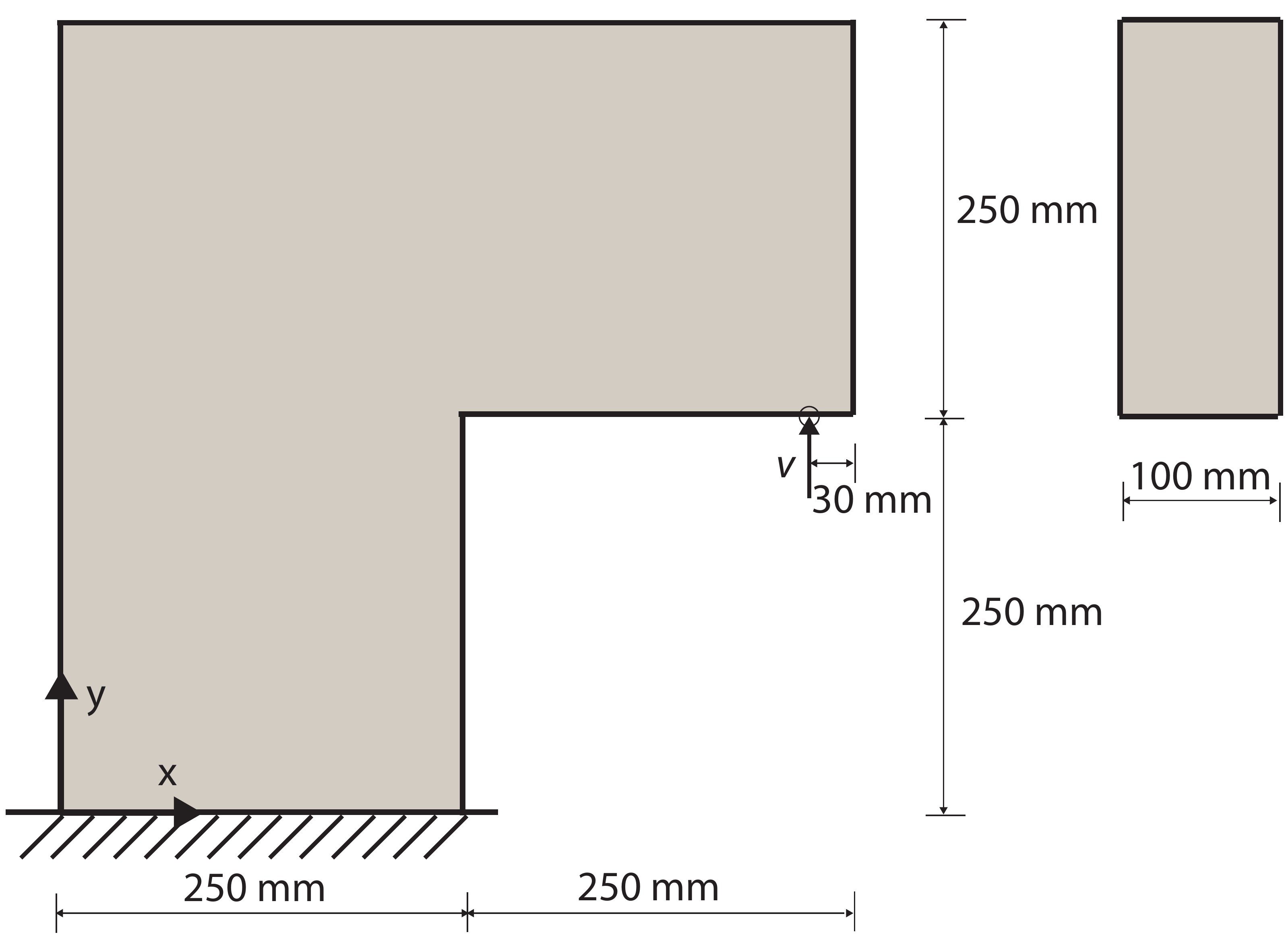}&
		\includegraphics[width=0.40\textwidth]{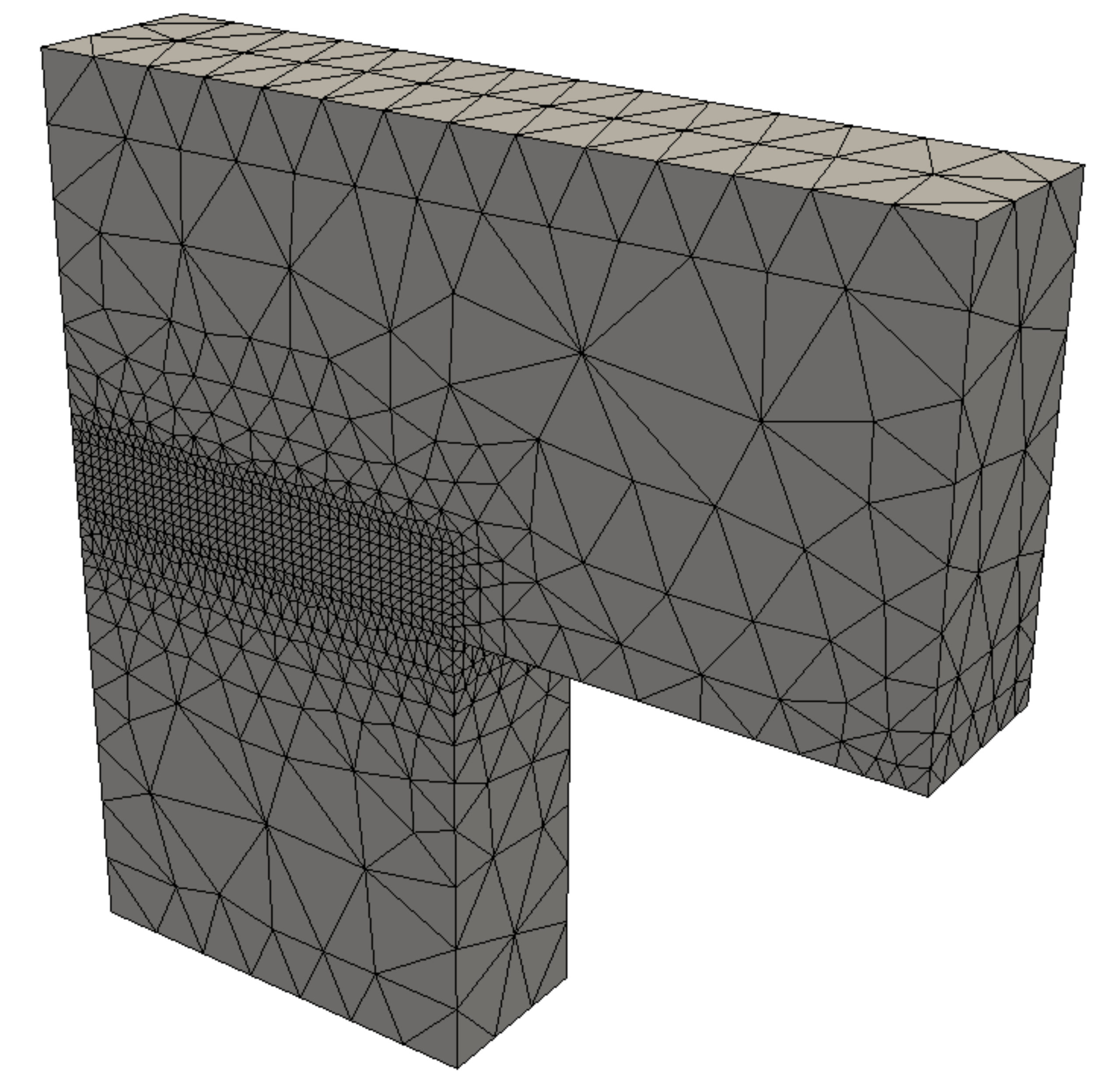}\\
		(a)&(b)
		\end{array}$
		}
	\caption{\label{Ex:Lshape:GeoMesh}
		Example \ref{Sec.NumEx:Lshape}. $3d$ $L-$shaped panel test.
		$(a)$ Geometric setup.
		$(b)$ Unstructured finite element mesh.
		}
\end{figure}

The concrete material properties chosen are the same as given in \cite{Win01} with the Young modulus  
$E=25.85\,\mathrm{kN/mm^2}$, the Poisson ratio $\nu=0.18$, the critical energy release rate $g_c=0.095\,\,\mathrm{N/mm}$ and the internal length 
$\ell=20\,\mathrm{mm}$.
The unstructured finite element mesh is shown in Figure \ref{Ex:Lshape:GeoMesh}$(b)$ and consists of $44880$ tetrahedral 
elements and $68470$ nodes. No initial crack is prescribed. However, since
we expect that this starts at the interior corner of the $L-$shape, we 
have refined therein the mesh with a characteristic finite element length equal to $h=6.25\,\mathrm{mm}<\ell/2$
in order to resolve properly the crack pattern. 

\begin{figure}[H]
	\centering{$\begin{array}{c}
		\includegraphics[width=0.65\textwidth]{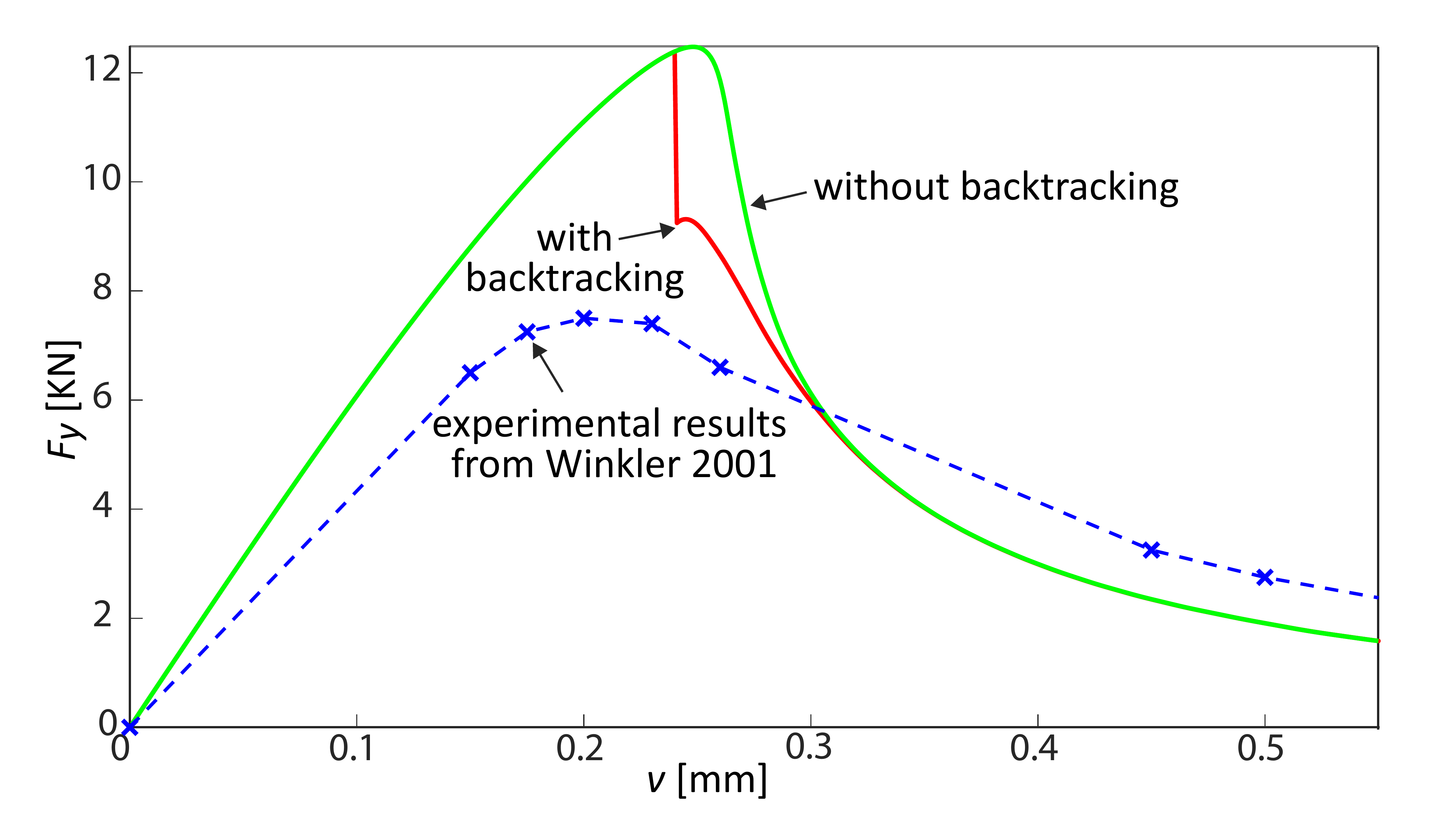} 
		\end{array}$}
	\caption{\label{Ex:Lshape:FvDis}
	Example \ref{Sec.NumEx:Lshape}. $3d$ $L-$shaped panel test.
	Load--displacement curves in the case the backtracking algorithm is activated and for the case 
	without applying the backtracking algorithm.}
\end{figure}

The numerical simulations have been carried out by applying a monotone loading history for the nonhomogeneous Dirichlet boundary 
condition $v$ by means of the application of constant displacement increments $\Delta v=10^{-3}\,\mathrm{mm}$.
For this example, the penalization factor $\epsilon$ has been set equal to $10^{-4}$.
Figure \ref{Ex:Lshape:FvDis} displays the resulting load-displacement curves with $(K>0)$ and without $(K=0)$ the backtracking option active.
We observe that the two curves are practically identical until the peak, but they then differentiate each other for a short 
range of the applied displacement $v$ in the post peak, where we verify only a minor occurrence of backtracking,
for then to display again the same behaviour starting from around $v=0.3\,\mathrm{mm}$. Our numerical results compare quite well
with those obtained by \cite{MBK15,BWBNR20}, but they all differentiate in a relevant manner from the experimental findings of \cite{Win01}
in the detection of the peak value and of the residual load. This behaviour was also noted in \cite{MBK15}.
We ascribe the difference of results to the quasi--brittle model we have used for the concrete which does not account for
plastic deformations prior to the damage and for cohesive forces on the crack surfaces. By applying a mixed-mode cohesive crack model but with
an energy--based crack criterion, on the other hand, \cite{MB07} can  obtain good agreement 
with the experiments of \cite{Win01}.

\begin{figure}[H]
	\centering{$\begin{array}{c}
		\includegraphics[width=0.49\textwidth]{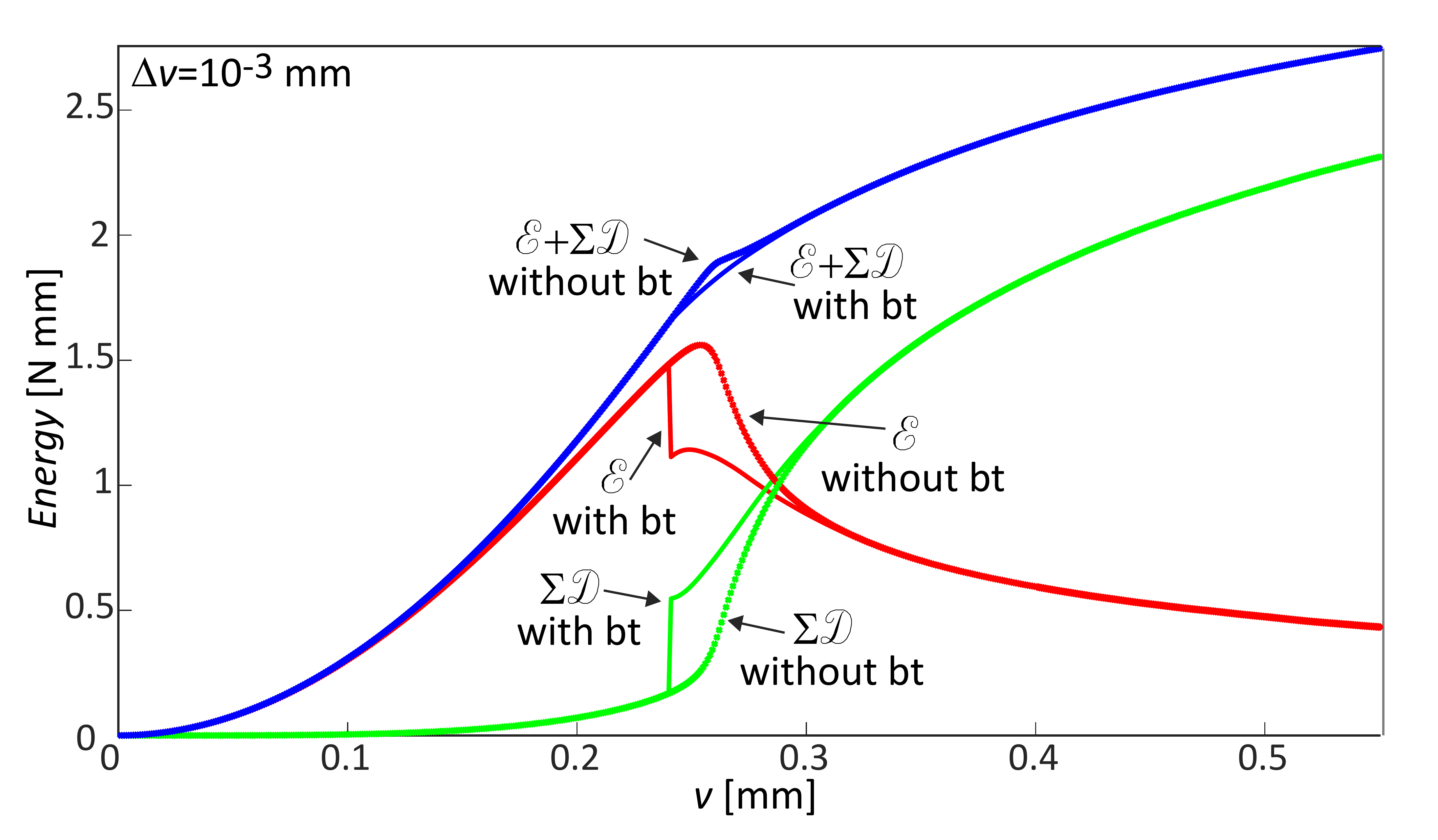}\\
		(a)\\
		\begin{array}{cc}
			\includegraphics[width=0.49\textwidth]{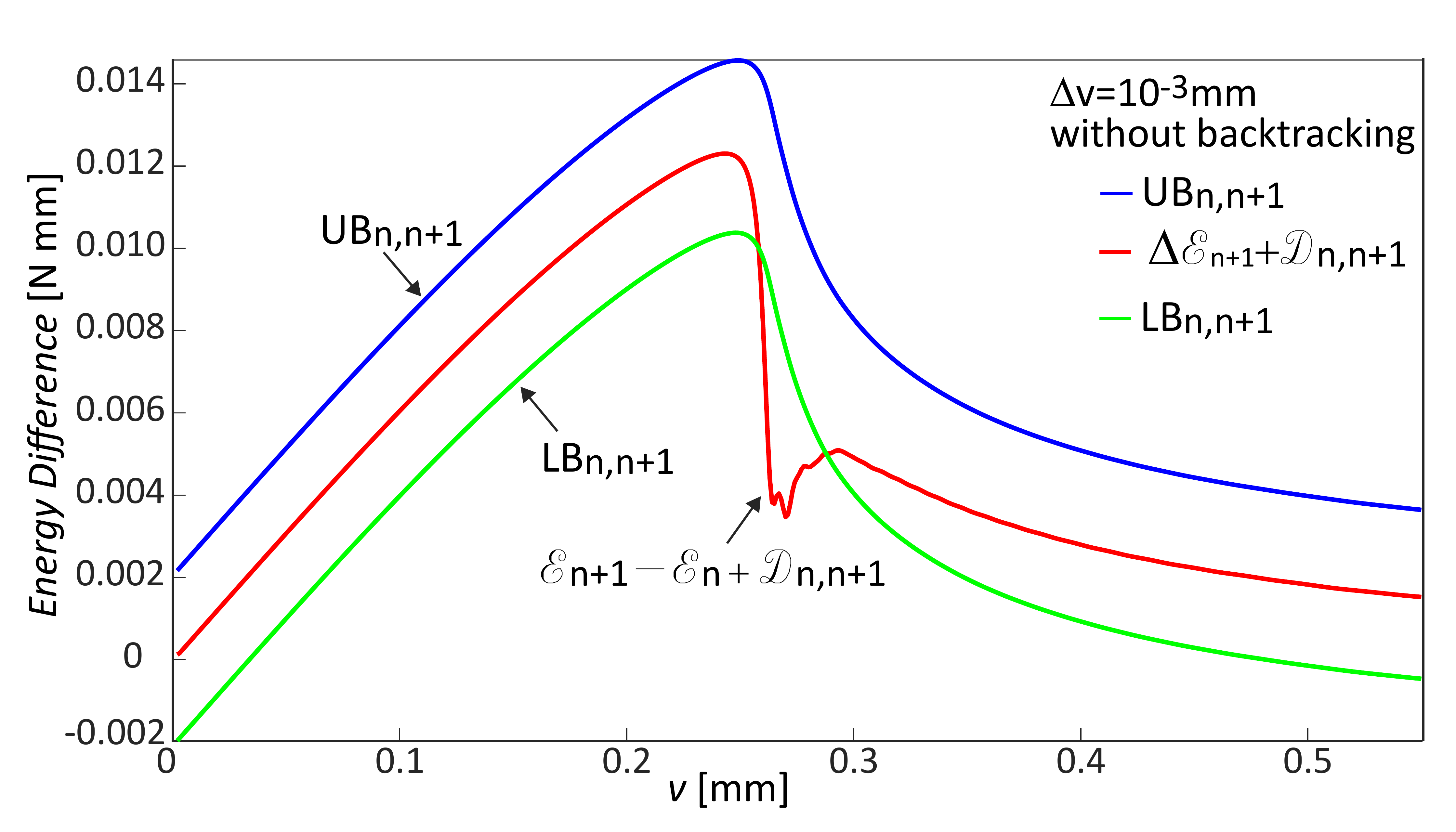}& 
			\includegraphics[width=0.49\textwidth]{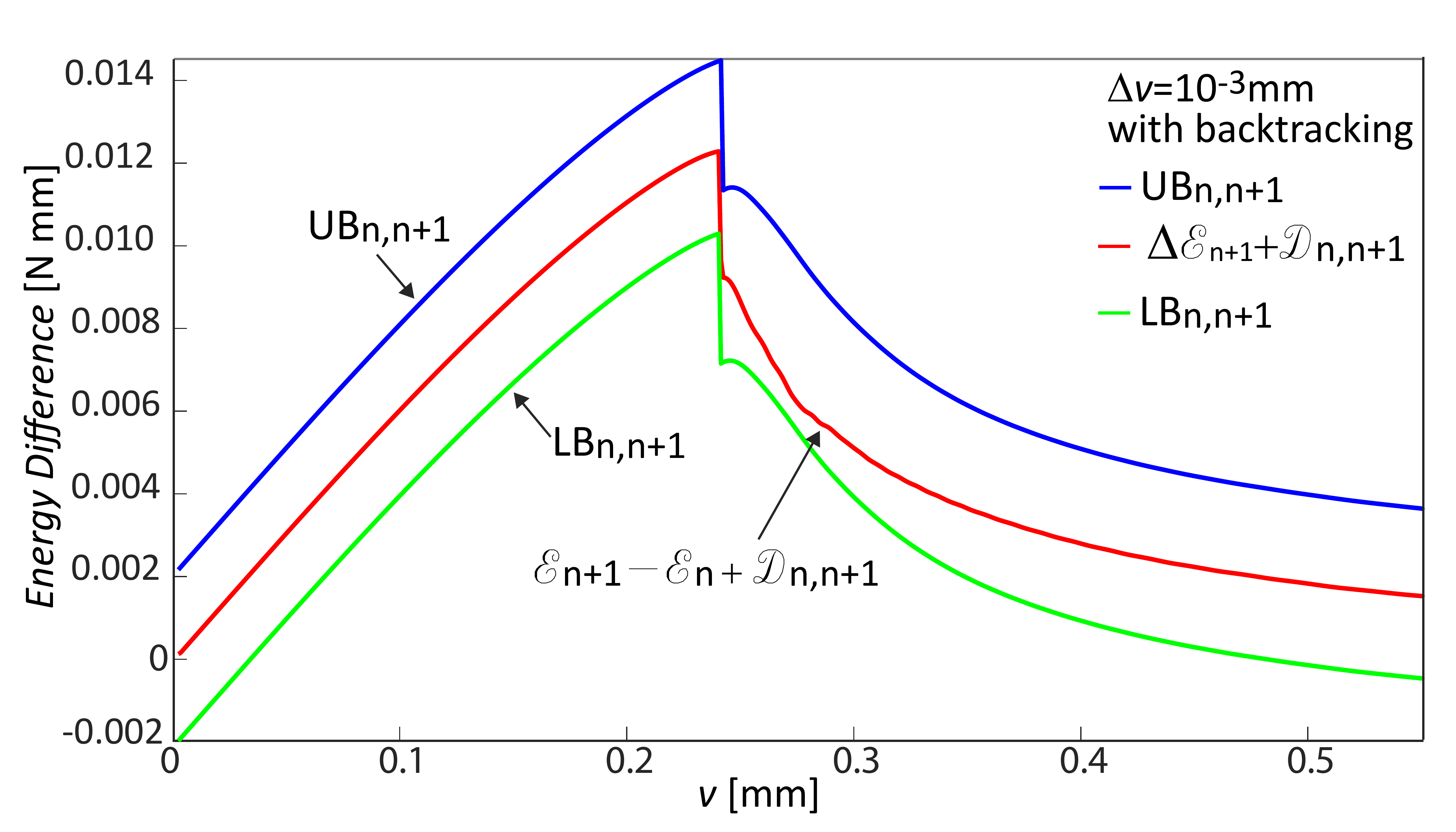}\\
			(b)&(c)
		\end{array}
		\end{array}$}
	\caption{\label{Ex:Lshape:EnrgVrDis}
	Example \ref{Sec.NumEx:Lshape}. $3d$ $L-$shaped panel test.
	Results for $\Delta v=10^{-3}\,\mathrm{mm}$.
	$(a)$ Evolution of the total energy $\mathcal{E}(t_{n+1},\bmU_{n+1},\bmA_{n+1})+\sum_{i=0}^n\mathcal{D}(\bmA_i,\bmA_{i+1})$ 
	for $n=0,1,\ldots, N-1$, without  backtracking $(K=0)$ and with backtracking.
	Evolution of the total incremental energy given by $\mathcal{E}_{n+1}-\mathcal{E}_{n}+\mathcal{D}_{n,n+1}$ 
	and of the lower $LB_{n,n+1}$ and upper bound $UB_{n,n+1}$ which enter the two-sided energy estimate \eqref{Disc.TwoSidIneq},
	$n=0,1,\ldots, N-1$, for the scheme $(b)$ without backtracking and $(c)$ with backtracking.
	}
\end{figure}

The confirmation of the aforementioned behaviour is obtained by analysing the evolution of the total energetics of the 
computed solutions displayed in Figure \ref{Ex:Lshape:EnrgVrDis}. The discrete computed solutions obtained by the 
the alternate minimization method without backtracking fails to yield approximate energetic solutions.
Figure \ref{Ex:Lshape:EnrgVrDis}$(b)$ shows that the two-sided energy inequality \eqref{Disc.TwoSidIneq} 
is satisfied only in the initial stage when the specimen stays mainly elastic 
and in the last stage when the specimen experiences the same damage pattern, that is, when the algorithm
fall back to lower energy states, whereas it is violated for other values of $v$.
With the backtracking option active, by contrast, 
Figure \ref{Ex:Lshape:EnrgVrDis}$(c)$ shows that the alternate minimization is capable of detecting a lower energy 
path during the whole evolution which is defined by the approximate energetic solutions 
that meet the two--sided energy  inequality.
Finally, Figure \ref{Ex:Lshape:DmgMap} displays the phase-field distribution on the plane $z=50\,\mathrm{mm}$ 
at different stages of the displacement $v$ for the two numerical schemes verifying 
a faster evolution of the damage with the backtracking algorithm when compared with the basic scheme. 

\begin{figure}[H]
	\centering{$\begin{array}{ccc}
	\text{$(a)$ Without backtracking}&&\\
		\includegraphics[width=0.28\textwidth]{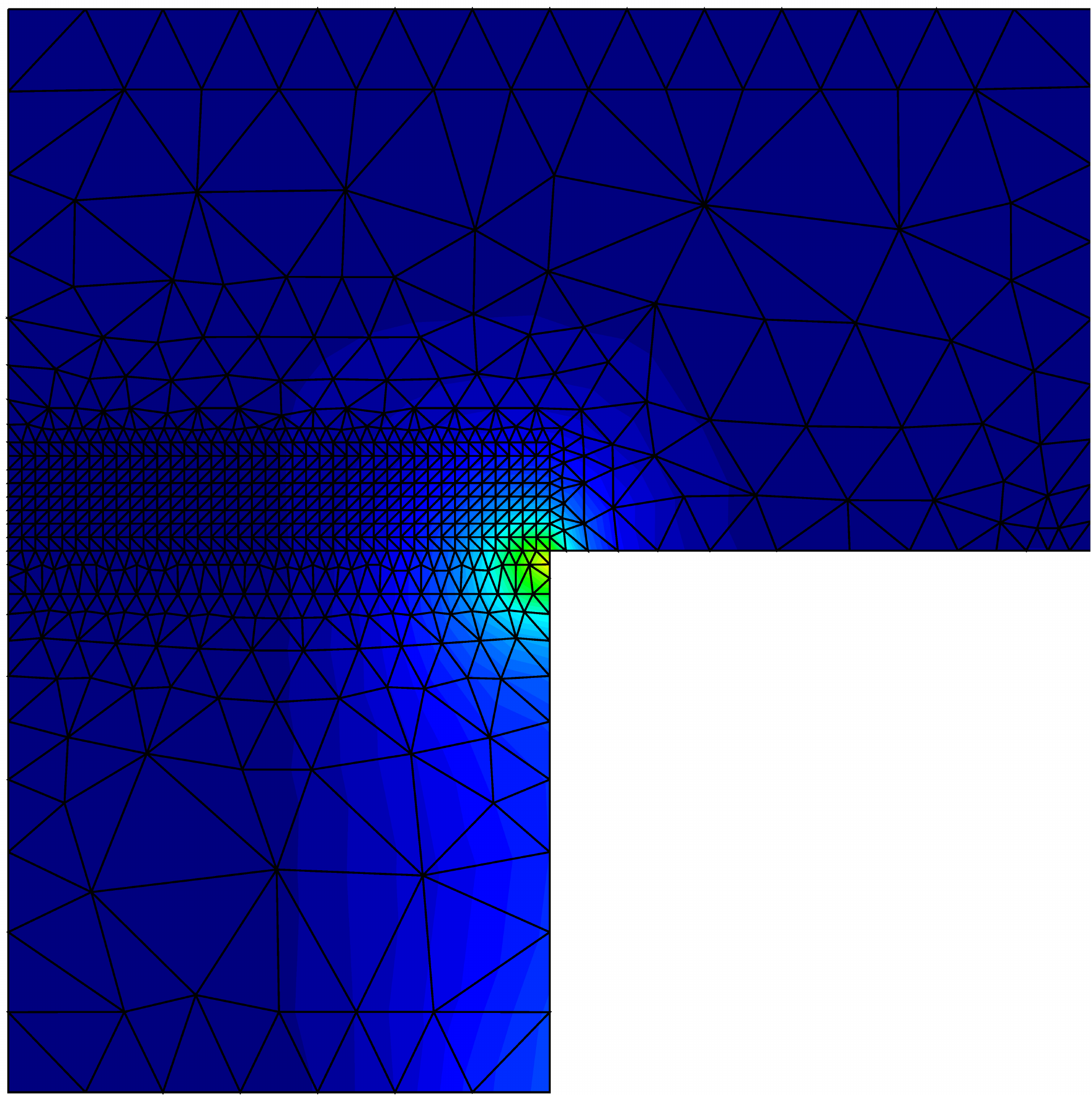}
		\includegraphics[width=0.045\textwidth]{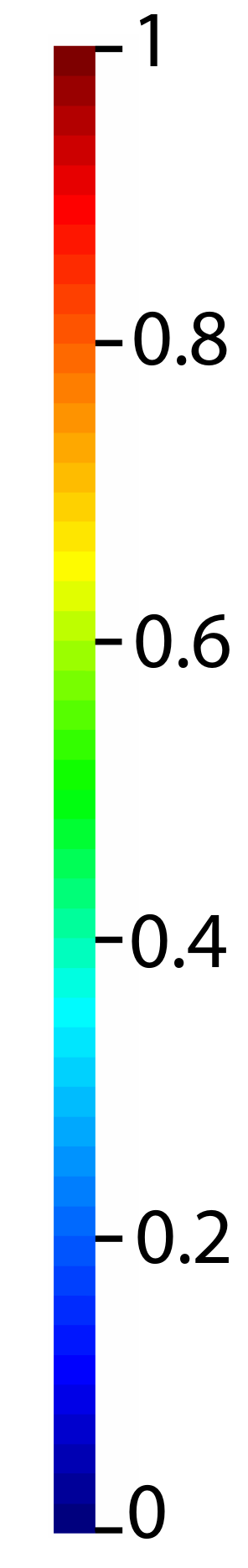}&
		\includegraphics[width=0.28\textwidth]{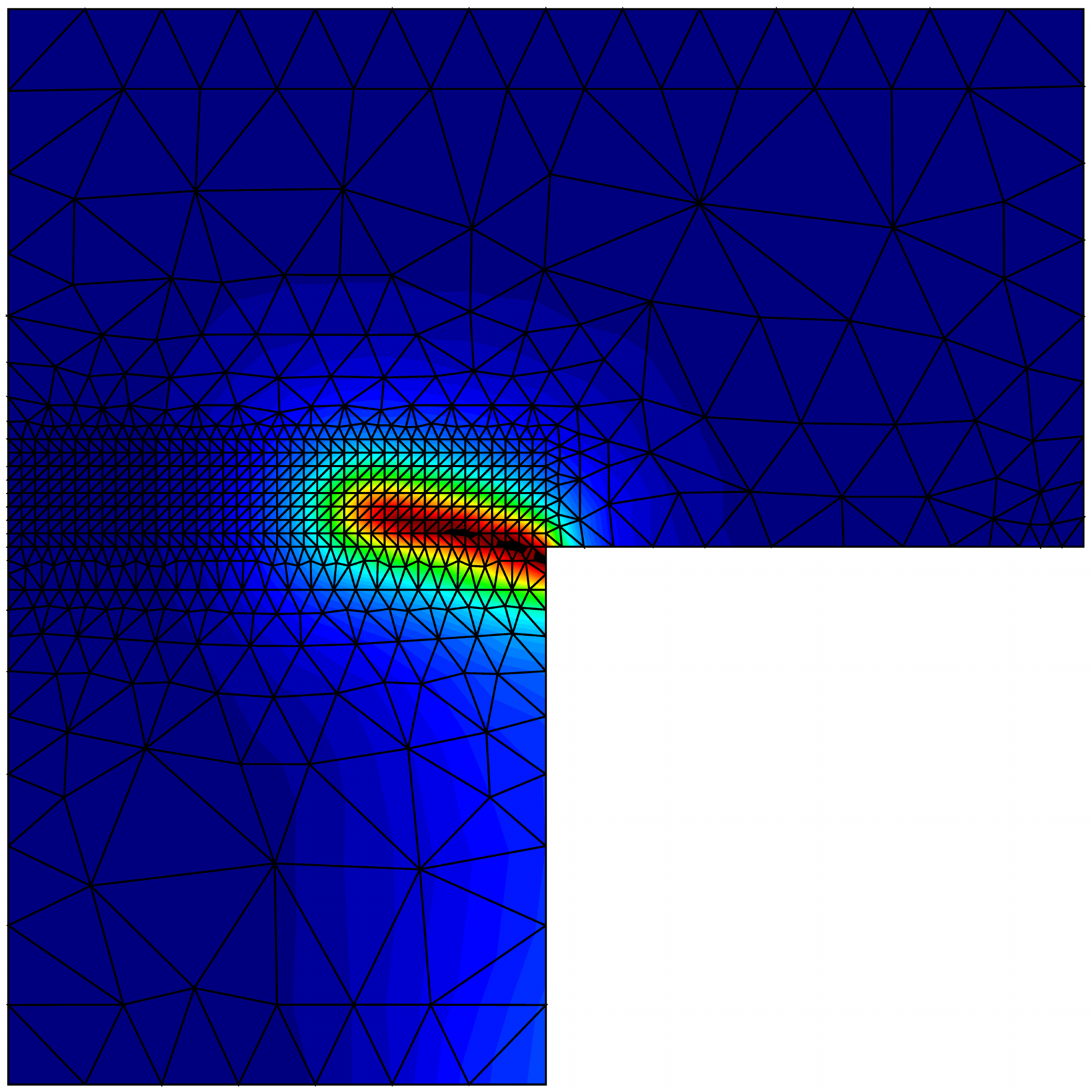}&		  
		\includegraphics[width=0.28\textwidth]{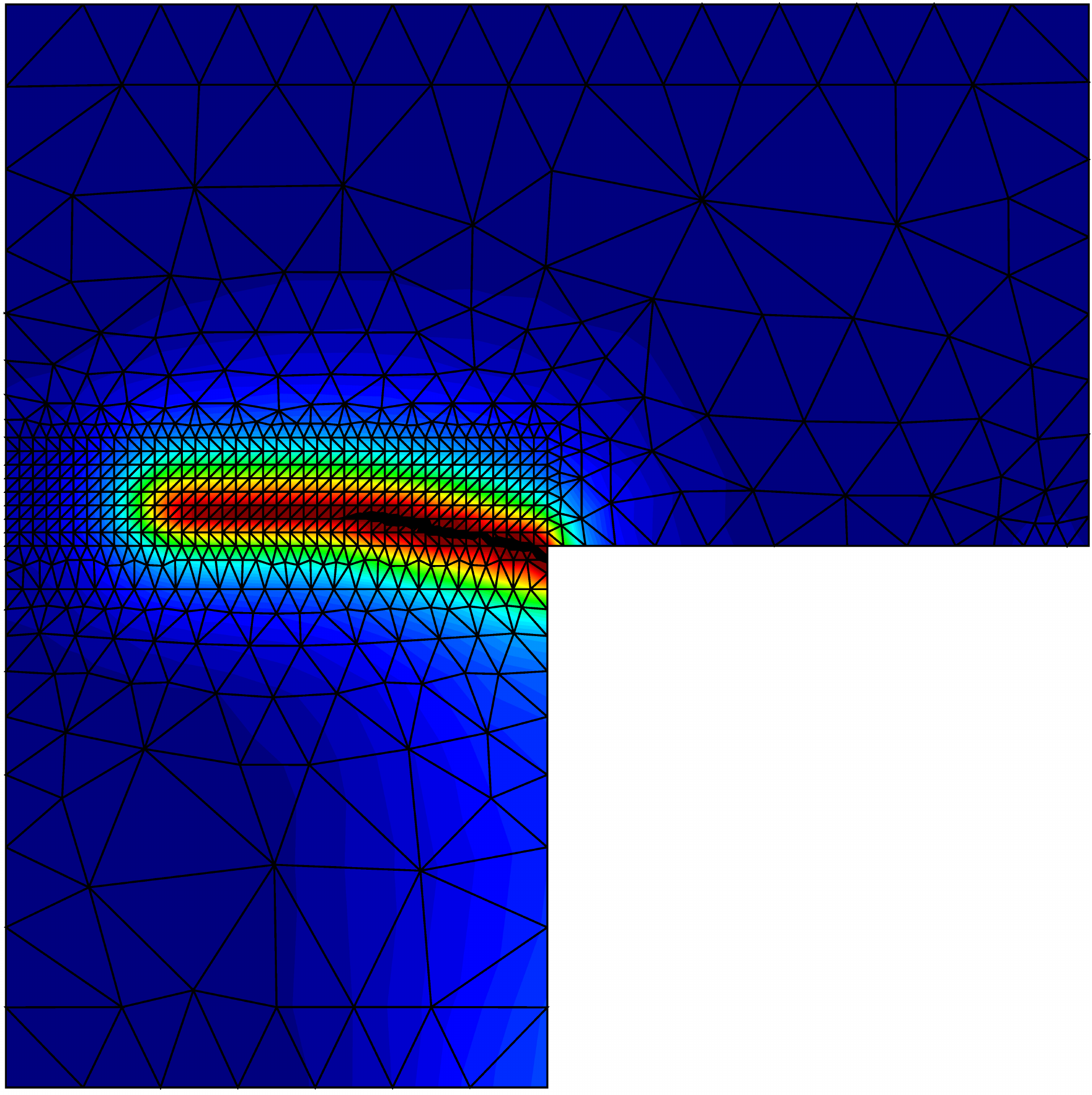}\\
		\includegraphics[width=0.28\textwidth]{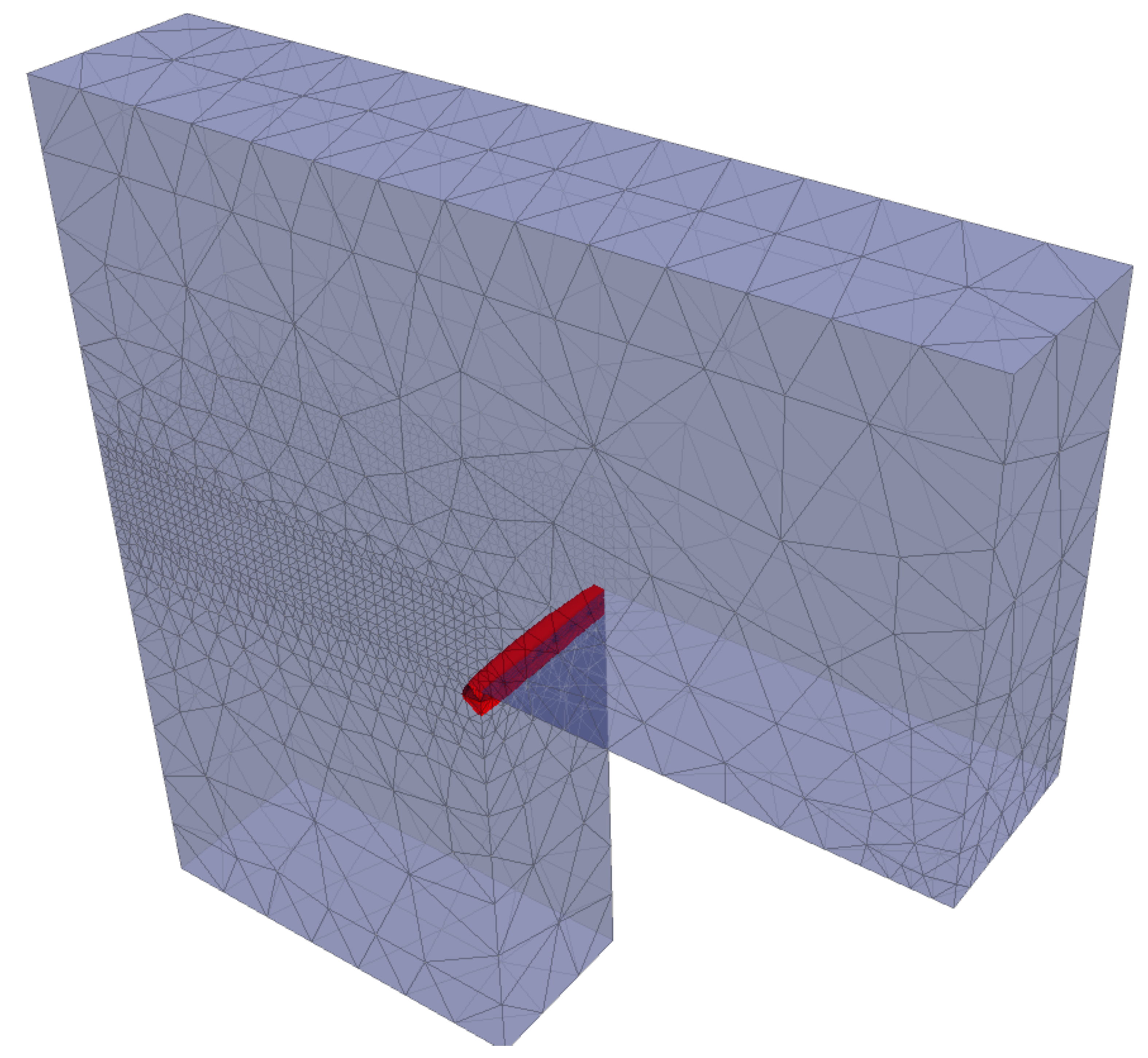}&
		\includegraphics[width=0.28\textwidth]{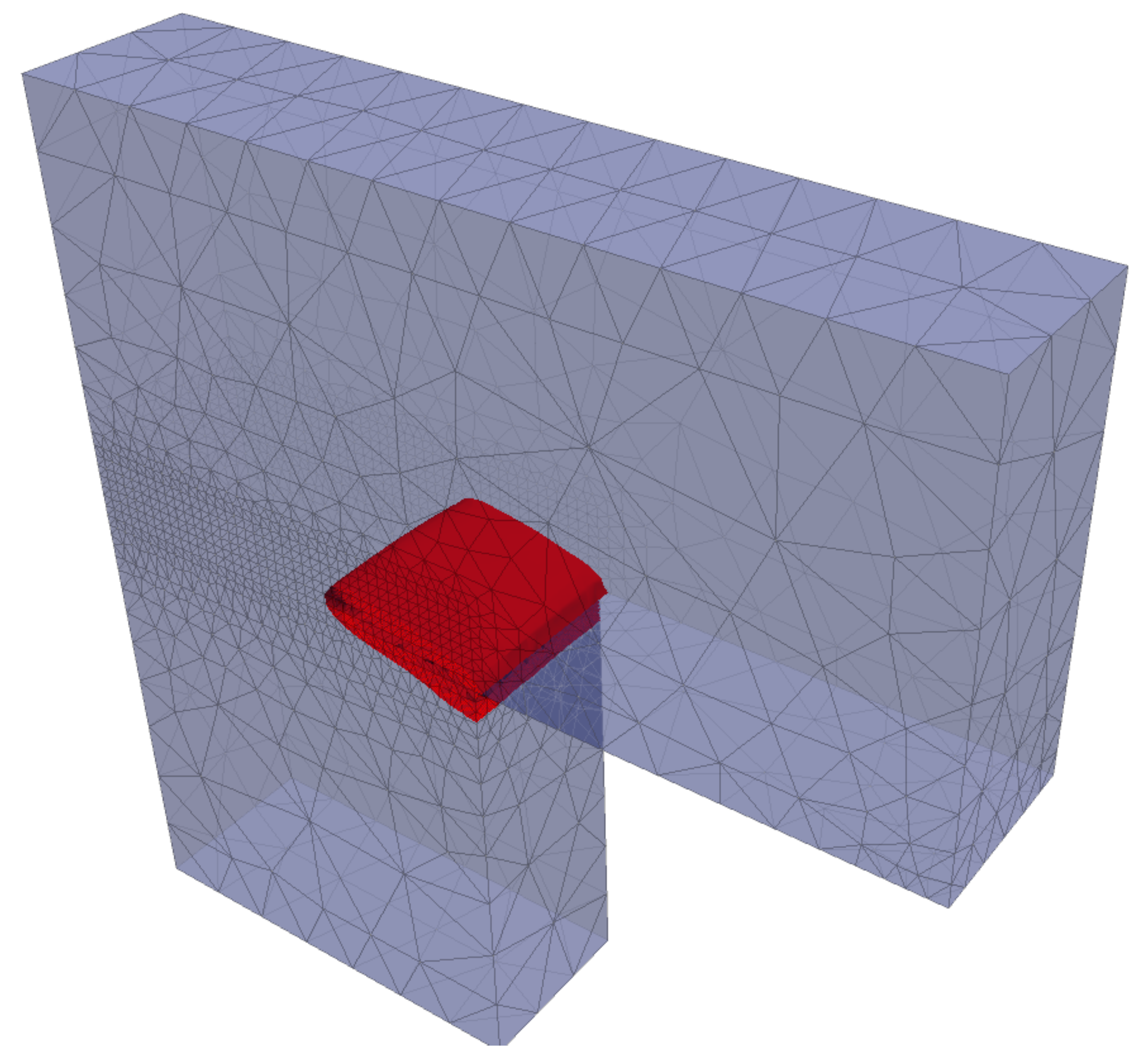}&
		\includegraphics[width=0.28\textwidth]{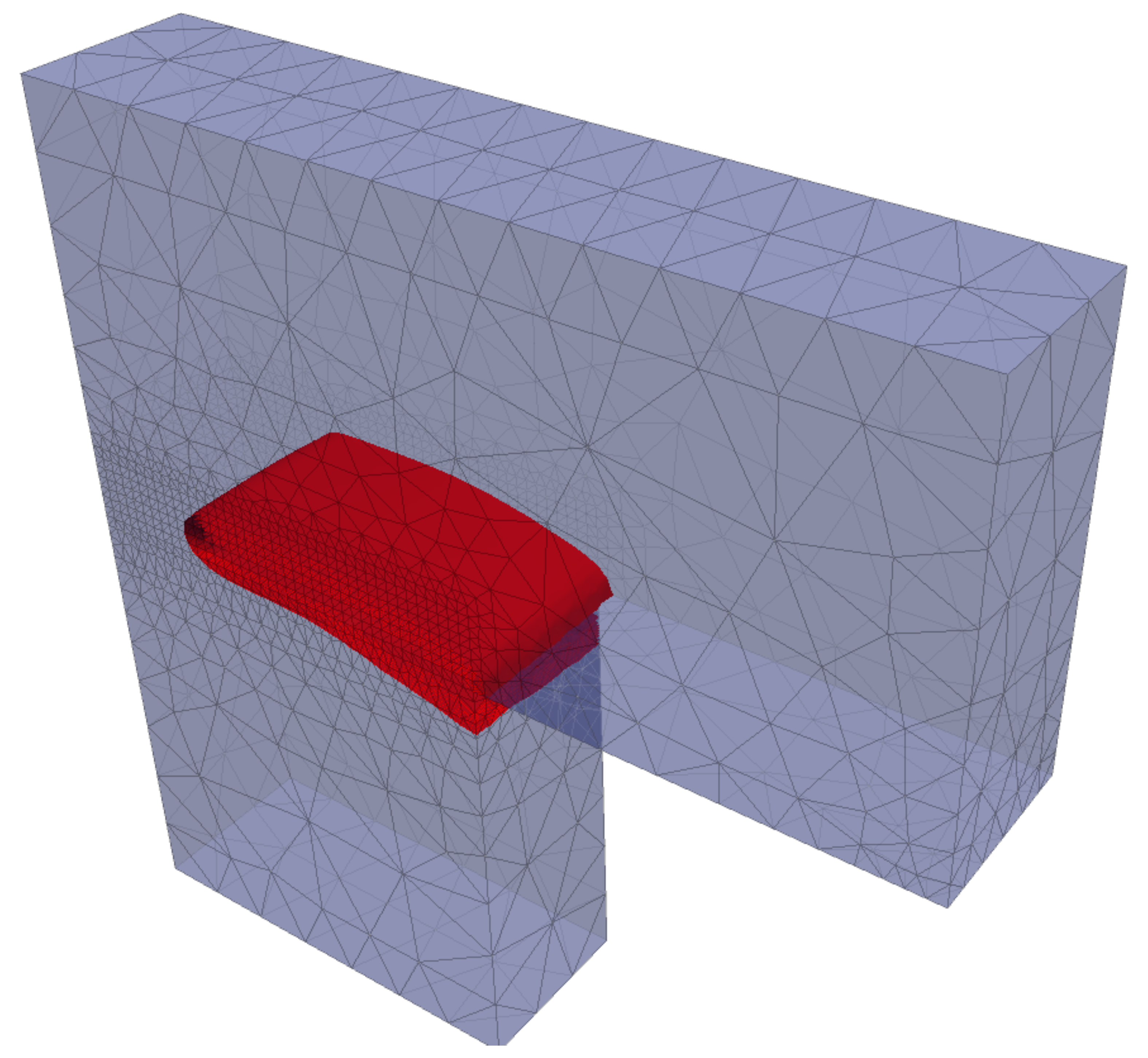}\\		
		v=0.25\,\mathrm{mm} & v=0.30\,\mathrm{mm}  & v=0.5\,\mathrm{mm}  \\\\[1.5ex]
	\text{$(b)$ With backtracking}&&\\
		\includegraphics[width=0.28\textwidth]{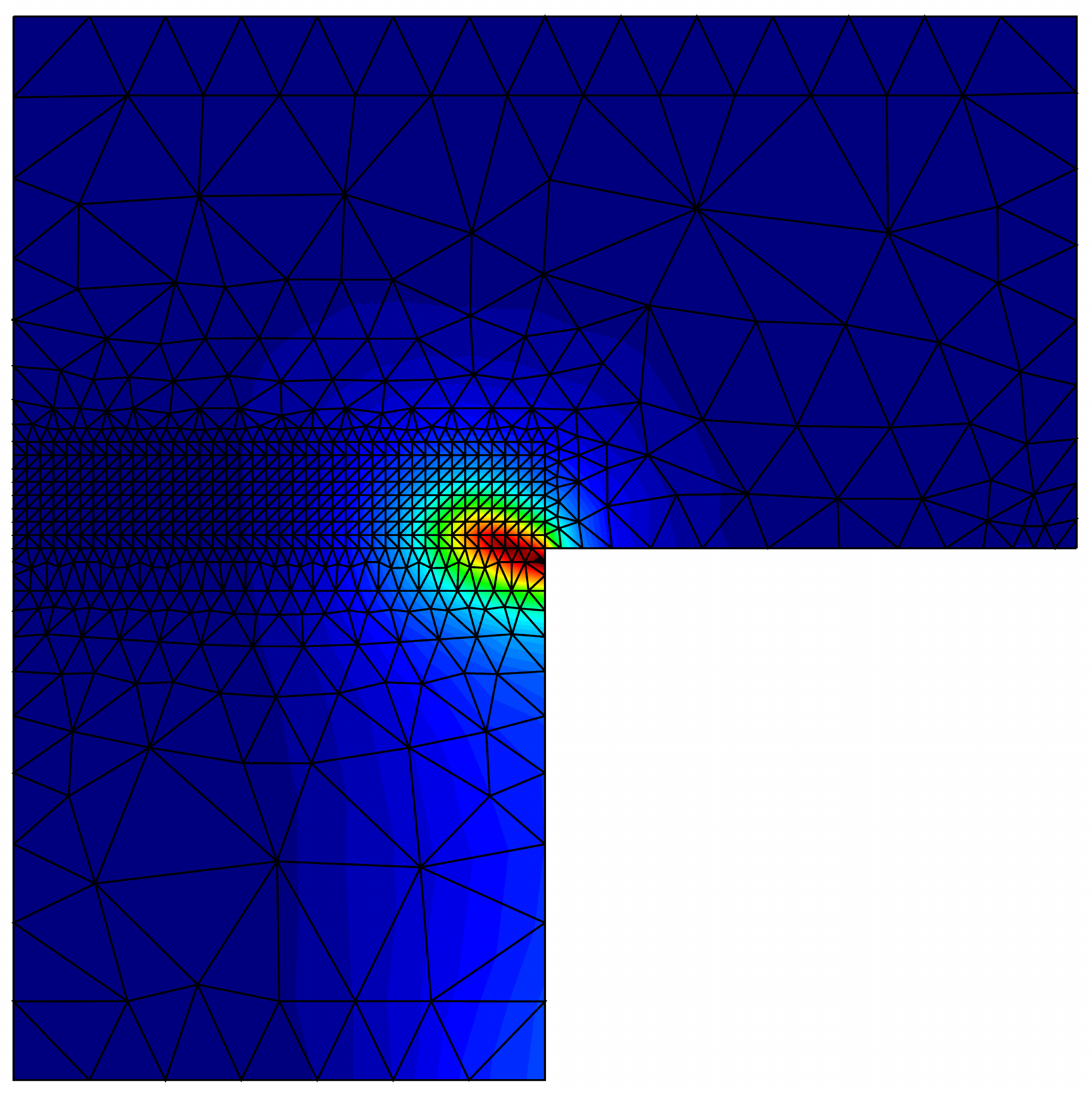}
		\includegraphics[width=0.045\textwidth]{LegendDamageMap3d.pdf}&
		\includegraphics[width=0.28\textwidth]{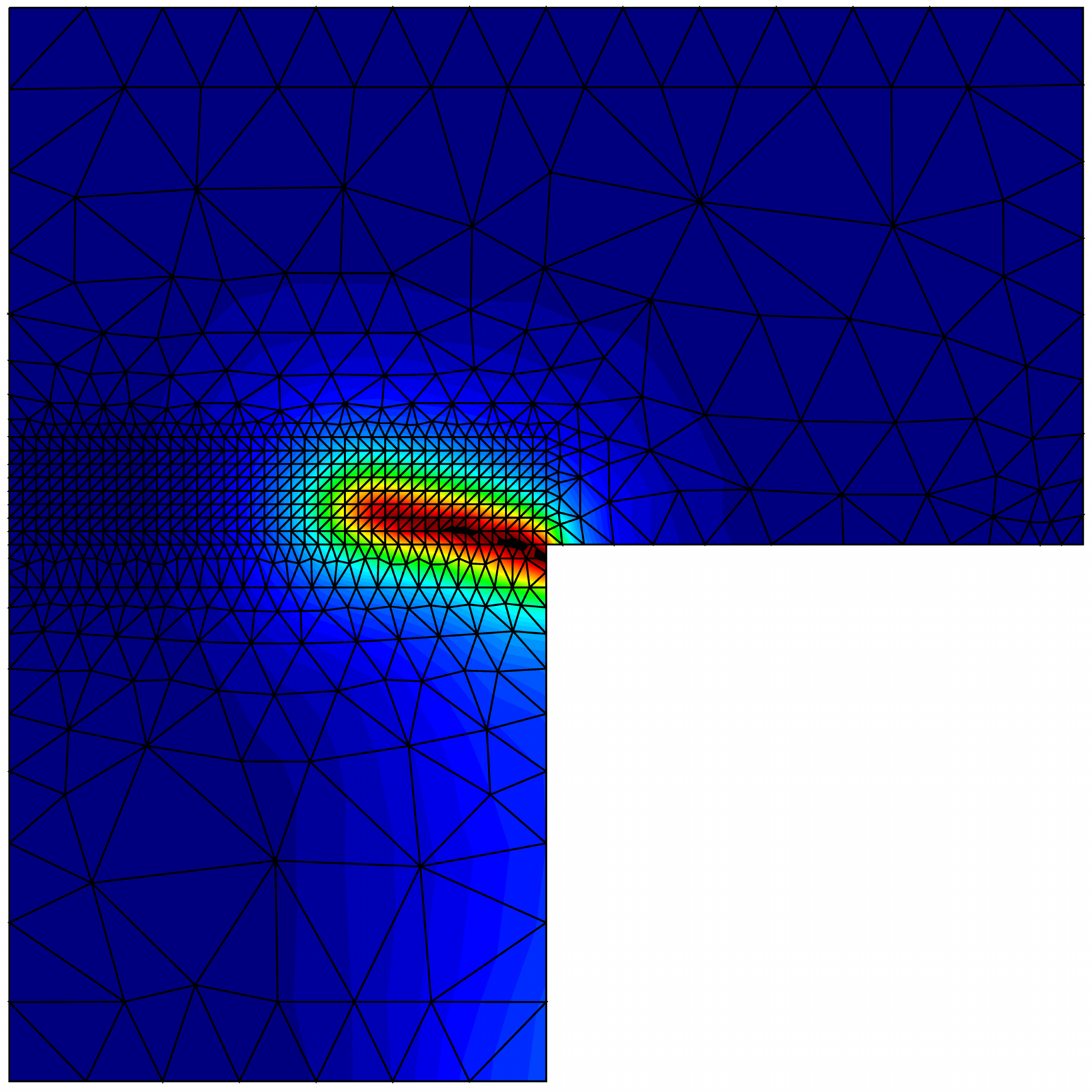}&		  
		\includegraphics[width=0.28\textwidth]{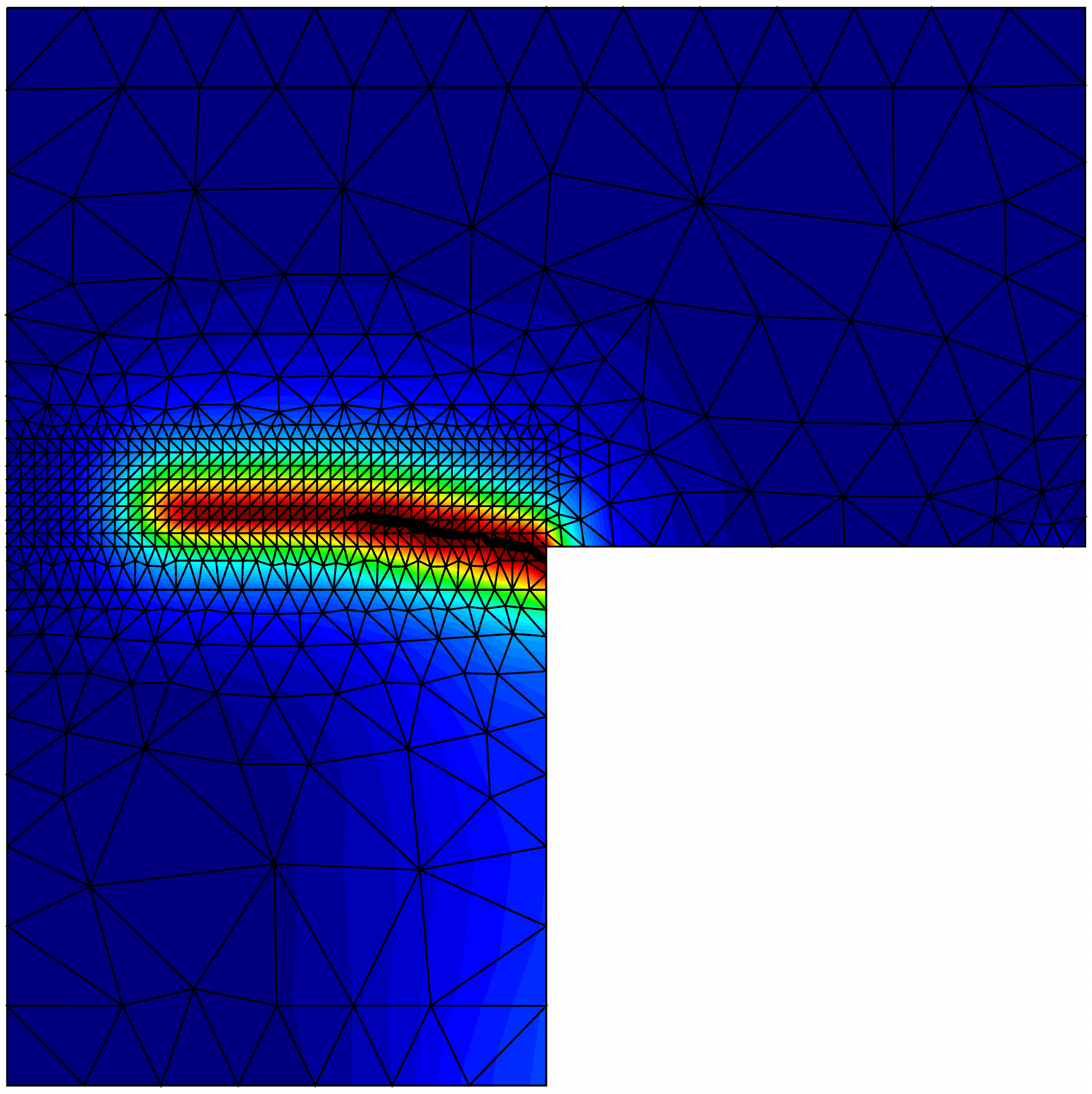}\\
		\includegraphics[width=0.28\textwidth]{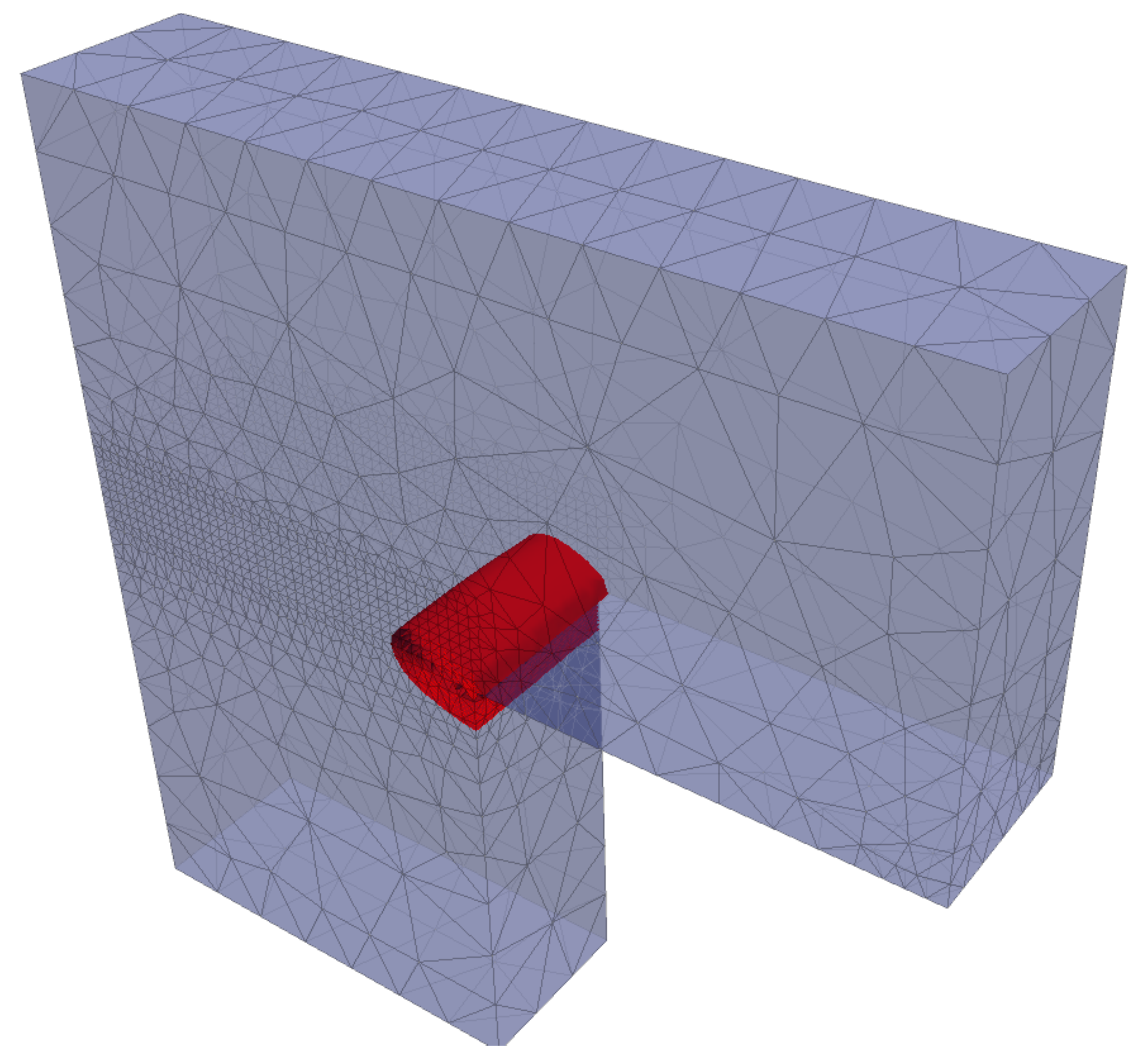}&
		\includegraphics[width=0.28\textwidth]{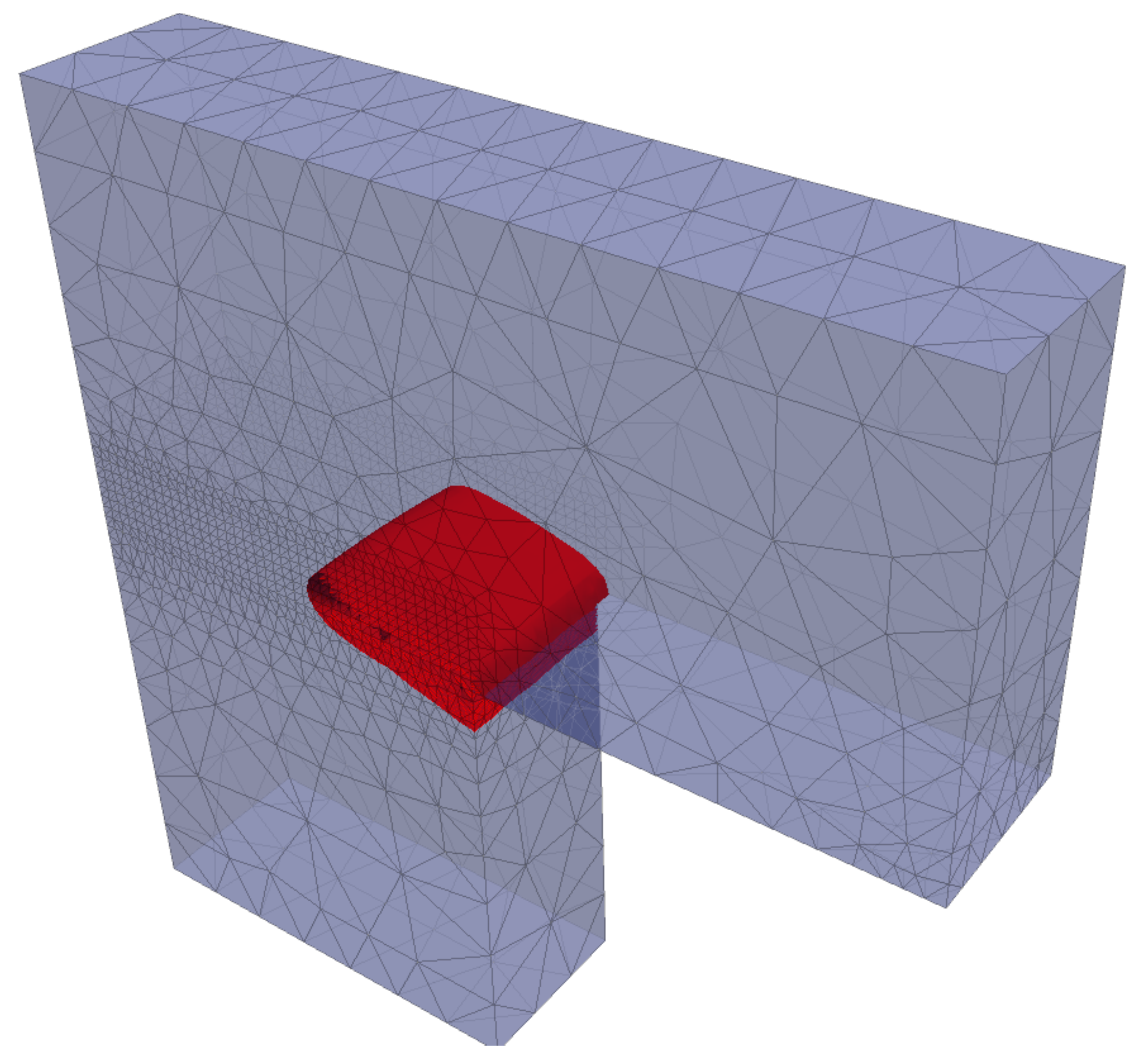}&
		\includegraphics[width=0.28\textwidth]{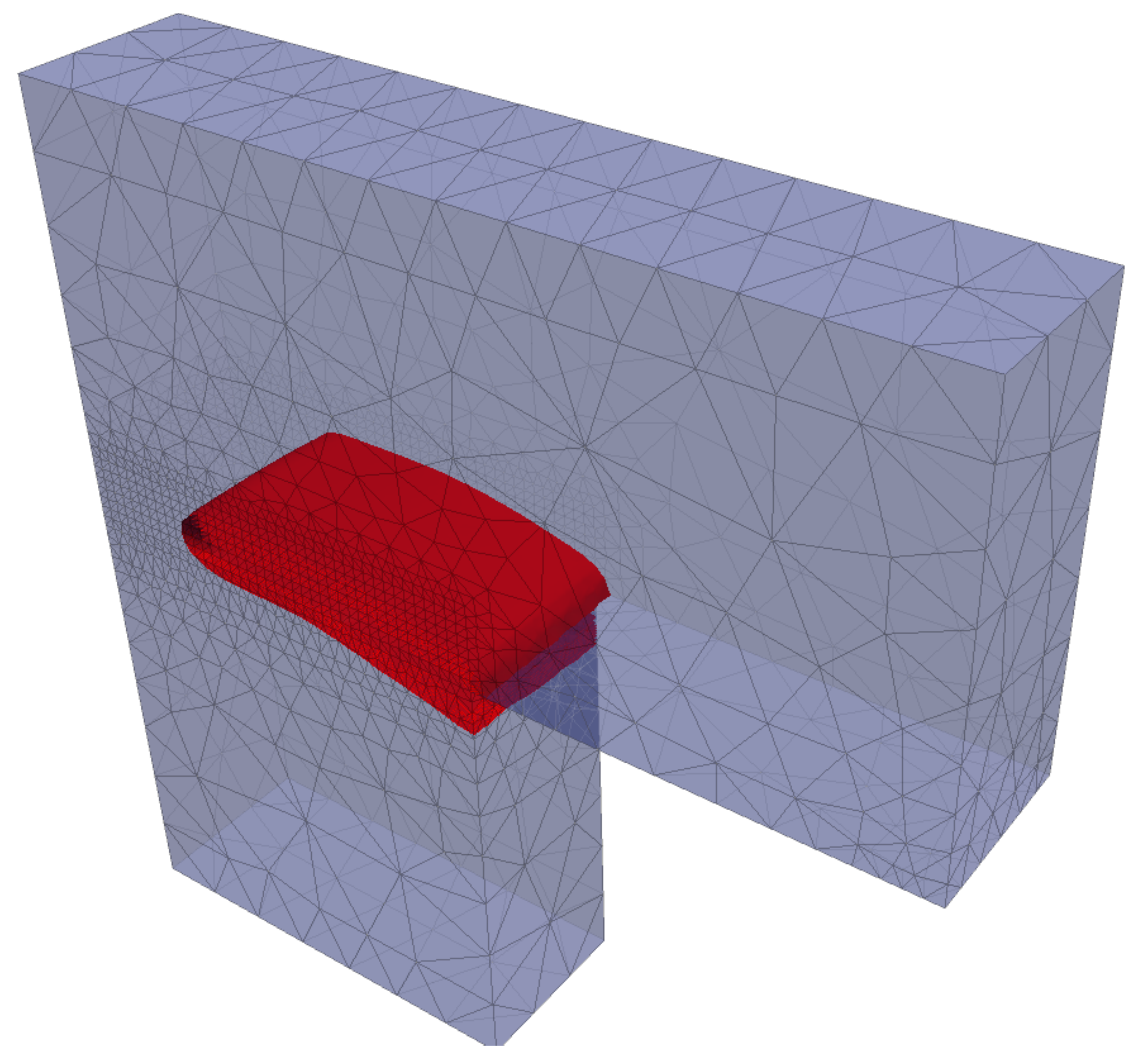}\\
		v=0.25\,\mathrm{mm} & v=0.3\,\mathrm{mm}  & v=0.5\,\mathrm{mm}  \\
       \end{array}$
	}
	\caption{\label{Ex:Lshape:DmgMap}
	Example \ref{Sec.NumEx:Lshape}. $3d$ $L-$shaped panel test.
	Phase field distribution on the plane $z=50\,\mathrm{mm}$ with $3d$ views of the 
	phase-field isolevel lines.
	In the damage maps, 
	the darkest colour corresponds to $\beta=1-\delta$ 
	with $\delta=10^{-4}$ given that we are considering a partially damage profile, 
	whereas the blue corresponds to solid material for which $\beta=0$. 
	}
\end{figure}


\subsection{Three dimensional symmetric bending test}\label{Sec.NumEx:Ex4}

We conclude this section with the $3d$ numerical simulation of the three--point bending test of a mortar notched beam,
normally used in applications to determine the fracture energy \cite{RILEM85}. We compare
our numerical results with the experimental findings of \cite{GRZ93}. The geometric setup conforms with the 
specifications of \cite{RILEM85} and is displayed in Figure \ref{NE16}$(a)$. The height notch is equal to half the 
beam height and its width is not greater than $10\,\mathrm{mm}$. The elastic constants are chosen as 
$E=39.0\,\mathrm{kN/mm^2}$ and $\nu=0.15$, the critical energy release rate as $g_c=0.04\,\,\mathrm{N/mm}$ and the internal length as $\ell=15\,\mathrm{mm}$.

\begin{figure}[H]
	\centerline{$\begin{array}{cc}
		\includegraphics[width=0.45\textwidth]{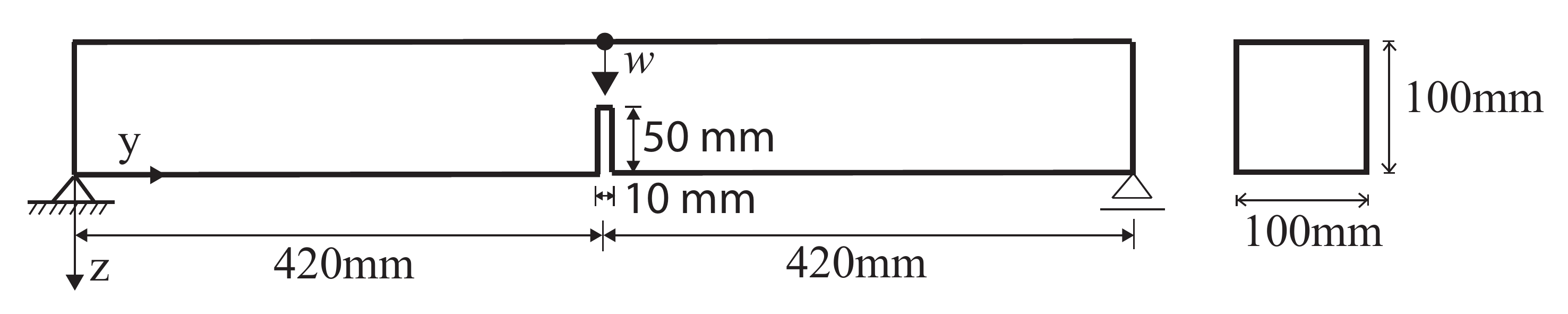}&
		\includegraphics[width=0.45\textwidth]{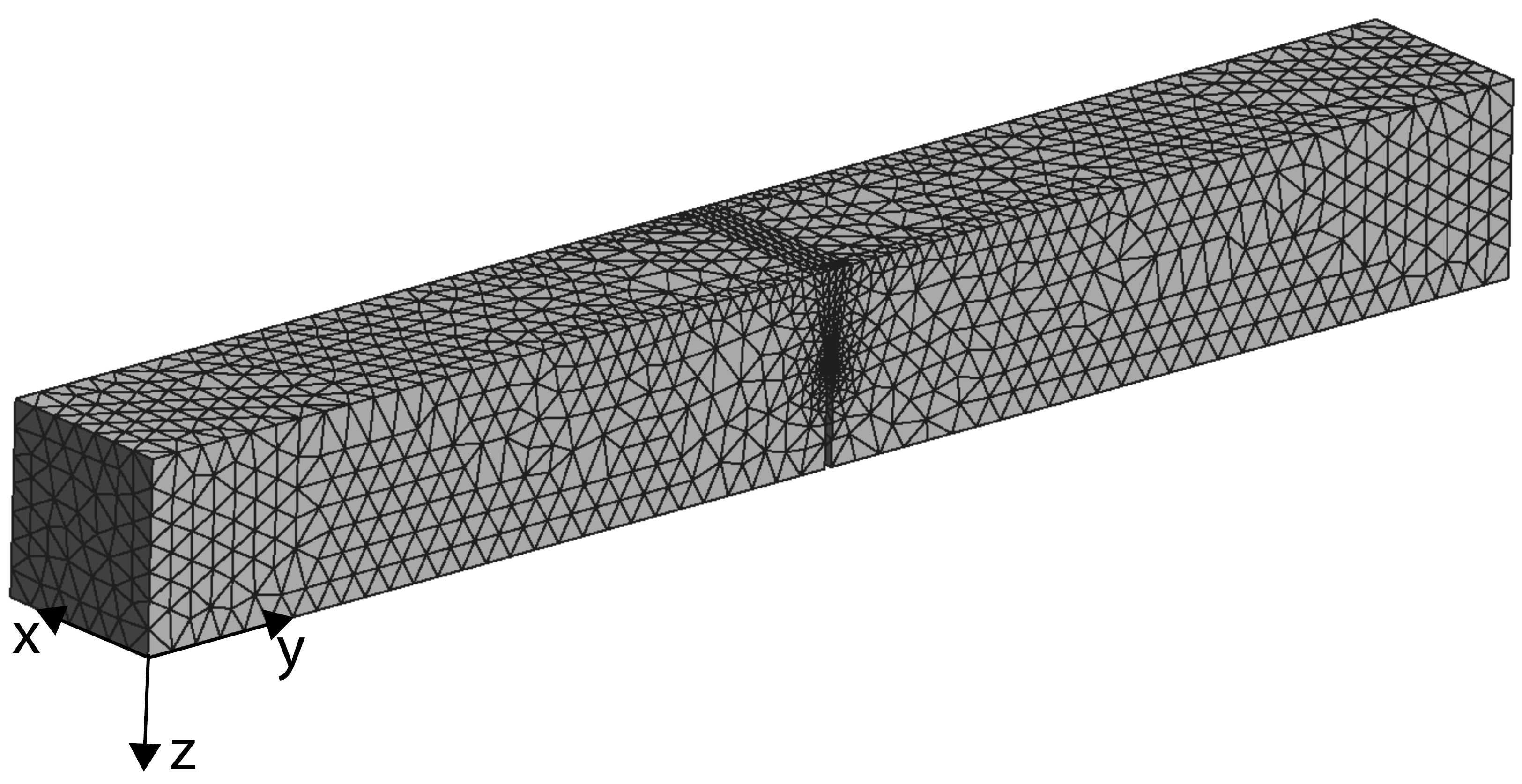}\\
		(a)&(b)
		\end{array}$
		}
	\caption{\label{NE16}
		Example \ref{Sec.NumEx:Ex4}. $3d$-symmetric bending test.
		$(a)$ Geometric setup.
		$(b)$ Unstructured finite element mesh.
		}
\end{figure}

The finite element mesh is shown in Figure \ref{NE16}$(b)$ and is formed by $33824$ tetrahedral elements 
and $6601$ nodes. In order to capture properly the crack pattern, the mesh has been refined in the region
where the crack is expected to propagate with a characteristic finite element length equal to $h=1\,\mathrm{mm}<\ell/2$.  
The tests are performed by applying a deformation controlled loading of the central 
line of equation $y=420\,\mathrm{mm},\,z=-100\,\mathrm{mm}$ by constant displacement increments $\Delta w=10^{-3}\,\mathrm{mm}$
and we set the penalization factor $\epsilon$ equal to $10^{-4}$.
If we denote by $F$ the 
resultant reaction force of the non-homogeneous Dirichlet boundary condition $w$, which is prescribed on the top edge,
the load-displacement curve without the backtracking option active is displayed in Figure \ref{NE17} 
showing good agreement with the experimental findings of \cite{GRZ93}. However, unlike the previous examples,
the variation of the total energy of the computed solutions
displayed in Figure \ref{Ex4:3dbeamEnergy} 
shows that, in this case, the standard scheme of the alternating minimization is capable of identifying 
the energetic solutions when damage starts to manifest without resorting to backtacking, given that 
the computed solutions meet the two-sided energetic inequality. This occurs because
the bound limits are quite ample. 
Finally, Figure \ref{Ex:3dbeam:DmgMap} shows the damage distribution on the cross section $y=420\,\mathrm{mm}$ at several stages of the 
deformation which is consistent with the
description of the experimental results by \cite{GRZ93}.

\begin{figure}[H]
	\centerline{\includegraphics[width=0.5\textwidth]{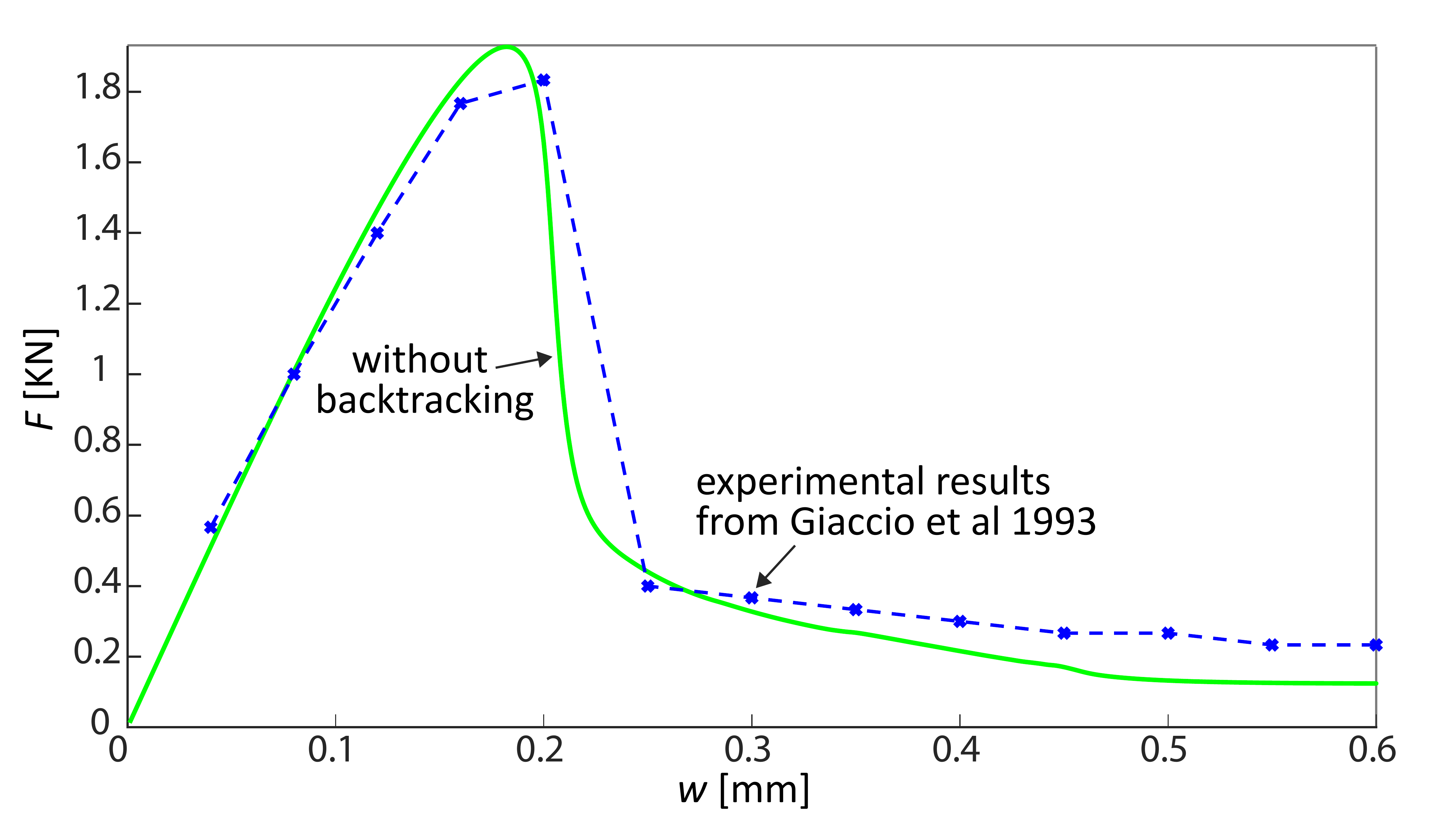}
		}
	\caption{\label{NE17}
	Example \ref{Sec.NumEx:Ex4}. $3d$-symmetric bending test.
	Load-displacement curve associated with the evolution 
	of the approximate energetic solutions. For this case,
	standard application of the alternating minimization identifies
	the energetic solutions without resorting to the backtracking strategy.
	}
\end{figure}

\begin{figure}[H]
	\centering{$\begin{array}{cc}
		\includegraphics[width=0.45\textwidth]{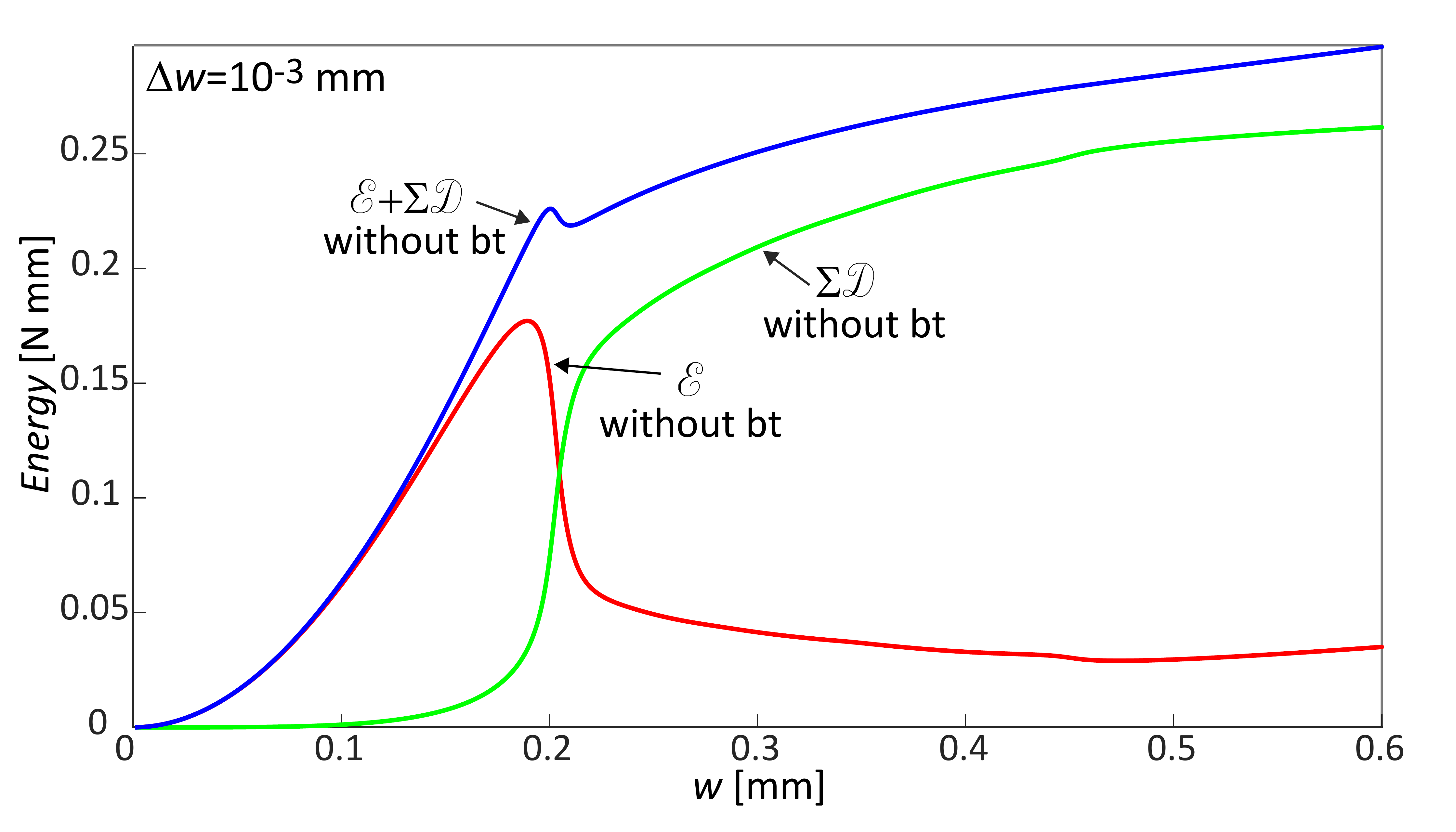}&
		\includegraphics[width=0.45\textwidth]{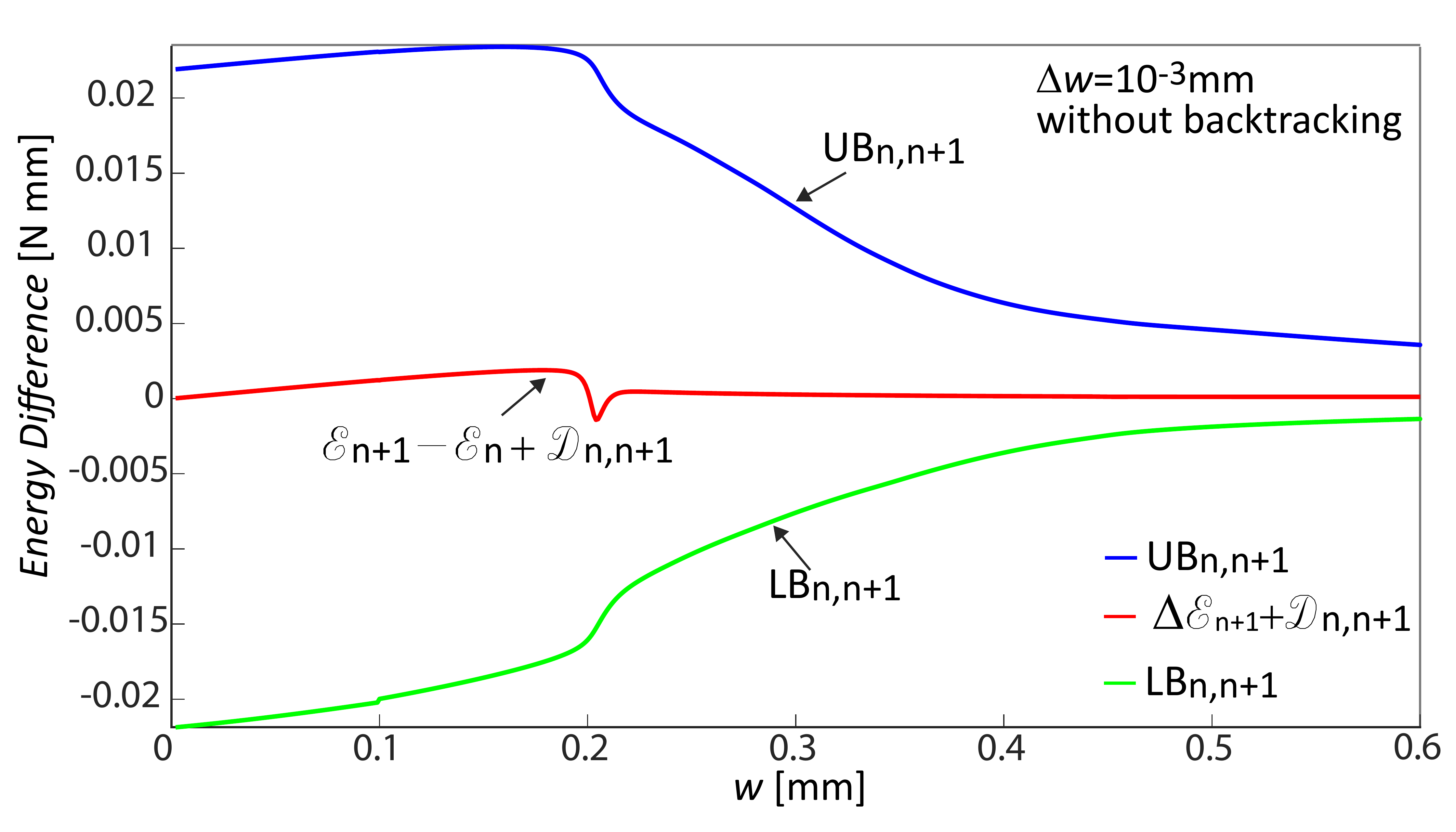}\\
		(a)&(b)
		\end{array}$}
	\caption{\label{Ex4:3dbeamEnergy}
	Example \ref{Sec.NumEx:Ex4}. $3d$-symmetric bending test.
	Results for $\Delta w=10^{-3}\,\mathrm{mm}$.
	Evolution of: $(a)$ the total energy $\mathcal{E}(t_{n+1},\bmU_{n+1},\bmA_{n+1})+\sum_{i=0}^n\mathcal{D}(\bmA_i,\bmA_{i+1})$ 
	for $n=0,1,\ldots, N-1$ and $(b)$ of the total incremental energy $\mathcal{E}_{n+1}-\mathcal{E}_{n}+\mathcal{D}_{n,n+1}$, 
	the lower bound $LB_{n,n+1}$ and the upper bound $UB_{n,n+1}$ which enter the two-sided energy estimate \eqref{Disc.TwoSidIneq},
	$n=0,1,\ldots, N-1$. For this problem, standard application of the alternating minimization identifies
	the energetic solutions without resorting to the backtracking strategy.
	}
\end{figure}

 \begin{figure}[H]
	\centering{$\begin{array}{ccc}
		\includegraphics[width=0.30\textwidth]{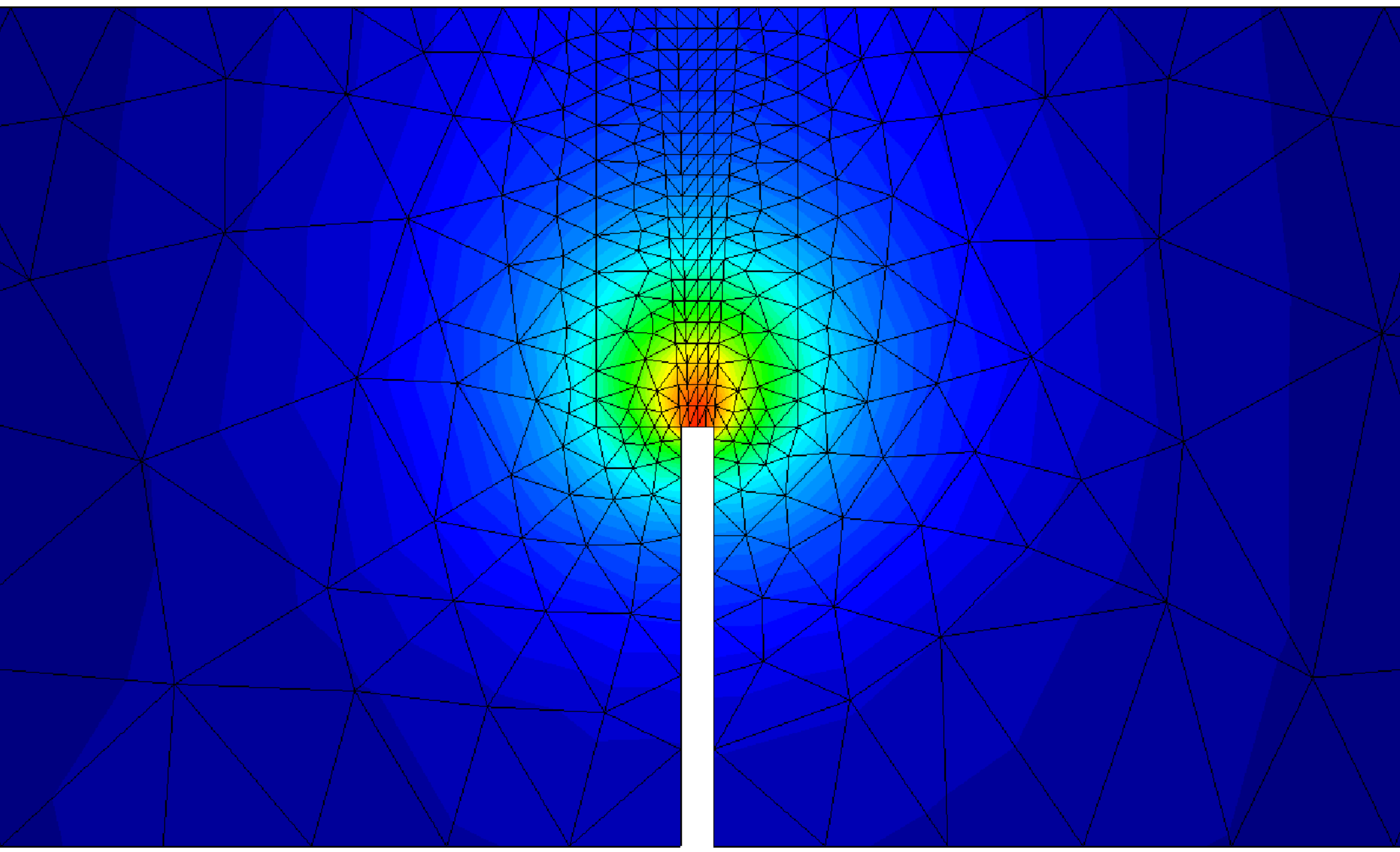}
		\includegraphics[width=0.03\textwidth]{LegendDamageMap3d.pdf}&
		\includegraphics[width=0.30\textwidth]{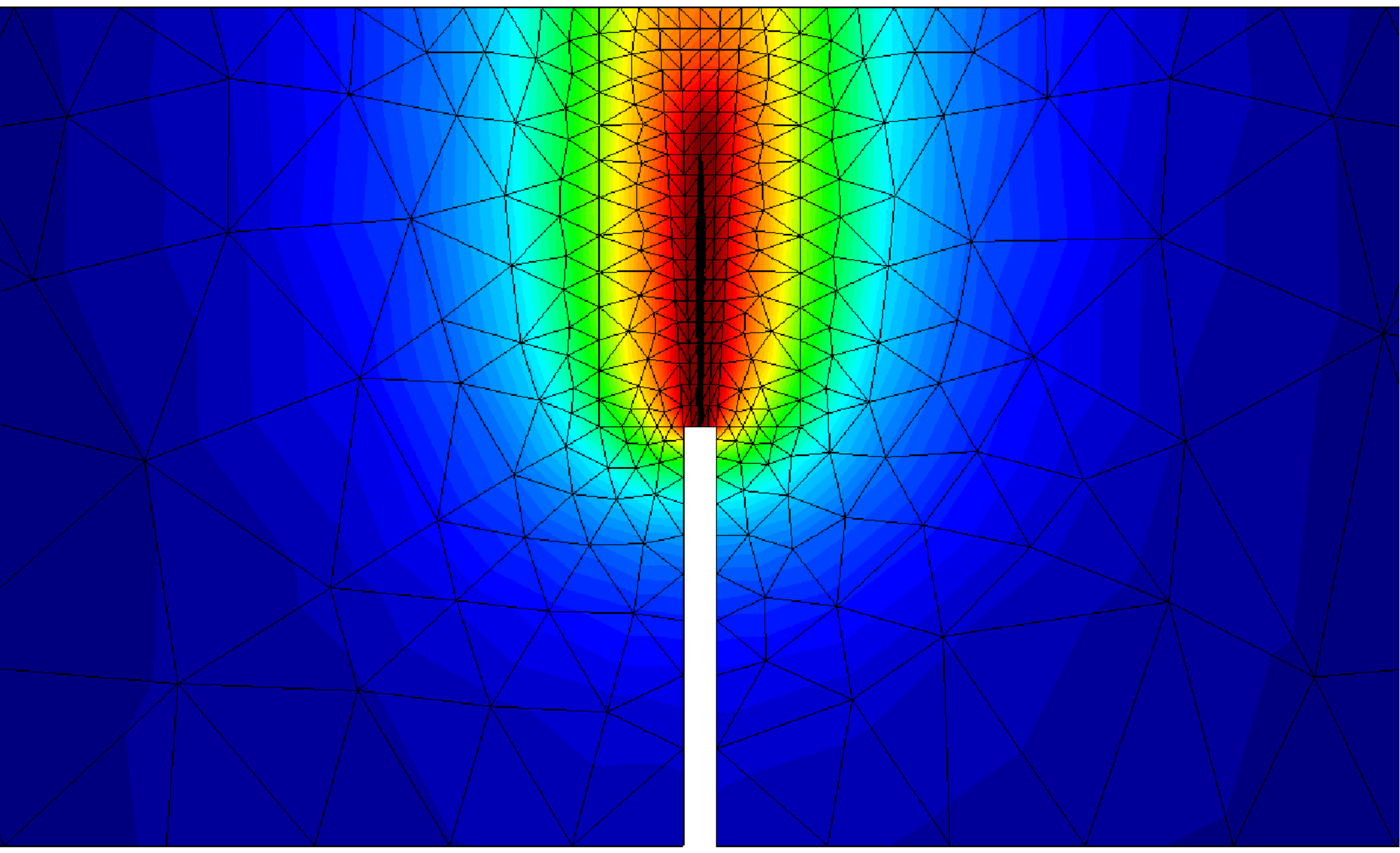}&
		\includegraphics[width=0.30\textwidth]{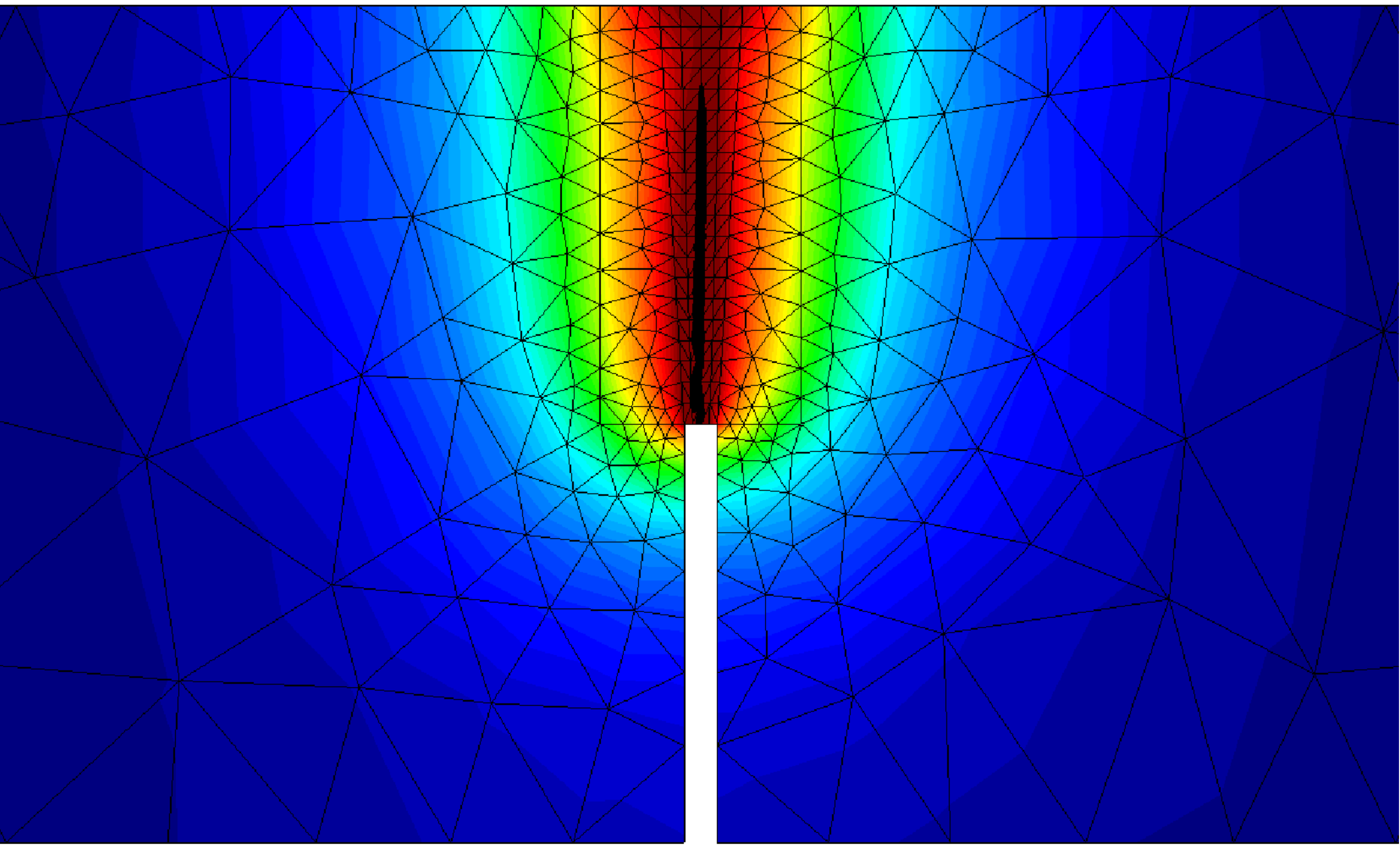}\\
		\includegraphics[width=0.30\textwidth]{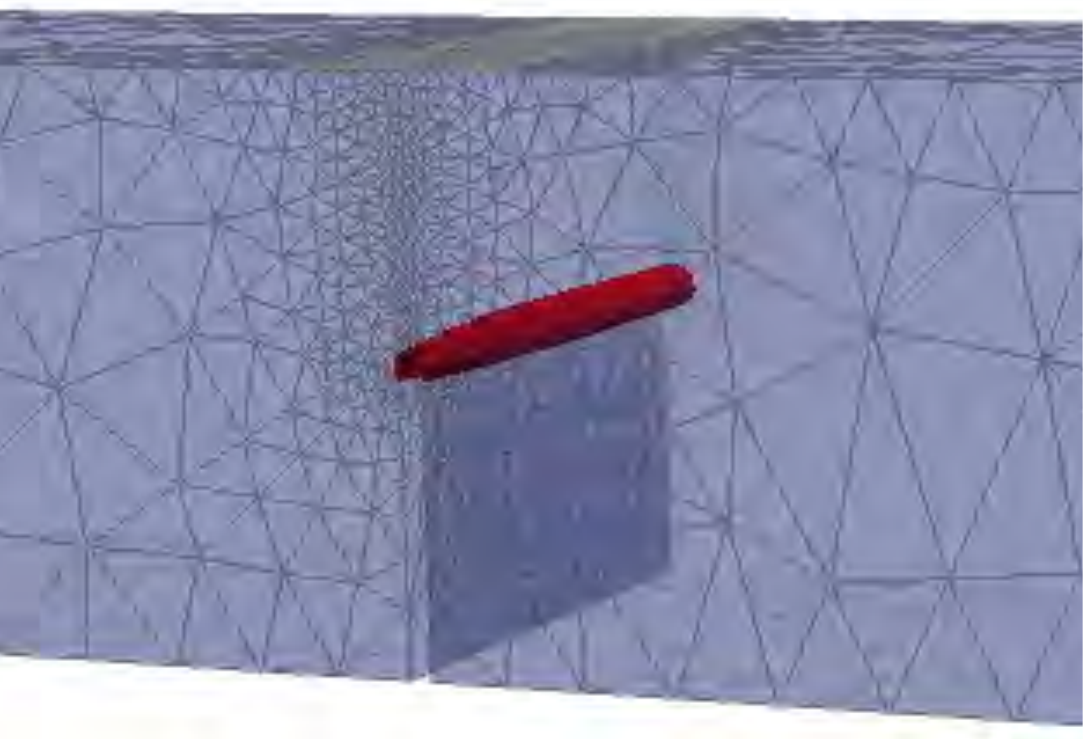}&
		\includegraphics[width=0.30\textwidth]{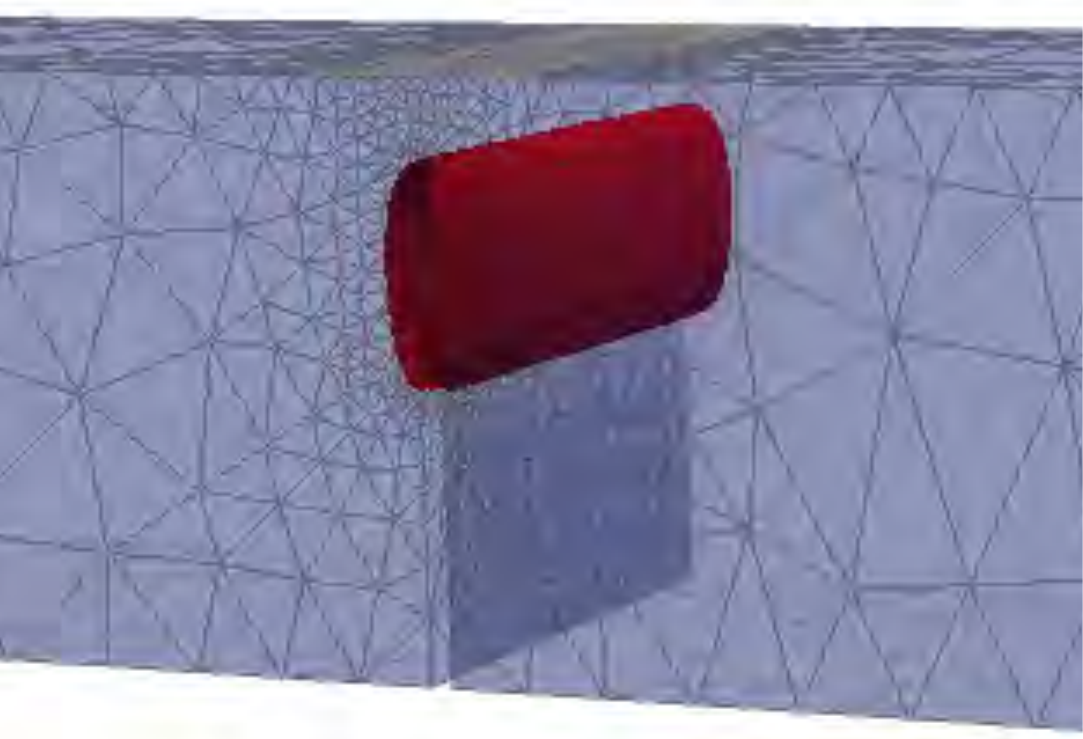}&
		\includegraphics[width=0.30\textwidth]{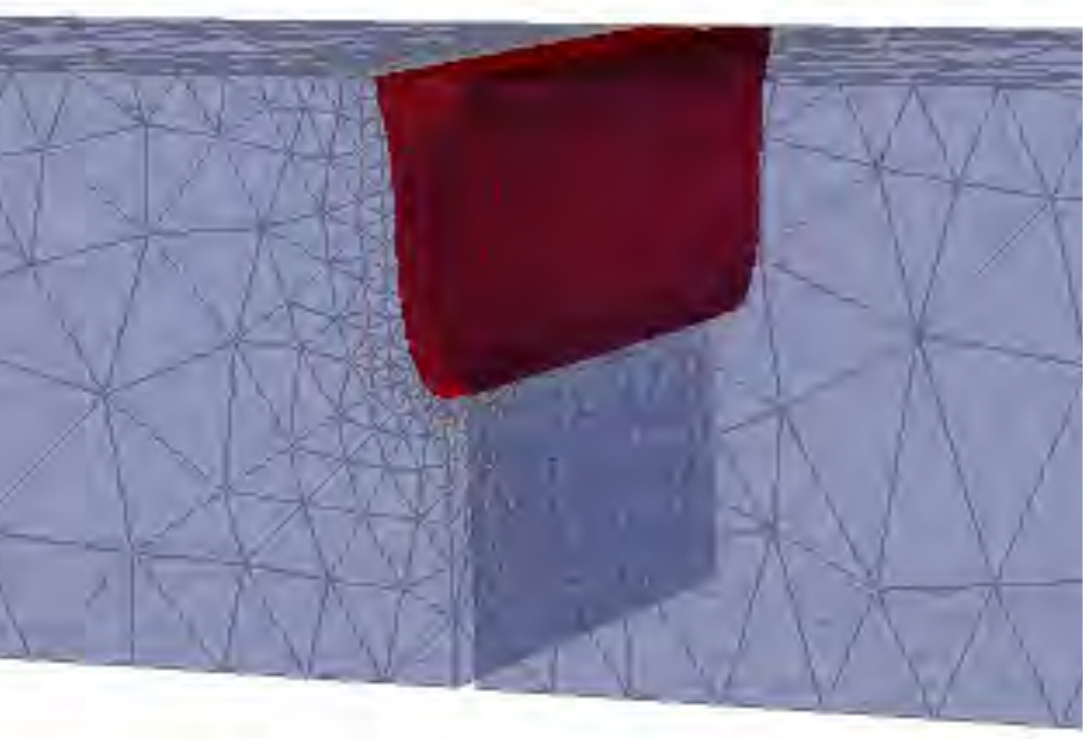}\\
		w=0.171\,\mathrm{mm}   & w=0.291\,\mathrm{mm} & w=0.591\,\mathrm{mm}
               \end{array}$
		}
	\caption{\label{Ex:3dbeam:DmgMap}
	Example \ref{Sec.NumEx:Ex4}. $3d$-symmetric bending test.
	Phase field distribution on the cross section $y=420\,\mathrm{mm}$ with
	correspondent $3d$ views of the phase-field isolevel lines.
	In the damage maps, the darkest colour corresponds to $\beta=1-\delta$ 
	with $\delta=10^{-4}$ given that we are considering a partially damage profile, 
	whereas the blue corresponds to solid material for which $\beta=0$. 
	}
 \end{figure}


\section{Conclusions}\label{Sec.Conc}

In this paper we have proposed an algorithm that computes the solutions of the energetic formulation of an 
anisotropic phase-field model of quasi--brittle fracture characterized by 
a different behaviour at traction and compression 
and by a state dependent dissipation potential. Through the simulation of $2d$ and $3d$ benchmark problems,
our results show that the standard procedure of simply 
solving the weak form of the Euler-Lagrange equations detects generally solutions that
violate the modelling assumption of evolution via global minima of the discrete functionals
which underpins the variational formulation of fracture and its corresponding energetic formulation.
We have verified this assumption by checking an additional optimality condition of the global minimizers.
Such condition 
has the form of a two--sided energy estimate and 
consists 
in checking that within each time step $[t_n,\,t_{n+1}]$, $n=1,\ldots, N$, the sum of the variation of the 
stored energy and of the dissipation between the 
state of the system at the time instants $t_n$ and $t_{n+1}$
is bounded above and below. As an alternative and first approximation to 
the application of global optimization algorithms, which would not be viable in our case, given 
their high numerical complexity,
we have designed a feasible numerical aproach that automatically enforces the meeting of the 
two-sided energetic estimates and allows the computation of 
improved energetic solutions consistent with the basic assumptions. The implementation 
of the energy  estimates has been done within a backtracking strategy by which one goes back 
over past time steps when the estimates are violated, and restarts the simulation of the incremental 
problem with different initial condition 
for the phase-field variable. We noted, however, that there might be cases where the backtracking
procedure is not activated, which occur especially in those situations where the lower energy bound is small and the  
two energy bounds are well apart from each other. This has occurred, for instance, in the 
$3d$ simulation of the symmetric bending beam whereas in the other cases, the check of the 
bounds has been determinant to find different energetic solutions which comply with the energetic bounds.
In this case we cannot infer anything about the computed solution, though we believe, 
that, still within the field of application of methods of local optimization, 
a backtracking strategy based on sharper bounds would select more reliable energetic solutions.
Finally, following the validation of the formulation to model crack in concrete,
we believe that a better modelling of the cracking process upon different loading conditions
can be obtained by including plasticity and cohesive effects, thus elaborating, for instance, the formulation
advanced in \cite{AMV15,AMMV18} where we account of what we have developed in our present work. 
This is part of an ongoing research.


\section*{Acknowledgements} 
The authors are extremely grateful to the anonymous referees, whose constructive and generous comments
on earlier versions of the manuscript have contributed to produce a better version of the paper and
made the authors appreciate subtleties of this fascinating topic.
The authors wish also to thank the partial financial support of the Argentinian Research Council (CONICET),
the Argentinian Ministry of Science, Technology \& Development (MINCyT)
and the National University of Tucum\'{a}n, Argentina for the financial support through the Projects CONICET PIP 101,
PICT 2016-105 and PIUNT CX-E625, respectively.

\end{document}